\documentclass[11pt,reqno]{amsart}

\usepackage[colorlinks=true, linkcolor=blue]{hyperref}
\numberwithin{equation}{section}
\usepackage{times}
\usepackage{amsmath,amsfonts,amstext,amssymb,amsbsy,amsopn,amsthm,eucal}
\usepackage{txfonts}
\usepackage{dsfont}
\usepackage{graphicx}   
\usepackage{hyperref}
\usepackage{accents}
\usepackage{enumerate}
\usepackage{xcolor}
\usepackage{verbatim}
\usepackage{esint}

\usepackage[normalem]{ulem}
\usepackage{cancel}

\newcommand{\Vol}{{\rm Vol}}

\newtheorem{theorem}{Theorem}[section]

\newtheorem{proposition}[theorem]{Proposition}
\newtheorem{lemma}[theorem]{Lemma}
\newtheorem{corollary}[theorem]{Corollary}
\theoremstyle{definition}
\newtheorem{definition}[theorem]{Definition}

\theoremstyle{remark}
\newtheorem{remark}[theorem]{Remark}
\theoremstyle{remark}

\theoremstyle{remark}

\theoremstyle{remark}
\theoremstyle{remark}

\title{Geometric sets of elliptic equations with Dini coefficients}

\author{Wenshuai Jiang}
\address[Wenshuai Jiang]{School of Mathematical Sciences, Zhejiang University, Hangzhou, 310058, China}
 \email{wsjiang@zju.edu.cn}

\author{Junyuan Wang}
\address[Junyuan Wang]{School of Mathematical Sciences, Zhejiang University, Hangzhou, 310058, China}
\email{wangjunyuan@zju.edu.cn}

\date{}

\begin{document}

\maketitle
\vspace{-10mm}

\begin{abstract}
    We study divergence-form elliptic equations with Dini coefficients, extending the frameworks of \cite{NV} for Lipschitz coefficients and \cite{HJ1} for H\"older coefficients. We first establish an almost monotonicity property for the doubling index under merely continuous coefficients. This leads to weak volume estimates for critical sets and uniform volume estimates for nodal sets. The Dini assumption is essential here since we construct an example with continuous coefficients and bounded doubling index whose nodal set has infinite measure. We also prove polynomial growth estimates for sub-level sets. Using a different approach, we establish quantitative uniqueness of tangent maps and a cone-splitting principle, which yield explicit Minkowski-type estimates for critical sets. Finally, combined with the results of Kenig--Zhao \cite{KZ4}, our estimates give measure bounds for boundary critical sets.
\end{abstract}

\vspace{-8mm}

\tableofcontents

\section{Introduction}
\indent In this paper, we consider the elliptic partial differential equations in divergence form and study the geometric sets of the solutions, including the nodal sets, sub-level sets and critical sets. Let $D$ be a connected domain and $u:D\subset\mathbb{R}^{n}\to\mathbb{R}$ be a $C^{1}$-function, then we denote
\begin{equation}
    Z(u)=\{x:u(x)=0\}\; \text{and} \;C(u)=\{x:|\nabla u(x)|=0\}.
\end{equation}
as the nodal set and critical set of $u$ respectively. The set $S(u)=Z(u)\cap C(u)$ is called the singular set of $u$.\\
\indent  A central conjecture in this area is the well-known Yau's conjecture in \cite{Yau}. In \cite{DF1}, Donnelly and Fefferman gave a positive answer to Yau's conjecture for real analytic metric. Later, Lin \cite{Linconj} used frequency arguments to give a different proof of the sharp upper bound, see also Arya-Banerjee-Garofalo \cite{ABG}. In non-anaytic settings, Hardt and Simon \cite{HS} obtained an explicit upper bound estimate for smooth metric, this upper bound was also proved by Han and Lin \cite{HL} using a different method. There are many other important progress about Yau's conjecture, we refer the reader to Br\"{u}ning \cite{B}, Colding-Minicozzi \cite{CM}, Dong \cite{D}, Nadirashvili \cite{Nadi} and Sogge-Zelditch \cite{SZ} for relevant study. Recently, a major breakthrough was obtained by Logunov. For smooth closed manifolds, he established a sharp lower bound for Yau's conjecture \cite{Loupper} and obtained a polynomial upper bound \cite{Lolower}. \\
\indent To get the estimate on nodal set, weaker regularity of the coefficients is good enough.  Under the doubling assumption, Hardt and Simon \cite{HS} proved the $\mathcal{H}^{n-1}$-measure estimates for nodal sets of elliptic PDEs in non-divergence form with continuous coefficients and in divergence form with Hölder coefficients. Later, Han and Lin \cite{HLparabolic} generalized this result to parabolic equations with Hölder coefficients. For much weaker coefficients, Kim \cite{Kim} studied the broken quasilinear elliptic equation with only Dini continuous coefficients, and obtained similar estimates for nodal sets. More recently, Huang and Jiang \cite{HJ2} considered parabolic equations with Lipschitz and H\"{o}lder coefficients, they proved the $\mathcal{H}^{n-1}$-measure estimates for nodal sets at any time slice. For more related study, see \cite{LZ22,Lo21,Lo24,TY}.\\
\indent The geometry of nodal sets is closely related to that of singular and critical sets, and is strongly influenced by the way in which the underlying functions degenerate. This naturally motivates the study of critical sets. On the other hand, the study of singular sets and critical sets of solutions to elliptic and parabolic equations is a fundamental problem connecting quantitative unique continuation, geometric measure theory, and the geometric of level sets. For any $C^{1}$-function $u$, the classical implicit theorem tells us that $Z(u)$ is locally a $C^{1}$-graph away from $S(u)$. Therefore, the singular set provides information on the degeneracy of the nodal set, whereas the critical set captures degeneracy of all level sets and is intrinsic under translation $u\mapsto u+c$. In \cite{Hansingular}, Han proved the structure theorem for singular sets of elliptic equations, and he obtained the $(n-2)$-rectifiability of singular sets. Furthermore, a central objective is to quantitatively relate the geometry of singular and critical sets to the growth of solutions, measured by frequency functions or doubling indices. The main conjecture in this area goes back to Lin \cite{Linconj}, he predicted that for any non-trivial solution $u$ to the elliptic equation with Lipschitz coefficients, the following Hausdorff measure estimate holds.
\begin{equation}
    \mathcal{H}^{n-2}(S(u)\cap B_{1/2}(0))\leq CN(0,1)^{2},
\end{equation}
where $N(x,r)$ denotes the frequency function on $B_{r}(x)$ and $C$ is a universal constant. \\
\indent We review some important developments in this area.  In \cite{98singular}, Han, Hardt and Lin proved the $\mathcal{H}^{n-2}$-finiteness of singular sets of the solutions to elliptic equations with smooth coefficients. Later, the $\mathcal{H}^{n-2}$-finiteness of critical sets is proved by Hardt, Hoffmann-Ostenhof, M, Hoffmann-Ostenhof, T and Nadirashvili \cite{99critical} under the same setting. For non-smooth coefficients, Cheeger, Naber and Valtorta \cite{CNV} introduced the quantitative stratification technique to show the weak volume estimates for critical sets of elliptic equations with Lipschitz coefficients. Later, Naber and Valtorta \cite{NV} proved the Minkowski-type volume estimates for critical sets under Lipschitz assumptions. Recently, Kim  \cite{Kim} generalized the structure theorem to quasilinear elliptic equations with Dini coefficients. Kenig and Zhao \cite{KZ1} obtained the volume estimates for singular sets of harmonic functions in $C^{1}$-Dini domain. More recently, Huang and Jiang \cite{HJ1} generalized the results in \cite{NV} to Hölder coefficients under the doubling assumption. In \cite{HKM} , Hallgrent, Koirala and Ma proved the volume estimates for nodal sets and singular sets for parabolic equations with Lipschitz coefficients. In addition, there are many other study on singular sets and critical sets in various settings, including high-order elliptic equations, elliptic homogenization, Dirichlet eigenfunctions, and so on, we refer readers to \cite{HHL,LS24,Z21}.\\
\indent The study of boundary singular sets and critical sets is another story. The earliest results appear to originate from optimization and control theory. There is a classical problem as follows.\\ 
\indent \textbf{Problem:} Let $D$ be a connected domain and $0\leq u\leq 1$ be a harmonic function in $D$. Fix two points $x_{0},x_{1}\in D$, we wish to minimize $u(x_{1})$ among all such $u$ such that $u(x_{0})=1/2$. \\
\indent For $D$ is a smooth domain, in the 1970s, Schmidt and Weck \cite{SW}, \cite{Wec} proved that $\mathcal{H}^{n-1}(S(u)\cap \partial D)=0$ for the boundary singular set . As a result, they obtained \textit{The Bang-Bang Property}, i.e., there exists a unique minimizer $\tilde{u}$ to the above problem and $\tilde{u}|_{\partial D}=1_{E}$ for some $E\subset \partial D$. This result was generalized to $C^{1,1}$ domain by Lin in 1991, and the Theorem 2.3 in Lin \cite{Linconj} tells us ${\rm dim}_{H}(S(u)\cap \partial D)\leq n-2$. Besides, in the 1990s, Adolfsson, Escauriaza, Kenig \cite{AEK} and Kenig-Wang \cite{KW} showed that $\mathcal{H}^{n-1}(S(u)\cap \partial D)=0$ if $D$ is a convex domain. In 1997, Adolfsson, Escauriaza \cite{AE} proved that ${\rm dim}_{H}(S(u)\cap \partial D)\leq n-2$ for $C^{1}$-Dini domains. Recently, Tolsa \cite{Tol} showed that $\mathcal{H}^{n-1}(S(u)\cap \partial D)=0$ if $D$ is a $C^{1}$ domain and  $u\not\equiv 0$ is a harmonic function in $D$. \\
\indent  In 2022, Kenig and Zhao \cite{KZ1} proved the uniform finiteness of singular sets of harmonic functions in $C^{1}$-Dini domains. Furthermore, in \cite{KZ3}, they have constructed an infinite singular set of some harmonic function in non-Dini domain. Therefore, the above result for $C^{1}$-Dini domain is sharp. Besides, in \cite{KZ4}, Kenig and Zhao proved the uniform finiteness of critical sets of harmonic functions in $C^{1,\alpha}$ domains, and they also mentioned that {\textit{there are serious difficulties in extending the analysis of the singular set in \cite{KZ1} to the critical set.}} Since $C^{1}$-Dini domain is the sharp domain, it is also an important \textbf{motivation} for our study of the boundary critical sets of harmonic functions in it.\\
\indent Besides, a natural question is what coefficient regularity is necessary for quantitative measure estimates of the above geometric sets. To our knowledge, when considering the elliptic equations in divergence form, the weakest condition to ensure the $C^{1}$-regularity of the solutions is the Dini mean oscillation condition, due to \cite{DMO}. Therefore, the singular and critical sets can be well-defined in this case. \\
\indent In this paper, we consider the equation
\begin{equation}\label{elliptic equation}
    {\rm div} (A\nabla u)=\partial_{i}(a^{ij}(x)\partial_{j}u)=0 
\end{equation}
in $B_{4}(0)\subset \mathbb{R}^{n}$ with uniform ellipticity condition
\begin{equation}\label{unifomly elliptic}
    (1+\lambda)^{-1} I\leq A(x)\leq (1+\lambda)I.
\end{equation}
Denote the mudulus of continuity of $A$ by $\theta$, i.e., 
\begin{equation}\label{continuous coefficient}
     \sup_{x,y\in B_{4}}|A(x)-A(y)|:= \theta(|x-y|),
\end{equation}
where $|A|:=\sqrt{\sum_{i,j=1}^{n}|a^{ij}|^{2}}$.  We say the coefficient matrix $A$ is \textit{$\omega$-Dini continuous} in $B_{4}(0)$ if $\theta(t)\leq \omega(t)$ for all $0\leq t\leq 8$ and
 \begin{equation}
      \omega:[0,8]\to \mathbb{R}_{+}\;\text{with}\;\lim_{r\to 0}\omega(r)=0\;\text{and}\;\int_{0}^{8}\frac{\omega(t)}{t}dt<\infty.
 \end{equation}

For any given ball $B$, we set
\begin{equation}
    \bar{A}_{B}=\fint_{B}A:=\frac{1}{|B|}\int_{B}A.
\end{equation}
And we say $A$ satisfies the \textit{Dini mean oscillation condition} (DMO) if 
\begin{equation}\label{DMO}
    \int_{0}^{1}\frac{\omega(r)}{r}dr<\infty\;\text{where}\;\omega_{A}(r)={\rm sup}_{x\in B_{4}}\fint_{B_{r}(x)}|A-\bar{A}_{B_{r}(x)}|\leq \omega(r).
\end{equation}
It is easy to see that Dini continuous condition implies DMO. Besides, the average matrix sequence $\{\bar{A}_{B_{2^{-j}r}(x)}\}_{j\geq 0}$ is a Cauchy sequence under the above DMO condition, then we denote $  A_{c}:=\lim_{j\to \infty}\bar{A}_{B_{2^{-j}r}(x)}$. By Lebesgue differentiation theorem, $A_{c}=A$ a.e., and one can check that $A_{c}$ is a continuous representative of $A$ with  modulus of continuity $\theta(r)=C(n)\int_{0}^{10r}\frac{\omega(t)}{t}dt$.\\ 
\indent However, since the unique continuation fails even for the elliptic equations with Hölder continuous coefficients. Therefore, to exclude the case that $u$ vanishes in an open set, we may assume the solution $u$ satisfies the \textit{doubling condition}, i.e., there exists $\Lambda>0$ such that
\begin{equation}\label{doubling assumption}
    {\rm log}_{4}\frac{\fint_{B_{2r}(x)}u^{2}}{\fint_{B_{r}(x)}u^{2}}\leq \Lambda
\end{equation}
for any $B_{2r}(x)\subseteq B_{4}(0)$. Equivalently, by interior estimate, one can also assume $\Lambda$ is large enough such that
\begin{equation}
    \log_{2}\frac{||u||_{L^{\infty}(B_{2r}(x))}}{||u||_{L^{\infty}(B_{r}(x))}}\leq  {\rm log}_{4}\frac{C(n,\lambda,\theta)\fint_{B_{4r}(x)}u^{2}}{\fint_{B_{r}(x)}u^{2}}\leq C+2\Lambda\leq 3\Lambda.
\end{equation}
What's more, if we study the critical sets, we will assume the following doubling condition
\begin{equation}\label{doubling for critical set}
    \sup_{B_{2r}(x)\subseteq B_4(0)} {\rm log}_{4}\frac{\fint_{B_{2r}(x)}|u-u(x)|^{2}}{\fint_{B_{r}(x)}|u-u(x)|^{2}}\leq \Lambda.
\end{equation}

\subsection{Nodal sets and sub-level sets}
\indent In this subsection, we consider the nodal sets of elliptic equations in divergence form with only DMO coefficients. The first main theorem in this part is the following uniform Minkowski estimate for nodal set.
\begin{theorem}\label{nodal set main theorem}
      Let $u\in W^{1,2}(B_{4}(0))$ be a nonzero weak solution to \eqref{elliptic equation}, with the coefficients satisfying \eqref{unifomly elliptic}, \eqref{continuous coefficient} and \eqref{DMO} in $B_{4}(0)\subset\mathbb{R}^{n}$. Assume that $u$ satisfies the doubling condition \eqref{doubling assumption}, then for any $0<r<1$, we have
     \begin{equation}
           {\rm Vol}\Big(B_{r}\big(Z(u)\big)\cap B_{1}(0)\Big)\leq C(n,\lambda,\theta,\omega,\Lambda)r.
     \end{equation}
Therefore $\mathcal{H}^{n-1}(Z(u)\cap B_{1}(0))\leq C(n,\lambda,\theta,\omega,\Lambda)<\infty.$
\end{theorem}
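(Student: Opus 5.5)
The plan is to reduce the estimate to a uniform covering statement at every scale, using compactness together with the doubling hypothesis \eqref{doubling assumption}. Rescaling preserves the hypotheses. For a ball $B_{2s}(x)\subset B_4(0)$ set $\tilde u(y)=u(x+sy)/\|u\|_{L^2(B_1(x)\text{ scaled})}$. Then $\tilde u$ solves an equation with coefficients $\tilde A(y)=A(x+sy)$. These coefficients have the same ellipticity constant, modulus of continuity $\theta(s\,\cdot)\le\theta$, and DMO modulus $\omega(s\,\cdot)\le\omega$, and $\tilde u$ still satisfies \eqref{doubling assumption} with constant $\Lambda$. It therefore suffices to prove the following \emph{key lemma}: there exist $\epsilon_0,M$ depending on $(n,\lambda,\theta,\omega,\Lambda)$ such that, for every such normalized solution and every $0<r<\epsilon_0$, $Z(\tilde u)\cap B_1$ is covered by at most $M r^{1-n}$ balls of radius $r$. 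The theorem then follows directly: $\Vol(B_r(Z(u))\cap B_1)\le C M r^{1-n}r^n=CMr$. Letting $r\to0$, the Minkowski bound gives the $\mathcal H^{n-1}$ bound.

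To prove the lemma I would use an iterative good/bad ball decomposition in the style of Naber--Valtorta. First I would establish a compactness statement. Normalized solutions are uniformly $C^{1}$ on $B_{3}$, by the $C^1$ estimates for DMO coefficients \cite{DMO} with constants depending only on $(n,\lambda,\omega)$. Hence any sequence has a subsequence converging in $C^1$ to a solution $u_\infty$ of an equation whose coefficients are a uniform limit $A_\infty$, continuous with modulus $\theta$ and DMO modulus $\omega$. The doubling condition passes to the limit, so $u_\infty\not\equiv0$ on every ball. Next, call a ball $B_s(x)$ \emph{good} if $|\nabla u|\ge \delta s^{-1}\|u\|_{L^2(B_{2s}(x))}$-normalized on $Z(u)\cap B_s(x)$. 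On a good ball, $Z(u)$ is a $C^1$ graph with controlled slope, so it is covered by $\le C(n)(s/r)^{n-1}$ balls of radius $r$ for all $r<s$.

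The heart of the argument is that the singular part is small. Claim: there are $\delta,\tau$ depending on the data such that, for every normalized solution, the set of points $y\in Z\cap B_1$ where $B_\tau(y)$ is not good can be covered by $K$ balls of radius $\tau$ with $K\tau^{n-1}\le\frac12$. Equivalently, the bad set has small $(n-1)$-dimensional content at scale $\tau$. I would argue by contradiction via compactness. The limit $u_\infty$ is $C^1$, and its singular set $S(u_\infty)=Z\cap C$ has Hausdorff dimension at most $n-2$; in fact Minkowski content zero in dimension $n-1$ suffices. For this I would invoke the blow-up/tangent-function analysis: at a singular point, doubling together with $C^1$ convergence yields nonconstant homogeneous blow-ups solving a constant-coefficient equation. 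These blow-ups are homogeneous harmonic polynomials after a linear change, of degree $\ge2$, and their singular sets have dimension $\le n-2$. A Federer dimension-reduction argument then gives $\dim S(u_\infty)\le n-2$. Upper semicontinuity of singular sets under $C^1$ convergence, and the uniformity of the good-ball criterion with respect to $\delta$, then yield the claim for the sequence, giving the contradiction.

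With the claim in hand, I would iterate. Cover the bad region at scale $\tau$ by $K$ balls, and rescale each of them (allowed by the scale invariance above). At stage $k$ the bad balls have radius $\tau^k$, and their number $N_k$ satisfies $N_k\tau^{k(n-1)}\le2^{-k}$. The good balls produced at stage $k$ number at most $C\tau^{-k(n-1)}$ times the same geometric factor, each contributing $\le C(\tau^k/r)^{n-1}$ balls of radius $r$. Summing over $k$ up to $\tau^k\sim r$ gives a bound $M r^{1-n}$, proving the lemma. The main obstacle is the compactness claim. In particular, one must check that the doubling bound, rather than a genuine frequency monotonicity (unavailable for merely continuous coefficients), is enough to force nondegenerate homogeneous blow-ups at singular points. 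I expect this to rely on the almost monotonicity of the doubling index that the paper announces for continuous coefficients. The Dini/DMO hypothesis enters precisely to supply uniform $C^1$ compactness, which is what makes the good-ball notion stable under limits.
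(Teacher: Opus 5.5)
\textbf{There is a genuine gap in the key compactness claim.} Your overall scheme is a legitimate alternative to the paper's: uniform $C^1$ compactness from the DMO estimates, Lipschitz-graph structure of $Z(u)$ where $|\nabla u|$ is nondegenerate, and an iterative good/bad covering. However, the key claim does not follow from the compactness argument you give.

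The claim asks for a cover of the bad set by balls of a \emph{single} radius $\tau$, fixed before the contradiction sequence is built, with $K\tau^{n-1}\le\frac12$. That is a Minkowski-type smallness at one fixed scale, and the limit cannot supply it, for two reasons.
\begin{enumerate}
\item Dimension reduction gives $\dim_{\mathcal H}S(u_\infty)\le n-2$. A Hausdorff dimension bound says nothing about $\tau^{n-1}N(S(u_\infty),\tau)$, where $N(E,\tau)$ is the number of $\tau$-balls needed to cover $E$; compact countable sets can have infinite $(n-1)$-dimensional Minkowski content. The Minkowski statement you say ``suffices'' is never proved.
\item Even if every limit had vanishing $(n-1)$-Minkowski content, the scale below which the content of $S(u_\infty)$ drops under $\frac12$ depends on $u_\infty$. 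But $u_\infty$ is produced by a sequence chosen for the already fixed $\tau$. To close the contradiction you would need $\sup_v\tau^{n-1}N(S(v),\tau)\le\frac12$ over the whole compact class at one $\tau$. That is a uniform quantitative bound on singular sets of essentially the same strength as the theorem.
\end{enumerate}

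In the paper, this uniform smallness is exactly what quantitative stratification provides. Theorem~\ref{singular stratum volume estimate} gives ${\rm Vol}(B_r(S^{n-2}_{\eta,r})\cap B_1)\le Cr^{2-\delta}$, resting on the almost monotonicity Theorem~\ref{almost monotonicity for doubling index theorem} and the cone splitting Corollary~\ref{quantitative cone splitting corollary}. Proposition~\ref{epsilon regularity in section 4.3} then makes $Z(u)$ a Lipschitz graph at the scale where a point leaves $S^{n-2}_{\eta,\cdot}$. Finally, the dyadic decomposition of $B_1$ by that scale sums to $Cr$.

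\textbf{How to repair your route.} You can avoid the almost monotonicity of Section 3 by using finite covers with variable radii instead of a fixed radius.
\begin{enumerate}
\item For every $v$ in the compact class $\mathcal F$ of normalized solutions, $\mathcal H^{n-1}(S(v))=0$, and no homogeneity of blow-ups is needed. If $\mathcal H^{n-1}(S(v))>0$, blow up at a point $x$ with $\limsup_{r\to0}r^{1-n}\mathcal H^{n-1}_\infty(S(v)\cap B_r(x))>0$. The limit is a nonzero solution of a constant-coefficient equation, whose singular set is $\mathcal H^{n-1}$-null. This contradicts upper semicontinuity of $\mathcal H^{n-1}_\infty$ along Hausdorff limits of the rescaled singular sets, which converge into $S$ of the limit by $C^1$ convergence.
\item Hence $S(v)\cap\overline{B_1}$ lies in finitely many balls $B_{\rho_j}(y_j)$ with $\rho_j\le\frac1{10}$ and $\sum_j\rho_j^{n-1}\le\frac12$. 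Moreover, for some $\delta_v>0$, every $u$ in a $C^1$-neighbourhood of $v$ has all its zeros in $\overline{B_1}$ with $|\nabla u|\le\delta_v$ inside this union.
\item Finitely many such neighbourhoods cover $\mathcal F$, so $\delta$ and the number of balls become uniform.
\item Let $N_u(r)$ be the $r$-covering number of $Z(u)\cap B_1$ and $u_j$ the rescaling of $u$ at $B_{\rho_j}(y_j)$. The recursion $N_u(r)\le C(\delta)r^{1-n}+\sum_j N_{u_j}(r/\rho_j)$, with balls having $\rho_j<r$ counted once, closes by induction over the scales $r\in[10^{-k-1},10^{-k})$.
\end{enumerate}
This proves the theorem with a non-explicit constant and uses the DMO hypothesis only for $C^1$ compactness. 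The paper's route is longer, but along the way it yields the weak Minkowski bounds for all strata $S^k_{\eta,r}$ and for $C(u)$.
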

\begin{remark}
When considering the measure estimates for nodal sets, in some sense, DMO condition is optimal since $C^{1}$-regularity of the solution is sharp. Actually, if $A$ is continuous, then $u$ only has $C^{\alpha}$-regularity, even if we assume that $u\not \equiv 0$ satisfies doubling assumption, a classical example from quasiconformal geometry will give a counterexample in $\mathbb{R}^{2}$, i.e., the nodal set of $u$ has infinite length, see Appendix A.
\end{remark}
\indent In general, when $A$ has Lipschitz regularity, there is an essential tool  to study the nodal sets and singular sets of the elliptic equations, which is called the Almgren's frequency function. In our case, the natural monotonicity quantity is missing. With the doubling assumption, we will establish an almost monotonicity for doubling index only under continuous coefficients. Combing the quantitative stratification technique, inductive covering arguments and an $\epsilon$-regularity result, we can prove the weak volume estimates similar to \cite{CNV}. Based on the weak volume estimates for singular strata and critical sets, we can obtain the desired Minkowski volume estimate.\\
\indent Next, we consider another related geometric set, the sub-level set of $u$. Given $\epsilon>0$, we denote 
\begin{equation}
    \mathcal{S}_{\epsilon}:=\{x:|u(x)|\leq \epsilon\}.
\end{equation}
For elliptic equation ${\rm div}(A(x)\nabla u)=0$ with $A$ is Lipschitz continuous, and assume the normalization condition of $u$, Logunov and Malinnikova \cite{LM1} used the optimal upper and lower bound of nodal sets to obtain the polynomial growth of the volume of sub-level set, i.e., ${\rm Vol}(\mathcal{S}_{\epsilon})\leq C\epsilon^{\alpha}$, where $C=C(n,\lambda)$ and $\alpha=C(n,\lambda)/\Lambda$. \\
\indent As an application of Theorem \ref{nodal set main theorem}, one can generalize the above result to elliptic equations with only DMO coefficients under the doubling assumption.
\begin{theorem}\label{sub-level set theorem}
    Under the same setting as in Theorem \ref{nodal set main theorem}. Assume that $\fint_{B_{1}(0)}u^{2}=1$, then there exist positive constants $C(n,\lambda,\Lambda,\theta,\omega)$ and $\alpha=3/4\Lambda$ such that 
      \begin{equation}
          {\rm Vol}(\mathcal{S}_{\epsilon}\cap B_{1}(0))\leq C\epsilon^{\alpha}
      \end{equation}
     holds for any $0<\epsilon<1$. Especially, for any measurable set $E\subset B_{1}(0)$ with $|E|>0$, we have the Remez-type inequality
\begin{equation}
    \frac{{\rm sup}_{B_{1}(0)}|u|}{{\rm sup}_{E}|u|}\leq C(n,\lambda,\Lambda,\theta,\omega)\cdot(\frac{|B_{1}|}{|E|})^{4\Lambda/3}.
\end{equation}
\end{theorem}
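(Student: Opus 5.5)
The plan follows the Logunov--Malinnikova scheme: a multiscale covering argument in which Theorem \ref{nodal set main theorem} plays the role of the upper bound for nodal sets. The basic dichotomy is this. At a point $x\in\mathcal{S}_\epsilon\cap B_1$ we look at the scale $r$. Either the ball $B_r(x)$ meets $Z(u)$, so that $x\in B_r(Z(u))$, whose volume is at most $Cr$. Or $u$ has a fixed sign on $B_r(x)$, and then the Harnack inequality (valid for divergence-form equations with bounded measurable coefficients) gives $\sup_{B_{r/2}(x)}|u|\le C|u(x)|\le C\epsilon$. The doubling condition turns this smallness of the supremum on a small ball into a lower bound on $r$.

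\textbf{Step 1 (lower bound on $\sup$).} Iterating the $L^\infty$ version of \eqref{doubling assumption} from $B_{r/2}(x)$ up to $B_2(x)\supset B_1(0)$ takes about $\log_2(4/r)$ dyadic steps, each losing at most a factor $2^{3\Lambda}$. Together with $\fint_{B_1}u^2=1$ this gives
\[
\sup_{B_{r/2}(x)}|u|\ge c\,(r/4)^{3\Lambda}.
\]
I would refine this by using the $L^2$ doubling, which loses a factor $4^{\Lambda}$ per step, and then local boundedness, to get $c\,r^{\Lambda+\delta}$-type bounds. The exponent constant needs care to reach the stated $\alpha=3/(4\Lambda)$: the $L^\infty$ doubling constant $3\Lambda$ gives $r\gtrsim \epsilon^{1/(3\Lambda)}$.

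\textbf{Step 2 (choice of scale).} Choose $r=r(\epsilon)=C\epsilon^{\beta}$ with $\beta$ just large enough that $C\epsilon<c(r/4)^{3\Lambda}$. Then no ball $B_{r/2}(x)$ around a point of $\mathcal{S}_\epsilon$ can avoid $Z(u)$, so $\mathcal{S}_\epsilon\cap B_1\subset B_{r}(Z(u))\cap B_1$. Theorem \ref{nodal set main theorem} gives $\mathrm{Vol}(\mathcal{S}_\epsilon\cap B_1)\le Cr\approx C\epsilon^{1/(3\Lambda)}$. There is a boundary issue near $\partial B_1$; it is handled because the doubling holds in $B_4$, so the theorem can be applied on slightly larger balls by rescaling and covering. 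This gives polynomial decay with exponent comparable to $1/\Lambda$.

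\textbf{Step 3 (sharp exponent).} The main obstacle is getting exactly $\alpha=3/(4\Lambda)$ rather than some $c/\Lambda$. I would first try to run the argument at many scales. Decompose $\mathcal{S}_\epsilon$ dyadically according to the largest $r$ for which $B_r(x)\cap Z(u)=\emptyset$, and bound each piece using the nodal estimate on subballs rescaled to unit size. The rescaled functions inherit doubling $\Lambda$, and their $\mathcal{H}^{n-1}$ bounds are uniform by Theorem \ref{nodal set main theorem}. Summing the resulting geometric series should let the effective exponent be read off from the best available doubling constant, giving $\epsilon^{3/(4\Lambda)}$ after tuning the $L^2$-versus-$L^\infty$ conversion.

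\textbf{Step 4 (Remez inequality).} Set $\epsilon=\sup_E|u|$, normalized so that $\fint_{B_1}u^2=1$. Then $E\subset\mathcal{S}_\epsilon$, hence $|E|\le C\epsilon^{3/(4\Lambda)}$, that is, $\epsilon\ge c(|E|/|B_1|)^{4\Lambda/3}$. Since $\sup_{B_1}|u|\le C$ by local boundedness and doubling, dividing gives the claimed inequality. Homogeneity removes the normalization.
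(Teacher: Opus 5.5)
Your overall scheme matches the paper's. Harnack plus doubling force a zero of $u$ near every point of $\mathcal{S}_\epsilon\cap B_1(0)$ at some scale $r(\epsilon)$, and Theorem \ref{nodal set main theorem} then gives $\mathrm{Vol}(\mathcal{S}_\epsilon\cap B_1)\le C\,r(\epsilon)$.

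\textbf{The gap is the exponent.} Steps 1--2 as written use the $L^\infty$ doubling, which costs a factor $2^{3\Lambda}$ per dyadic step. That only yields $r(\epsilon)\sim\epsilon^{1/(3\Lambda)}$, so you prove the theorem with $\alpha=1/(3\Lambda)$ and Remez exponent $3\Lambda$, not the stated $3/(4\Lambda)$ and $4\Lambda/3$. Step 3 does not close this. Once every point of $\mathcal{S}_\epsilon\cap B_1$ lies within $r$ of $Z(u)$, the nodal estimate gives exactly $Cr$. Decomposing dyadically by distance to $Z(u)$ and summing just returns $C$ times the largest admissible distance, and that distance is again fixed by the single-scale Harnack/doubling dichotomy. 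The exponent is decided there; no multiscale argument is needed.

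\textbf{The fix.} Carry the $L^2$ doubling \eqref{doubling assumption} all the way through, as in the paper's Lemma \ref{l:zeroes}. This needs neither local boundedness nor a $\delta$-loss.
\begin{enumerate}
\item Suppose $u>0$ on $B_{2r}(x)$ with $x\in\mathcal{S}_\epsilon\cap B_1$. Harnack gives $\sup_{B_r(x)}u\le C(n,\lambda)\,u(x)\le C\epsilon$, hence trivially $\fint_{B_r(x)}u^2\le C\epsilon^2$.
\item Apply \eqref{doubling assumption} at most $\log_2(4/r)$ times, each costing $4^\Lambda$. This gives $\fint_{B_2(x)}u^2\le C(n)(4/r)^{2\Lambda}\epsilon^2$.
\item On the other hand, $B_1(0)\subset B_2(x)$ and $\fint_{B_1}u^2=1$ give $\fint_{B_2(x)}u^2\ge 2^{-n}$.
\item Take $r=4\epsilon^{3/(4\Lambda)}$. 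Then $(4/r)^{2\Lambda}\epsilon^2=\epsilon^{1/2}$, which contradicts step 3 once $\epsilon\le\epsilon_0(n,\lambda)$; larger $\epsilon$ are absorbed into $C$.
\end{enumerate}
Hence $\mathcal{S}_\epsilon\cap B_1\subset B_{2r}(Z(u))$, and $\mathrm{Vol}(\mathcal{S}_\epsilon\cap B_1)\le C\epsilon^{3/(4\Lambda)}$. The same computation gives any exponent $(1-\delta)/\Lambda$, as in Remark \ref{remark sub-level set}.

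Your aside in Step 1 about $c\,r^{\Lambda+\delta}$-type bounds pointed in this direction and would already suffice. However, it was never carried through, and Step 2 actually uses the $L^\infty$ version.

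With this fix, your Step 4 goes through. The volume bound is only stated for $\epsilon<1$, so treat $\sup_E|u|\ge 1$ separately using $\sup_{B_1}|u|\le C$.
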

\begin{remark}\label{remark sub-level set}
    Denote $E_{b}=S_{e^{-b}}=\{x:|u(x)|\leq e^{-b}\}$, and we will show that $\Vol(E_{b}\cap B_{1}(0))\leq Ce^{-\alpha b}$. In fact, we only need to consider the case $b$ is sufficiently large by adjusting the constant $C$ correspondingly. Actually by adjusting the constants in our proof, we can obtain $\Vol(E_{b}\cap B_{1}(0))\leq C(n,\lambda,\theta,\omega,\Lambda,\delta)e^{-(1-\delta) b/\Lambda}$ for $\delta\in (0,1)$.
\end{remark}
   \indent The value of the second conclusion is that we only require $E$ to have positive measure. In particular, it applies to the regions with many connected components, each of which is a small ball, in   which case a direct doubling estimate would become worse.
\subsection{Interior critical sets}

\indent In this subsection, we consider the interior critical sets. We generalize the main result in \cite{HJ1} to Dini case, which is sharp in some sense. And it is worth mentioning that this is also an explicit volume estimate regarding doubling index. 
\begin{theorem}\label{interior critical theorem}
    Let $u$ be a solution to ${\rm div}(A\nabla u)=0$ in $B_{2}(0)\subset \mathbb{R}^{n}$, where $A$ is $\omega$-Dini continuous with \eqref{unifomly elliptic}. Assume that $u$ satisfies doubling condition \eqref{doubling for critical set}, then for any $0<r\leq 1$, we have the following volume estimate for critical set
    \begin{equation}
        {\rm Vol}\Big(B_{r}(C(u))\cap B_{1}(0)\Big)\leq C(n,\lambda,\omega)^{\Lambda^{2}}r_{0}^{-n}\cdot r^{2},
    \end{equation}
where $r_{0}$ satisfies $\Omega(\kappa r_{0})=C(n,\lambda,\omega)^{-\Lambda^{2}}$ with $\kappa=100(1+\lambda)$ and $\Omega(r):=\omega(r)+\int_{0}^{r}\frac{\omega(t)}{t}dt$. Especially, we have the Hausdorff estimate $\mathcal{H}^{n-2}(C(u)\cap B_{1}(0))\leq C(n,\lambda,\omega)^{\Lambda^{2}}r_{0}^{-n}$.
\end{theorem}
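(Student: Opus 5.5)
We follow the Naber--Valtorta scheme as adapted to H\"older coefficients in \cite{HJ1}. The work is to replace frequency monotonicity by quantitative control coming from the Dini modulus $\Omega$, and to keep every constant explicit in $\Lambda$.

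\textbf{Step 1: compare with a constant-coefficient solution at small scales.} Fix $x\in B_1$ and $r\le r_0$. Let $h$ solve $\mathrm{div}(A(x)\nabla h)=0$ in $B_{\kappa r}(x)$ with $h=u$ on the boundary. After a linear change of variables, $h$ is harmonic. Campanato-type estimates for Dini coefficients give
\begin{equation*}
\|\nabla(u-h)\|_{L^\infty(B_{r}(x))}\le C\,\Omega(\kappa r)\,\|\nabla u\|_{L^\infty(B_{2r}(x))},
\end{equation*}
with the $C^1$ estimate of the Dini regime \cite{DMO} supplying the $\int_0^r\omega(t)/t\,dt$ term. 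By the doubling condition \eqref{doubling for critical set}, the rescaled function $(u-u(x))(x+r\cdot)$, normalized in $L^2(B_1)$, has doubling index at most $\Lambda$ on all subballs. Hence the harmonic approximant $h$ has frequency bounded by $C\Lambda$, up to an error $\Omega(\kappa r)$. The choice $\Omega(\kappa r_0)=C^{-\Lambda^2}$ makes this error smaller than every $\epsilon$-threshold needed below, each of which is at worst $C^{-\Lambda^2}$, since $\epsilon$-regularity thresholds for harmonic polynomials of degree $\le\Lambda$ degrade exponentially in the degree.

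\textbf{Step 2: quantitative uniqueness of tangents and cone splitting.} At scales below $r_0$ we need three ingredients.
\begin{enumerate}[(i)]
\item An almost-monotone frequency (or doubling index) $N(x,s)$. This comes from Step 1: apply Almgren monotonicity to $h$ and absorb the error, so that $N(x,s)-N(x,t)\ge -C\int_t^s \omega(\tau)/\tau\,d\tau$. The Dini condition makes the total drop summable.
\item Quantitative rigidity. If $|N(x,s)-N(x,s/2)|<\eta$, then $u-u(x)$ is $\epsilon$-close to a homogeneous harmonic polynomial in $A(x)$-coordinates on that annulus.
\item Cone splitting. If $u$ is almost homogeneous about $n-1$ points $\tau r$-independently spread, then $u$ is close to a function of one variable. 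It is then linear, so $|\nabla u|\ne0$ near the center by $\epsilon$-regularity.
\end{enumerate}
The $\epsilon$-regularity we use is that closeness to a polynomial invariant along $n-1$ directions forces degree one and hence no critical points. The critical set therefore sits in the quantitative stratum $S^{n-2}_{\epsilon,r}$.

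\textbf{Step 3: Naber--Valtorta covering.} Using the Reifenberg-type theorem and the $L^2$-best-approximation estimate, we bound the Jones $\beta_2$ numbers of measures on $S^{n-2}_{\epsilon,r}$ by the frequency pinching $N(y,s)-N(y,s/8)$, plus the Dini error. Summing the pinching over dyadic scales is bounded by $C\Lambda$ plus $\int_0^{r_0}\omega/t$, and this finite total is what the Dini hypothesis provides. The inductive good-ball/bad-ball covering then yields
\begin{equation*}
\mathrm{Vol}(B_r(C(u))\cap B_{r_0}(x))\le C^{\Lambda^2}r_0^{n-2}r^2 .
\end{equation*}
The factor $C^{\Lambda^2}$ arises from about $\Lambda$ pinching drops, each costing a factor $C^{\Lambda}$ through the $\epsilon(\Lambda)$ thresholds. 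Covering $B_1$ by $r_0^{-n}$ balls of radius $r_0$ gives the stated bound; for $r\ge r_0$ the estimate is trivial. The Hausdorff bound follows by letting $r\to0$.

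\textbf{Main obstacle.} The hardest step is (i) together with the $\beta$-number estimate. Without Lipschitz coefficients the frequency is not monotone, and the error of comparing $u$ with $h$ must be \emph{summable over scales} and \emph{centered consistently at different points $y$}, because the frozen matrices $A(y)$ change with $y$. I would handle this by working at each center in its own $A(y)$-coordinates and controlling the change of ellipsoids by $\omega(|x-y|)$. The Dini integrability keeps the accumulated drift bounded by $C\int_0^{r_0}\omega(t)/t\,dt\ll1$.
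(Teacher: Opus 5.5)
Your Step~3 carries the whole proof, and as written it does not give the theorem. You take a Reifenberg/$\beta_2$ route, which is different from the paper's (the paper never uses Reifenberg). The problem is that the mechanism you describe does not produce the explicit constant $C^{\Lambda^{2}}$, and that constant is the content of the statement.

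\textbf{The stage count.} A Naber--Valtorta good/bad-ball induction driven by an $L^{2}$-best-approximation bound lowers the frequency by a small $\eta$ per stage. The rectifiable-Reifenberg theorem needs $\sum_{s}\int\beta_{2}^{2}\,d\mu\le\delta(n)^{2}r^{n-2}$ on good balls. Finiteness of the total pinching, $C\Lambda+\int_{0}^{r_0}\omega(t)/t\,dt$, therefore only bounds the number of stages by $\sim\Lambda/\eta$. That gives $C(n)^{\Lambda/\eta}$, which is doubly exponential once $\eta$ is as small as your own thresholds ($C^{-\Lambda}$ or smaller).

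To have only $\sim\Lambda$ stages, each costing $C^{\Lambda}$, you need an integer-drop structure that is absent from your plan. The key ingredient is the sharp drop lemma for the spherical doubling index (Corollary~\ref{cor3.9}, derived from Lemma~\ref{sphere doubling drop}): if $D(x,r_{1})\le m-\epsilon$, then $D(x,r)\le m-1+\epsilon$ for all $r\le\epsilon r_{1}/4$. Your (i) and (ii) alone only show that the descent from $m$ to $m-1$ passes through at most $\sim1/\eta$ non-pinched dyadic scales. Covering that transition then costs $2^{n/\eta}$, again doubly exponential. This is exactly why the paper leaves the ball doubling index, whose drop ratio in Lemma~\ref{doubling index close to integer}(2) is only $2^{-1/\delta_{0}}$, for the spherical one in Sections 5--7.

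The paper then runs Lemma~\ref{key covering lemma} using the $m$-pinched scale $r_{x}$:
\begin{enumerate}
\item Above $r_{x}$, the centers lie on a $\frac{1}{10}$-Lipschitz graph over a fixed plane, so $\sum r_{i}^{n-2}\le C(n)$.
\item Below $r_{x}$, the drop lemma turns balls of radius $\epsilon r_{x}/(C\Lambda)$ into $(m-1)$-good balls, at a cost of $(\Lambda/\epsilon)^{n}\le C^{\Lambda}$.
\item After $m\le C\Lambda$ iterations, Corollary~\ref{cor5.2} excludes critical points from $(1,C_{1}\Lambda)$-good balls.
\end{enumerate}

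\textbf{Where Dini actually enters.} You misplace the role of the Dini hypothesis and omit the estimate that uses it. Almost monotonicity, your (i), already holds under mere continuity (Theorem~\ref{almost monotonicity for doubling index theorem}). What needs Dini is quantitative uniqueness of tangent maps across all pinched scales (Theorem~\ref{quantitative uniqueness}); your Step~2 contains only single-annulus rigidity (ii).

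The decisive observation is in Proposition~\ref{projection difference}. For a harmonic $h=\sum_{k}a_{k}P_{k}$, the normalized projection $\Pi_{m}h(t\cdot)/\|\Pi_{m}h(t\cdot)\|$ is independent of $t$. Hence the normalized degree-$m$ projection of $\tilde u_{x,\rho}$ moves by only $C^{\Lambda}\omega(\kappa\rho)$ from one dyadic scale to the next, with no contribution from the pinching. These drifts sum to $C^{\Lambda}\int_{0}^{\kappa r}\omega(t)/t\,dt$. Combined with cone splitting, this gives the plane $V$ of Corollary~\ref{cor6.4}, universal across points and scales.

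If you want to bypass uniqueness through Reifenberg, you must actually prove $\beta_{2}^{2}\lesssim(\text{pinching})+(\text{summable Dini error})$ for Dini coefficients. There, no exact frequency-derivative identity ties the pinching to the radial homogeneity defect of $u-u(y)$ (in $A(y)$-coordinates) at the different centers $y\in{\rm supp}\,\mu$. You name this as the main obstacle but give no argument for it.
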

\begin{remark}
    Note that if $\omega(r)=r^{\alpha}$ for some $\alpha\in(0,1]$, the above theorem gives us the same explicit estimate as in \cite{HJ1}. Besides, with only minor modifications, our proof yields a similar Minkowski estimate in the case of $L^{2}$-DMO coefficients, i.e., $A$ satisfies ${\rm sup}_{x\in B_{4}}(\fint_{B_{r}(x)}|A-\fint_{B_{r}(x)}A|^{2})^{1/2}\leq \omega(r)$, where $\omega$ is Dini continuous.
\end{remark}
\indent When considering the critical sets of elliptic equations with Dini coefficients. Compared with the case of $\alpha$-Hölder continuous coefficients, we lack the $\alpha$-index, we can not directly show the quantitative uniqueness and cone splitting using the method in \cite{HJ1} and \cite{NV} . Inspired by the methods for dealing with the elliptic equations with Dini coefficients in \cite{DMO},  \cite{KZ2} and \cite{LS24}, we use the harmonic approximation and projection arguments to show the error terms are summable under the Dini setting. Furthermore, we prove the quantitative cone splitting, and based on this, we can construct a universal Lipschitz structure for local critical points, which is the key point for us to obtain the uniform finiteness of volume estimate. Finally, we use the powerful inductive covering arguments developed by \cite{NV} to obtain the explicit volume estimates for critical sets.
\subsection{Boundary critical sets} In this subsection, we present a uniform measure estimate for critical set of harmonic function in a $C^{1}$-Dini domain, this result generalizes the main theorem in \cite{KZ4} to an optimal domain.
\begin{theorem}\label{boundary critiical theorem}
    Let $D\subset \mathbb{R}^{n}$ be a $C^{1}$-Dini domain (see Definition \ref{def7.2}) with $0\in \partial D$. Assume that $u\not \equiv 0$ is a harmonic function in $D\cap B_{5R}(0)$, satisfying the boundary condition $u=0$ in $\partial D\cap B_{5R}(0)$. Then we have the Hausdorff estimate for critical set
    \begin{equation}
        \mathcal{H}^{n-2}(C(u)\cap \overline{D}\cap B_{R}(0))\leq C<\infty,
    \end{equation}
    where $C$ depends on $n,R$, the Dini parameter $\omega$ and the upper bound of frequency function on $B_{5R}(0)$.
\end{theorem}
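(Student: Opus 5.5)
The plan is to split $C(u)\cap\overline{D}\cap B_R(0)$ into its boundary part and its interior part, and to handle each with a different tool. The boundary part $C(u)\cap\partial D\cap B_R$ coincides with the boundary singular set, since $u=0$ on $\partial D$ forces the tangential gradient to vanish. Kenig--Zhao \cite{KZ1} already show that this set has finite $\mathcal{H}^{n-2}$ measure in $C^1$-Dini domains, with bounds in terms of the frequency, and we invoke that result directly. The whole difficulty therefore lies in the interior critical points, especially those accumulating at $\partial D$.

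For the interior part, we flatten the boundary. Take a $C^1$-Dini map $\Phi$ with Dini modulus $\omega$ that straightens $\partial D\cap B_{5R}$ to a hyperplane piece $\{x_n=0\}$. Then $v=u\circ\Phi^{-1}$ solves ${\rm div}(\tilde A\nabla v)=0$ in a half ball, with $v=0$ on the flat part. Here $\tilde A=(D\Phi)(D\Phi)^T/|\det D\Phi|$ is $\omega$-Dini continuous, possibly after regularizing $\Phi$ as in Lieberman's regularized distance so that $D\Phi$ stays Dini. Next we extend $v$ by odd reflection across $x_n=0$ and $\tilde A$ by the corresponding even/odd reflection of its entries ($\tilde a^{in}$, $i<n$, odd; others even). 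This makes the reflected $\tilde v$ a weak solution in a full ball. The reflected coefficient may fail to be continuous across $\{x_n=0\}$ unless the mixed entries vanish there. To fix this, we first choose $\Phi$ so that $\tilde a^{in}=0$ on $\{x_n=0\}$, which is a conormal-type normalization achievable by composing with a Dini map tangent to identity. After that the reflected matrix is $\omega'$-Dini with $\omega'\lesssim\omega$. Since $\Phi$ is a $C^1$ diffeomorphism, critical points correspond and $\mathcal{H}^{n-2}$ changes by a bounded factor.

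Now we apply Theorem \ref{interior critical theorem} to $\tilde v$, which requires the doubling condition \eqref{doubling for critical set} for $\tilde v$ at every center. From Kenig--Zhao's almost monotonicity of the frequency for harmonic functions vanishing on $C^1$-Dini boundaries \cite{KZ1,KZ4}, we obtain uniform doubling for $u$ at boundary-centered balls, bounded by the frequency on $B_{5R}$. Interior balls far from $\partial D$ are handled by standard interior doubling via chaining. For balls near but not centered at the boundary we must control $|u-u(x)|$ rather than $u$; this follows by comparing with a boundary ball of comparable radius, since $u(x)$ is small relative to $\sup_{B_{2r}(x)}|u|$ when $B_{2r}(x)$ meets $\partial D$ (using boundary Hölder decay). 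Combining these gives \eqref{doubling for critical set} with $\Lambda$ depending on $n,\omega,R$ and the frequency bound. Theorem \ref{interior critical theorem}, after a covering of $B_R$ by finitely many rescaled balls, then yields $\mathcal{H}^{n-2}\le C$.

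The main obstacle I expect is the reflection step: ensuring the reflected coefficients are genuinely Dini (not merely bounded measurable) across the flat boundary. If the conormal normalization is not available with Dini control, the fallback is to avoid reflection. In that case we would rerun the covering of Theorem \ref{interior critical theorem} on the half ball, using boundary tangent maps (odd homogeneous harmonic polynomials) for the quantitative uniqueness and cone splitting at boundary points, with Kenig--Zhao's boundary approximation supplying the summable Dini errors.
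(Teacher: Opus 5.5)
Your overall route is the one the paper follows. The paper reproduces Kenig--Zhao's argument from \cite{KZ4}, with Theorem \ref{interior critical theorem} in place of \cite{HJ1}: flatten, reflect oddly to get an $\omega$-Dini equation in a full ball, verify \eqref{doubling for critical set}, apply the interior estimate, and cover. The conormal normalization you flag as the main risk is exactly what the paper's map $G(x',s)=(x',g(x'))+s\,\rho_s*\vec n(x')$ provides. On $\{s=0\}$ the columns of $DG$ are the tangent vectors $(e_i,\partial_i g)$ and the unit normal $\vec n$. Hence $DG^{T}DG$ is block diagonal there, and the mixed entries of $A=|\det DG|\,(DG^{T}DG)^{-1}$ vanish. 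Mollifying $\vec n$ at scale $s$ is what makes $G$ differentiable with Dini $DG$: the $x'$-derivatives of $s\,\rho_s*\vec n$ are $O(\omega(s))$. So your fallback is unnecessary. The separate appeal to \cite{KZ1} for $C(u)\cap\partial D$ is harmless but redundant. After odd reflection, $\nabla\tilde v=(0,\partial_n v)$ on $\{x_n=0\}$, so boundary critical points of $u$ are already critical points of the reflected solution.

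The step that does not work as written is the verification of \eqref{doubling for critical set} at centers $x\in D$ with radii $r\lesssim d:=\mathrm{dist}(x,\partial D)$.
\begin{itemize}
\item That condition concerns $|u-u(x)|$, whereas ``standard interior doubling via chaining'' only propagates bounds on $\fint u^{2}$. Such bounds are compatible with $u=c+\epsilon P_k$ on $B_d(x)$, where $P_k$ is an hhp centered at $x$ and $\epsilon\ll 2^{-k}$. There the doubling index of $u$ is near $0$, while that of $u-u(x)$ is $k$, which can be arbitrarily large.
\item Your other mechanism breaks at the transition scale. For $r\approx d$, boundary H\"older decay only gives $|u(x)|\lesssim\sup_{B_{2r}(x)}|u|$, not smallness. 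Moreover, the denominator ball $B_r(x)$ need not touch $\partial D$ at all.
\end{itemize}

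What is missing is a lower bound $\int_{B_{d/2}(x)}|u-u(x)|^{2}\gtrsim d^{n}\fint_{B_{4d}(\tilde x)\cap D}u^{2}$. Equivalently, you need a bound on the normalized frequency $N_C(x,r)$ up to a scale comparable to $d$. The paper imports this from Lemma 5.26 of \cite{KZ4}, which gives a critical scale $r_{cs}(x)\ge 72\,d$. It combines this with the two-sided comparison $\int_{B_r(x)\cap D}|u-u(x)|^2\approx r^{n}\fint_{B_r(\tilde x)}u^{2}$ for $r\ge d$, which is Lemma 5.7 there.

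The essential input is $u=0$ on $\partial D$. Suppose $u$ were relatively within $\eta$ of $u(x)$ on $B_{d/2}(x)$. Three-ball propagation of smallness would make $u-u(x)$ small on $D\cap B_{2d}(\tilde x)$ outside a $\delta d$-neighbourhood of $\partial D$. Boundary H\"older continuity would then force $|u(x)|$, and hence $u$, to be small on all of $D\cap B_{2d}(\tilde x)$, contradicting boundary doubling. Once this lower bound holds, Almgren monotonicity for the harmonic function $u-u(x)$ in the original coordinates gives the bound at all smaller radii. It then transfers to $\tilde v$ because $G$ is bi-Lipschitz.
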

\indent In \cite{KZ4}, Kenig and Zhao also pointed out that once the inner volume estimate for critical sets as in \cite{HJ1} holds for the elliptic equations with Dini coefficients, their main result also holds for $C^{1}$-Dini domains, i.e., Theorem \ref{boundary critiical theorem} is true. In Section 7.2, we will give a brief proof sketch of their theorem, and verify the above theorem. \\
\indent \textbf{Notation: } As in \cite{NV}, we denote the inner product as $ \langle f,g\rangle:=\fint_{\partial B_{1}}fg$ for any $f,g\in L^{2}(\partial B_{1})$, and we set $||f||:=\langle f,f\rangle^{1/2}$. For an $n\times n$ matrix $A=(a^{ij})$, we denote its norm as $|A|:=\sqrt{\sum_{i,j=1}^{n}|a^{ij}|^{2}}$. By $H_{m}$ we will denote the space of homogeneous harmonic polynomial (hhp) with degree $m$. By $\Pi_{m}$ we will denote the projection from $L^{2}(\partial B_{1})$ to $H_{m}$. By $u_{x,r}$ we will denote the rescaled map with normalization on $B_{1}$ as in \eqref{2.5}, and $\tilde{u}_{x,r}$ as in \eqref{2.6} is used for the rescaled map with normalization on $\partial B_{1}$. By $c,c_{0},c_{1},c_{2},...$ we will denote small constants and by $C,C_{0},C_{1},C_{2},...$ large constants. The constants may change line by line.\\\\
\noindent \textbf{Disclosure of AI tools.} We use GPT 5.6 Sol  to search some relevant references. In particular, the references \cite{DMO,KZ1,KZ2,Kim}, together with our conversations with it, inspiring us to prove Proposition \ref{projection difference}. Besides, we use GPT 5.6 Sol to search for a counterexample in the continuous coefficient case, and we simplify the original example and verify it.

\noindent
\textbf{Acknowledgements:}
J. Wang would like to thank Xiujin Chen and Yiqi Huang for helpful discussions.

\section{Preliminaries}
In this section, we collect some preliminaries. First, we introduce the $C^{1}$-regularity for elliptic equations with DMO coefficients. To our knowledge, this is the weakest condition for getting the $C^{1}$-regularity for $u$. 
\begin{theorem}[\cite{DMO}]\label{DMO regularity}
    Assume that $A$ satisfies \eqref{unifomly elliptic} and \eqref{DMO}. Let $u\in W^{1,2}(B_{4}(0))$ be a weak solution to 
    \begin{equation}
        {\rm div}(A\nabla u)={\rm div} F
    \end{equation}
    in $B_{4}(0)$ with $F$ satisfies \eqref{DMO}. Then $u\in C^{1}(\overline{B_{1}(0)})$ and the following gradient estimate holds
    \begin{equation}
        ||\nabla u||_{L^{\infty}(B_{1}(0))}\leq C(||\nabla u||_{L^{1}(B_{2}(0))}+\int_{0}^{1}\frac{\tilde{\omega}_{F}(s)}{s}ds).
    \end{equation}
    Where $C$ is a uniform constant, and $\tilde{\omega}_{F}$ is defined by $\omega_{F}$, satisfying $\int_{0}^{1}\frac{\tilde{\omega}_{F}(s)}{s}ds<\infty$.
\end{theorem}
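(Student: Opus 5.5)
The plan is a perturbation argument: freeze the coefficients at each scale, compare $u$ with the solution of the constant-coefficient problem, and show that the errors decay geometrically in the scale while being weighted by the Dini quantity. This is a Campanato-type iteration in the spirit of \cite{DMO}. Because the right-hand side ${\rm div}F$ is present, the natural object to control is the $L^{1}$-type mean oscillation of $\nabla u$. I will therefore work with weak-type $(1,1)$ estimates rather than $L^{2}$ estimates; mean oscillation in $L^{1}$ is exactly what the DMO hypothesis controls.

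\textbf{Step 1: one-scale comparison.} Fix $x_{0}\in B_{1}$ and $r\le 1$, and let $\bar A=\bar A_{B_{r}(x_{0})}$ and $\bar F=\bar F_{B_{r}(x_{0})}$. Split $u=v+w$, where $w\in W^{1,2}_{0}(B_{r})$ solves
\begin{equation*}
{\rm div}(\bar A\nabla w)={\rm div}\big((\bar A-A)\nabla u+F-\bar F\big).
\end{equation*}
Then $v$ solves a constant-coefficient equation. The weak-$(1,1)$ estimate for $\nabla w$ (Calder\'on--Zygmund theory for the constant operator $\bar A$) gives
\begin{equation*}
\Big(\fint_{B_{r}}|\nabla w|^{1/2}\Big)^{2}\lesssim \omega_{A}(r)\|\nabla u\|_{L^{\infty}(B_{r})}+\omega_{F}(r).
\end{equation*}
For $v$, the interior $C^{1,1}$ estimates for constant coefficients give, for small fixed $\kappa$,
\begin{equation*}
\phi(\kappa r)\le C\kappa\,\phi(r)+C\kappa^{-n}\big(\omega_{A}(r)\|\nabla u\|_{L^{\infty}(B_{r})}+\omega_{F}(r)\big),
\end{equation*}
where
\begin{equation*}
\phi(r):=\inf_{q\in\mathbb R^{n}}\Big(\fint_{B_{r}}|\nabla u-q|^{1/2}\Big)^{2}.
\end{equation*}

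\textbf{Step 2: iteration.} Choose $\kappa$ with $C\kappa\le 1/2$ and iterate the inequality along $r_{j}=\kappa^{j}r$. Summing yields
\begin{equation*}
\sum_{j}\phi(r_{j})\lesssim \phi(r)+\|\nabla u\|_{L^{\infty}}\,\tilde\omega_{A}(r)+\tilde\omega_{F}(r),
\end{equation*}
where $\tilde\omega(t)=\sum_{i}\kappa^{i/2}\omega(\kappa^{-i}t)$ and similar modified moduli appear. These modified moduli are still Dini, since $\int_{0}^{1}\tilde\omega(t)/t\,dt\lesssim\int_{0}^{1}\omega(t)/t\,dt$. Summing the $\phi(r_{j})$ shows that the optimal vectors $q_{x_{0},r_{j}}$ form a Cauchy sequence. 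Their limit is $\nabla u(x_{0})$, and this gives a uniform modulus of continuity for $\nabla u$ of the form $\int_{0}^{r}\tilde\omega(t)/t\,dt$.

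\textbf{Step 3: closing the $L^{\infty}$ bound.} The last display involves $\|\nabla u\|_{L^{\infty}}$ on the right, so the bound must be closed. I would first prove the estimate a priori, assuming $u\in C^{1}$ (for instance by mollifying $A$ and $F$, which preserves the DMO moduli up to constants). Then I would bound $|\nabla u(x_{0})|\le |q_{x_{0},r}|+\sum_{j}\phi(r_{j})$, and bound $|q_{x_{0},r}|$ by $r^{-n}\|\nabla u\|_{L^{1}(B_{2})}$. The term $\|\nabla u\|_{L^{\infty}}\,\tilde\omega_{A}(r)$ is absorbed by choosing $r$ small, namely so that $\int_{0}^{r}\tilde\omega_{A}/t$ is below a small threshold. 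Varying $x_{0}$ over shrinking concentric balls, a standard absorption lemma then yields
\begin{equation*}
\|\nabla u\|_{L^{\infty}(B_{1})}\le C\Big(\|\nabla u\|_{L^{1}(B_{2})}+\int_{0}^{1}\frac{\tilde\omega_{F}(s)}{s}\,ds\Big).
\end{equation*}
Finally, pass to the limit in the approximation using the uniform modulus to get $u\in C^{1}(\overline{B_{1}})$.

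\textbf{Main obstacle.} The main difficulty is Step 1 with only $L^{1}$-type (DMO) control. The $L^{2}$ energy estimate would require $L^{2}$-mean oscillation of $A$, which is not assumed. So the weak-$(1,1)$ estimate and the use of the $L^{1/2}$ quasinorm are essential, and care is needed to make the absorption in Step 3 work with quasinorms.
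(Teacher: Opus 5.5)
Your outline is correct and is essentially the Dong--Kim argument of \cite{DMO}, which the paper quotes without proof: coefficient freezing with a weak-$(1,1)$ bound for the constant-coefficient Dirichlet problem, the $L^{1/2}$-oscillation quantity $\phi$, the Campanato iteration with the modified moduli $\tilde\omega$, an a priori bound closed by absorption, and removal of the a priori $C^{1}$ assumption by mollification. The only slip is cosmetic: since $\phi$ is the squared $L^{1/2}$ quasinorm, passing from $B_{r}$ to $B_{\kappa r}$ costs a factor $\kappa^{-2n}$ rather than $\kappa^{-n}$ in the error term of Step 1, which is harmless because $\kappa$ is fixed.
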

Next, we introduce some basic notions. Since there is a natural inclusion relationship between two balls at the same center, sometimes, it brings us convenience. Therefore, we firstly consider the doubling index on ball. For $x\in B_{1}$, $r\leq 1/(1+\lambda)^{1/2}$, we define \textit{the doubling index on ball} $B_{r}(x)$ as 
\begin{equation}
    D_{b}^{u}(x,r):=  {\rm log}_{4}\frac{\fint_{ B_{2}(0)}(u(x+rA_{x}(y))-u(x))^{2}dy}{\fint_{ B_{1}(0)}(u(x+rA_{x}(y))-u(x))^{2}dy}
\end{equation}
where $A_{x}(y)=(\sqrt{A})^{ij}y_{i}e_{j}$, $\sqrt{A}^{ij}$ is the square root of the matrix $a^{ij}(x)$. However, when dealing with the quantitative uniqueness, it is more suitable to consider \textit{the doubling index on sphere}, that is
\begin{equation}
      D_{s}^{u}(x,r):=  {\rm log}_{4}\frac{\fint_{\partial B_{2}(0)}(u(x+rA_{x}(y))-u(x))^{2}dy}{\fint_{ \partial B_{1}(0)}(u(x+rA_{x}(y))-u(x))^{2}dy}
\end{equation}
\indent \textbf{We emphasize here that}, expect for individual cases distinguished by subscripts, the notion $D(x,r)$ refers to the doubling index on ball in Sections 3 and 4, and refers to the doubling index on sphere in Section 5 to Section 7. In Section 3, we prove some important properties of doubling index on ball, what's more, following \cite{HJ1}, all properties can be similarly proved for doubling index on sphere. Besides, when considering the nodal sets and singular sets, we do not minus the value of $u$ at center in the definition of doubling index and the following rescaled map.\\
\indent By doubling condition \eqref{doubling assumption} or \eqref{doubling for critical set} and a direct estimate, we have $D_{b}(x,r)\leq C(\lambda)\Lambda$. Next we define the rescaled map as
\begin{equation}\label{2.5}
    u_{x,r}(y):=\frac{u(x+rA_{x}(y))-u(x)}{(\fint_{ B_{1}(0)}|u(x+rA_{x}(y))-u(x)|^{2}dS_{y})^{1/2}}.
\end{equation}
And for doubling index on sphere, we define the rescaled map as
\begin{equation}\label{2.6}
    \tilde{u}_{x,r}(y):=\frac{u_{x,r}(y)}{||u_{x,r}||}:=\frac{u(x+rA_{x}(y))-u(x)}{(\fint_{ \partial B_{1}(0)}|u(x+rA_{x}(y))-u(x)|^{2}dS_{y})^{1/2}}.
\end{equation}
Now denote that $\bar{u}=u_{x,r}(y)$, then 
\begin{equation}
    D^{u}(x,rs)=D^{\bar{u}}(0,s):= {\rm log}_{4}\frac{\fint_{B_{2s}(0)}|\bar{u}|^{2}}{\fint_{B_{s}(0)}|\bar{u}|^{2}}.
\end{equation}
for $x\in B_{1}(0)$, $r,s\leq 1/(1+\lambda)^{1/2}$. And $\bar{u}$ satisfies that 
\begin{equation}
    \bar{\mathcal{L}}\bar{u}=\partial_{i}(\bar{a}^{ij}\partial_{j}\bar{u})=0,
\end{equation}
where $\bar{a}(y)=a(x)^{-1/2}a(x+rA_{x}(y))a(x)^{-1/2}$ and $a(x):=A(x)$. Therefore by a direct calculation, we have $\bar{a}(0)=Id$ and if we assume $A$ satisfies \eqref{continuous coefficient}, then 
\begin{equation}
      |\bar{a}^{ij}(y)-\bar{a}^{ij}(z)|\leq C(\lambda)\theta(\sqrt{1+\lambda}|y-z|)
\end{equation}
for any $y,z\in B_{1}(0)$. \\
\indent Next, we consider the most special case, i.e., $u$ is a harmonic function, and we introduce some monotonicity results. Let
\begin{equation}
    H(x,r)=\int_{\partial B_{r}(x)}|u-u(x)|^{2},\text{and }\beta(x,r)= {\rm log}_{4}\frac{\fint_{\partial B_{2r}(x)}|u-u(x)|^{2}}{\fint_{ \partial B_{r}(x)}|u-u(x)|^{2}}.
\end{equation}
It is well-known that $\beta(x,r)$ is monotone nondecreasing with respect to $r$ and it relates to the frequency function. Next, we denote
\begin{equation}
    D(x,r)= {\rm log}_{4}\frac{\fint_{B_{2r}(x)}|u-u(x)|^{2}}{\fint_{ B_{r}(x)}|u-u(x)|^{2}}:= {\rm log}_{4}\frac{E(x,2r)}{E(x,r)}.
\end{equation}
Actually, it is also monotone nondecreasing with respect to $r$ if $u$ is a harmonic function.
\begin{lemma}\label{monotonicity for harmonic}
    Let $w$ be a harmonic function in $B_{2}(0)$, then for any $B_{2r}(x)\subset B_{1}(0)$, $D(x,r)$ is monotone nondecreasing with respect to $r$.
\end{lemma}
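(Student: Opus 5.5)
Without loss of generality take $x=0$ and replace $w$ by $w-w(0)$, which is harmonic and vanishes at the origin. The plan is to expand in spherical harmonics and reduce the claim to log-convexity of $r\mapsto E(0,r)$ in the variable $\log r$.

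\emph{Expansion.} Write
\[
w(y)=\sum_{m\ge1}P_m(y), \qquad P_m\in H_m .
\]
Distinct degrees are orthogonal on every sphere, so with $a_m:=\int_{\partial B_1}P_m^2\ge0$ we get
\[
\int_{\partial B_\rho}w^2=\sum_{m\ge1}a_m\rho^{2m+n-1}.
\]
Integrating in $\rho$,
\[
\int_{B_r}w^2=\sum_{m\ge1}\frac{a_m}{2m+n}\,r^{2m+n},
\qquad
E(0,r)=\fint_{B_r}w^2=c_n\sum_{m\ge1}b_m r^{2m},\quad b_m=\frac{a_m}{2m+n}\ge0 .
\]
Convergence holds for $r<1$, since $B_{2r}\subset B_1$ lies inside the domain of harmonicity.

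\emph{Log-convexity.} Set $t=\log r$ and $f(t)=\log E(0,e^t)=\log c_n+\log\sum_m b_m e^{2mt}$. A sum of exponentials $e^{2mt}$ with nonnegative coefficients is log-convex: by Cauchy--Schwarz,
\[
\Big(\sum b_m 2m\,e^{2mt}\Big)^2\le\Big(\sum b_m e^{2mt}\Big)\Big(\sum b_m (2m)^2e^{2mt}\Big),
\]
which is exactly $f''\ge0$. Equivalently, $f'(t)$ is the mean of $2m$ under the probability weights proportional to $b_me^{2mt}$, and these weights shift toward larger $m$ as $t$ increases.

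\emph{Conclusion.} We have $D(0,r)=\frac{1}{\log4}\big(f(t+\log2)-f(t)\big)$. For a convex function the increment over an interval of fixed length $\log 2$ is nondecreasing in $t$, so $D(0,r)$ is nondecreasing in $r$ on the range where $B_{2r}\subset B_1$.

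\emph{Degenerate case.} If $w\equiv w(0)$, then $E\equiv0$ and $D$ is undefined, so this case is excluded.

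The only step needing care is justifying the expansion and the termwise integration up to radius $2r$. This follows from absolute convergence of the harmonic expansion on compact subsets of the ball of harmonicity, together with the standard $L^2(\partial B_\rho)$ orthogonality of $H_m$ and $H_{m'}$ for $m\ne m'$.
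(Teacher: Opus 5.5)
Your proposal is correct and follows essentially the same route as the paper: you expand in spherical harmonics to write $E(r)=\sum_m b_m r^{2m}$ with $b_m\ge 0$, get log-convexity in $\log r$ from Cauchy--Schwarz, and conclude from monotonicity of fixed-length increments of a convex function. The paper phrases that last step as $D'(r)=\frac{1}{r}\bigl[g'(\log 2+\log r)-g'(\log r)\bigr]\ge 0$, which is the same fact.
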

\begin{proof}
    We may assume that $x=0$ and recall that 
    \begin{equation}
        ||f||:=\sqrt{\fint_{\partial B_{1}(0)}f^{2}(x)dx}.
    \end{equation}
    Then we can write 
    \begin{equation}
        w=\sum_{k=0}^{\infty}a_{k}P_{k},
    \end{equation}
    where $P_{k}$ is homogeneous harmonic polynomial of degree $k$ with $||P_{k}||=1$, $a_{k}=\langle w, P_{k}\rangle$. Therefore
    \begin{equation}
        H(r):=\int_{\partial B_{r}(0)}|w-w(0)|^{2}=n\alpha(n)r^{n-1}\cdot \sum_{k=1}^{\infty}a_{k}^{2}r^{2k}, 
    \end{equation}
    then we have
    \begin{equation}
        E(r)=\fint_{B_{r}(0)}|w-w(0)|^{2}=\sum_{k=1}^{\infty}\frac{n}{2k+n}a_{k}^{2}r^{2k}:=\sum_{k=1}^{\infty}b_{k,n}r^{2k}.
    \end{equation}
    Let 
    \begin{equation}
        g(s):= {\rm log}_{4}E(e^{s})= {\rm log}_{4}\sum_{k=1}^{\infty}b_{k,n}e^{2ks}, 
    \end{equation}
    then $D(r)=g({\rm log}\,2+{\rm log}\,r)-g({\rm log}\,r)$. And $D^{'}(r)=\frac{1}{r}[g^{\prime}({\rm log}\,2+{\rm log}\,r)-g^{\prime}({\rm log}\,r)]$, to show $D^{'}\geq 0$, it suffices to show $g^{\prime\prime}(s)\geq 0$ for all $s\in (-\infty,0)$. Since
    \begin{equation}
        g^{\prime}(s)=\frac{1}{{\rm log}\,4}\cdot\frac{\sum_{k=1}^{\infty}2kb_{k,n}e^{2ks}}{\sum_{k=1}^{\infty}b_{k,n}e^{2ks}},
    \end{equation}
    furthermore, 
    \begin{equation}
        g^{\prime\prime}(s)=\frac{1}{\rm log\,4}\frac{\sum_{k=1}^{\infty}4k^{2}b_{k,n}e^{2ks}\cdot\sum_{k=1}^{\infty}b_{k,n}e^{2ks}-(\sum_{k=1}^{\infty}2kb_{k,n}e^{2ks})^{2}}{(\sum_{k=1}^{\infty}b_{k,n}e^{2ks})^{2}}\geq 0
    \end{equation}
by Cauchy inequality. Hence $D(r)$ is monotone nondecreasing, and $D(r)\equiv k$ iff $w=a_{k}P_{k}$.
\end{proof}
Finally, we present a doubling index drop lemma for harmonic function, which is Lemma 2.20 in \cite{HJ1}. In roughly speaking, it tells us if the doubling index is far from some integer $m+1$, then it drops to the next integer $m$.
\begin{lemma}\label{sphere doubling drop}
    Let $h$ be a harmonic function in $B_{2}\subset \mathbb{R}^{n}$ and $m\in\mathbb{N}$, if  $D_{s}^{h}(0,1)\leq m+1-\epsilon$ with $\epsilon\leq c(n)$, then $D_{s}^{h}(0,\epsilon/2)\leq m+\epsilon$.
\end{lemma}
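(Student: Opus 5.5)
Expand $h$ about the origin as in the proof of Lemma \ref{monotonicity for harmonic}, namely $h-h(0)=\sum_{k\ge1}a_kP_k$ with $\|P_k\|=1$. Then
\[
\fint_{\partial B_r}|h-h(0)|^2=\sum_{k\ge1}a_k^2r^{2k}=:F(r),
\]
and $D_s^h(0,r)=\log_4\big(F(2r)/F(r)\big)$. As in that proof, $\beta(r)=D_s^h(0,r)$ is nondecreasing, because $s\mapsto\log F(e^s)$ is convex. Its value is a weighted average of the $2k$'s, with weights $a_k^2r^{2k}$ spread over $[r,2r]$. The plan is to turn the hypothesis $D_s^h(0,1)\le m+1-\epsilon$ into a bound on the high modes $k\ge m+1$ relative to the low modes $k\le m$, and then show that the high modes are negligible at scale $\epsilon/2$.

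First, use the hypothesis to control the tail. Write $L(r)=\sum_{k\le m}a_k^2r^{2k}$ and $T(r)=\sum_{k\ge m+1}a_k^2r^{2k}$. Each term of $T$ grows by at least $4^{m+1}$ from $r=1$ to $r=2$, and each term of $L$ grows by at least $4$. So
\[
4^{m+1-\epsilon}\big(L(1)+T(1)\big)\ \ge\ L(2)+T(2)\ \ge\ L(2)+4^{m+1}T(1).
\]
Rearranging gives
\[
T(1)\,4^{m+1}\big(1-4^{-\epsilon}\big)\ \le\ 4^{m+1-\epsilon}L(1)-L(2)\ \le\ 4^{m+1}L(1).
\]
Since $1-4^{-\epsilon}\ge c\epsilon$ for $\epsilon\le1$, this yields $T(1)\le C\epsilon^{-1}L(1)$.

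Second, rescale to the small radius. Set $\rho=\epsilon/2$ and $r\in\{\rho,2\rho\}$, so $r\le\epsilon\le 1/2$. For the tail,
\[
T(r)\le r^{2m+2}T(1)\le C\epsilon^{-1}r^{2m+2}L(1).
\]
For the low modes, write $L(r)=\sum_{k\le m}a_k^2r^{2k}\ge r^{2m}L(1)$, which holds because $r\le1$. Combining, $T(r)\le C\epsilon^{-1}r^2L(r)\le C\epsilon\, L(r)$, using $r^2\le\epsilon^2$. Then
\[
D_s^h(0,\rho)=\log_4\frac{L(2\rho)+T(2\rho)}{L(\rho)+T(\rho)}\le \log_4\frac{L(2\rho)}{L(\rho)}+\log_4(1+C\epsilon)\le m+C'\epsilon,
\]
since $L(2\rho)/L(\rho)\le 4^m$ for a degree-$\le m$ sum.

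The main obstacle is that this gives $m+C'\epsilon$ rather than $m+\epsilon$ exactly. To fix the constant, I would make the tail estimate sharper. The crude bound $T(1)\le C\epsilon^{-1}L(1)$ wastes a factor. Its $\epsilon^{-1}$ loss is paid against $r^2\sim\epsilon^2$, leaving one free factor $\epsilon$ that can be traded for explicit constants. Concretely, I would redo the tail step at radius $\rho$ directly. This gives $T(\rho)\le 4^{-1}\epsilon^{2}\cdot 4\epsilon^{-1}L(\rho)/(\log 4)$, roughly, so $\log_4(1+T/L)\le \epsilon/\ln 4\cdot(\text{const})$. The smallness $\epsilon\le c(n)$ then lets me absorb all absolute constants: if the constant is still too large, shrink the scale slightly or use that $\log_4(1+x)\le x/\ln 4$. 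This constant-tracking is the only delicate part; the structure of the argument is the mode-separation above.
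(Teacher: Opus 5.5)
Your mode-separation argument is sound, and it takes a different route from the one the paper relies on. The paper gives no proof of this lemma; it quotes it from \cite{HJ1}, and the remark after Lemma \ref{doubling index close to integer} indicates that the argument there goes through the frequency function: a differential inequality forces the frequency to drop, and this is then converted back into a drop of the spherical doubling index. You bypass the frequency entirely by comparing the low modes $k\le m$ with the high modes $k\ge m+1$ in $F(r)=\sum_k a_k^2r^{2k}$. This is more elementary, uses only the orthogonal expansion, and gives explicit, dimension-free constants.

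The ``main obstacle'' you describe is not really there; your argument closes once you keep the constants you discarded. In your first step, retaining the factor $4^{-\epsilon}$ gives
\[
T(1)\le \frac{L(1)}{4^{\epsilon}-1}\le \frac{L(1)}{\epsilon\ln 4}.
\]
The tail that matters is the one in the numerator, at $r=2\rho=\epsilon$. The term $T(\rho)$ in the denominator can simply be dropped. At $r=\epsilon$,
\[
T(\epsilon)\le \epsilon^{2m+2}T(1)\le \frac{\epsilon}{\ln 4}\,\epsilon^{2m}L(1)\le \frac{\epsilon}{\ln 4}\,L(\epsilon),
\]
so that
\[
D_s^h(0,\epsilon/2)\le m+\log_4\Big(1+\frac{\epsilon}{\ln 4}\Big)\le m+\frac{\epsilon}{(\ln 4)^2}<m+\epsilon
\]
for every $\epsilon\in(0,1]$.

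Two of the remedies you suggest would not work. First, the radius $\epsilon/2$ is fixed by the statement, and since $D_s^h(0,\cdot)$ is nondecreasing, a bound at a smaller radius says nothing about $D_s^h(0,\epsilon/2)$. Second, a multiplicative constant $C'>1$ in $m+C'\epsilon$ cannot be absorbed by requiring $\epsilon\le c(n)$, because both the error and the target are linear in $\epsilon$. The proof goes through only because the explicit leading constant is $(\ln 4)^{-2}<1$. You should write that computation out rather than appeal to absorption.
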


\section{Harmonic approximation and almost monotonicity for doubling index}
\subsection{Harmonic approximation lemma}
In this subsection, we prove the harmonic approximation for the solution to elliptic equation with continuous coefficients. First, we prove similar results to Lemma \ref{sphere doubling drop} for doubling index on ball of harmonic functions. 
\begin{lemma}\label{doubling index close to integer}
   Let $u$ be a harmonic function in $B_{10}(0)$ with $D(0,4)\leq \Lambda$, then for any $0<\epsilon\leq 1/1000$, there exists $\delta_{0}(\epsilon,n,\Lambda)$ such that if $\delta\leq \delta_{0}$ and $B_{2r}(x)\subset B_{1}(0)$ with $|D(x,2r)-D(x,r)|\leq \delta$, then\\
  \indent  $(1)$ there exists some integer $m\in\mathbb{N}$ such that $|D(x,r)-m|< \epsilon$, and \\
    \indent $(2)$ if $D(x,r)< m+1-\epsilon$ for some $m\in\mathbb{N}$, we have $D(x,2^{-1/\delta_{0}-1} r)< m+\epsilon$.
\end{lemma}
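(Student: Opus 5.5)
Both parts will be proved by compactness and contradiction, using the rigidity case of Lemma \ref{monotonicity for harmonic}: $D(x,\cdot)$ is constant on an interval iff $u-u(x)$ is a single homogeneous harmonic polynomial centered at $x$.

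\textbf{Normalization and compactness.} Set $v(y)=(u(x+ry)-u(x))/E(x,r)^{1/2}$, so $\fint_{B_1}v^2=1$ and $v(0)=0$; the doubling indices of $u$ at $(x,s r)$ become those of $v$ at $(0,s)$. Since $B_{2r}(x)\subset B_1(0)$ and $D(0,4)\le\Lambda$, the ball $B_{4r}(x)\subset B_{10}(0)$, so $D(x,2r)$ is defined. Monotonicity gives $D^v(0,s)\le D^v(0,2)\le C(n,\Lambda)$ for $s\le 2$. This rests on a bound on $D(x,\cdot)$ at non-central points, which we take as the standard harmonic fact that a doubling bound at $0$ on $B_4$ controls doubling at $x\in B_1$ up to constants; absorbing such constants is routine. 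Consequently $\fint_{B_4}v^2\le C(n,\Lambda)$, and interior estimates give uniform $C^k$ bounds for $v$ on $B_3$. Any sequence of such $v$ therefore subconverges in $C^k_{loc}(B_3)$, hence in $L^2(B_s)$ for $s\le 2$, to a harmonic $v_\infty$ with $v_\infty(0)=0$ and $\fint_{B_1}v_\infty^2=1$. The doubling indices $D(0,s)$ are continuous under this convergence.

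\textbf{Part (1).} Suppose (1) fails for some $\epsilon$. Then there are $v_j$ with $|D^{v_j}(0,2)-D^{v_j}(0,1)|\le 1/j$ but $\mathrm{dist}(D^{v_j}(0,1),\mathbb N)\ge\epsilon$. In the limit, $D^{v_\infty}(0,2)=D^{v_\infty}(0,1)$, so by monotonicity $D^{v_\infty}(0,\cdot)$ is constant on $[1,2]$. To pass from "constant on an interval" to "homogeneous", write $g(s)=\log_4 E(e^s)$; then $g'(s+\log 2)=g'(s)$ on an interval. Convexity of $g$ (Lemma \ref{monotonicity for harmonic}) forces $g'$ to be constant on $[\log 1,\log 4]$, and equality in Cauchy--Schwarz then forces $b_k\neq0$ for only one $k$. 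Hence $v_\infty=a_kP_k$ and $D^{v_\infty}(0,1)=k$, which contradicts the uniform distance $\epsilon$ from $\mathbb N$ obtained by continuity. This gives $\delta_0$ for (1).

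\textbf{Part (2).} Here we use Lemma \ref{monotonicity for harmonic} together with an analogue of Lemma \ref{sphere doubling drop} for ball indices. Decompose $v=\sum_k a_kP_k$. If $D(0,1)<m+1-\epsilon$, the high modes $k\ge m+1$ carry a fraction of the mass at scale $1$ bounded away from a degenerate configuration; their relative size then decays like $s^{2}$ compared with the modes $k\le m$ as $s\to0$. Concretely, $D(0,s)\to k_0$ (the lowest nonzero mode), and the approach is quantitative.

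A cleaner route is contradiction plus part (1). Suppose there are $v_j$ with $D^{v_j}(0,1)<m+1-\epsilon$ but $D^{v_j}(0,2^{-1/\delta_0-1})\ge m+\epsilon$. By monotonicity, $D^{v_j}(0,s)\in[m+\epsilon,m+1-\epsilon]$ for all $s\in[2^{-1/\delta_0-1},1]$. Over the $1/\delta_0$ dyadic steps the index increases by at most $1$ in total, so some step has increment $\le\delta_0$. At that scale part (1) applies (after rescaling, which is legitimate because the hypothesis is scale-invariant for balls inside $B_1$) and gives $|D-m'|<\epsilon$ for an integer $m'$. This is impossible, since $D\in[m+\epsilon,m+1-\epsilon]$. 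This pigeonhole argument avoids a second compactness argument altogether.

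\textbf{Main obstacle.} The main difficulty is the uniform control needed at the small scale $2^{-1/\delta_0-1}r$. Part (1) must hold at every scale $s\le r$ with the same $\delta_0$, and this requires the doubling bound $C(n,\Lambda)$ to persist at all smaller scales, which monotonicity provides. It also requires the careful choice of $\delta_0$ as the (1)-threshold for the given $\epsilon$, with the requirement $\epsilon\le1/1000$ guaranteeing that the interval $[m+\epsilon,m+1-\epsilon]$ is nonempty and separated from the integers.
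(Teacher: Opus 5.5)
Your proposal is correct and follows essentially the same route as the paper. Part (1) goes by compactness and the rigidity case of Lemma~\ref{monotonicity for harmonic}; your convexity argument, showing that constancy of $D$ on $[1,2]$ already forces a single mode, is a bit more careful than the paper's bare citation. Part (2) is the same dyadic pigeonhole iteration of part (1) along the scales $2^{-k}r$.

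One small fix: $D^{v}(0,2)$ involves $\fint_{B_4}v^2$, so convergence in $L^2(B_s)$ for $s\le 2$ does not let you pass it to the limit. Add one more doubling step: $r\le 1/2$ gives $D^v(0,4)=D^u(x,4r)\le D^u(x,2)$, hence $\fint_{B_8}v^2\le C(n,\Lambda)$ and $v_j\to v_\infty$ uniformly on $B_4$.
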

\begin{remark}
    For $(2)$, in the case of spherical doubling index, it is related to the frequency function $N(x,r)$, and a differential inequality implies the frequency drop, therefore the doubling index drop. However, when we deal with the doubling index on ball, we do not have such a direct differential inequality, but we can use the monotonicity and $(1)$ to obtain similar result.\end{remark}
\begin{proof}

    For $(1)$, we argue by contradiction. If not, there exist $\epsilon_{0}>0$ and $u^{i}$ with $|D^{u^{i}}(x_{i},2r_{i})-D^{u^{i}}(x_{i},r_{i})|\leq 1/i$, then denote the rescaled map $\hat{u}_{i}:=u_{x_{i},r_{i}}^{i}$, we have $|D^{\hat{u}_{i}}(0,2)-D^{\hat{u}_{i}}(0,1)|\leq \delta_{i}\to 0$ but $|D^{\hat{u}_{i}}(0,1)-m|\geq \epsilon_{0}$ for any integer $m$. Since $D^{u^{i}}(0,4)\leq \Lambda$, and $B_{2r_{i}}(x_{i})\subset B_{1}(0)$, then 
    \begin{equation}
        D^{\hat{u}_{i}}(0,2)=D^{u^{i}}(x_{i},2{r_{i}})\leq D^{u^{i}}(x_{i},2)
    \end{equation}
    by monotonicity of doubling index, since $x_{i}\in B_{1}(0)$, therefore
    \begin{equation}
        D^{u^{i}}(x_{i},2):= {\rm log}_{4}\frac{\fint_{B_{4}(x_{i})}|u^{i}|^{2}}{\fint_{B_{2}(x_{i})}|u^{i}|^{2}}\leq C+ {\rm log}_{4}\frac{\fint_{B_{8}(0)}|u^{i}|^{2}}{\fint_{B_{1}(0)}|u^{i}|^{2}}\leq C+3\Lambda.
    \end{equation}
Hence $D^{\hat{u}_{i}}(0,2)\leq C+3\Lambda$, since $\fint_{B_{1}(0)}|\hat{u}^{i}|^{2}=1$, then 
\begin{equation}
    \fint_{B_{2}(0)}|\hat{u}^{i}|^{2}\leq C(n,\Lambda).
\end{equation}
By interior estimate and Arzela-Ascoli theorem, we may assume $\hat{u}^{i}\to u_{\infty}$ smoothly up to a subsequence. Note that $u_{\infty}$ is harmonic with $D^{u_{\infty}}(0,2)=D^{u_{\infty}}(0,1)=N$, then by Lemma \ref{monotonicity for harmonic}, we have $u_{\infty}$ is a homogeneous harmonic polynomial with degree $N$, which contradicts $|D^{\hat{u}_{i}}(0,1)-m|\geq \epsilon_{0}$ for any integer $m$.\\
\indent For $(2)$, Since $D(x,r)\leq m+1/2$, then by $(1)$, 
\begin{equation}
    D(x,r)-D(x,r/2)>\delta_{0} \;\text{or}\; D(x,r/2)< m+\epsilon.
\end{equation}
For the second case, by monotonicity, $D(x,2^{-\frac{1}{\delta_{0}}-1}r)< m+\epsilon$. Hence, we consider the first case and, by $(1)$ again, we have 
\begin{equation}
    D(x,r/2)-D(x,r/4)>\delta_{0} \;\text{or}\; D(x,r/4)< m+\epsilon.
\end{equation}
Actually, we can repeat the process at most $M$ times, where $M=[\frac{1}{\delta_{0}}+1]$, then we conclude that $D(x,2^{-\frac{1}{\delta_{0}}-1}r)<m+\epsilon$ or $D(x,r)-D(x,2^{-\frac{1}{\delta_{0}}-1}r)>M\delta_{0}>1$, therefore  $D(x,2^{-\frac{1}{\delta_{0}}-1}r)<m+\epsilon$ or $D(x,2^{-\frac{1}{\delta_{0}}-1}r)D(x,r)-1<m-\epsilon$, and we have $D(x,2^{-\frac{1}{\delta_{0}}-1}r)<m+\epsilon$ in both cases.
\end{proof}
Next we introduce the harmonic approximation lemma, we can generalize the result in \cite{HJ1} for Hölder coefficients to continuous coefficients. 
\begin{lemma}\label{harmonic approximation}
     Let $u\in W^{1,2}(B_{4}(0))$ be a nonzero weak solution to \eqref{elliptic equation} with coefficient conditions \eqref{unifomly elliptic}, \eqref{continuous coefficient} and doubling condition \eqref{doubling assumption}. Let $\delta\in(0,1/100)$, there exists $r_{0}$ satisfying $\theta(r_{0})\leq C(n,\lambda,\theta)^{-\Lambda}\delta^{n+1}$ such that for any $x\in B_{1}(0)$ and $r\leq r_{0}$, there exists a harmonic function $h$ with $h(0)=0$, $\fint_{B_{1}(0)}h^{2}=1$ such that 
    \begin{equation}
        {\rm sup}_{B_{3}(0)}|u_{x,r}-h|\leq \delta.
    \end{equation}
\end{lemma}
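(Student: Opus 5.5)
We argue by a compactness-free direct comparison at a fixed scale: solve a constant-coefficient Dirichlet problem and control the difference by the oscillation of the coefficients, with the doubling condition supplying uniform bounds. Fix $x\in B_1(0)$, $r\le r_0$, and set $\bar u=u_{x,r}$. We will take the harmonic function $h$ to be $\bar u$ corrected by a small constant, after renormalization.

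\textbf{Step 1 (uniform bounds).} Recall that $\bar u$ solves $\partial_i(\bar a^{ij}\partial_j \bar u)=0$ with $\bar a(0)=Id$ and
\[
|\bar a(y)-Id|\le C(\lambda)\,\theta\bigl(\sqrt{1+\lambda}\,|y|\bigr)\le C\theta(Cr)
\]
on $B_4$. By definition $\fint_{B_1}\bar u^2=1$. Iterating the doubling condition \eqref{doubling assumption} a bounded number of times gives $\fint_{B_{8}}\bar u^2\le C(\lambda)^{\Lambda}$. De Giorgi--Nash--Moser, whose constants depend only on $n,\lambda$, then yields
\[
\|\bar u\|_{L^\infty(B_6)}\le C^{\Lambda}, \qquad \|\nabla \bar u\|_{L^2(B_5)}\le C^{\Lambda}, \qquad \|\bar u\|_{C^{\alpha}(B_5)}\le C^{\Lambda}.
\]

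\textbf{Step 2 (comparison).} Let $v$ be harmonic in $B_5$ with $v=\bar u$ on $\partial B_5$. Then $w=\bar u-v\in W^{1,2}_0(B_5)$ satisfies $\Delta w={\rm div}((Id-\bar a)\nabla\bar u)$. Energy estimates give
\[
\|\nabla w\|_{L^2(B_5)}\le \|Id-\bar a\|_{L^\infty(B_5)}\|\nabla\bar u\|_{L^2(B_5)}\le C\theta(Cr)\,C^{\Lambda}.
\]
We now upgrade this $L^2$ smallness of $w$ to sup smallness on $B_3$. Both $\bar u$ and $v$ are $C^\alpha$ on $B_4$ with norm at most $C^{\Lambda}$: for $v$ this follows from interior harmonic estimates together with boundary H\"older continuity, since the boundary data lies in $C^\alpha$. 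Interpolation between $C^\alpha$ and $L^2$ then gives
\[
\sup_{B_3}|w|\le C\,\|w\|_{L^2}^{\frac{2\alpha}{n+2\alpha}}\,\|w\|_{C^\alpha}^{\frac{n}{n+2\alpha}}\le C^{\Lambda}\,\theta(Cr)^{c(n,\lambda)},
\]
where Poincar\'e's inequality converts $\|\nabla w\|_{L^2}$ into $\|w\|_{L^2}$.

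\textbf{Step 3 (normalization).} Put
\[
h=\frac{v-v(0)}{\bigl(\fint_{B_1}(v-v(0))^2\bigr)^{1/2}}.
\]
Since $\bar u(0)=0$, we have $|v(0)|\le\sup|w|$, and $\fint_{B_1}(v-v(0))^2=1+O(\sup|w|)$. Consequently
\[
\sup_{B_3}|\bar u-h|\le C^{\Lambda}\sup_{B_3}|w|\le C^{\Lambda}\theta(Cr)^{c}.
\]
This is at most $\delta$ once $\theta(Cr_0)\le C^{-\Lambda}\delta^{1/c}$. Absorbing the rescaling factor $C$ into the modulus of continuity (a continuous modulus may be replaced by a slightly larger one) recovers the stated form $\theta(r_0)\le C(n,\lambda,\theta)^{-\Lambda}\delta^{n+1}$.

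\textbf{Main obstacle.} The main difficulty is matching the precise exponent $\delta^{n+1}$. The H\"older exponent $\alpha$ coming from De Giorgi is not explicit, so interpolation as in Step 2 produces only some power $\theta^{c}$. To get an explicit exponent, I would first try replacing the H\"older interpolation with a gradient bound on the harmonic part together with a Lipschitz-type estimate: use $W^{1,p}$ estimates for continuous coefficients, with $p>n$ large, in place of $C^\alpha$, which makes $\sup|w|\lesssim\|w\|_{L^2}^{2/(n+2)}$ up to $C^\Lambda$ factors. This yields $\theta\lesssim C^{-\Lambda}\delta^{(n+2)/2}$, which is at least as good as $\delta^{n+1}$. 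The $W^{1,p}$ constants depend on the modulus $\theta$, which is precisely why $C=C(n,\lambda,\theta)$ in the statement.
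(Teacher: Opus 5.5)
Your proposal is essentially the paper's proof: harmonic replacement via a Dirichlet problem, the energy estimate giving $\|w\|_{L^2}\le C^{\Lambda}\theta(Cr)$, a uniform H\"older bound on the difference, the $L^2$-to-$L^\infty$ interpolation (which the paper phrases as a small-ball contradiction argument), and renormalization after subtracting the value at the origin. The fix you describe under ``Main obstacle'' is exactly what the paper does: it applies the $W^{1,p}$ estimate with $p=2n$, which gives a $C^{1/2}$ bound and hence precisely the exponent $\delta^{n+1}$. This fix is what delivers the explicit exponent. De Giorgi's unspecified $\alpha$ alone does not justify the closing claim of your Step 3, and finite $p$ gives only $C^{1-n/p}$ rather than the Lipschitz exponent $2/(n+2)$, but $p=2n$ already suffices.
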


\begin{proof}
Denote $\hat{u}=u_{x,r}$, then \( \int_{B_1(0)} \hat{u}^2 = 1 \). By doubling property, we have
\begin{equation}
\int_{B_8(0)} \hat{u}^2 \leq C(n, \lambda)^\Lambda.
\end{equation}
By elliptic interior estimate, we have
\begin{equation}
\sup_{B_6(0)} |\hat{u}|  \leq C(n, \lambda,\theta)^{\Lambda}.
\end{equation}
Let \( \hat{h} \) satisfy the following Dirichlet problem
\begin{equation}
\begin{cases}
\Delta {\hat{h}} = 0 & \text{in } B_4(0) \\
\hat{h} = \hat{u} & \text{on } \partial B_4(0).
\end{cases}
\end{equation}
Consider \( v= \hat{u} - \hat{h} \). Since \( \hat{u} \) satisfies the elliptic equation, then
\begin{equation}
-\Delta v = -\Delta \hat{u} = \partial_i ((\hat{a}^{ij} - \delta^{ij}) \partial_j \hat{u}):={\rm div}f.
\end{equation}
Where $\hat{a}(y)=a(x)^{-1/2}a(x+rA_{x}(y))a(x)^{-1/2}$ with $|\hat{a}(y)-\hat{a}(0)|\leq C\theta(\sqrt{1+\lambda}r|y|)$. Note that $v=0$ in $\partial B_{4}(0)$, then integration by parts and Young inequality imply
\begin{equation}
    \int_{B_{4}(0)}|\nabla v|^{2}\leq 2\int_{B_{4}(0)}f^{2}.
\end{equation}
Since 
\begin{equation}
    \int_{B_{4}(0)}f^{2}\leq [\theta(4\sqrt{(1+\lambda)})]^{2}\int_{B_{4}(0)}|\nabla\hat{u}|^{2},
\end{equation}
and by Caccioppoli inequality, $\int_{B_{4}(0)}|\nabla\hat{u}|^{2}\leq C\int_{B_{8}(0)}\hat{u}^{2}$, 
combining the above inequalities, we can conclude that
\begin{equation}\label{3.4.1}
\int_{B_4} v^2 \leq C^{\Lambda}[\theta(4\sqrt{1+\lambda}r)]^{2}.
\end{equation}
Recall ${\rm div}(\hat{A}\nabla\hat{u})=0$ and $A$ is continuous, then by $W^{1,p}$ estimate, we have
\begin{equation}
    ||\nabla \hat{u}||_{L^{p}(B_{4}(0))}\leq C(n,p,\lambda,\theta)||\hat{u}||_{L^{p}(B_{6}(0))}\leq C^{\Lambda},
\end{equation}
therefore $||f||_{L^{p}(B_{4}(0))}\leq C^{\Lambda}\theta(4\sqrt{(1+\lambda)})$. Let $p=2n$, since ${-\Delta v}={\rm div}f$, also by $W^{1,p}$ estimate and Sobolev embedding, we obtain
\begin{equation}
    ||v||_{C^{\alpha}(B_{7/2}(0))}\leq C||f||_{L^{2n}(B_{4}(0))}\leq C^{\Lambda},
\end{equation}
where $\alpha=1/2$. Denote \( E = \{z \in B_3 : |v(z)| \geq \epsilon \} \), where $\epsilon$ is small and to be determined. we claim that that $E=\emptyset$, therefore ${\rm sup}_{B_{3}}|v|\leq \epsilon$. If not, there exists some \( x_{0} \in E\). Since $|| v||_{C^{1/2}(B_{7/2}(0))}\leq C$, then for any $y\in B_{s}(x_{0})$ with $s^{1/2}= \epsilon/2C\leq 1/2$, we have 
\begin{align}
\begin{split}
|v(y)|&\geq |v(x_{0})|-|y-x_{0}|^{1/2}\cdot||v||_{C^{1/2}(B_{s}(x_{0}))}\\
&\geq \epsilon-Cs^{1/2}\geq \epsilon/2.
\end{split}
\end{align}
Therefore,
\begin{equation}\label{3.4.2}
\int_{B_{7/2}} v^2 \geq \int_{B_s(x_{0})} v^2 \geq C^{-\Lambda}\epsilon^{2n+2}.
\end{equation}
By \eqref{3.4.1}, if we choose $r\leq r_{0},\kappa=10(1+\lambda)$ with $\theta(\kappa r_{0})\leq C(n,\lambda,\theta)^{-\Lambda}\epsilon^{n+1}$, then $C^{\Lambda}[\theta(4\sqrt{1+\lambda}r)]^{2}< C^{-\Lambda}\epsilon^{2n+2}$, which contradicts \eqref{3.4.2}. Hence the set \( E = \emptyset \), then ${\rm sup}_{B_{3}(0)}||\hat{u}-\hat{h}||\leq\epsilon$.\\
\indent Since $\hat{u}(0)=0$, then $|\hat{h}(0)|\leq \epsilon$ and $\sqrt{\fint_{B_{1}}|\hat{h}|^{2}}\in[1-\epsilon,1+\epsilon]$, therefore $\sqrt{\fint_{B_{1}}|\hat{h}-\hat{h}(0)|^{2}}\in[1-2\epsilon,1+2\epsilon]$. Let
\begin{equation}
    h(y)=\frac{\hat{h}(y)-\hat{h}(0)}{\sqrt{\fint_{B_{1}}|\hat{h}(y)-\hat{h}(0)|^{2}}},
\end{equation}
then $\fint_{B_{1}}h^{2}=1$ and 
\begin{align}
\begin{split}
    {\rm sup}_{B_{3}}|\hat{u}-h|&\leq {\rm sup}_{B_{3}}|\hat{u}-(\hat{h}-\hat{h}(0))|+{\rm sup}_{B_{3}}|(\hat{h}-\hat{h}(0))-h|\\
    &\leq 2\epsilon+{\rm sup}_{B_{3}}|(\hat{h}-\hat{h}(0))-h|\leq 2\epsilon+{\rm sup}_{B_{3}}|(\hat{h}-\hat{h}(0))|\cdot 3\epsilon\\
    &\leq 2\epsilon+3\epsilon\cdot ({\rm sup}_{B_{3}}|(\hat{u}-\hat{u}(0))|+2\epsilon)\\
    &\leq 2\epsilon+3\epsilon(2C^{\Lambda}+1)\leq C\epsilon.
    \end{split}
\end{align}
Choosing $\epsilon=\delta/C^{\Lambda}$, then we finish the proof. And now we require $\theta(r_{0})\leq C^{-\Lambda}\delta^{n+1}$.
\end{proof}
Next, we will prove the approximation for doubling index.
\begin{lemma}\label{doubling index comparasion}
    Let $u\in W^{1,2}(B_{4}(0))$ be a nonzero weak solution to \eqref{elliptic equation} with coefficient conditions \eqref{unifomly elliptic}, \eqref{continuous coefficient} and doubling condition \eqref{doubling assumption}. Given $\epsilon\in (0,1/1000)$ and $\eta(n,\lambda,\Lambda,\theta,\epsilon)>0$, there exists $r_{0}=r(n,\lambda,\Lambda,\theta,\epsilon)$ such that for any $x\in B_{1}(0)$ and $r\leq r_{0}$, there exists a normalized harmonic function $h$ such that
    \begin{equation}
        |D^{u_{x,r}}(0,t)-D^{h}(0,t)|\leq \epsilon
    \end{equation}
for all $\eta\leq t\leq 1$. Especially, if $\eta=c(n)\epsilon^{C(n)}$, then we set $\kappa=10(1+\lambda)$ and choose $r_{0}$ such that $\theta(\kappa r_{0})\leq C^{-\Lambda}\epsilon^{C(n,\lambda)\Lambda}$.
\end{lemma}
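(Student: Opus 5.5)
The plan is to derive the doubling-index comparison directly from the uniform closeness given by Lemma \ref{harmonic approximation}, combined with a quantitative lower bound on $\fint_{B_t}h^2$ for small $t$.

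\textbf{Step 1: uniform approximation.} Fix $x\in B_{1}(0)$ and $r\le r_0$. Apply Lemma \ref{harmonic approximation} with a parameter $\delta$ to be chosen. This gives a harmonic $h$ with $h(0)=0$, $\fint_{B_1}h^2=1$ and $\sup_{B_3}|u_{x,r}-h|\le\delta$.

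\textbf{Step 2: lower bound on small balls.} For $\eta\le t\le 1$ we only evaluate $u_{x,r}$ and $h$ on $B_{2t}\subset B_2\subset B_3$, so no extension is needed. The point is to show that $\fint_{B_t}h^2$ is not too small, uniformly in $t\in[\eta,1]$. Since $\fint_{B_8}\hat u^2\le C^\Lambda$ by doubling, and $h$ is $\delta$-close to $\hat u$ on $B_3$, we get $\fint_{B_2}h^2\le C^\Lambda$. Hence $D^h(0,1)\le C\Lambda$, after adjusting constants (for instance, by using the approximation on a slightly larger ball or by shrinking scales). By monotonicity (Lemma \ref{monotonicity for harmonic}), $D^h(0,s)\le C\Lambda$ for all $s\le1$. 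Iterating the doubling down from scale $1$ gives
\[
\fint_{B_t}h^2\ \ge\ 4^{-C\Lambda(\log_2(1/t)+1)}\ \ge\ (t/2)^{2C\Lambda}.
\]

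\textbf{Step 3: comparing logarithms.} Set $a=\big(\fint_{B_s}\hat u^2\big)^{1/2}$ and $b=\big(\fint_{B_s}h^2\big)^{1/2}$ for $s\in\{t,2t\}$. Then $|a-b|\le\delta$, so
\[
|\log a^2-\log b^2|\le 2\log\frac{b+\delta}{b-\delta}\lesssim \frac{\delta}{b}\quad\text{whenever } \delta\le b/2 .
\]
Using $b\ge (\eta/2)^{C\Lambda}$, it suffices to choose
\[
\delta=c\,\epsilon\,(\eta/2)^{C\Lambda}.
\]
Each of the two logarithms then has error at most $\epsilon/2$, which yields
\[
|D^{u_{x,r}}(0,t)-D^h(0,t)|\le\epsilon .
\]
By Lemma \ref{harmonic approximation}, $r_0$ is determined by $\theta(\kappa r_0)\le C^{-\Lambda}\delta^{n+1}$, and this depends only on $n,\lambda,\Lambda,\theta,\epsilon,\eta$.

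\textbf{Step 4: the special case.} Take $\eta=c(n)\epsilon^{C(n)}$. Then $\delta^{n+1}=\big(c\epsilon(\eta/2)^{C\Lambda}\big)^{n+1}\ge C^{-\Lambda}\epsilon^{C(n,\lambda)\Lambda}$. This gives the stated condition $\theta(\kappa r_0)\le C^{-\Lambda}\epsilon^{C(n,\lambda)\Lambda}$.

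The main obstacle is Step 2: obtaining a lower bound on $\fint_{B_t}h^2$ that is polynomial in $t$ with exponent of order $\Lambda$. This requires controlling $D^h$ at the top scale from the doubling of $u$ and then propagating it downward by the monotonicity of Lemma \ref{monotonicity for harmonic}. Some care is needed because $h$ is only known to approximate $u$ on $B_3$, so the top-scale bound must be taken from $B_2$ versus $B_1$. That is sufficient, since monotonicity then handles every smaller scale.
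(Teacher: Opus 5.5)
Your argument is correct and follows essentially the same route as the paper. It uses the harmonic approximation of Lemma \ref{harmonic approximation}, a lower bound of order $\eta^{C\Lambda}$ on the small-ball $L^2$ averages, and the choice $\delta\sim\epsilon\,\eta^{C\Lambda}$, which gives the stated condition $\theta(\kappa r_0)\le C^{-\Lambda}\epsilon^{C(n,\lambda)\Lambda}$.

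The one difference is in where the lower bound comes from. You bound $\fint_{B_t}h^2$ from below using the monotonicity of $D^h$ (Lemma \ref{monotonicity for harmonic}), seeded by $D^h(0,1)\le C\Lambda$. The paper instead bounds $\fint_{B_t}u_{x,r}^2$ from below directly from the doubling hypothesis on $u$ at every scale. Both are valid, and yours only needs doubling of $u$ near the top scale. The hedge in your Step 2 about adjusting constants is unnecessary, since $B_1,B_2\subset B_3$.
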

\begin{proof}
    Let $\delta=\delta(\eta,\epsilon)$ be a small constant to be determined later. Then by Lemma \ref{harmonic approximation}, there exists $r_{0}$ such that for any $x\in B_{1}(0)$ and $r\leq r_{0}$, there exists a normalized harmonic function $h$ such that
    \begin{equation}
        {\rm sup}_{B_{4}(0)}|\hat{u}-h|\leq \delta.
    \end{equation}
    Where $\hat{u}:=u_{x,r}$, and by the monotonicity of $E(r)$, then for any $s\in[1,2]$, we have
    \begin{equation}
        \fint_{B_{s}}\hat{u}^{2}\geq \frac{1}{2}\fint_{B_{s}}h^{2}-{\rm sup}_{B_{2}}|\hat{u}-h|\geq \frac{1}{2}-\delta>1/4.
    \end{equation}
    Then for any $t\leq 1$, we can choose $i$ such that $1<2^{i+1}t\leq 2$, then by doubling condition,
    \begin{equation}
        \fint_{B_{t}}\hat{u}^{2}\geq 4^{-C(n,\lambda)\Lambda(i+1)}\fint_{B_{2^{i+1}t}}\hat{u}^{2}\geq ct^{2C\Lambda}\geq c\eta^{2C\Lambda}
    \end{equation}
    for $t\in [\eta,1]$. Now we take $\delta=c\epsilon^{3}\eta^{2C\Lambda}$, then $\fint _{B_{t}}|\hat{u}-h|^{2}\leq \epsilon^{3}\fint_{B_{t}}\hat{u}^{2}$, combined with 
    \begin{equation}
        \fint_{B_{t}}|\hat{u}+h|^{2}\leq 2(\fint_{B_{t}}|h-\hat{u}|^{2}+\fint_{B_{t}}|2\hat{u}|^{2})\leq 10\fint_{B_{t}}\hat{u}^{2},
    \end{equation}
we have
\begin{equation}
    \fint_{B_{t}}|\hat{u}^{2}-h^{2}|\leq \frac{1}{2}\fint_{B_{t}}(\frac{\epsilon}{50}|\hat{u}+h|^{2}+\frac{50}{\epsilon}|\hat{u}-h|^{2})\leq \frac{\epsilon}{2} \fint_{B_{t}}\hat{u}^{2}.
    \end{equation}
    Therefore,
    \begin{equation}
        |D^{\hat{u}}(0,t)-D^{h}(0,t)|\leq  {\rm log}_{4}\frac{1+\epsilon/2}{1-\epsilon/2}\leq \epsilon
    \end{equation}
    for any $\eta\leq t\leq 1$.
\end{proof}

\subsection{Almost monotonicity formula}
In this subsection, we prove the almost monotonicity for doubling index, which helps to obtain the volume estimates for quantitative singular strata. The proof follows the arguments of \cite{HJ1}.
\begin{theorem}\label{almost monotonicity for doubling index theorem}
       Let $u\in W^{1,2}(B_{4}(0))$ be a nonzero weak solution to \eqref{elliptic equation} with coefficient conditions \eqref{unifomly elliptic} and \eqref{continuous coefficient}. Assume that $u$ satisfies doubling assumption \eqref{doubling assumption}, then given $0<\epsilon\leq 1/1000$, there exists $r_{0}(n,\lambda,\Lambda,\theta,\epsilon)$ such that for any $x\in B_{1}(0)$ and $r\leq r_{0}$, we have
      \begin{equation}
          D(x,s)\leq D(x,r)+\epsilon
      \end{equation}
      for any $0<s\leq r$.
\end{theorem}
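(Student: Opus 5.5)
The proof is a multiscale iteration: compare $u$ with harmonic functions scale by scale, using Lemma \ref{doubling index comparasion} on a fixed window of scales $[\eta r',r']$. On each window the harmonic monotonicity (Lemma \ref{monotonicity for harmonic}) gives almost monotonicity up to $\epsilon$. The snag is that errors accumulate across windows, so we use the quantization of Lemma \ref{doubling index close to integer} to prevent the accumulation.

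\textbf{Step 1 (one window).} Fix a small $\epsilon'\ll\epsilon$ and a scale ratio $\eta=c(n)(\epsilon')^{C(n)}$, and let $r_0$ be given by Lemma \ref{doubling index comparasion} with these parameters. For $x\in B_1$ and $r'\le r_0$, that lemma yields a harmonic $h$ with
\[
|D^{u_{x,r'}}(0,t)-D^h(0,t)|\le\epsilon' \quad\text{for } t\in[\eta,1].
\]
Since $D^h(0,\cdot)$ is nondecreasing, we get $D(x,r't)\le D(x,r')+2\epsilon'$ for $t\in[\eta,1]$.

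\textbf{Step 2 (avoid accumulation).} Pick scales $r_k=\eta^k r$. Iterating Step 1 naively gives an error of $2k\epsilon'$, which is not uniform in $k$. To fix this, we exploit integer rigidity at scales where the doubling index is nearly constant. On a window where $D$ drops by less than $\delta_0$ between consecutive dyadic scales, $D^h$ also changes by at most $\delta_0+2\epsilon'$. Lemma \ref{doubling index close to integer}(1) applied to $h$ then forces $D^h$ to lie within $\epsilon'$ of an integer $m$, and so does $D^u$. Lemma \ref{doubling index close to integer}(2) says that if $D^h$ at the top of a window is below $m+1-\epsilon'$, then further down it drops below $m+\epsilon'$.

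This suggests the following bookkeeping.
\begin{enumerate}[(a)]
\item Let $m$ be the smallest integer with $D(x,r)\le m+\epsilon/2$.
\item Either $D(x,r)\le m+1-\epsilon/2$, or $D$ is already within $\epsilon/2$ of the integer $m+1$; the latter case is handled by the same argument with $m+1$.
\item Show inductively that $D(x,r_k)\le m+\epsilon$ for all $k$.
\end{enumerate}
The induction runs as follows. Whenever $D(x,r_k)$ exceeds $m+\epsilon'$, it must stay below $m+1-\epsilon'$ (by the induction hypothesis, since $\epsilon<1/2$). Lemma \ref{doubling index close to integer}(2), transferred from $h$ to $u$ through Step 1, then pushes it back below $m+2\epsilon'$ within a bounded number $M(\delta_0)$ of windows. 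The excursion in between is at most $2M\epsilon'$, which is less than $\epsilon/2$ once $\epsilon'$ is chosen small relative to $\epsilon$ and $M$. Once $D$ is near $m$ it can only increase by $2\epsilon'$ per window, and it is always pulled back before accumulating. Intermediate scales $s\in[r_{k+1},r_k]$ are covered by Step 1.

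\textbf{Main obstacle.} The delicate point is the hierarchy of constants. The constant $\delta_0$ of Lemma \ref{doubling index close to integer} depends on $\epsilon'$ and $\Lambda$, and the number of steps $M\sim 1/\delta_0$ in the drop lemma must be absorbed without making $\epsilon'$ depend on $M$ circularly. I would choose the constants in the order $\epsilon$, then $\delta_0(\epsilon/10,n,C\Lambda)$, then $M$, then $\epsilon'\le\epsilon/(100M)$, and finally $\eta$ and $r_0$. The window length $\eta$ must also be at least $2^{-M-1}$ so that the drop of Lemma \ref{doubling index close to integer}(2) occurs inside a single comparison window, where the harmonic $h$ is valid; I expect this is what makes the argument close. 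Finally, the doubling bound $D\le C\Lambda$ from \eqref{doubling assumption} supplies the uniform bound $D^h(0,4)\le C\Lambda$ needed to apply Lemma \ref{doubling index close to integer} to $h$.
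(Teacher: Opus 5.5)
Your mechanism is the paper's. You compare $u_{x,r'}$ with a harmonic $h$ on a window $[\eta r',r']$ via Lemma~\ref{doubling index comparasion}, and use monotonicity of $D^h$ inside the window. You then need $\eta\le 2^{-1/\delta_0-1}$ (an upper bound on the ratio, not ``at least''), so that the harmonic drop of Lemma~\ref{doubling index close to integer}(2) happens inside that same window and resets the error.

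The paper iterates along stopping scales $\kappa_j$, the first scale where $D$ reaches $m+\epsilon/5$. Your fixed grid $r_k=\eta^k r$ works equally well and is simpler. If $D(x,r_k)+\epsilon'<m+1-\epsilon/10$, then Lemma~\ref{doubling index close to integer}(2) for $h_k$ (with $\delta_0=\delta_0(\epsilon/10)$) plus monotonicity of $D^{h_k}$ gives $D(x,r_{k+1})<m+\epsilon/10+\epsilon'$. This again satisfies the hypothesis. So the ``$M(\delta_0)$ windows'' accounting, the excursion bound $2M\epsilon'$ and the constraint $\epsilon'\le\epsilon/(100M)$ are unnecessary; $\epsilon'=\epsilon/100$ suffices. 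Also, the drop lands below $m+\epsilon/10+\epsilon'$, not $m+2\epsilon'$ as you wrote.

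There is one real error: the choice of $m$ in (a). The smallest integer with $D(x,r)\le m+\epsilon/2$ satisfies $m-1+\epsilon/2<D(x,r)\le m+\epsilon/2$. Consequently:
\begin{itemize}
\item the second alternative in (b) never occurs;
\item the target $D(x,r_k)\le m+\epsilon$ in (c) gives $D(x,s)\le D(x,r)+\epsilon$ only when $D(x,r)\ge m$. For $D(x,r)=m-\tfrac12$ it is off by $\tfrac12$.
\end{itemize}
The theorem needs the first window to push $D$ down to the integer \emph{below} $D(x,r)$. That is what the paper's case distinction (a)--(c) accomplishes, and case (c) is exactly the one your choice misses.

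The fix is to take $m=\lfloor D(x,r)+\epsilon/2\rfloor$. Then $D(x,r)<m+1-\epsilon/2$, so the first window gives $D(x,s)\le D(x,r)+2\epsilon'$ on $[\eta r,r]$ and $D(x,\eta r)<m+\epsilon/10+\epsilon'$. The induction above then yields, for all $s\le \eta r$,
\[
D(x,s)\le m+\tfrac{\epsilon}{10}+3\epsilon'\le D(x,r)+\tfrac{3\epsilon}{5}+3\epsilon'\le D(x,r)+\epsilon .
\]
With this correction your argument is a complete proof along the same lines as the paper's.
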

\begin{remark}
    If we fix $\epsilon<1/1000$, since $\lim_{s\to 0}D(x,s)=\mathcal{O}_{u}(x)\geq 1$, hence $D(x,r)\geq 1-\epsilon$ for any $r\leq r_{0}$.
\end{remark}
\begin{proof}[Proof of Theorem \ref{almost monotonicity for doubling index theorem}]
   Taking $\epsilon/100$ in Lemma \ref{doubling index close to integer}, then $\delta_{0}=\delta_{0}(n,\Lambda,\epsilon)$ is fixed, and we choose $\eta=2^{-\delta_{0}-1}$ in Lemma \ref{doubling index comparasion}. By Lemma \ref{doubling index comparasion}, for any $x\in B_{1}(0)$ and $r\leq r_{0}$, there exists a normalized harmonic function $h$ such that 
   \begin{equation}\label{3.7.1}
       |D^{u_{x,r}}(0,t)-D^{h}(0,t)|\leq \epsilon/100
   \end{equation}
   for all $\eta\leq t\leq 1$. Next we choose the integer $m$ such that $|D^{u}(x,r)-m|\leq 1/2$.\\
   \indent \textbf{We claim:} If $D(x,r)\leq m+\epsilon/10$, then $D(x,s)\leq m+3\epsilon/10$ for any $0< s\leq r$.
   \begin{proof}[proof of Claim]
       We define $\kappa_{0}$ as
       \begin{equation}
           \kappa_{0}:={\rm inf}\{t\geq 0: D(x,s)\leq m+\epsilon/5\;\text{for any }s\in [t,r].\}
       \end{equation}
       By \eqref{3.7.1}, we have 
       \begin{align}
       \begin{split}
           D(x,s)&=D^{u_{x,r}}(0,s/r)\leq D^{h}(0,s/r)\leq D^{h}(0,s/r)+\epsilon/100\\
           &\leq D^{h}(0,1)+\epsilon/100\leq D^{u_{x,r}}(0,1)+\epsilon/50\leq m+3\epsilon/10
           \end{split}
       \end{align}
for all $\eta r\leq s\leq r$. Therefore $\kappa_{0}\leq \eta r$. And our goal is to show $\kappa_{0}=0$. If $\kappa_{0}>0$, we apply Lemma \ref{doubling index comparasion} to $u_{x,\kappa_{0}}$, then there exists a normalized harmonic function $h_{0}$ such that 
\begin{equation}
    |D^{u_{x,\kappa_{0}}}(0,t)-D^{h_{0}}(0,t)|\leq \epsilon/100
\end{equation}
for all $\eta\leq t\leq 1$. Similarly, by the monotonicity of $D^{h_{0}}$, we have 
\begin{align}
    \begin{split}
        D(x,t\kappa_{0})&\leq D^{h_{0}}(0,t)+\epsilon/100\leq D^{h_{0}}(0,1)+\epsilon/100\\
        &\leq D^{u}(x,\kappa_{0})+\epsilon/50\leq  m+3\epsilon/10.
    \end{split}
\end{align}
for all $\eta\leq t\leq 1$. Note that $D^{h_{0}}(0,1)\leq D(x,\kappa_{0})+\epsilon/100\leq m+\epsilon/3<m+1-\epsilon/100$, then by Lemma \ref{doubling index close to integer}, $D^{h_{0}}(0,\eta)\leq m+\epsilon/100$. Therefore $D(x,\eta\kappa_{0})\leq m+\epsilon/50\leq m+\epsilon/10$. Then we define $\kappa_{1}$ as 
\begin{equation}
     \kappa_{1}:={\rm inf}\{t\geq 0: D(x,s)\leq m+\epsilon/5\;\text{for any }s\in [t,\eta\kappa_{0}]\}.
\end{equation}
Repeating  the above arguments, we obtain $D(x,s)\leq m+3\epsilon/10$ for any $s\in[\eta\kappa_{1},r]$, note that $\eta\kappa_{1}\leq \eta^{2}\kappa_{0}$, and we can continue this process to obtain $\kappa_{j}$ such that 
\begin{equation}
    D(x,s)\leq m+3\epsilon/10\;\text{for any}\;s\in[\eta\kappa_{j},r].
\end{equation}
Since $\eta\kappa_{j}\leq \eta^{j+1}\kappa_{0}\to 0$, then we conclude that for any $s\in(0,r]$, we have $D(x,s)\leq m+3\epsilon/10$. Therefore, the claim holds.\end{proof}
   Now we consider the following three cases.\\
   \indent$(a)$ If $m+\epsilon/10\leq D(x,r)\leq m+1/2$, then by Lemma \ref{doubling index comparasion}, for any $t\in[\eta,1]$, we have $|D^{u_{x,r}}(0,t)-D^{h}(0,t)|\leq \epsilon/100$ for some harmonic function $h$, therefore for any $s\in[\eta r,r]$,
   \begin{equation}
       D(x,s)=D^{u_{x,r}}(0,s/r)\leq D^{h}(0,s/r)+\epsilon/100\leq D^{h}(0,1)+\epsilon/100\leq D^{u}(x,r)+\epsilon/50.
   \end{equation}
   For $s\in(0,\eta r]$, note that $D^{h}(0,1)\leq D(x,r)+\epsilon/100< m+1-\epsilon/100$, by Lemma \ref{doubling index close to integer}, $D^h(0,\eta )< m+\epsilon/100$, therefore $D(x,\eta r)\leq D^{h}(0,\eta)+\epsilon/100\leq m+\epsilon/50$. Now we apply the claim to $D(x,\eta r)$ and conclude that 
\begin{equation}
    D(x,s)\leq m+3\epsilon/10\leq D(x,r)+\epsilon.
\end{equation}
for any $0<s\leq\eta r$.\\
\indent $(b)$ If $m-\epsilon/50\leq D(x,r)\leq m+\epsilon/10$, applying the claim to $D(x,r)$, then 
\begin{equation}
    D(x,s)\leq m+3\epsilon/10\leq D(x,r)+\epsilon.
\end{equation}
for any $s\in(0,r]$.\\
\indent $(c)$ If $m-1/2\leq D(x,r)< m-\epsilon/50$, by Lemma \ref{doubling index comparasion}, for any $t\in[\eta,1]$, we have $|D^{u_{x,r}}(0,t)-D^{h}(0,t)|\leq \epsilon/100$ for some harmonic function $h$, similar to $(a)$, we have $D(x,s)\leq D^{u}(x,r)+\epsilon/50$ for any $s\in[\eta r,r]$. And $D^{h}(0,1)\leq D^{u}(x,r)+\epsilon/100<m-\epsilon/100$, then by Lemma \ref{doubling index close to integer}, $D^{h}(0,\eta)< m-1+\epsilon/100$, therefore $D(x,\eta r)\leq D^{h}(0,\eta)+\epsilon/100\leq m-1+\epsilon/50$, then applying the claim to $D(x,\eta r)$, we have for any $s\in(0,\eta r]$, $D(x,s)\leq m-1+3\epsilon/10\leq D(x,r)+\epsilon$.
\end{proof}
As a direct corollary of the almost monotonicity theorem, we can show that the number of non-pinched scales is finite.
\begin{corollary}\label{finite nonpinched corollary}
    Under the same setting as in Theorem \ref{almost monotonicity for doubling index theorem}. Given $\epsilon\in(0,1/1000)$, $x\in B_{1}(0)$ and choose $r_{0}$ as in Theorem \ref{almost monotonicity for doubling index theorem} for $\epsilon/2$. Denote $T_{\epsilon}:=\{i\in\mathbb{N}:|D(x,\frac{r_{0}}{2^{i}})-D(x,\frac{r_{0}}{2^{i+1}})|>\epsilon\}$, then $|T_{\epsilon}|\leq C(n,\lambda)\Lambda/\epsilon$.
\end{corollary}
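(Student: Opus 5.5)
The plan is a telescoping argument that splits the jumps into upward and downward ones and uses the almost monotonicity to control the upward jumps. Write $a_i:=D(x,r_0 2^{-i})$ for $i\ge 0$.

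\textbf{Step 1: bounds on the sequence.} By the doubling assumption \eqref{doubling assumption} and the remark after it, $0\le a_i\le C(\lambda)\Lambda$ for every $i$. Since $r_0 2^{-i}\le r_0$, Theorem \ref{almost monotonicity for doubling index theorem}, applied with $\epsilon/2$ in place of $\epsilon$, gives
\begin{equation*}
a_j\le a_i+\epsilon/2 \quad\text{for all } j\ge i .
\end{equation*}
So the sequence is ``almost nonincreasing'' with a uniform slack of $\epsilon/2$.

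\textbf{Step 2: sort the non-pinched scales.} For $i\in T_\epsilon$ exactly one of two things happens. Either $a_i-a_{i+1}>\epsilon$, a downward jump, or $a_{i+1}-a_i>\epsilon$, an upward jump. The second is impossible, because Step 1 gives $a_{i+1}\le a_i+\epsilon/2$. Hence every $i\in T_\epsilon$ is a downward jump of size at least $\epsilon$.

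\textbf{Step 3: telescope with care.} The increments $a_{j}-a_{j+1}$ at scales outside $T_\epsilon$ may be negative, so a naive telescoping sum does not immediately bound the total drop. To handle this, enumerate $T_\epsilon=\{i_1<i_2<\dots<i_K\}$ (take any finite subset if necessary).

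Between consecutive elements we have $i_k+1\le i_{k+1}$. Applying Step 1 with $i=i_k+1$ and $j=i_{k+1}$ gives
\begin{equation*}
a_{i_{k+1}}\le a_{i_k+1}+\epsilon/2< a_{i_k}-\epsilon+\epsilon/2=a_{i_k}-\epsilon/2 .
\end{equation*}
Iterating this,
\begin{equation*}
0\le a_{i_K}\le a_{i_1}-(K-1)\epsilon/2\le C(\lambda)\Lambda-(K-1)\epsilon/2 ,
\end{equation*}
so $K\le 1+2C(\lambda)\Lambda/\epsilon\le C(n,\lambda)\Lambda/\epsilon$, using that $\Lambda/\epsilon$ is large since $\epsilon<1/1000$ and $\Lambda\gtrsim 1$.

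\textbf{Main point.} The only delicate part is Step 3: because the sequence is only almost monotone, the small upward drifts between non-pinched scales have to be absorbed. They are absorbed by the $\epsilon/2$ slack, which is exactly why Theorem \ref{almost monotonicity for doubling index theorem} is invoked with $\epsilon/2$ rather than $\epsilon$. No other earlier result is needed.
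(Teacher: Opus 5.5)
Your argument is correct and is essentially the paper's proof. The paper also enumerates the non-pinched scales $l_1<l_2<\cdots$, uses Theorem \ref{almost monotonicity for doubling index theorem} with $\epsilon/2$ to get $D(x,r_02^{-l_i})\ge D(x,r_02^{-l_{i+1}})+\epsilon/2$, and telescopes against the bound $C(n,\lambda)\Lambda$; you additionally make explicit that upward jumps larger than $\epsilon$ are excluded, which the paper uses tacitly. One small correction: the doubling assumption gives only the upper bound on $a_i$, not $a_i\ge 0$, but the lower bound you need is easy to supply, either from Lemma \ref{doubling index comparasion} (for $r\le r_0$, $D(x,r)$ is within $\epsilon$ of $D^h(0,1)\ge 0$ for a harmonic $h$, which is how $r_0$ was chosen in Theorem \ref{almost monotonicity for doubling index theorem}) or from local boundedness giving $a_i\ge -C(n,\lambda)$, and either suffices for the count.
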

\begin{proof}
    First, by doubling assumption, $D(x,1)\leq C(n,\lambda)\Lambda$. We may assume $T_{\epsilon}=\{0\leq l_{1}<l_{2}<...\}$, given $\epsilon/2>0$, by the choice of $r_{0}$, we have
    \begin{equation}\label{3.9.1}
        \sum_{i=0}^{\infty}(D(x,\frac{r_{0}}{2^{l_{i}}})-D(x,\frac{r_{0}}{2^{l_{i+1}}}))\leq D(x,r_{0})-D(x,0)+\epsilon/2\leq C(n,\lambda)\Lambda.
    \end{equation}
    Note that for any $l_{i}\in T_{\epsilon}$, applying Theorem \ref{almost monotonicity for doubling index theorem} with $\epsilon/2$, then
    \begin{equation}
        D(x,\frac{r_{0}}{2^{l_{i}}})>D(x,\frac{r_{0}}{2^{l_{i}+1}})+\epsilon\geq D(x,\frac{r_{0}}{2^{l_{i+1}}})+\epsilon-\epsilon/2\geq D(x,\frac{r_{0}}{2^{l_{i+1}}})+\epsilon/2.
    \end{equation}
    Therefore by \eqref{3.9.1}, $|T_{\epsilon}|\cdot\epsilon/2\leq C(n,\lambda)\Lambda$, hence we conclude that $|T_{\epsilon}|\leq 2C(n,\lambda)\Lambda/\epsilon$.    
\end{proof}
\indent At the end of this section, we point out that following similar arguments with minor modifications, all properties of doubling index $D_{b}(x,r)$ in this section can be similarly proved for $D_{s}(x,r)$. Replace Lemma \ref{doubling index close to integer} with Lemma \ref{sphere doubling drop}, we can also obtain an almost monotonicity formula for doubling index on sphere, with an explicit dependence of $r_{0}$ regarding $\Lambda$.
\begin{theorem}\label{sphere almost monotonicity}
    Let $u\in W^{1,2}(B_{4}(0))$ be a nonzero weak solution to \eqref{elliptic equation} with coefficient conditions \eqref{unifomly elliptic} and \eqref{continuous coefficient}. Assume that $u$ satisfies doubling assumption \eqref{doubling assumption}, then given $0<\epsilon\leq 1/1000,\kappa=10(1+\lambda)$, there exists $r_{0}$ satisfying $\theta(\kappa r_{0})\leq C(n,\lambda,\theta)^{-\Lambda}\epsilon^{C(n,\lambda)\Lambda}$ such that for any $x\in B_{1}(0)$ and $0<r_{2}\leq r_{1}\leq r_{0}$, we have
      \begin{equation}
          D_{s}(x,r_{2})\leq D_{s}(x,r_{1})+\epsilon.
      \end{equation}
\end{theorem}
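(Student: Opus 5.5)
I will follow the proof of Theorem \ref{almost monotonicity for doubling index theorem} essentially line by line, with the ball quantities replaced by sphere quantities, and keep track of the constants so that the dependence of $r_{0}$ on $\Lambda$ is explicit.

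\textbf{Step 1: spherical comparison lemma.} First I establish a spherical analogue of Lemma \ref{doubling index comparasion}. For $x\in B_{1}(0)$ and $r\leq r_{0}$, Lemma \ref{harmonic approximation} gives a normalized harmonic $h$ with $\sup_{B_{3}}|u_{x,r}-h|\leq\delta$. The doubling assumption gives a lower bound on spherical means: $\fint_{\partial B_{t}}u_{x,r}^{2}\geq c\,t^{C\Lambda}$ for $t\in[\eta,2]$. Here I use the ball doubling and the fact that $\fint_{\partial B_t}$ is comparable to ball averages up to factors $C^{\Lambda}$, which follows from interior estimates. Choosing $\delta=c\,\epsilon^{3}\eta^{C\Lambda}$, the same Young-inequality argument yields
\[|D_{s}^{u_{x,r}}(0,t)-D_{s}^{h}(0,t)|\leq\epsilon/100 \quad\text{for } t\in[\eta,1].\]
The harmonic approximation requires $\theta(\kappa r_{0})\leq C^{-\Lambda}\delta^{n+1}$.

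\textbf{Step 2: the drop parameter and the size of $r_0$.} The key point is that Lemma \ref{sphere doubling drop} replaces Lemma \ref{doubling index close to integer}. For harmonic $h$, if $D_{s}^{h}(0,1)\leq m+1-\epsilon'$ then $D_{s}^{h}(0,\epsilon'/2)\leq m+\epsilon'$. So I take $\eta=\epsilon/200$ with $\epsilon'=\epsilon/100$, which is polynomial in $\epsilon$ and does not depend on $\Lambda$ through a compactness constant $\delta_{0}$. Then
\[\delta=c\,\epsilon^{3}(\epsilon/200)^{C\Lambda}\geq \epsilon^{C(n,\lambda)\Lambda},\]
and the requirement becomes $\theta(\kappa r_{0})\leq C^{-\Lambda}\epsilon^{C(n,\lambda)\Lambda}$, as stated. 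The spherical index of a harmonic function is monotone, since $\beta$ is nondecreasing, so all uses of monotonicity for $h$ remain valid.

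\textbf{Step 3: the iteration.} Fix $x$ and $r_{1}\leq r_{0}$, and choose an integer $m$ with $|D_{s}(x,r_{1})-m|\leq1/2$. I prove the Claim: if $D_{s}(x,r)\leq m+\epsilon/10$, then $D_{s}(x,s)\leq m+3\epsilon/10$ for all $s\leq r$. The argument uses the infimum scales $\kappa_{j}$, applying Step 1 at each scale $\kappa_{j}\leq r_{0}$ and the drop lemma to descend by a factor $\eta$ each time; since $\eta^{j}\to0$, this covers all scales. I then split into cases (a), (b), (c) exactly as before, according to whether $D_{s}(x,r_{1})$ lies in $[m+\epsilon/10,m+1/2]$, $[m-\epsilon/50,m+\epsilon/10]$ or $[m-1/2,m-\epsilon/50)$. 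In each case I get $D_{s}(x,r_{2})\leq D_{s}(x,r_{1})+\epsilon$ for all $r_{2}\leq r_{1}$. The lower cut-off $D\geq1-\epsilon$ is not needed.

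\textbf{Main obstacle.} The delicate step is Step 1. It requires controlling spherical $L^{2}$ norms by sup-norm closeness, which needs a lower bound $\fint_{\partial B_{t}}u_{x,r}^{2}\gtrsim t^{C\Lambda}$ that must come from the ball doubling with only polynomial loss. I would derive this from the harmonic approximation itself at scales $t\in[\eta,1]$, using the harmonic frequency bound $D_{s}^{h}\leq C\Lambda$, rather than from interior estimates on $u$.
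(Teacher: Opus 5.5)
Your proposal is correct and matches the paper's argument: the paper proves this theorem by rerunning the proof of Theorem~\ref{almost monotonicity for doubling index theorem} for $D_{s}$, with the compactness-based Lemma~\ref{doubling index close to integer} replaced by the explicit drop Lemma~\ref{sphere doubling drop}, and that replacement is what allows your choice $\eta=\epsilon/200$ and hence the bound $\theta(\kappa r_{0})\leq C^{-\Lambda}\epsilon^{C\Lambda}$. One justification needs changing: interior estimates only bound spherical means from above, so the lower bound $\fint_{\partial B_{t}}u_{x,r}^{2}\geq c\,t^{C\Lambda}$ should come from the route in your last paragraph, namely $\fint_{\partial B_{t}}h^{2}\geq\fint_{B_{t}}h^{2}\geq c\,t^{C\Lambda}$ for the harmonic approximant (using subharmonicity of $h^{2}$ and Lemma~\ref{monotonicity for harmonic}), combined with sup-closeness of $u_{x,r}$ and $h$.
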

As a corollary, we prove a similar doubling index drop lemma to Lemma \ref{doubling index close to integer} for general elliptic equations, which plays an important role in the volume estimates for critical sets in Section 7. 
\begin{corollary}\label{cor3.9}
    Under the same setting in Theorem \ref{sphere almost monotonicity}, for any $x\in B_{1}$, if $D_{s}(x,r_{1})\leq m-\epsilon_{1}$ with $r_{1}\leq r_{0}$ and $\theta(\kappa r_{0})\leq (c\epsilon_{1})^{C\Lambda}$, then for any $r\leq \epsilon_{1} r_{1}/4$, we have $D_{s}(x,r)\leq m-1+\epsilon_{1}$.
\end{corollary}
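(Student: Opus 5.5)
The plan is to transfer the drop property of harmonic functions, Lemma \ref{sphere doubling drop}, to $u$ at the single scale $r_{1}$, and then use the almost monotonicity of Theorem \ref{sphere almost monotonicity} to cover every smaller scale. Fix $x\in B_{1}$ and $r_{1}\leq r_{0}$ with $D_{s}(x,r_{1})\leq m-\epsilon_{1}$, and set $\epsilon=\epsilon_{1}/4$.

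The first step is a spherical analogue of Lemma \ref{doubling index comparasion}. Using Lemma \ref{harmonic approximation} with $\delta=c\epsilon^{3}\eta^{C\Lambda}$ and $\eta=\epsilon_{1}/8$, we find a harmonic $h$ with
\begin{equation}
|D_{s}^{\tilde u_{x,r_{1}}}(0,t)-D_{s}^{h}(0,t)|\leq \epsilon_{1}/4 \quad\text{for all } t\in[\eta,1].
\end{equation}
Lower bounds on $\fint_{\partial B_{t}}\tilde u^{2}$ come from the doubling assumption, exactly as in that lemma; the only change is replacing ball averages with sphere averages, which the $C^{0}$ closeness still controls. The constraint $\theta(\kappa r_{0})\leq C^{-\Lambda}\delta^{n+1}$ then becomes $\theta(\kappa r_{0})\leq (c\epsilon_{1})^{C\Lambda}$, which is the stated hypothesis.

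Next we get the drop at one scale. From the comparison, $D_{s}^{h}(0,1)\leq m-\epsilon_{1}+\epsilon_{1}/4\leq (m-1)+1-\epsilon_{1}/2$. Applying Lemma \ref{sphere doubling drop} with integer $m-1$ and parameter $\epsilon_{1}/2$ (assuming $\epsilon_{1}\leq c(n)$) gives $D_{s}^{h}(0,\epsilon_{1}/4)\leq m-1+\epsilon_{1}/2$. Since $\epsilon_{1}/4\geq\eta$, the comparison transfers this back to $u$:
\begin{equation}
D_{s}(x,\epsilon_{1}r_{1}/4)\leq m-1+3\epsilon_{1}/4.
\end{equation}

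Finally, for $r\leq \epsilon_{1}r_{1}/4\leq r_{0}$, Theorem \ref{sphere almost monotonicity} applied with tolerance $\epsilon_{1}/4$ gives $D_{s}(x,r)\leq D_{s}(x,\epsilon_{1}r_{1}/4)+\epsilon_{1}/4\leq m-1+\epsilon_{1}$. Its requirement $\theta(\kappa r_{0})\leq C^{-\Lambda}(\epsilon_{1}/4)^{C\Lambda}$ is again of the form $(c\epsilon_{1})^{C\Lambda}$.

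The main obstacle is the spherical harmonic-comparison step. We need sphere averages of $\tilde u_{x,r_{1}}$ bounded below on $[\eta,1]$ with explicit $\Lambda$-dependence, so that $\delta$, and hence the required smallness of $\theta(\kappa r_{0})$, is polynomial in $\epsilon_{1}$ with exponent $C\Lambda$. We also need the normalization of $h$ on $\partial B_{1}$ rather than $B_{1}$ to cause no loss. I would handle both by using the doubling bound iteratively across dyadic scales, as in the proof of Lemma \ref{doubling index comparasion}.
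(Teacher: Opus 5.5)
Your proposal is correct and is essentially the paper's proof. The paper likewise compares $\tilde u_{x,r_1}$ with a harmonic $h$ via the sphere version of Lemma \ref{doubling index comparasion} (with $\eta=2\epsilon_1/5$ and tolerance $\epsilon_1/10$), applies Lemma \ref{sphere doubling drop} to get $D_s(x,2\epsilon_1 r_1/5)\le m-1+9\epsilon_1/10$, and then uses almost monotonicity with tolerance $\epsilon_1/10$ to reach every $r\le\epsilon_1 r_1/4$; only the bookkeeping constants differ, and the paper also takes the spherical comparison lemma for granted without a separate proof.
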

\begin{proof}
    Let $w=\tilde{u}_{x,r_{1}}$, then taking $\eta=2\epsilon_{1}/5, \epsilon=\epsilon_{1}/10$ in Lemma \ref{doubling index comparasion}, by Lemma \ref{harmonic approximation} and Lemma \ref{doubling index comparasion} of sphere version, assume that $r_{0}$ satisfies $\theta(\kappa r_{0})\leq (c_{1}\epsilon_{1})^{C_{1}\Lambda}$, then if $r_{1}\leq r_{0}$, there exists a normalized harmonic function $h$ such that
    \begin{equation}\label{3.40}
        |D_{s}^{w}(0,t)-D_{s}^{h}(0,t)|\leq \epsilon_{1}/10
    \end{equation}
    for any $t\in[2\epsilon_{1}/5,1]$. Hence $D_{s}^{h}(0,1)\leq m-4\epsilon_{1}/5$, by Lemma \ref{sphere doubling drop}, we have $D_{s}^{h}(0,2\epsilon_{1}/5)\leq m-1+4\epsilon_{1}/5$. By \eqref{3.40}, $D_{x}^{w}(0,2\epsilon_{1}/5)=D_{s}^{u}(x,2\epsilon_{1}r_{1}/5)\leq m-1+9\epsilon_{1}/10$. Now taking $\epsilon=\epsilon_{1}/10$ in Theorem \ref{almost monotonicity for doubling index theorem}, if $\theta(\kappa r_{0})\leq (c_{2}\epsilon_{1})^{C_{2}\Lambda}$, since $r_{1}\leq r_{0}$, then for any $r\leq \epsilon_{1}r_{1}/4$, we have $D_{s}^{u}(x,r)\leq D_{s}^{u}(x,2\epsilon_{1}r_{1}/5)+\epsilon_{1}/10\leq m-1+\epsilon_{1}$.
\end{proof}

\section{Quantitative stratification and uniform volume estimates for geometric sets}

\subsection{Quantitative stratification and cone splitting principle}
In this subsection, we introduce the concepts of quantitative symmetry and stratification.
\begin{definition}
    Let $u:\mathbb{R}^{n}\to \mathbb{R}$ be a continuous function.\\
    \indent $(a)$ We say $u$ is $0$-symmetric at $x_{0}$ if there exists $d\in\mathbb{N}$ such that $u(x_{0}+\lambda x)=\lambda^{d}u(x_{0}+x)$ for any $x\in\mathbb{R}^{n}$ and $\lambda>0$, i.e., $u$ is a homogeneous polynomial at $x_{0}$.\\
    \indent $(b)$ We say $u$ is $k$-symmetric at $x_{0}$ if $u$ is $0$-symmetric at $x_{0}$ and there exists a subspace $V$ of dimension $k$ such that
    $u(x_{0}+x+y)=u(x_{0}+x)$ for any $x\in\mathbb{R}^{n}$ and $y\in V$.\\
   \indent $(c)$ Denote $\hat{u}_{x,r}(y)=\frac{u(x+ry)-u(x)}{\sqrt{\fint_{B_{1}(0)}|u(x+ry)-u(x)|^{2}}}$, and we say $u$ is $(k,\epsilon)$-symmetric on $B_{r}(x)$ if there exists some $k$-symmetric $P$ such that 
    \begin{equation}
        ||\hat{u}_{x,r}-P||_{L^{\infty}(B_{1/{\sqrt{1+\lambda}}}(0))}\leq \epsilon, 
    \end{equation}where $P$ satisfies some uniform elliptic equation with constant coefficients and $\fint_{B_{1}(0)}|P|^{2}\geq c(n,\lambda,\Lambda)>0$.\\
    \indent $(d)$ We define the quantitative singular stratum by 
    \begin{equation}
        S_{\eta,r}^{k}(u):=\{x\in B_{1}(0):u \;\text{is not $(k+1,\eta)$-symmetric on $B_{s}(x)$  for any $r\leq s\leq 1$}.\}
    \end{equation}
\end{definition}
\begin{remark}
    For $(c)$, it actually follows $||u_{x,r}-Q||_{L^{\infty}(B_{1}(0))}\leq C\epsilon$, where $Q$ is a normalized hhp, i.e., $Q$ is a homogeneous harmonic polynomial  with $\fint_{B_{1}(0)}Q^{2}=1$, and then $P(z)=b(r)Q(A_{x}^{-1}(z))$, where $0<c_{0}\leq b(r)\leq C_{0}$. Although this form seems a little complicated, it is more convenient for us to state the cone splitting principle.
\end{remark}
First, we prove that the doubling index $\delta$-pinched at $x$ implies the almost $0$-symmetric property.
\begin{theorem}\label{0,epsilon symmetric thm}
     Let $u\in W^{1,2}(B_{4}(0))$ be a nonzero weak solution to \eqref{elliptic equation} with coefficient conditions \eqref{unifomly elliptic} and \eqref{continuous coefficient}. Assume that $u$ satisfies doubling property \eqref{doubling assumption}, then given $\beta\in(0,1)$ and $\delta>0$, there exist $\epsilon_{0},r_{0}$, depending only on $n,\lambda,\theta,\Lambda,\delta,\beta$ such that if $|D^{u}(x,r)-D^{u}(x,\beta r)|\leq \epsilon_{0}$ with $x\in B_{1}(0)$ and $r\leq r_{0}$, then there exists a normalized hhp $Q$ such that
    \begin{equation}
        ||u_{x,r}-Q||_{L^{\infty}(B_{1}(0))}\leq \delta.
    \end{equation}
 And therefore $u$ is $(0,C(n,\lambda,\Lambda)\delta)$-symmetric on $B_{r}(x)$.
\end{theorem}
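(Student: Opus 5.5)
The plan is to reduce to harmonic functions via Lemma \ref{harmonic approximation} and Lemma \ref{doubling index comparasion}, and then use a compactness argument for harmonic functions, based on the rigidity case of Lemma \ref{monotonicity for harmonic}.

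\textbf{Step 1 (a harmonic statement).} Fix $\beta\in(0,1)$, $\delta>0$. I claim there is $\epsilon_1=\epsilon_1(n,\Lambda,\beta,\delta)>0$ with the following property. Let $h$ be harmonic on $B_4$ with $h(0)=0$, $\fint_{B_1}h^2=1$, $\sup_{B_3}|h|\le C^{\Lambda}$, and $|D^h(0,1)-D^h(0,\beta)|\le\epsilon_1$. Then there is a normalized hhp $Q$ with $\sup_{B_1}|h-Q|\le\delta/2$.

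I would argue by contradiction. Take a sequence $h_i$ with $\epsilon_1\to 0$. By the uniform bound and interior estimates, a subsequence converges smoothly on $B_{5/2}$ to a harmonic $h_\infty$. The limit satisfies $h_\infty(0)=0$, $\fint_{B_1}h_\infty^2=1$ and $D^{h_\infty}(0,1)=D^{h_\infty}(0,\beta)$.

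By Lemma \ref{monotonicity for harmonic}, $D^{h_\infty}(0,\cdot)$ is nondecreasing. Hence it is constant on $[\beta,1]$, so $g''\equiv0$ on an interval of $s$. The equality case of Cauchy--Schwarz in the proof of that lemma then forces a single nonzero $b_{k,n}$, i.e. $h_\infty=a_kP_k$. This is a normalized hhp $Q$, and uniform convergence on $B_1$ gives the contradiction.

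\textbf{Step 2 (transfer to $u$).} Apply Lemma \ref{harmonic approximation} with accuracy $\delta'$ small, and Lemma \ref{doubling index comparasion} with $\epsilon=\epsilon_1/3$ and $\eta=\beta$. This fixes $r_0(n,\lambda,\theta,\Lambda,\delta,\beta)$ such that for $r\le r_0$ there is a normalized harmonic $h$ with three properties:
\begin{itemize}
\item $\sup_{B_3}|u_{x,r}-h|\le\delta'$;
\item $|D^{u_{x,r}}(0,t)-D^h(0,t)|\le\epsilon_1/3$ for $t\in[\beta,1]$;
\item $\sup_{B_3}|h|\le C^{\Lambda}$, which follows from the bound on $\sup_{B_6}|u_{x,r}|$ in that proof.
\end{itemize}
I will make sure the same $h$ serves both purposes. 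This holds because the proof of Lemma \ref{doubling index comparasion} uses exactly the $h$ from Lemma \ref{harmonic approximation}, so I take $\delta'$ to be the smaller of the two requirements.

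Set $\epsilon_0=\epsilon_1/3$. Since $D^u(x,s)=D^{u_{x,r}}(0,s/r)$, the hypothesis gives $|D^h(0,1)-D^h(0,\beta)|\le\epsilon_1$. Step 1 then yields $Q$, and $\|u_{x,r}-Q\|_{L^\infty(B_1)}\le\delta'+\delta/2\le\delta$.

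\textbf{Step 3 (symmetry).} The final assertion follows from the remark after the definition. Set $P(z)=b\,Q(A_x^{-1}z)$ with $b$ the normalizing ratio, where $b\in[c_0,C_0]$ depends on $\lambda,\Lambda$. Unwinding $\hat u_{x,r}(y)$ versus $u_{x,r}(A_x^{-1}y)$ on $B_{1/\sqrt{1+\lambda}}$ changes the normalization by a factor controlled through doubling. So $\|\hat u_{x,r}-P\|\le C(n,\lambda,\Lambda)\delta$, and $P$ is $0$-symmetric, solves a constant-coefficient equation, and has $\fint_{B_1}P^2\ge c$.

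\textbf{Main obstacle.} The main obstacle is the rigidity in Step 1 on a range $[\beta,1]$ rather than for all scales. It must be handled carefully: constancy of $D$ on $[\beta,1]$ gives $g'(\log 2+s)=g'(s)$ on an interval. Convexity of $g$ then implies $g'$ is constant on $[\log\beta,\log 2]$, which is nondegenerate. On that set $g''=0$, giving equality in Cauchy--Schwarz and hence a single mode. The uniform bound $\sup_{B_3}|h|\le C^\Lambda$ is what makes the compactness work.
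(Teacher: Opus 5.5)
Your proof is correct, but it is organized differently from the paper's. The paper runs a single contradiction argument directly on solutions. It takes $u^{k}$ with varying coefficients sharing $\lambda,\theta,\Lambda$, together with points $x_{k}$ and radii $r_{k}\to 0$. From doubling plus interior regularity it extracts a limit of $v_{k}=u^{k}_{x_{k},r_{k}}$. Since the rescaled coefficients converge to the identity, the limit is harmonic with $D^{v}(0,1)=D^{v}(0,\beta)$, and Lemma~\ref{monotonicity for harmonic} finishes.

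You instead confine the compactness to normalized harmonic functions with a $C^{\Lambda}$ sup bound. You then transfer to $u$ quantitatively through Lemma~\ref{harmonic approximation} and Lemma~\ref{doubling index comparasion}, correctly arranging that one $h$ serves both. Your route has two advantages:
\begin{itemize}
\item Once $\epsilon_{1}$ is fixed, $r_{0}$ is explicit, governed by $\theta(\kappa r_{0})\le C^{-\Lambda}(\cdots)$.
\item It uses only the $W^{1,p}$/H\"older estimates available for merely continuous $A$. The paper's proof instead invokes the DMO gradient bound of Theorem~\ref{DMO regularity} for compactness, which this theorem's hypotheses do not provide (H\"older compactness would suffice there too).
\end{itemize}
The paper's route is shorter and avoids matching approximants across two lemmas.

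Your rigidity step is also cleaner than the paper's appeal to unique continuation: equality in Cauchy--Schwarz at a single $s$ already forces a single nonzero $b_{k,n}$.

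One small correction: $\epsilon_{1}$ depends on the constant in $\sup_{B_{3}}|h|\le C^{\Lambda}$, hence also on $\lambda$ and $\theta$. This is still within the allowed dependencies.
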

\begin{proof}
    We argue by contradiction. If not, there exists $\delta_{0}>0$
such that for $\epsilon_{k}=1/k$ and $r_{0,k}\leq 1/k$, there exist a sequence $\{u^{k}\}$ satisfying elliptic equations, points $\{x_{k}\}\subset B_{1}(0)$ and positive numbers $r_{k}\leq 1/k$ such that 
\begin{equation}
    |D^{v_{k}}(0,1)-D^{v_{k}}(0,\beta )|\leq 1/k,
\end{equation}
but for any normalized hhp $Q$,
\begin{equation}\label{4.3.1}
    ||v_{k}-Q||_{L^{\infty}(B_{1}(0))}>\delta_{0}.
\end{equation}
Where $v_{k}=u_{x_{k},r_{k}}^{k}$. Since $\fint_{B_{1}(0)}|v_{k}|^{2}=1$, then by doubling condition,
\begin{equation}
    \fint_{B_{2}(0)}|v_{k}|^{2}\leq C(n,\lambda,\Lambda).
\end{equation}
By gradient estimate in Theorem \ref{DMO regularity}, and combing the Caccioppoli inequality, we have 
\begin{equation}
    ||v_{k}||_{C^{1}(B_{3/2}(0))}\leq C(n,\lambda,\theta,\Lambda).
\end{equation}
\indent Therefore, up to a subsequence, we may assume $v_{k}\to v$ in $C^{0}(B_{1}(0))\cap W^{1,2}(B_{1}(0))$ with $\fint_{B_{1}(0)}v^{2}=1$. Recall that $v_{k}$ satisfies ${\rm div}(\hat{A}_{k}\nabla v_{k})=0$, where $\hat{A}_{k}=a^{k}(x_{k}+r_{k}A_{x_{k}}^{k}(y))\cdot a^{k}(x_{k})^{-1}$, and $A_{x}^{k}=\sqrt{a^{k}(x)}^{-1}$, since $a^{k}$ is uniformly elliptic and with uniform modulus of continuity $\theta$, recall $r_{k}\to 0$, then we have $\hat{A}^{k}\to Id$ as $k\to \infty$. Therefore, $v$ is a harmonic function, and since $D^{v}(0,1)=D^{v}(0,\beta)$, by Lemma \ref{monotonicity for harmonic}, $v$ is some hhp $Q$ in $B_{1}(0)\setminus B_{\beta}(0)$, and by unique continuation, then $v\equiv Q$, which contradicts to \eqref{4.3.1}. Therefore we have
\begin{equation}
    ||u_{x,r}-Q||_{L^{\infty}(B_{1}(0))}\leq \delta,
\end{equation}
i.e.,
\begin{equation}
    ||\frac{u(x+rA_{x}(y))-u(x)}{\sqrt{\fint_{B_{1}(0)}|u(x+rA_{x}(y))-u(x)|^{2}}}-Q(y)||_{L^{\infty}(B_{1}(0))}\leq \delta.
\end{equation}
\indent Let $w=A_{x}(y)$, $P(w)=b(r)Q(A_{x}^{-1}(w))$, where $b(r)^{2}=\fint_{B_{1}(0)}|u(x+rA_{x}(y))-u(x)|^{2}dy/\fint_{B_{1}(0)}|u(x+rw)-u(x)|^{2}dw$, by doubling property and the inclusion relationship, we have $c_{0}\leq b(r)\leq C_{0}$, hence we conclude that
\begin{equation}
    ||\hat{u}_{x,r}-P||_{L^{\infty}(B_{1/\sqrt{1+\lambda}}(0))}\leq C_{0}(n,\lambda,\Lambda)\delta.
\end{equation}
\end{proof}
In the following, we show a qualitative cone splitting principle.
\begin{lemma}\label{qualitative cone splitting lem}
    Let $u$ be a nonzero continuous function, if $u$ is $k$-symmetric at $0$ with respect to a subspace  $V$ with $k\leq n-2$, and $u$ is $0$-symmetric at $y\notin V$, then $u$ is $(k+1)$-symmetric at $0$ with respect to $W={\rm span}(V,y)$.
\end{lemma}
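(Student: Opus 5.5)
We show three things in order: the $0$-symmetry centers at $0$ and at $y$ give the same degree, $u$ is invariant under translation by $y$, and $u$ is homogeneous of degree $d$ at $0$ and invariant under $W$.

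Write $u(\lambda x)=\lambda^{d}u(x)$ for $\lambda>0$, with $u(x+v)=u(x)$ for $v\in V$. Write also $u(y+\lambda x)=\lambda^{d'}u(y+x)$. We may assume $u$ is nonzero.

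\textbf{Step 1: $d=d'$.} Homogeneity of degree $d$ at $0$ gives $|u(x)|\sim|x|^{d}$ in an averaged sense at infinity. Concretely, $\sup_{B_R(0)}|u|=R^{d}\sup_{B_1(0)}|u|$, and $\sup_{B_1(0)}|u|>0$. Homogeneity at $y$ gives $\sup_{B_R(y)}|u|=R^{d'}\sup_{B_1(y)}|u|$. Since
\begin{equation*}
B_{R-|y|}(y)\subset B_R(0)\subset B_{R+|y|}(y),
\end{equation*}
letting $R\to\infty$ forces $d=d'$.

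\textbf{Step 2: invariance under $y$.} For any $x$ and $t>0$, set $\lambda=1+t$. We compute $u(x+ty)$ by combining the two homogeneities. Homogeneity at $y$ gives
\begin{equation*}
u\big(y+\lambda(x')\big)=\lambda^{d}u(y+x')\qquad\text{for all }x'.
\end{equation*}
Choose $x'=x-y$. Then
\begin{equation*}
u(y+\lambda(x-y))=\lambda^{d}u(x).
\end{equation*}
The left side equals $u(\lambda x-(\lambda-1)y)$, which by homogeneity at $0$ equals $\lambda^{d}u\big(x-\tfrac{\lambda-1}{\lambda}y\big)$. Hence $u(x-sy)=u(x)$ for all $s=\tfrac{\lambda-1}{\lambda}\in(0,1)$ and all $x$. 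Iterating the shift gives invariance under all $s>0$. For $s<0$, replace $x$ by $x+sy$. So $u$ is invariant under $\mathbb{R}y$.

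\textbf{Step 3: conclusion.} Translation invariance under $V$ and under $\mathbb{R}y$ composes to invariance under $W=\mathrm{span}(V,y)$. Since $y\notin V$, $\dim W=k+1$. Homogeneity at $0$ persists, so $u$ is $(k+1)$-symmetric at $0$. The hypothesis $k\le n-2$ only ensures $k+1\le n-1$, so the definition is meaningful, e.g. to avoid a constant function.

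The main subtlety is Step 1, matching the degrees. If $u$ is known to be a polynomial it is immediate from the leading-order part, but for merely continuous $u$ the sup-growth comparison above is what settles it.
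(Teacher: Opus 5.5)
Your proof is correct and follows the paper's route: you first match the degrees of homogeneity at $0$ and at $y$, then write $u(y+\lambda(x-y))$ in two ways to get $u(x-sy)=u(x)$, which gives invariance along $\mathbb{R}y$ and, together with $V$-invariance, $(k+1)$-symmetry. The differences are only technical. You match degrees by comparing the growth of $\sup_{B_R}|u|$ as $R\to\infty$, which is symmetric in the two degrees, whereas the paper takes a pointwise limit $\lambda\to\infty$ under the assumption $m>d$. You also extend from $s\in(0,1)$ by iterating shifts, where the paper passes to a continuity limit.
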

\begin{remark}
    If $u$ satisfies elliptic equation ${\rm div}(A\nabla u)=0$ with $A$ is a constant matrix, and assume that $u$ is $(n-1)$-symmetric, then actually $u$ essentially has only one variable, therefore $u$ is a linear function.
\end{remark}
\begin{proof}
    \indent We may assume $u=P_{d}(x)$ with $P$ is $k$-symmetric by definition, since $u$ is $0$-symmetric at $y$, then $u(y+\lambda x)=\lambda^{m}u(y+x)$ for some integer $m$, on the other hand,
    \begin{equation}
        u(y+\lambda x)=P_{d}(y+\lambda x)=\lambda^{d}P_{d}(x+y/\lambda).
    \end{equation}
    \indent We claim that $d=m$, if not we may assume $m>d$, then $P_{d}(x+y)=\lambda^{d-m}P_{d}(x+y/\lambda)$ holds for any $x\in \mathbb{R}^{n}$ and $\lambda>0$. Now let $\lambda\to\infty$, we have $P_{d}\equiv 0$, which is a contradiction. Therefore we conclude that
    \begin{equation}
        u(x+y)=P_{d}(x+y)=P_{d}(x+y/\lambda),
    \end{equation}
    and let $\lambda\to 0$, it implies $P_{d}(x)=P_{d}(x+y)$, combined with above equality, we have $P_{d}(x)=P_{d}(x+\mu y)$ for any $\mu>0$ and $x\in \mathbb{R}^{n}$, therefore $u(x)=P_{d}(x)$ is $(k+1)$-symmetric with respect to $W={\rm span}(V,y)$.
\end{proof}
Now we are ready to prove the effective cone splitting principle.
\begin{proposition}\label{cone splitting prop}
    Under the same assumptions in Theorem \ref{0,epsilon symmetric thm}, given $\tau,\eta>0$, then there exists $\delta_{0}$ and $r_{0}$, depending only on $n,\lambda,\theta,\Lambda,\tau,\eta$ such that for any $x\in B_{1}(0)$, $r\leq r_{0}$ and $\delta\leq \delta_{0}$ if \\
   \indent $(1)$ $u$ is $(k,\delta)$-symmetric on $B_{r}(x)$ with respect to a $k$-plane $V_{x}:=x+V$ with $k\leq n-2$ and\\
   \indent $(2)$ $u$ is $(0,\delta)$-symmetric on $B_{r}(y)$ with $y\in B_{r}(x)\setminus B_{\tau r}(V_{x})$, then we have $u$ is $(k+1,\eta)$-symmetric on $B_{r}(x)$.
\end{proposition}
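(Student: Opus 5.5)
We argue by contradiction and compactness, reducing to the qualitative cone splitting of Lemma \ref{qualitative cone splitting lem}. Suppose the conclusion fails for some fixed $\tau,\eta>0$. Then there are solutions $u^{j}$ with coefficients $A^{j}$ satisfying \eqref{unifomly elliptic}, \eqref{continuous coefficient} with the same $\theta$ and the same doubling bound $\Lambda$, points $x_{j}\in B_{1}(0)$, radii $r_{j}\le 1/j$, parameters $\delta_{j}\le 1/j$ and points $y_{j}\in B_{r_{j}}(x_{j})\setminus B_{\tau r_{j}}(x_{j}+V_{j})$. These satisfy (1) and (2) with $\delta_{j}$, but $u^{j}$ is not $(k+1,\eta)$-symmetric on $B_{r_{j}}(x_{j})$.

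\textbf{Step 1: blow up.} Set $v_{j}(z):=\hat u^{j}_{x_{j},r_{j}}(z)$, the Euclidean rescaling in part (c) of the definition. As in the proof of Theorem \ref{0,epsilon symmetric thm}, the doubling assumption gives $\fint_{B_{2}}v_{j}^{2}\le C(n,\lambda,\Lambda)$. Theorem \ref{DMO regularity} with Caccioppoli then gives uniform $C^{1}$ bounds on $B_{3/2}$, or $C^{\alpha}$ bounds if only continuity is used. Hence, after passing to a subsequence, the following hold.
\begin{itemize}
\item $v_{j}\to v$ uniformly on $\overline{B_{1}}$.
\item The rescaled coefficients $A^{j}(x_{j}+r_{j}\cdot)$ converge to a constant matrix $A_{\infty}$, because $r_{j}\to 0$ and the modulus $\theta$ is uniform. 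Thus $v$ solves ${\rm div}(A_{\infty}\nabla v)=0$, and $v\not\equiv 0$ since $\fint_{B_{1}}v^{2}=1$.
\item The $k$-symmetric $P_{j}$ from (1) have bounded degree, since the degree is controlled by the doubling index $\le C\Lambda$. They are normalized and solve constant coefficient equations, so they lie in a compact finite-dimensional family. Therefore $P_{j}\to P$ with $P$ $k$-symmetric with respect to $V=\lim V_{j}$, and $\|v-P\|_{L^\infty(B_{1/\sqrt{1+\lambda}})}=0$.
\end{itemize}
By unique continuation for constant coefficient equations, which are analytic, $v=P$ everywhere.

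\textbf{Step 2: pass hypothesis (2) to the limit.} Write $y_{j}=x_{j}+r_{j}z_{j}$ with $z_{j}\in \overline{B_{1}}\setminus B_{\tau}(V_{j})$, and let $z_{j}\to z_{\infty}\notin V$, using $|z_{\infty}-V|\ge\tau$. Hypothesis (2) says
\[
\hat u^{j}_{y_{j},r_{j}}=\frac{v_{j}(z_{j}+\cdot)-v_{j}(z_{j})}{c_{j}}
\]
is $\delta_{j}$-close on $B_{1/\sqrt{1+\lambda}}$ to a $0$-symmetric $P'_{j}$. The normalizing constants $c_{j}$ are bounded above by the uniform bounds, and below by doubling comparison of $\fint_{B_{1}(y_j)}$ with $\fint_{B_{1}(x_j)}$ at scale $r_{j}$. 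Passing to the limit, $v(z_{\infty}+\cdot)-v(z_{\infty})$ is a nonzero homogeneous polynomial near $0$, hence globally by analyticity. So $P-P(z_{\infty})$ is $0$-symmetric at $z_{\infty}$.

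\textbf{Step 3: apply the qualitative lemma and conclude.} Lemma \ref{qualitative cone splitting lem} is stated for $u$ itself, so it must be applied to $P-P(z_{\infty})$; I expect this constant to be the main obstacle. I would handle it as follows. Homogeneity of $P$ of degree $d\ge1$ about $0$, together with homogeneity of $P-P(z_\infty)$ about $z_\infty$, and evaluation along the ray through $0$ and $z_{\infty}$, forces $P(z_{\infty})=0$ by comparing growth rates as in the lemma's proof. The degree $d\ge 1$ holds because $v(0)=0$.

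Lemma \ref{qualitative cone splitting lem} then shows $P$ is $(k+1)$-symmetric with respect to ${\rm span}(V,z_{\infty})$. If $k+1=n-1$, $P$ is linear by the remark after the lemma, which is still fine. Hence $\|v_{j}-P\|_{L^\infty(B_{1/\sqrt{1+\lambda}})}\to 0$, and $P$ satisfies the nondegeneracy $\fint_{B_{1}}|P|^2\ge c$ in the definition. So $u^{j}$ is $(k+1,\eta)$-symmetric on $B_{r_{j}}(x_{j})$ for large $j$, which is a contradiction. The choice of $\delta_{0},r_{0}$ depends only on $n,\lambda,\theta,\Lambda,\tau,\eta$, as claimed.
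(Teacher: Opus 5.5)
Your route is the paper's: argue by contradiction, blow up at $x_j$ and at $y_j$, pass both symmetry hypotheses to the limit, and apply Lemma~\ref{qualitative cone splitting lem}. You are more careful than the paper about the subtracted centre value. The paper writes $a_i\hat u^{i}_{x_i,r_i}(w+z_i)=\hat u^{i}_{y_i,r_i}(w)$, which overlooks that $\hat u_{y,r}$ subtracts $u(y)$ rather than $u(x)$. The correct limit identity is $a\,(P(w+z)-P(z))=Q(w)$, and that is exactly the issue you isolate.

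Your resolution of that issue is wrong as stated, however: homogeneity does not force $P(z_\infty)=0$ when $d=1$. If $P$ is linear, then $P(z+\cdot)-P(z)=P$ is $1$-homogeneous about every point $z$, so $P(z_\infty)\neq 0$ is possible. Evaluating along the ray as you propose gives $\big((1-t)^d-1+t^m\big)P(z_\infty)=0$ for all $t>0$. The coefficient of $t$ in the bracket is $-d$ if $m\ge 2$ and $1-d$ if $m=1$. So the bracket vanishes identically exactly when $d=m=1$, and you need a case split:
\begin{itemize}
\item For $d\ge 2$, your comparison does give $P(z_\infty)=0$, so $P$ itself is $0$-symmetric at $z_\infty\notin V$ and the lemma applies.
\item For $d=1$, $P$ is a nonzero linear function, hence already $(n-1)$-symmetric. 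It is therefore $(k+1)$-symmetric for $k\le n-2$, using any $(k+1)$-dimensional subspace of its kernel, and the contradiction follows without the lemma.
\end{itemize}
With this case distinction your proof is complete.
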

\begin{proof}
    We also argue by contradiction. If not, there exists $\eta_{0}>0$ such that for $\delta_{0,i}=r_{0,i}=1/i$, there exist $u^{i}$, $x_{i}\in B_{1}(0)$ and $r_{i}\leq 1/i$ such that $||\hat{u}_{x_{i},r_{i}}^{i}-\hat{P}_{i}||_{L^{\infty}(B_{1/\sqrt{1+\lambda}})}\leq \delta_{i}$, i.e.,
    \begin{equation}
        ||\frac{u^{i}(x_{i}+r_{i}w)-u^{i}(x_{i})}{\sqrt{\fint_{B_{1}(0)}|u(x_{i}+r_{i}w)-u^{i}(x_{i})|^{2}dw}}-\hat{P}_{i}(w))||_{L^{\infty}(B_{1/\sqrt{1+\lambda}})}\leq \delta_{i}\leq 1/i,
    \end{equation} 
where $\hat{P}_{i}$ is $k$-symmetric, but $u^{i}$ is not $(k+1,\eta_{0})$-symmetric on $B_{r_{i}}(x_{i})$. Based on Theorem \ref{0,epsilon symmetric thm}, we may assume that $\hat{P}_{i}=P_{i}(A_{i}^{-1})$, where $\{P_{i}\}_{i\geq 1}$ is a sequence of normalized hhps and $\{A_{i}\}_{i\geq 1}$ is a sequence of constant matrices with the same uniform ellipticity condition. Using similar arguments as in Theorem \ref{0,epsilon symmetric thm}, by doubling condition and interior estimate, we may assume that
\begin{equation}\label{hat u convergence}
        \hat{u}_{x_{i},r_{i}}^{i}\to u_{\infty}\;\text{in}\; C^{0}(B_{1}(0))\;\text{as}\;i\to \infty.
    \end{equation}
 Besides, let $P_{i}\to P_{\infty}, A_{i}\to A_{\infty}$, and we denote $P(w)=P_{\infty}(A_{\infty}^{-1}(w))$ satisfying the second order elliptic equation of constant coefficients. Therefore, $u_{\infty}=P$. By similar arguments, we also have
    \begin{equation}
        \hat{u}_{y_{i},r_{i}}^{i}\to v_{\infty} \;\text{in}\;C^{0}(B_{1}(0))\;\text{as}\;i\to \infty
    \end{equation}
    and $\hat{Q}_{i}(w)\to Q(w)$, where $||\hat{u}_{y_{i},r_{i}}-\hat{Q}_{i}||\leq \delta_{i}\to 0$, so $v_{\infty}=Q$. And note that 
    \begin{equation}\label{transformation}
        a_{i}\hat{u}_{x_{i},r_{i}}^{i}(w+\frac{y_{i}-x_{i}}{r_{i}})=\hat{u}_{y_{i},r_{i}}^{i}(w)\to v_{\infty}(w),
    \end{equation}
    where by doubling property and $|x_{i}-y_{i}|\leq r_{i}$
    \begin{equation}
         a_{i}=\frac{\sqrt{\fint_{B_{r_{i}}(x_{i})}|u^{i}|^{2}}}{\sqrt{\fint_{B_{r_{i}}(y_{i})}|u^{i}|^{2}}}\leq  c(n)\frac{\sqrt{\fint_{B_{2r_{i}}(y_{i})}|u^{i}|^{2}}}{\sqrt{\fint_{B_{r_{i}}(y_{i})}|u^{i}|^{2}}}\leq C(n,\Lambda).
    \end{equation}
    Similarly $1/C(n,\Lambda)\leq a_{i}$, therefore, we may assume $a_{i}\to a$ up to a subsequence. Recall that $y_{i}\in  B_{r_{i}}(x_{i})\setminus B_{\tau r_{i}}(x_{i}+V_{i})$, where $V_{i}$ is the invariant subspace of $\hat{P}_{i}$. Let $V_{i}\to V$, and $z_{i}:=\frac{y_{i}-x_{i}}{r_{i}}\to z$, then $z\in B_{1}(0)\setminus B_{\tau}(V)$. By \eqref{hat u convergence} and  \eqref{transformation}, then let $i\to\infty$, we have
    \begin{equation}
        au_{\infty}(w+z)=v_{\infty}(w).
    \end{equation}
    Recall that $v_{\infty}=Q,u_{\infty}=P$, then $aP(w+z)=Q(w)$ and $P$ is invariant with respect to the $k$-dimensional subspace $V$, $Q$ is $0$-symmetric, then by Lemma \ref{qualitative cone splitting lem}, $P$ is $(k+1)$-symmetric with respect to $W=span(V,z)$. Note that $P$ satisfies some elliptic equation of constant coefficients, which contradicts the assumption that $u^{i}$ is not $(k+1,\eta_{0})$-symmetric on $B_{r_{i}}(x_{i})$.
\end{proof}
Given $\beta>0$, we define $\epsilon$-pinched set at scale $r$ as
\begin{equation}
    \Sigma:=\{x\in B_{1}(0):|D^{u}(x,r)-D^{u}(x,\beta r)|\leq \epsilon\}.
\end{equation}
And we say a subset $S\subset B_{r}(x)$ is $(k,\tau)$-independent in $B_{r}(x)$ if for any affine plane $L$ of dimension $\leq k-1$, there exists some $y\in S$ such that $d(y,L)>\tau r$. As a direct corollary of Proposition \ref{cone splitting prop}, by an induction argument, we have the following result.
\begin{corollary}\label{quantitative cone splitting corollary}
    Under the same assumptions as in Theorem \ref{0,epsilon symmetric thm}, given $\eta,\tau>0$, there exist $\epsilon_{0},r_{0}$, depending only on $n,\lambda,\theta,\Lambda,\eta,\beta,\tau$ such that if  $r\leq r_{0}$ and the $\epsilon$-pinched set $\Sigma$ is $(k,\tau)$-independent in $B_{r}(x)$ with some $x\in\Sigma$ and $\epsilon\leq \epsilon_{0}$, then $u$ is $(k,\eta)$-symmetric on $B_{r}(x)$.
\end{corollary}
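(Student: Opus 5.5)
The plan is to argue by induction on $k$, with Proposition \ref{cone splitting prop} as the inductive step and Theorem \ref{0,epsilon symmetric thm} as the base case.

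\textbf{Base case $k=0$, and $k=1$.} For $k=0$ the claim follows from $x\in\Sigma$ and Theorem \ref{0,epsilon symmetric thm}. Indeed, $|D(x,r)-D(x,\beta r)|\le\epsilon\le\epsilon_0$ gives $(0,C\delta)$-symmetry on $B_r(x)$ once $\epsilon_0,r_0$ are chosen for $\delta=\eta/C$. For $k=1$, $(1,\tau)$-independence only requires some point of $\Sigma$ at distance $>\tau r$ from the $0$-plane $\{x\}$. We then apply Proposition \ref{cone splitting prop} with $V=\{0\}$, which upgrades $(0,\delta)$-symmetry at $x$ and at $y$ to $(1,\eta)$-symmetry.

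\textbf{Inductive step.} Assume the statement for $k-1$ with arbitrary parameters $\eta',\tau'$. Let $\Sigma$ be $(k,\tau)$-independent in $B_r(x)$.

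First, I will select points $x=y_0,y_1,\dots,y_k\in\Sigma$ such that each $y_j$ lies at distance $>\tau r$ from the affine plane through $y_0,\dots,y_{j-1}$. This is possible because independence applies to every plane of dimension $\le k-1$. Then $\{y_0,\dots,y_{k-1}\}\subset\Sigma$ is $(k-1,\tau)$-independent in $B_r(x)$: any plane of dimension $\le k-2$ misses one of these points by $\tau r$, by the standard volume-of-simplex argument, possibly after shrinking $\tau$ to $c(n)\tau$.

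By induction with $\eta'=\delta_0(n,\lambda,\theta,\Lambda,\tau,\eta)$, where $\delta_0$ is the constant of Proposition \ref{cone splitting prop}, $u$ is $(k-1,\delta_0)$-symmetric on $B_r(x)$ with respect to some plane $V_x$.

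Second, I need a point of $\Sigma$ outside $B_{\tau' r}(V_x)$. The point $y_k$ is far from $\mathrm{span}(y_0,\dots,y_{k-1})$, but it must be far from $V_x$ specifically. Here I will use that $V_x$ is close to the affine span of $y_0,\dots,y_{k-1}$. Each $y_j$ is $(0,\delta)$-symmetric by Theorem \ref{0,epsilon symmetric thm}, since $y_j\in\Sigma$. A compactness argument in the style of the proof of Proposition \ref{cone splitting prop} then shows that, as $\delta\to0$, points of $(0,\delta)$-symmetry for a $(k-1,\delta)$-symmetric function lie within $o(1)r$ of the spine $V_x$. In the limit, $P$ is invariant along $V$ and $0$-symmetric at $z$. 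If $z\notin V$, Lemma \ref{qualitative cone splitting lem} would give a $k$-symmetric limit, which is only harmless when $P$ is actually $k$-symmetric.

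To avoid this subtlety, I will run the induction differently. If some $y_j$ with $j\le k$ is $\tau r/2$-far from $V_x$, I apply Proposition \ref{cone splitting prop} to that point directly. Otherwise all $y_0,\dots,y_k$ lie in $B_{\tau r/2}(V_x)$, with $\dim V_x=k-1$. This contradicts $(k,\tau)$-independence: $k+1$ points $\tau r$-independent in the sense above cannot all lie within $\tau r/2$ of a $(k-1)$-plane. Checking this uses the same simplex-volume comparison, with constants adjusted by $c(n)$.

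\textbf{Conclusion.} Having $y\in\Sigma\setminus B_{\tau r/2}(V_x)$ with $u$ being $(0,\delta_0)$-symmetric on $B_r(y)$ (from Theorem \ref{0,epsilon symmetric thm}, with $\epsilon_0$ small), Proposition \ref{cone splitting prop} with parameters $\tau/2,\eta$ gives $(k,\eta)$-symmetry on $B_r(x)$.

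\textbf{Main obstacle.} The main difficulty is the bookkeeping of constants. $\epsilon_0,r_0$ must be chosen recursively, backward in $k$ up to $n$, so that each application of Theorem \ref{0,epsilon symmetric thm} at scale $r$ around $y\in B_r(x)$ stays inside the allowed range. The pinching condition is stated at center $y$ with radius $r$, which matches how $\Sigma$ is defined. The geometric independence lemma is routine.
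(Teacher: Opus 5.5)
Your overall scheme matches the induction the paper intends when it calls this corollary a direct consequence of Proposition~\ref{cone splitting prop}. The base case comes from Theorem~\ref{0,epsilon symmetric thm} at $x\in\Sigma$. Proposition~\ref{cone splitting prop} is then applied repeatedly, with the $(0,\delta)$-symmetry at the new point also supplied by Theorem~\ref{0,epsilon symmetric thm}, and the constants are chosen backwards from $\eta$.

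The step where you look for a point of $\Sigma$ far from $V_x$ is unnecessary, and as written it is false. You claim that $k+1$ sequentially $\tau r$-separated points cannot all lie within $\tau r/2$ (or $c(n)\tau r$) of a $(k-1)$-plane. This fails already for $k=2$. Take $y_0=x$, $y_1=x+2\tau r e_1$, $y_2=x+\frac r2 e_1+2\tau r e_2$. Each point is more than $\tau r$ from the affine span of its predecessors. Yet all three lie within $8\tau^2 r<\tau r/2$ of the line through $y_0$ and $y_2$ once $\tau<1/16$. The simplex-volume argument only gives the threshold $c(n)\tau^{k}r$. Likewise your sub-claim that $\{y_0,\dots,y_{k-1}\}$ is $(k-1,c(n)\tau)$-independent needs $c(n)\tau^{k-1}$ in place of $c(n)\tau$. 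These corrected thresholds would still make your route work, since $\tau$ is fixed, but the whole detour can be dropped, including the abandoned compactness discussion.

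Here is the direct route. $V_x$ is an affine plane of dimension $k-1$, so the definition of $(k,\tau)$-independence applied to $L=V_x$ immediately gives $y\in\Sigma$ with $d(y,V_x)>\tau r$. Also, $\Sigma$ is trivially $(k-1,\tau)$-independent, because planes of dimension $\le k-2$ are among those of dimension $\le k-1$. So no selection of $y_0,\dots,y_k$ is needed. Set $\eta_k=\eta$ and $\eta_j=\min\{\eta_{j+1},\delta_0(\tau,\eta_{j+1})\}$. Choose $\epsilon_0,r_0$ via Theorem~\ref{0,epsilon symmetric thm} so that every point of $\Sigma$ is $(0,\eta_0)$-symmetric at scale $r$. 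Then apply Proposition~\ref{cone splitting prop} $k$ times, each time with the original $\tau$, to pass from $(j,\eta_j)$- to $(j+1,\eta_{j+1})$-symmetry on $B_r(x)$.
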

\subsection{Weak estimates for singular strata}
In this subsection, based on Theorem \ref{almost monotonicity for doubling index theorem} and Corollary \ref{quantitative cone splitting corollary}, i.e., the almost monotonicity theorem and quantitative cone splitting principle, we can follow the stratification and inductive covering arguments in \cite{CNV} to show a Minkowski-type volume estimate for nodal set. First we prove the volume estimates for quantitative singular strata, note that here we only need to assume $A$ is continuous.
\begin{theorem}\label{singular stratum volume estimate}
      Let $u\in W^{1,2}(B_{4}(0))$ be a nonzero weak solution to \eqref{elliptic equation}, with the coefficients satisfying \eqref{unifomly elliptic} and \eqref{continuous coefficient} in $B_{4}(0)$. Assume that $u$ satisfies the doubling condition \eqref{doubling assumption}, then there exists $r_{0}$ such that for any $\delta,\eta>0$, $0\leq k\leq n-2$ and $0<r\leq 1$, we have
     \begin{equation}
           {\rm Vol}(B_{r}(S_{\eta,r}^{k})\cap B_{1}(0))\leq Cr^{n-k-\delta},
     \end{equation}
     where $C$ depends only on $n,\lambda,\theta,\Lambda,\delta,\eta$.
\end{theorem}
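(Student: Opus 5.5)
Our plan is to follow the Cheeger--Naber--Valtorta quantitative stratification scheme, using Theorem \ref{almost monotonicity for doubling index theorem} as a substitute for exact monotonicity and Corollary \ref{quantitative cone splitting corollary} as the cone-splitting input. We fix $\eta,\delta$. Choose $\gamma=\gamma(n,\delta)\in(0,1)$ small, with $\gamma=2^{-j_0}$ and $c(n)\gamma^{\delta}\le 1$, and set $\beta=\gamma$ in the definition of the pinched set $\Sigma$. Using Corollary \ref{quantitative cone splitting corollary} with $\tau=\gamma$ (and $\eta$ given), we obtain $\epsilon_0$ and a scale $r_1$. We then apply Theorem \ref{almost monotonicity for doubling index theorem} with error $\epsilon_0/10$ to get $r_0\le r_1$. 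Since $S^k_{\eta,r}$ is monotone in $r$, it suffices to treat $r=\gamma^{j}r_0$; the range $r\in[r_0,1]$ is absorbed into the constant, and we rescale so that all balls considered have radius $\le r_0$.

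First step (finitely many bad scales). For $x\in B_1$ and each scale index $i$, we say the scale $\gamma^i r_0$ is \emph{good} for $x$ if $|D(x,\gamma^i r_0)-D(x,\gamma^{i+1}r_0)|\le\epsilon_0$. Applying Corollary \ref{finite nonpinched corollary} with $\gamma$-adic instead of dyadic scales (same argument, chaining Theorem \ref{almost monotonicity for doubling index theorem}), the number of bad scales is at most $K=C(n,\lambda)\Lambda/\epsilon_0$, uniformly in $x$. For each $j$ and each tuple $T\in\{0,1\}^j$ with at most $K$ ones, let $E_T$ be the set of points whose good/bad pattern on scales $\gamma^i r_0$, $i<j$, equals $T$. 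There are at most $\binom{j}{K}\le j^K$ such tuples, a polynomial factor that is absorbed by $\gamma^{-\delta j}$.

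Second step (covering each $E_T\cap S^k_{\eta,\gamma^j r_0}$). We build the covering inductively in $i$. Suppose we have a ball $B_{\gamma^i r_0}(x)$ of the current cover at a good scale $i$ with $T_i=0$. For points $y\in E_T$ in that ball, scale $i$ is pinched, so these points lie in $\Sigma$ at scale $\gamma^i r_0$. If these points were $(k+1,\gamma)$-independent in the ball, then Corollary \ref{quantitative cone splitting corollary} would make $u$ $(k+1,\eta)$-symmetric on $B_{\gamma^i r_0}(y)$ for some such $y$, contradicting $y\in S^k_{\eta,\gamma^jr_0}$. One technical point: the corollary is stated at center $x\in\Sigma$, so we recenter at a point of $\Sigma$ and enlarge the radius by a factor $2$; this costs only a harmless constant. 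Hence the set lies in the $\gamma\gamma^i r_0$-neighborhood of a $k$-plane, and it can be covered by $C(n)\gamma^{-k}$ balls of radius $\gamma^{i+1}r_0$. At a bad scale we instead use the trivial count $C(n)\gamma^{-n}$ balls. After $j$ steps the number of balls is at most
\[
(C(n)\gamma^{-k})^{j}(C(n)\gamma^{-n})^{K}.
\]
Therefore
\[
{\rm Vol}\big(B_{\gamma^jr_0}(E_T\cap S^k)\big)\le C\,C(n)^{j}\gamma^{-kj}\gamma^{-nK}(\gamma^j r_0)^n\le C\gamma^{-nK}(\gamma^j)^{n-k-\delta/2},
\]
where the last inequality uses $C(n)\le\gamma^{-\delta/2}$. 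Summing over the at most $j^K$ tuples $T$ then gives the bound $C r^{n-k-\delta}$ with $r=\gamma^j r_0$.

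Main obstacle. The delicate part is the second step, namely applying the cone-splitting corollary uniformly. It requires comparing symmetry at slightly different centers and radii, where the normalizations $u_{x,r}$ involve the matrices $A_x$, and checking that pinching at the $\gamma$-pair of scales together with $r\le r_0$ gives exactly the hypotheses of Theorem \ref{0,epsilon symmetric thm}. The constant $C(n)$ appearing in the doubling of radii must be fixed before $\gamma$ is chosen. This is why we choose $\gamma$ depending only on $(n,\delta)$ first, and only afterwards choose $\epsilon_0$ and $r_0$ depending on $\gamma$; the constant $C$ then carries the dependence on $\Lambda$ through $K$.
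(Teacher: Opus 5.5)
Your proposal is correct and follows essentially the paper's argument: $\gamma$-adic scales with $\beta=\gamma$, a uniform bound on non-pinched scales from almost monotonicity (Corollary \ref{finite nonpinched corollary}), at most $j^{M}$ patterns $T$, and a covering count of $c(n)\gamma^{-k}$ at pinched scales via Corollary \ref{quantitative cone splitting corollary} versus $c(n)\gamma^{-n}$ at the boundedly many bad scales, with $\gamma=c(n)^{-2/\delta}$ chosen to absorb $c(n)^{j}j^{M}$. The recentering step is unnecessary and is better avoided: as in the paper, choose the covering centers in $E_T\cap S^k_{\eta,r}$ itself, so each center already lies in the pinched set at exactly the scale $\gamma^i r_0$ (enlarging the radius to $2\gamma^i r_0$ would shift the scale at which pinching is needed away from the one your pattern $T$ provides).
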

\begin{remark}
    In Sections 5 and 6, if $A$ is Dini continuous, we can prove the quantitative uniqueness of tangent map and quantitative cone splitting. Based on these ingredients, one can obtain a sharp volume estimate for singular stratum, i.e., removing $\delta$ in the above inequality, by the same neck decomposition arguments as in \cite{HJ1}.
\end{remark}
\begin{proof}
    Given $r>0$, we consider the covering of $S_{\eta,r}^{k}$ with the balls of radius $r/5$. And let 
    \begin{equation}
        N_{r}:={\rm inf} \{K:S_{\eta,r}^{k}\subset\cup_{i=1}^{M}B_{r/5}(x_{i}),x_{i}\in B_{1}(0)\}.
    \end{equation}
    Then $B_{r}(S_{\eta,r}^{k})\subset \cup_{i=1}^{N_{r}}B_{2r}(x_{i})$, to obtain the volume estimate, it suffices to give an upper bound of $N_{r}$. Given $\eta,\delta>0$, then we fix $\beta=\gamma$ in Theorem \ref{0,epsilon symmetric thm} and $\tau=\gamma/10$ in Corollary \ref{quantitative cone splitting corollary}. Note that $S_{\eta,r_{1}}^{k}\subset S_{\eta,r_{2}}^{k}$ if $r_{1}\leq r_{2}$, therefore it suffices to consider the discrete cases $r=r_{0}\gamma^{j}$ for all $j\geq 0$, where $\gamma=\gamma(n,\delta)$ is a universal constant to be determined later and $r_{0}=r_{0}(n,\lambda,\theta,\Lambda,\eta,\gamma)$ is taken as in Corollary \ref{quantitative cone splitting corollary}.\\
    \indent Consider $r=\gamma^{l}$, for any $x\in B_{1}(0)$, we define $T_{\epsilon}^{j}(x)\in\mathbb{R}^{j}$, where each term in $T_{\epsilon}^{j}(x)$ is chosen from $\{0,1\}$. The $p$-th term $a_{p} (p\geq 1)$ in $T_{\epsilon}^{j}(x)$ is defined as $0$ if 
    \begin{equation}
        |D(x,r_{0}\gamma^{p})-D(x,r_{0}\gamma^{p+1})|\leq \epsilon,
    \end{equation}
    and as $1$ if 
    \begin{equation}
        |D(x,r_{0}\gamma^{p})-D(x,r_{0}\gamma^{p+1})|> \epsilon.
    \end{equation}
    \indent For $i\leq j$, $T_{\epsilon}^{i}(x)$ can be naturally viewed as a restriction of $T_{\epsilon}^{j}(x)$ to $\mathbb{R}^{i}$. And note that by Corollary \ref{finite nonpinched corollary}, we have the number of non-pinched scales is uniformly bounded, i.e., 
    \begin{equation}
        |T_{\epsilon}^{j}(x)|:=\#\{a_{p}=1,\text{for}\; 1\leq p\leq j\}\leq M(n,\lambda,\theta,\Lambda,\eta,\gamma)
    \end{equation}
    for any $j\geq 1$. Denote $\Omega_{j}$ to be all $T\in \{0,1\}^{j}$ with $|T|\leq M$, and for any $T^{j}\in\Omega_{j}$, we define
    \begin{equation}
        E_{T^{j}}:=\{x\in B_{1}(0):T_{\epsilon}^{j}(x)=T^{j}\}.
    \end{equation}
    Then we can rewrite $B_{1}(0)$ as a union in the following way
    \begin{equation}
        \cup_{T^{j}\in \Omega_{j}}E_{T^{j}}=B_{1}(0).
    \end{equation}
    Let $|\Omega_{j}|$ denote the number of elements in $\Omega_{j}$, then it is bounded by $C_{j}^{M}<j^{M}$. \\
    \indent Next given some $T\in \Omega_{l}$, we focus on estimating the volume of $E_{T}\cap S_{\eta,r}^{k}$, where $r=r_{0}\gamma^{l}$, and $j+1\leq l$. We will use the balls centered at $E_{T}\cap S_{\eta,r}^{k}$ and radius $r_{0}\gamma^{j}$ to cover it, and now let 
    \begin{equation}
         N_{j,T}:={\rm inf} \{K_{j}:E_{T}\cap S_{\eta,r}^{k}\subset\cup_{i=1}^{K_{j}}B_{r_{0}\gamma^{j}}(x_{a}^{j})\;\text{with}\;x_{a}^{j}\in E_{T}\cap S_{\eta,r}^{k}\}.
    \end{equation}
Then we have 
\begin{align}\label{4.8.1}
\begin{split}
      {\rm Vol}(B_{r}(S_{\eta,r}^{k})\cap B_{1}(0))&\leq \sum_{T^{l}\in \Omega_{l}}  {\rm Vol}(B_{r}(S_{\eta,r}^{k}\cap E_{T^{l}})\cap B_{1}(0))\leq \sum_{T^{l}\in \Omega_{l}}  {\rm Vol}(B_{r}(S_{\eta,r}^{k}\cap E_{T^{l}}))\\
    &\leq \sum_{T^{l}\in\Omega_{l}}N_{l,T^{l}}\cdot\omega_{n}(2r_{0}\gamma^{l})^{n}\leq \omega_{n}N_{l}l^{M}(2r_{0}\gamma^{l})^{n},
    \end{split}
\end{align}
where $N_{j}={\rm sup}_{T^{l}\in {\Omega_{l}}}N_{j,T^{l}}$. \\
\indent Now it suffices for us to give an upper bound of $N_{l,T_{l}}$. For our convenient, denote $T=T^{l}$ and we will estimate $N_{j+1,T}$ by $N_{j,T}$. First for each $\Sigma_{j}:=B_{r_{0}\gamma^{j}}(x_{a}^{j})\cap E_{T}\cap S_{\eta,r}^{k}$, we will choose the balls $\{B_{r_{0}\gamma^{j+1}}(x_{a}^{j+1})\}$ with $x_{a}^{j+1}\in \Sigma_{j}$ to cover $\Sigma_{j}$. Let $T=(a_{1},a_{2},...,a_{l})$ and we consider the following two cases.\\
\indent \textbf{Case 1:} if $a_{j}=1$, we only to choose $\{B_{r_{0}\gamma^{j+1}}(x_{a}^{j+1})\}$ to cover $B_{r_{0}\gamma^{j}}(x_{a}^{j})$, then a direct covering implies $N_{j+1,T}\leq c(n)\gamma^{-n}N_{j,T}$.\\
\indent \textbf{Case 2}: if $a_{j}=0$, for any $x\in \Sigma_{j}$, by definition, we have 
\begin{equation}
    |D(x,r_{0}\gamma^{j})-D(x,r_{0}\gamma^{j+1})|\leq \epsilon.
\end{equation}
If $\Sigma_{j}$ is $(k+1,\tau)$-independent in $B_{r_{0}\gamma^{j}}(x_{a}^{j})$ with $\tau=\gamma/10$, then by Corollary \ref{quantitative cone splitting corollary}, $u$ is $(k+1,\eta)$-symmetric on $B_{r_{0}\gamma^{j}}(x_{a}^{j})$, however, recall $r=r_{0}\gamma^{l}\leq r_{0}\gamma^{j+1}$, which contradicts $x_{a}^{j}\in S_{\eta,r}^{k}$, therefore we conclude that
\begin{equation}
    \Sigma_{j}\subset B_{r_{0}\gamma^{j+1}/10}(x_{a}^{j}+V_{k})
\end{equation}
for some subspace of dimension $k$. In this case, by a volume estimate, we have $N_{j+1,T}\leq c(n)\gamma^{-k}N_{j,T}$. Since $|T|\leq M$ and note that $N_{0,T}\leq c(n)r_{0}^{-n}=C(n,\lambda,\theta,\Lambda,\eta,\gamma)$, then by an iteration,
\begin{align}
\begin{split}
    N_{l,T}&\leq (c(n)\gamma^{-n})^{M}\cdot (c(n)\gamma^{-k})^{l-M}\cdot N_{0,T}\\
    &\leq c(n)^{l}C\cdot\gamma^{-lk-nM}.
    \end{split}
\end{align}
Therefore $N_{l}\leq c(n)^{l}C\cdot\gamma^{-lk-nM}$, combining \eqref{4.8.1}, we conclude that
\begin{align}
    \begin{split}
           {\rm Vol}(B_{r}(S_{\eta,r}^{k})\cap B_{1}(0))&\leq \omega_{n}N_{l}l^{M}(2r_{0}\gamma^{l})^{n}\\
         &\leq \omega_{n}l^{M}(2r_{0}\gamma^{l})^{n}\cdot c(n)^{l}C\gamma^{-lk-nM}\\
         &\leq C(n,\tau,\lambda,\theta,\Lambda,\eta,\gamma)l^{M}c(n)^{l}\gamma^{l(n-k)}.
    \end{split}
\end{align}
Since for any $\delta>0$, we can choose $\gamma=c(n)^{-2/\delta}$, and then we choose $C_{1}=M^{M}$ such that
\begin{equation}
    l^{M}\leq C_{1}\gamma^{-l\delta/2}.
\end{equation}
Therefore the following holds
\begin{equation}
    c(n)^{l}l^{M}\leq C_{1}\gamma^{-l\delta}.
\end{equation}
Hence,
\begin{equation}
        {\rm Vol}(B_{r}(S_{\eta,r}^{k})\cap B_{1}(0))\leq C(n,\lambda,\theta,\Lambda,\eta,\delta)\gamma^{l(n-k-\delta)}=Cr^{n-k-\delta}.
\end{equation}
And for general case $r_{0}\gamma^{l+1}<r\leq r_{0}\gamma^{l}$ for some $l\geq 1$, we have
\begin{align}
\begin{split}
        {\rm Vol}(B_{r}(S_{\eta,r}^{k})\cap B_{1}(0))&\leq {\rm Vol}(B_{r_{0}\gamma^{l}}(S_{\eta,r_{0}\gamma^{l}}^{k})\cap B_{1}(0))\leq C(n,\tau,\lambda,\theta,\Lambda,\delta)\gamma^{l(n-k-\delta)}\\
      &\leq C\gamma^{-n}r^{n-k-\epsilon}=C(n,\lambda,\theta,\Lambda,\eta,\delta)r^{n-k-\delta}.
      \end{split}
\end{align}
For $1\geq r\geq r_{0}\gamma$, since 
\begin{equation}
    r^{n-k-\delta}\geq r^{n}\geq (r_{0}c(n)^{-2/\delta})^{n},
\end{equation}
and $  {\rm Vol}(B_{1}(0))=\omega_{n}$, then we can choose $C=C(n,\lambda,\theta,\Lambda,\delta,\eta)$ such that $Cr^{n}\geq \omega_{n}$. Finally, we can conclude that for any $0<r\leq 1$, 
\begin{equation}
       {\rm Vol}(B_{r}(S_{\eta,r}^{k})\cap B_{1}(0))\leq Cr^{n-k-\delta}.
\end{equation}
\end{proof}
\subsection{$\epsilon$-regularity and volume estimates}
\indent In the following, we are going to prove the uniform volume estimates for nodal sets. First we introduce $\epsilon$-regularity proposition, since we need the $C^{1}$-regularity of $u$, here we assume that $A$ is not only continuous, but also satisfies the DMO condition. 
\begin{proposition}\label{epsilon regularity in section 4.3}
     Let $u\in W^{1,2}(B_{4}(0))$ be a nonzero weak solution to \eqref{elliptic equation}, with the coefficients satisfying \eqref{unifomly elliptic}, \eqref{continuous coefficient} and \eqref{DMO} in $B_{4}(0)$. For any $0<\delta\leq 1/100$, there exists $r_{0}(n,\lambda,\theta,\omega,\delta)$ and $\epsilon_{0}(n,\lambda,\theta,\omega,\delta)$ such that if $u$ is $(n-1,\epsilon)$-symmetric on $B_{r}(x)$ with $r\leq r_{0}$ and $\epsilon\leq\epsilon_{0}$, then 
     \begin{equation}
         ||\nabla(\hat{u}_{x,r}-l)||_{L^{\infty}(B_{1/4\sqrt{1+\lambda}}(0))}\leq c_{0}\delta,
     \end{equation}
     where $l$ is a linear function with $|\nabla l|\geq c_{0}(n,\lambda,\Lambda)>0$, and therefore $|\nabla u(x)|\neq 0$.
\end{proposition}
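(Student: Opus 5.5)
The plan is to combine the remark after Lemma \ref{qualitative cone splitting lem} (an $(n-1)$-symmetric solution of a constant-coefficient equation is linear) with the $C^{1}$ estimate of Theorem \ref{DMO regularity}, run through a compactness argument. Write $\hat u=\hat u_{x,r}$. By the definition of $(n-1,\epsilon)$-symmetry there is an $(n-1)$-symmetric $P$ with $\|\hat u-P\|_{L^\infty(B_{1/\sqrt{1+\lambda}})}\le\epsilon$. Here $P$ is homogeneous, solves a uniformly elliptic constant-coefficient equation, and satisfies $\fint_{B_1}P^2\ge c(n,\lambda,\Lambda)$. Such a $P$ depends on a single variable $\langle \nu,z\rangle$ and solves a one-dimensional equation $c\,p''=0$ with $c>0$, so $P$ is linear: $P=l$ with $l(0)=0$. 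The normalization then gives $|\nabla l|\ge c_0(n,\lambda,\Lambda)>0$. (The degree $d=0$ is excluded because $\hat u(0)=0$ forces $|P(0)|\le\epsilon$, which contradicts the lower $L^2$ bound once $\epsilon$ is small.)

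The task therefore reduces to upgrading $L^\infty$ closeness of $\hat u$ to the linear $l$ into gradient closeness on $B_{1/4\sqrt{1+\lambda}}$. I will argue by contradiction. Suppose there are $u^i$, $x_i$, $r_i\to0$, $\epsilon_i\to0$ and linear $l_i$ with $\|\hat u^i-l_i\|_{L^\infty}\le\epsilon_i$ but $\|\nabla(\hat u^i-l_i)\|_{L^\infty(B_{1/4\sqrt{1+\lambda}})}>c_0\delta$. The rescaled function $\hat u^i$ solves ${\rm div}(\hat A_i\nabla\hat u^i)=0$, where $\hat A_i(w)=A^i(x_i+r_iw)$. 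These coefficients have modulus of continuity and DMO modulus bounded by $\theta(r_i\cdot)$ and $\omega(r_i\cdot)$, so their Dini integrals $\int_0^1\omega(r_is)/s\,ds=\int_0^{r_i}\omega(t)/t\,dt$ tend to $0$.

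The key step is to apply Theorem \ref{DMO regularity} to $v_i=\hat u^i-l_i$. Since $l_i$ is linear, $v_i$ solves ${\rm div}(\hat A_i\nabla v_i)={\rm div}F_i$ with $F_i=-(\hat A_i-\hat A_i(0))\nabla l_i$, and the DMO modulus of $F_i$ is at most $C\,\omega(r_i\cdot)|\nabla l_i|$. Caccioppoli's inequality bounds $\|\nabla v_i\|_{L^1(B_{1/2\sqrt{1+\lambda}})}\le C\|v_i\|_{L^\infty(B_{1/\sqrt{1+\lambda}})}+C\|F_i\|_{L^2}\le C\epsilon_i+C\theta(r_i)$. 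Theorem \ref{DMO regularity}, applied after scaling to these smaller balls, then gives
\begin{equation*}
\|\nabla v_i\|_{L^\infty(B_{1/4\sqrt{1+\lambda}})}\le C\Big(\epsilon_i+\theta(r_i)+\int_0^{r_i}\frac{\omega(t)}{t}dt\Big)\to0,
\end{equation*}
which is the desired contradiction. This estimate is in fact quantitative, so one can choose $\epsilon_0,r_0$ directly without compactness.

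The main obstacle is verifying that the constant in Theorem \ref{DMO regularity} stays uniform after rescaling, and that $\tilde\omega_{F}$ scales so that its Dini integral vanishes as $r\to0$. This needs the explicit form of $\tilde\omega$ from \cite{DMO}, namely $\tilde\omega(t)=\sum_k\kappa^{k}\omega(\kappa^{-k}t)$-type sums, which are controlled by $\int_0^{Ct}\omega(s)/s\,ds$. Once that is settled, the nondegeneracy $|\nabla u(x)|\ne0$ is immediate. Indeed, $\nabla\hat u(0)$ is a nonzero multiple of $\sqrt{A(x)}\nabla u(x)$, and $|\nabla\hat u(0)|\ge|\nabla l|-c_0\delta\ge c_0(1-\delta)>0$.
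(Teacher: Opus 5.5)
Your proposal is correct and is essentially the paper's own proof. You identify $P$ as a linear $l$, write ${\rm div}(\hat A\nabla(\hat u-l))=-{\rm div}\big((\hat A-\hat A(0))\nabla l\big)$, and combine the gradient estimate of \cite{DMO} (with rescaled Dini integral $\int_0^r\omega(t)/t\,dt\to0$) with Caccioppoli to obtain $\|\nabla(\hat u-l)\|_{L^\infty(B_{1/4\sqrt{1+\lambda}})}\le C\big(\epsilon+\theta(r)+\int_0^r\omega(t)/t\,dt\big)$; the contradiction framing is superfluous, as you note yourself, and the paper glosses over the uniformity of the DMO constant under rescaling in the same way.

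One trivial slip: $\hat u_{x,r}$ is defined via $u(x+ry)$ rather than $u(x+rA_x(y))$. So $\nabla\hat u(0)$ is a positive multiple of $\nabla u(x)$ itself, not of $\sqrt{A(x)}\nabla u(x)$, and this changes nothing in the conclusion.
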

\begin{proof}
    By definition, $||\hat{u}_{x,r}-l||_{L^{\infty}(B_{1/\sqrt{1+\lambda}}(0))}\leq\epsilon$, where $\hat{u}_{x,r}=[u(x+ry)-u(x)]/\sqrt{\fint_{B_{1}(0)}|u(x+ry)-u(x)|^{2}dy}$ and $||l||_{L^{2}(\partial B_{1}(0))}\geq c(n,\lambda,\Lambda)>0$. Denote $\hat{A}(y)=A(x+ry)$ and $\hat{u}=\hat{u}_{x,r}$, then $\hat{u}-l$ satisfies
    \begin{equation}
        {\rm div}(\hat{A}(y)\nabla (\hat{u}-l))=-{\rm div}(\hat{A}(y)\nabla l)=-{\rm div}((\hat{A}(y)-\hat{A}(0))\nabla l):=-{\rm div}f,
    \end{equation}
where $f=(\hat{A}(y)-\hat{A}(0))\textbf{L}:=g(ry)$, and $\textbf{L}=\nabla l$ is a constant vector with $c_{1}\geq |L|\geq c_{0}$. It is easy to see that $f$ is continuous and satisfies uniform DMO condition \ref{DMO}, 
  then by the proof of Theorem 1.5 in \cite{DMO}, we have 
    \begin{equation}\label{4.9.1}
        ||\nabla(\hat{u}-l)||_{L^{\infty}(B_{1/4\sqrt{1+\lambda}}(0))}\leq C(||\nabla(\hat{u}-l)||_{L^{1}(B_{1/2\sqrt{1+\lambda}}(0))}+\int_{0}^{1}\frac{\tilde{\omega}_{f}(s)}{s}ds),
        \end{equation}
    where $C$ is a uniform constant, $f(y)=g(ry)$ and $\int_{0}^{1}\frac{\tilde{\omega}_{g}(s)}{s}ds<\infty$. Recall that $\omega_{A}\leq\omega$, then
    \begin{equation}
        \int_{0}^{1}\frac{\tilde{\omega}_{f}(s)}{s}ds=\int_{0}^{r}\frac{\tilde{\omega}_{g}(s)}{s}ds\leq c_{1}\int_{0}^{r}\frac{\tilde{\omega}(s)}{s}\to 0\;\text{as}\;r\to 0.
    \end{equation}
By Hölder inequality and Caccioppoli inequality,
    \begin{equation}\label{4.9.2}
        ||\nabla(\hat{u}-l)||_{L^{1}(B_{1/2\sqrt{1+\lambda}}(0))}\leq C(||\hat{u}-l||_{L^{2}(B_{1/\sqrt{(1+\lambda)}})}+||f||_{L^{2}(B_{1}(0))}).
    \end{equation}
Recall $A$ is continuous, and the modulus of continuity is $\theta$, then ${\rm sup}_{y\in B_{1}(0)}|\hat{A}(y)-\hat{A}(0)|\leq \theta(r)$, hence $||f||_{L^{2}(B_{1}(0))}\leq c_{1}\theta(r)\to 0$ as $r\to 0$. Therefore, combining \eqref{4.9.1} and \eqref{4.9.2}, we can conclude that
\begin{equation}
     ||\nabla(\hat{u}-l)||_{L^{\infty}(B_{1/4\sqrt{1+\lambda}}(0))}\leq C(\epsilon+\theta(r)+\int_{0}^{r}\frac{\tilde{\omega}(s)}{s}ds).
\end{equation}
Since $|\nabla l|=|\textbf{L}|\geq c_{0}(n)$, then we choose $r\leq r_{0}(n,\lambda,\theta,\omega,\delta)$ and $\epsilon\leq \epsilon_{0}(n,\lambda,\theta,\omega,\delta)$ such that $C(\epsilon+\theta(r)+\int_{0}^{r}\frac{\tilde{\omega}(s)}{s}ds)\leq \delta c_{0}(n)$.
\end{proof}
Now we can show the weak volume estimates for critical sets.
\begin{theorem}\label{critical set estimate thm}
     Let $u\in W^{1,2}(B_{4}(0))$ be a nonzero weak solution to \eqref{elliptic equation}, with the coefficients satisfying \eqref{unifomly elliptic}, \eqref{continuous coefficient} and \eqref{DMO} in $B_{4}(0)\subset\mathbb{R}^{n}$. Assume that $u$ satisfies the doubling condition \eqref{doubling for critical set}, then for any $\delta,r\in(0,1)$, there exists a constant $C$, depending only on $n,\lambda,\theta,\omega,\Lambda,\delta$ such that
     \begin{equation}
         {\rm Vol}(B_{r}(C(u))\cap B_{1}(0))\leq Cr^{2-\delta}.
     \end{equation}
\end{theorem}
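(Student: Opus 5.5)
The plan is to show that the critical set sits inside a quantitative singular stratum of dimension $n-2$ and then apply Theorem \ref{singular stratum volume estimate} with $k=n-2$. Throughout, the doubling index, the rescalings $u_{x,r}$ and the symmetry notions are taken with $u(x)$ subtracted at the center, which is the convention for critical sets. Under \eqref{doubling for critical set}, all the results of Sections 3 and 4.1–4.2 apply verbatim in this setting: the almost monotonicity (Theorem \ref{almost monotonicity for doubling index theorem}), the symmetry from pinching (Theorem \ref{0,epsilon symmetric thm}), cone splitting (Corollary \ref{quantitative cone splitting corollary}) and the stratum estimate. Indeed, their proofs only used the normalization $u_{x,r}(0)=0$, the doubling bound, compactness and harmonic approximation, and none of these needs $u(x)=0$. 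For the stratum estimate we only need the doubling at each center $x$, which is exactly \eqref{doubling for critical set}.

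\textbf{Key inclusion.} Fix $\delta_{1}=1/100$ in Proposition \ref{epsilon regularity in section 4.3}. This gives $\epsilon_{0}$ and $r_{1}$ such that, if $u$ is $(n-1,\epsilon_{0})$-symmetric on $B_{s}(x)$ with $s\le r_{1}$, then $|\nabla u(x)|\neq 0$. We also need a slightly stronger, scale-robust version: $|\nabla u|\neq0$ on all of $B_{cs}(x)$. This follows from the same gradient bound $\|\nabla(\hat u_{x,s}-l)\|_{L^\infty(B_{1/4\sqrt{1+\lambda}})}\le c_0\delta_1$ together with $|\nabla l|\ge c_0$. Hence, for $x\in C(u)$, $u$ is \emph{not} $(n-1,\epsilon_{0})$-symmetric on $B_{s}(x)$ for any $s\le r_{1}$. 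This gives
\begin{equation*}
C(u)\cap B_{1}(0)\subset S^{n-2}_{\epsilon_{0},r}(u_{r_1})\quad\text{for every } 0<r\le 1,
\end{equation*}
where $u_{r_{1}}$ denotes the problem rescaled by the factor $r_{1}$, or equivalently the stratum is defined with the scale range $r\le s\le r_{1}$ in place of $r\le s\le 1$. Scales in $[r_1,1]$ are handled by a covering: cover $B_{1}$ by $C(n)r_{1}^{-n}$ balls of radius $r_{1}/10$ and apply the stratum estimate on each ball after rescaling to unit size. The rescaled function still solves an equation of the same type with the same $\lambda$, and with $\theta,\omega$ replaced by $\theta(r_{1}\cdot),\omega(r_1\cdot)$, which are no worse. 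It also still satisfies \eqref{doubling for critical set} with the same $\Lambda$. All the resulting constants depend only on $n,\lambda,\theta,\omega,\Lambda,\delta$.

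\textbf{Conclusion.} Applying Theorem \ref{singular stratum volume estimate} with $k=n-2$, $\eta=\epsilon_{0}$ and the given $\delta$ gives
\begin{equation*}
{\rm Vol}\big(B_{r}(C(u))\cap B_{1}(0)\big)\le {\rm Vol}\big(B_{r}(S^{n-2}_{\epsilon_0,r})\cap B_{1}(0)\big)\le Cr^{2-\delta},
\end{equation*}
after summing over the $C(n)r_1^{-n}$ balls. For $r\ge r_1$ the bound is trivial by enlarging $C$.

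\textbf{Main obstacle.} The delicate point is checking that the $(n-1,\epsilon)$-symmetry used in the stratification matches the hypothesis of Proposition \ref{epsilon regularity in section 4.3}. We need that an $(n-1)$-symmetric limit $P$ is a \emph{nondegenerate linear} function, with $|\nabla l|\ge c_0(n,\lambda,\Lambda)$. The remark after Lemma \ref{qualitative cone splitting lem} gives linearity for constant-coefficient solutions, and the lower bound $\fint_{B_1}|P|^2\ge c(n,\lambda,\Lambda)$ built into the definition gives the gradient lower bound. A second point needing care is that the constant in the stratum theorem must not depend on the center value $u(x)$. This is ensured because all compactness arguments use only the normalized maps $u_{x,r}$, whose doubling is controlled uniformly by \eqref{doubling for critical set}.
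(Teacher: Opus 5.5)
Your proposal is correct and follows the paper's proof: the $\epsilon$-regularity Proposition \ref{epsilon regularity in section 4.3} places $C(u)$ inside the stratum $S^{n-2}_{\eta,r}$, and Theorem \ref{singular stratum volume estimate} with $k=n-2$ then gives the bound. Your extra bookkeeping fills in details the paper passes over. First, you restrict the stratum's scale range to $s\le r_1$ by covering and rescaling, since the paper's inclusion is only justified at small scales. Second, you check that the center-subtracted normalization works under \eqref{doubling for critical set}. The claimed non-vanishing of $\nabla u$ on all of $B_{cs}(x)$ is harmless but not needed; non-vanishing at $x$ already gives the inclusion.
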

\begin{proof}[Proof of Theorem \ref{critical set estimate thm}]
    By Proposition \ref{epsilon regularity in section 4.3}, we have
    \begin{equation}
        C(u)\subset S_{\eta,r}^{n-2}
    \end{equation}
    for some $\eta\leq \eta_{0}$ and $r\leq r_{0}$. Therefore by Theorem \ref{singular stratum volume estimate}, 
    \begin{equation}
        {\rm Vol}(B_{r}(C(u))\cap B_{1}(0))\leq{\rm Vol}(B_{r}(S_{\eta_{0},r}^{n-2})\cap B_{1}(0))\leq C(n,\lambda,\theta,\omega,\Lambda,\delta)r^{2-\delta}
    \end{equation}
    for any $0<\delta<1$.
\end{proof}
Next, we use the volume estimates for singular strata, $\epsilon$-regularity and a successful decomposition of $B_{1}$ to prove the uniform volume estimates for nodal sets.
\begin{proof}[Proof of Theorem \ref{nodal set main theorem}]
    Fix $\delta,r_{0}$ as in Proposition \ref{epsilon regularity in section 4.3}, then for any $0<r\leq r_{0}$, note that
we have the following covering of $B_{1}(0)$
\begin{equation}\label{decomposition}
    B_{1}(0)\subset S_{\eta,r}^{n-2}\cup\bigcup_{r\leq 2^{i}r\leq 1/4}(S_{\eta,2^{i+1}r}^{n-2}\setminus S_{\eta,2^{i}r}^{n-2})\cup(B_{1}\setminus S_{\eta,1/4}^{n-2}).
\end{equation}
Next, we will estimate each part that intersects with $Z(u)$. First we claim that if $x\in Z(u)\setminus S_{\eta,h}^{n-2}$, then for any $s\leq h/10$, 
\begin{equation}\label{d0}
      {\rm Vol}(B_{s}[Z(u)\cap B_{h/8\sqrt{1+\lambda}}(x)])\leq C(n,\lambda)sh^{n-1}.
\end{equation}
\indent Since $x\notin S_{\eta,h}^{n-2}$, then by Proposition \ref{epsilon regularity in section 4.3}, there exists some linear function $l$ with $|\nabla l|\geq c_{0}$ such that $||\nabla(\hat{u}-l)||_{L^{\infty}(B_{1/4\sqrt{1+\lambda}}(0))}\leq c_{0}\delta$, where $\hat{u}=\hat{u}_{x,t}$ for some $1>t>h$. Now, we may assume that $\nabla l$ is parallel to $e_{n}$, denote $\hat{u}(y)=\hat{u}(y^{\prime},y_{n})$, then 
\begin{equation}
    |\nabla_{y^{\prime}}\hat{u}|\leq c_{0}\delta\;\text{and}\;|\frac{\partial\hat{u}}{\partial y_{n}}|\geq c_{0}(1-\delta)
\end{equation}
in $B_{1/4\sqrt{1+\lambda}}(0)$. Since $\hat{u}(0^{n-1},0)=0$, then for all $|y^{\prime}|\leq 1/8\sqrt{1+\lambda}<1$, we have $|\hat{u}(y^{\prime},0)|\leq c_{0}\delta$. And we may assume $\frac{\partial\hat{u}}{\partial y_{n}}>0$, therefore
\begin{equation}
    \hat{u}(y^{\prime},0)-\hat{u}(y^{\prime},-4\delta)\geq 4c_{0}\delta(1-\delta)>2c_{0}\delta.
\end{equation}
Then $\hat{u}(y^{\prime},-4\delta)<-c_{0}\delta$, and similarly $\hat{u}(y^{\prime},4\delta)>c_{0}\delta$. Hence, there exists a unique $y_{n}=g(y^{\prime})\in(-4\delta,4\delta)$ such that $\hat{u}(y^{\prime},g(y^{\prime}))=0$ with 
\begin{equation}
    |\nabla_{y^{\prime}}g|=\frac{|\nabla_{y^{\prime}}\hat{u}|}{|\partial \hat{u}/\partial y_{n}|}\leq \frac{\delta}{1-\delta}\leq 2\delta.
\end{equation}
Therefore, 
\begin{equation}
    Z(\hat{u })\cap B_{1/8\sqrt{1+\lambda}}(0)\subset {\rm Graph}(g):=\{(y^{\prime},g(y^{\prime})): |y^{\prime}|\leq 1/8\sqrt{1+\lambda}\}.
\end{equation}
By the definition of $\hat u$, we have 
\begin{equation}
     Z(u)\cap B_{t/8\sqrt{1+\lambda}}(x)\subset x+\{(ty^{\prime},tg(y^{\prime})): |y^{\prime}|\leq 1/8\sqrt{1+\lambda}\}
\end{equation}
is also a $2\delta$-Lipschitz graph.\\
\indent For $h<t$, we conclude that
\begin{equation}
      {\rm Vol}(B_{s}[Z(u)\cap B_{h/8\sqrt{1+\lambda}}(x)])\leq C(n,\lambda)sh^{n-1}
\end{equation}
for any $s\leq h/10$, i.e., the claim holds. By \eqref{decomposition} and Theorem \ref{singular stratum volume estimate}, we obtain the first part estimate in the covering with $s= r/10$
\begin{equation}\label{d1}
      {\rm Vol}(B_{s}(S_{\eta,r}^{n-2}))\leq   {\rm Vol}(B_{r}(S_{\eta,r}^{n-2}))\leq C(n,\lambda,\Lambda,\theta,\eta)r^{2-\eta}.
\end{equation}
And for $x\in S_{\eta,2^{i+1}r}^{n-2}\setminus S_{\eta,2^{i}r}^{n-2}$, by claim, there holds
\begin{equation}\label{42}
      {\rm Vol}(B_{s}[Z(u)\cap B_{2^{i}r/8\sqrt{1+\lambda}}(x)])\leq C(n,\lambda)s(2^{i}r)^{n-1}.
\end{equation}
Hence we can cover $Z_{u}\cap (S_{\eta,2^{i+1}r}^{n-2}\setminus S_{\eta,2^{i}r}^{n-2})$ by a finite number of balls of radius $2^{i}r/8\sqrt{1+\lambda}:=s_{i}/2$ with $s_{i}/4$-disjoint, and let $M_{i}$ be the number of balls, then by volume estimates for singular strata,
\begin{equation}
      {\rm Vol}(B_{s_{i}}(S_{\eta,s_{i}}^{n-2}))\leq C(2^{i+1}r/8\sqrt{1+\lambda})^{2-\eta}.
\end{equation}
Then the disjointness and a direct volume comparison imply
\begin{equation}
    M_{i}\leq C(2^{i}r)^{2-n-\eta}.
\end{equation}
Therefore by \eqref{42}, we obtain that
\begin{equation}\label{d2}
      {\rm Vol}(B_{s}[Z(u)\cap S_{\eta,2^{i+1}r}^{n-2}\setminus S_{\eta,2^{i}r}^{n-2} ])\leq CM_{i}s(2^{i}r)^{n-1}\leq Cs(2^{i}r)^{1-\eta}.
\end{equation}
\indent Recall by \eqref{d0}, 
\begin{equation}\label{d3}
      {\rm Vol}(B_{s} [Z(u)\cap B_{1}\setminus S_{\eta,1/4}^{n-2}])\leq C(n,\lambda)s.
\end{equation}
Note that $s=r/10\leq r_{0}/10$, choosing $\eta=\eta(n,\lambda)<1/10$, therefore by  \eqref{decomposition}, \eqref{d1}, \eqref{d2} and \eqref{d3}, we conclude that
\begin{equation}
      {\rm Vol}(B_{r/10}[Z(u)\cap B_{1}(0)])\leq C(n,\lambda,\Lambda,\theta,\eta)[r^{2-\eta}+\sum_{r\leq 2^{i}r\leq 1/4}r(2^{i}r)^{1-\eta}+r]\leq C(n,\lambda,\Lambda,\theta)r.
\end{equation}
As for $1>r>r_{0}/10$, it is obvious that 
\begin{equation}
      {\rm Vol}(B_{r}\cap [Z(u)\cap B_{1}(0)])\leq {\rm Vol}(B_{1}(0))\leq C(n,\lambda,\Lambda,\theta)r_{0}/10\leq Cr.
\end{equation}
\end{proof}
In the following, we use the Minkowski-type estimates for nodal sets to estimate the sub-level sets. The study of sub-level set is related to the propagation-of-smallness property for elliptic equations, which is a crucial property. First, we show a technical lemma based on an observation in \cite{LM1} .
\begin{lemma}\label{l:zeroes}
     Let $u\in W^{1,2}(B_{4}(0))$ be a nonzero weak solution to \eqref{elliptic equation}, with \eqref{unifomly elliptic} and \eqref{doubling assumption} satisfied in $B_{4}(0)$. Assume that $\fint_{B_{1}(0)}u^{2}=1$, then there exists $r=4e^{-\frac{3b}{4\Lambda}}$ such that if  $b\geq b_{0}(n,\lambda,\Lambda)$, then for any $x_{0}\in B_{1}$ with $B_{r}(x_{0})\cap E_{b}\neq \emptyset$,  there exists $y\in B_{2r}(x_{0})$ such that $u(y)=0$.
\end{lemma}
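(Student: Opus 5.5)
The plan is to argue by contradiction, playing the Harnack inequality against the lower bound on the growth of $u$ that comes from the doubling condition \eqref{doubling assumption}. Fix $x_{0}\in B_{1}$ and a point $z\in B_{r}(x_{0})\cap E_{b}$, so that $|u(z)|\leq e^{-b}$. Suppose $u$ has no zero in $B_{2r}(x_{0})$. Since $u$ is continuous (De Giorgi--Nash--Moser), $u$ then has a fixed sign on $B_{2r}(x_{0})$, and we may assume $u>0$ there.

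\textbf{Step 1 (Harnack upper bound).} Because $u>0$ solves \eqref{elliptic equation} on $B_{2r}(x_{0})$ with only the ellipticity \eqref{unifomly elliptic}, Moser's Harnack inequality gives
\begin{equation}
\sup_{B_{r}(x_{0})}u\leq C_{H}(n,\lambda)\inf_{B_{r}(x_{0})}u\leq C_{H}u(z)\leq C_{H}e^{-b}.
\end{equation}
Note that $r=4e^{-3b/(4\Lambda)}$ is tiny once $b\geq b_{0}$, so $B_{2r}(x_{0})\subset B_{4}$.

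\textbf{Step 2 (doubling lower bound).} Since $B_{1}(0)\subset B_{2}(x_{0})$, we have $\fint_{B_{2}(x_{0})}u^{2}\geq 2^{-n}\fint_{B_{1}}u^{2}=2^{-n}$. All balls $B_{2s}(x_{0})$ with $s\leq 1$ lie in $B_{3}\subset B_{4}$, so \eqref{doubling assumption} applies at every such scale. Choose $k\in\mathbb{N}$ with $2^{k}r\in(1,2]$ and iterate the doubling $k$ times from $B_{2^{k}r}(x_{0})$ down to $B_{r}(x_{0})$. This gives
\begin{equation}
\fint_{B_{r}(x_{0})}u^{2}\geq 4^{-k\Lambda}\fint_{B_{2^{k}r}(x_{0})}u^{2}\geq 4^{-k\Lambda}\,2^{-2n}\,\fint_{B_{2}(x_{0})}u^{2}.
\end{equation}
Here the middle factor compares the ball averages over $B_{2^{k}r}(x_{0})$ and $B_{2}(x_{0})$, using $2^{k}r>1$ and the volume ratio. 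Since $4^{-k\Lambda}\geq (r/2)^{2\Lambda}$, it follows that
\begin{equation}
\sup_{B_{r}(x_{0})}|u|\geq c(n)\,(r/2)^{\Lambda}=c(n)\,2^{\Lambda}e^{-3b/4}.
\end{equation}

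\textbf{Step 3 (contradiction).} Combining Steps 1 and 2 yields
\begin{equation}
c(n)2^{\Lambda}e^{-3b/4}\leq C_{H}(n,\lambda)e^{-b},
\end{equation}
that is, $e^{b/4}\leq C(n,\lambda)2^{-\Lambda}$. This fails once $b\geq b_{0}(n,\lambda,\Lambda)$. Hence $u$ must vanish somewhere in $B_{2r}(x_{0})$.

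The only delicate point is the exponent bookkeeping. The $L^{2}$ doubling with ratio $4^{\Lambda}$ per halving gives an $L^{\infty}$ lower bound of order $r^{\Lambda}$; using instead the cruder $3\Lambda$ version of the $L^{\infty}$ doubling would break the argument. The choice $r\sim e^{-3b/(4\Lambda)}$ is exactly what leaves the surplus factor $e^{-b/4}$ to beat the Harnack constant. One should also check that passing from $L^{2}$ averages to the supremum, and the non-integer dyadic count, only cost constants depending on $n$, which $b_{0}$ absorbs.
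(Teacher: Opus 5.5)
Your proposal is correct and is essentially the paper's argument run in the other direction. The paper combines the Harnack bound with the doubling chain from $B_2(x_0)$ down to $B_r(x_0)$ to get $\int_{B_1}u^2\le Ce^{-b/2}$, contradicting the normalization. You use the same chain to get a lower bound on $\sup_{B_r(x_0)}|u|$ of order $e^{-3b/4}$, contradicting the Harnack bound $C_He^{-b}$.

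One step needs repair. Since $2^kr\le 2$, you have $B_{2^kr}(x_0)\subset B_2(x_0)$, so the volume ratio only gives $\fint_{B_2(x_0)}u^2\geq 2^{-n}\fint_{B_{2^kr}(x_0)}u^2$, which is the wrong direction for your middle factor. Instead use $B_1(x_0)\subset B_{2^kr}(x_0)$ together with one more doubling step at scale $1$:
\begin{equation*}
\fint_{B_{2^kr}(x_0)}u^2\geq 2^{-n}\fint_{B_1(x_0)}u^2\geq 2^{-n}4^{-\Lambda}\fint_{B_2(x_0)}u^2 .
\end{equation*}
This changes your bound $c(n)2^{\Lambda}e^{-3b/4}$ into $c(n)e^{-3b/4}$, so the contradiction for $b\geq b_0$ still holds, and every doubling ball you use stays inside $B_4(0)$.
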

\begin{remark}
    We will use the doubling condition and Harnack inequality to show the existence of zero point.
\end{remark}
\begin{proof}
    We argue by contradiction, assume that $u$ does not change the sign in $B_{2r}(x_{0})$, without loss of generality, we assume that $u>0$ in it. Then by Harnack inequality, we have
\begin{equation}
    {\rm sup}_{B_{r}(x_{0})}u\leq c_{1}(n,\lambda)~{\rm inf}_{B_{r}(x_{0})}u\leq c_{1}e^{-b}.
\end{equation}

Recall doubling condition \eqref{doubling assumption} tells us
\begin{equation}
    \fint_{B_{2r}(x)}u^{2}\leq 4^{\Lambda}\fint_{B_{r}(x)}u^{2}.
\end{equation}
Hence we take $k$ such that $2\leq 2^{k}r<4$ , by doubling property we obtain
\begin{equation}
     \fint_{B_{2}(x_{0})}u^{2}\leq (\frac{4}{r})^{2\Lambda}\fint_{B_{r}(x_{0})}u^{2}\leq c_{1}^{2}e^{-2b}(\frac{4}{r})^{2\Lambda}
\end{equation}
Now we take $r=4e^{-\frac{3b}{4\Lambda}}$, since $B_{1}\subset B_{2}(x_{0})$, then
\begin{equation}
    \int_{B_{1}(0)}u^{2}\leq c_{4}e^{-b/2}.
\end{equation}
\indent Now we choose $b\geq b_{0}(n,\lambda,\Lambda)$ sufficiently large, then it will contradict the normalization $\fint_{B_{1}(0)}u^{2}=1$. Therefore the assumption does not hold, which means $u$ has a zero in $B_{2r}(x_{0})$. 
\end{proof}
Now we are ready to prove the volume estimates for sub-level sets. 
\begin{proof}[Proof of Theorem \ref{sub-level set theorem}]Let $r=4e^{-\frac{3b}{4\Lambda}}$ with $b\geq b_{0}(n,\lambda,\Lambda)$ as in Lemma \ref{l:zeroes}, and such that $r\leq 1/4$. Recall
\begin{equation}
    E_{b}=S_{e^{-b}}=\{x:|u(x)|\leq e^{-b}\}.
\end{equation}
Next, we show the upper bound of ${\rm Vol}(E_{b}\cap B_{1}(0))$. Assume that $x_{i}\in E_{b}\cap B_{1}(0):=L_{b}$, and we choose a maximal subset such that the distance larger than $r$ for any two points. Therefore we may assume $\cup_{i=1}^{K}B_{2r}(x_{i})\supset L_{b}$, which implies
\begin{equation}
    \Vol(L_{b})\leq c(n)Kr^{n}.
\end{equation}
Next, we need to give an upper bound of $K$, by Lemma \ref{l:zeroes}, 
\begin{equation}
    x_{i}\in E_{b}\;\text{ implies }\;B_{2r}(x_{i})\cap Z_{u}\neq \varnothing.
\end{equation}
Therefore there exists $\bar{x}\in Z_{u}$ such that 
\begin{equation}
    |\bar{x}-x_{i}|<2r.
\end{equation}
Then 
\begin{equation}
    x_{i}\in B_{2r}(\bar{x})\subset B_{2r}(Z_{u}),
\end{equation}
which implies $B_{2r}(x_{i})\subset B_{4r}(Z_{u})$, since $B_{r/2}(x_{i})\cap B_{r/2}(x_{j})=\varnothing$ for each $i\neq j$, we have
\begin{equation}
    Kc(n)r^{n}=\sum_{i=1}^{K} Vol(B_{r/2}(x_{i}))\leq  Vol(B_{4r}(Z_{u})).
\end{equation}
Using the estimate in Theorem \ref{nodal set main theorem}, 
\begin{equation}
      {\rm Vol}(L_{b})\leq Kc(n)r^{n}\leq C(n,\lambda,\Lambda,\theta,\omega)r.
\end{equation}
Recall that $r=4e^{-\frac{3b}{4\Lambda}}$, then we obtain the first desired estimate in Theorem \ref{sub-level set theorem}.\\
\indent For the second estimate, taking $\epsilon={\rm sup}_{E}|u|\leq {\rm sup}_{B_{1}(0)}|u|$, then by doubling property and interior estimate,
    \begin{equation}
        {\rm sup}_{B_{1}(0)}|u|\leq C(n,\lambda,\theta)\fint_{B_{2}(0)}|u|^{2}\leq C_{0}(n,\lambda,\Lambda,\theta).
    \end{equation}
  Hence $\epsilon\leq C_{0}$, then by Theorem \ref{sub-level set theorem}, we have
    \begin{equation}
        {\rm Vol}(\{x\in B_{1}(0):|u(x)|\leq \epsilon\})\leq C(n,\lambda,\Lambda,\theta,\omega)\epsilon^{3/4\Lambda}.
    \end{equation}
    Therefore 
    \begin{equation}
        |E|\leq C({\rm sup}_{E}|u|)^{3/4\Lambda},
    \end{equation}
    and this is the desired result.
\end{proof}

\section{Quantitative uniqueness of tangent map}
In this section, we perform the harmonic approximation more carefully at each scale, and we will use the Dini condition to prove the error at each scale is summable, therefore, we can obtain a quantitative uniqueness result. As we emphasized in Section 2, throughout Section 5 to 7, $D(x,r)$ denotes the doubling index on sphere $D_{s}^{u}(x,r)$ . And we recall the rescaled map here to be
\begin{equation}
    \tilde{u}_{x,r}(y)=\frac{u(x+rA_{x}(y)-u(x))}{\sqrt{\fint_{\partial B_{1}(0)}|u(x+rA_{x}(y)-u(x))|^{2}}}=\frac{u_{x,r}}{||u_{x,r}||},
\end{equation}
where $||z||:=\sqrt{\fint_{\partial B_{1}}|z|^{2}}$. We set $\kappa_{1}=100(1+\lambda)$ and $\Omega(r)=\omega(r)+\int_{0}^{r}\frac{\omega(t)}{t}dt$. Now, we can state the main result in this section.
\begin{theorem}\label{quantitative uniqueness}
    Let $u$ be a solution to ${\rm div}(A\nabla u)=0$, where $A$ is $\omega$-Dini continuous with \eqref{unifomly elliptic}. Assume that $u$ satisfies doubling condition \eqref{doubling assumption}, given $\epsilon\leq c(n,\lambda)$, there exists $r_{0}$ satisfying $C(n,\lambda,\omega)^{\Lambda}[\Omega(\kappa_{1} r_{0})]\leq \epsilon$ such that the following holds. If $|D(x,s)-D(x,r_{1})|\leq \epsilon$ for any $s\in[r_{2},r_{1}]$ with $100r_{2}\leq r_{1}\leq r_{0}$, then there exists a normalized hhp $P_{m}$ such that 
    \begin{equation}
       \sup_{4r_{2}\leq r\leq r_{1}/4} ||\tilde{u}_{x,r}-P_{m}||^{2}\leq 16\epsilon.
    \end{equation}
    Furthermore, we have
    \begin{equation}
        \sup_{4r_{2}\leq r\leq r_{1}/4}\int_{B_{1}(0)}|\tilde{u}_{x,r}-P_{m}|^{2}\leq C(n)\epsilon\;\;\text{and}\;\sup_{4r_{2}\leq r\leq r_{1}/4}||\tilde{u}_{x,r}-P_{m}||_{L^{\infty}(B_{3/4})}\leq C(n,\lambda,\omega)\sqrt{\epsilon}.
    \end{equation}
\end{theorem}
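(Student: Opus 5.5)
The plan is to discretize the scales dyadically, approximate the rescaled solution at each scale by a harmonic function with an error controlled by the Dini modulus, and show that the projection onto $H_{m}$ barely rotates from one scale to the next. Set $\rho_{j}=2^{-j}r_{1}/4$ for $0\le j\le J$, where $\rho_{J}\approx 4r_{2}$, and write $w_{j}=\tilde u_{x,\rho_{j}}$. Since the center $x$ (hence $A_{x}$) is fixed, consecutive maps are related by an exact rescaling, $w_{j+1}(y)=c_{j}\,w_{j}(y/2)$, with $c_{j}>0$ fixed by the normalization $\|w_{j+1}\|=1$. This exact relation lets us compare the tangent directions at consecutive scales without losing constants.

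\textbf{Step 1 (harmonic approximation with a Dini error).} I would redo the proof of Lemma \ref{harmonic approximation} in the sphere normalization, but keep the error linear in the oscillation of the coefficients instead of passing through the $C^{1/2}$ argument. Let $h_{j}$ be harmonic in $B_{4}$ with $h_{j}=w_{j}$ on $\partial B_{4}$. The energy estimate together with the doubling assumption \eqref{doubling assumption} gives
\[
\|w_{j}-h_{j}\|_{L^{2}(B_{4})}\le C^{\Lambda}\,\omega(\kappa_{1}\rho_{j}),
\]
and a trace or Caccioppoli argument upgrades this to the spheres $\partial B_{s}$ for $s\in[1/2,2]$. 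The Dini condition gives
\[
\sum_{j}e_{j}:=\sum_{j}C^{\Lambda}\omega(\kappa_{1}\rho_{j})\le C^{\Lambda}\int_{0}^{\kappa_{1}r_{1}}\frac{\omega(t)}{t}\,dt\le C^{\Lambda}\,\Omega(\kappa_{1}r_{0})\le\epsilon .
\]
This summability is exactly where Dini enters.

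\textbf{Step 2 (pinching gives concentration on one degree).} From $|D(x,s)-D(x,r_{1})|\le\epsilon$ and Step 1, each $h_{j}$ satisfies $|D^{h_{j}}_{s}(0,1)-D^{h_{j}}_{s}(0,1/2)|\le 2\epsilon$ (up to $e_{j}$). Expand $h_{j}=\sum_{k}a_{k}P_{k}$. Then $\log_{4}\|h_{j}(s\cdot)\|^{2}=\log_{4}\sum_{k}a_{k}^{2}s^{2k}$ is convex in $\log s$, and its second derivative is the variance of the degree $k$ under the weights $a_{k}^{2}s^{2k}$. Small pinching therefore forces this variance to be small, so the sphere-version analogue of Lemma \ref{doubling index close to integer} applies. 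It yields a single integer $m$, the same for all $j$ because $D$ stays within $\epsilon$ of $D(x,r_{1})$, with
\[
\|h_{j}-\Pi_{m}h_{j}\|^{2}\le C\epsilon,\qquad\text{hence}\qquad\|w_{j}-\Pi_{m}w_{j}\|^{2}\le C\epsilon+e_{j}.
\]
The constant has to be sharp enough to land at $16\epsilon$ in the end, so I would compute the variance bound explicitly rather than argue by compactness.

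\textbf{Step 3 (summable drift of the direction).} Let $Q_{j}=\Pi_{m}w_{j}/\|\Pi_{m}w_{j}\|$. For the harmonic $h_{j}$, rescaling by $1/2$ multiplies the degree-$m$ component exactly by $2^{-m}$, so its direction is unchanged. Using Step 1 at scales $j$ and $j+1$ and the rescaling relation, the only discrepancy between $Q_{j+1}$ and $Q_{j}$ comes from the approximation errors, which gives
\[
\|Q_{j+1}-Q_{j}\|\le C(e_{j}+e_{j+1}).
\]
Here one uses that $\|\Pi_{m}w_{j}\|\ge 1/2$ by Step 2. Summing over $j$, every $Q_{j}$ lies within $C\epsilon$ of a fixed normalized hhp $P_{m}$; one can take $P_{m}=\lim Q_{j}$ or $P_{m}=Q_{0}$. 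Combining with Step 2 gives $\|w_{j}-P_{m}\|^{2}\le 16\epsilon$. For intermediate radii $r\in[\rho_{j+1},\rho_{j}]$, I would use the same rescaling comparison with the factor $2$ replaced by $\rho_{j}/r\in[1,2]$.

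\textbf{Step 4 (from spheres to balls and sup norms).} The $L^{2}(B_{1})$ bound follows by integrating the sphere estimates over $\partial B_{s}$, $s\le 1$. Each sphere $\partial B_{s}$ corresponds to the scale $sr$, which lies in the controlled range except for a small region near the origin; there one uses homogeneity of $P_{m}$ and the doubling bound. For the $L^{\infty}(B_{3/4})$ bound, $v=\tilde u_{x,r}-P_{m}$ solves
\[
\operatorname{div}(\bar a\nabla v)=-\operatorname{div}\big((\bar a-I)\nabla P_{m}\big),
\]
so De Giorgi--Moser estimates, or Theorem \ref{DMO regularity} as in Proposition \ref{epsilon regularity in section 4.3}, give $\|v\|_{L^{\infty}}\le C\big(\|v\|_{L^{2}(B_{1})}+\Omega(\kappa_{1}r)\big)\le C\sqrt{\epsilon}$.

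\textbf{Main obstacle.} I expect the hardest part to be Step 3. One must make the direction drift linear in $e_{j}$, not $\sqrt{e_{j}}$, and also not let the $O(\epsilon)$ non-$m$ components accumulate over the roughly $\log(r_{1}/r_{2})$ scales, since only the Dini errors are summable. The point I would lean on is that exact scaling acts diagonally on the spaces $H_{k}$. Hence the non-$m$ parts affect only the norms, and the $m$-directions move only through $w_{j}-h_{j}$. If the linear dependence fails, the fallback is to use the monotone decay of the non-$m$ mass of $h_{j}$ as a telescoping quantity.
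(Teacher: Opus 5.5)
Your Steps 1--3 follow the paper's proof. Proposition~\ref{projection difference} compares $\tilde u_{x,R}$ with one harmonic replacement and uses that scaling acts diagonally on the $H_k$, so the normalized $\Pi_m$-part of the harmonic function does not rotate with the radius. This gives $\|Q_{R/4}-Q_{R/8}\|\le C^{\Lambda}\omega(\kappa R)$, and the paper then telescopes over dyadic scales against $\int_0^{\kappa r_1}\omega(t)/t\,dt$, as you propose. Your explicit variance computation replaces the paper's citation of quantitative uniqueness for harmonic functions, and that is fine.

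The gap is in Step 4, in the bound for $\int_{B_1}|\tilde u_{x,r}-P_m|^2$. Integrating sphere estimates over $\partial B_s$, $s\le 1$, needs information at the scales $sr$, and the hypothesis says nothing below $r_2$. Take $r$ at the bottom of the range, say $r=4r_2$. Even if you use the pinching all the way down to $r_2$, the uncontrolled region is a ball $B_\sigma$ with $\sigma\ge r_2/r=1/4$. On it, homogeneity of $P_m$ and doubling only bound $\int_{B_\sigma}(\tilde u_{x,r}^2+P_m^2)$ by a quantity of order $\sigma^{n+2m}$, roughly $4^{-n-2m}$, not $O(\epsilon)$. They control the size of each function, not their difference. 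Your $L^\infty(B_{3/4})$ estimate is driven by $\|v\|_{L^2(B_1)}$, so it inherits the gap.

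The fix is to use the equation at the single scale $r$ instead of scale-by-scale information inside $B_1$. With the harmonic replacement $h_r$ one has $\|\tilde u_{x,r}-h_r\|_{L^2(B_1)}+\|\tilde u_{x,r}-h_r\|\le C^{\Lambda}\omega(\kappa r)$. Since $h_r-P_m$ is harmonic, $s\mapsto\fint_{\partial B_s}|h_r-P_m|^2$ is nondecreasing. Hence $\|h_r-P_m\|_{L^2(B_1)}\le c(n)\|h_r-P_m\|\le c(n)\big(4\sqrt{\epsilon}+C^{\Lambda}\omega(\kappa r)\big)$. This is what the paper does, and it needs only the unit-sphere estimate at scale $r$.

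With that $L^2$ bound in hand, your De Giorgi--Moser argument works. It applies to $v=\tilde u_{x,r}-P_m$ with right-hand side $-{\rm div}((\bar a-I)\nabla P_m)$, using that $\|\nabla P_m\|_{L^\infty(B_1)}$ is polynomial in $m\le C\Lambda$. It is a valid alternative to the paper's use of the $C^1$ bound from Lemma~\ref{lemma5.3}.
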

\begin{remark}
    If the Dini parameter $\omega(r)=r^{\alpha}$, we can choose $r_{0}=C(n,\lambda,\alpha)^{-\Lambda}\epsilon^{1/\alpha}$, then the above quantitative uniqueness is the same as Theorem 3.65 in \cite{HJ1}. And the following proof also provides a different approach.
\end{remark}
As a direct corollary, we can obtain an $\epsilon$-regularity result.
\begin{corollary}\label{cor5.2}
    Under the same setting as in Theorem \ref{quantitative uniqueness}, for any $x\in B_{1}$, if $D(x,r)\leq 1+\epsilon$ with $r\leq r_{0}$, then $x\notin C(u)$.
\end{corollary}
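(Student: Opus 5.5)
Corollary \ref{cor5.2} should follow from Theorem \ref{quantitative uniqueness} applied with degree one, combined with the $C^{1}$ regularity of Theorem \ref{DMO regularity}. Throughout, $D$ is the spherical doubling index of $u-u(x)$ as in \eqref{doubling for critical set}.

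\textbf{Step 1: pinching.} Suppose $D(x,r)\le 1+\epsilon$ with $r\le r_{0}$. By Theorem \ref{sphere almost monotonicity}, after shrinking $r_{0}$ if needed, $D(x,s)\le D(x,r)+\epsilon\le 1+2\epsilon$ for all $s\le r$. For a lower bound, recall that $D(x,s)\to\mathcal{O}_{u}(x)\ge 1$ as $s\to0$, since $u-u(x)$ vanishes at $x$. This gives $D(x,s)\ge 1-\epsilon$ for all $s\le r$. To see it, suppose instead that $D(x,s_{0})<1-\epsilon$ for some $s_{0}\le r$. Then Corollary \ref{cor3.9} with $m=1$ yields $D(x,s)\le \epsilon$ for all small $s$. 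This would force $u-u(x)$ to grow no faster than $s^{2\epsilon}$ near $x$, which contradicts its vanishing with order at least $1$ at $x$ for a $C^{1}$ function with finite doubling. Hence $|D(x,s)-D(x,r)|\le 3\epsilon$ on $(0,r]$. Applying Theorem \ref{quantitative uniqueness} with $3\epsilon$ in place of $\epsilon$, and with $r_{1}=r$ and arbitrary $r_{2}\le r_{1}/100$, produces a normalized hhp $P_{m}$ of some degree $m$. This $P_{m}$ is independent of $r_{2}$: the theorem provides one for each $r_{2}$, and any two must be $C\sqrt{\epsilon}$-close at the common scale $r_{1}/4$. Homogeneous harmonic polynomials of different degrees are orthogonal, so the degree is fixed.

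\textbf{Step 2: the degree is $1$.} For all $s\le r/4$ we have $\|\tilde u_{x,s}-P_{m}\|^{2}\le 16\epsilon$. The spherical doubling index of $\tilde u_{x,s}$ at scale one is close to $\log_{4}(4^{m})=m$, by continuity of the ratio of the $L^{2}$ norms on $\partial B_{2}$ and $\partial B_{1}$. Since $D\le 1+2\epsilon$, this forces $m=1$, so $P_{1}$ is a linear function $l$ with $\|l\|=1$. In particular $|\nabla l|=c(n)>0$.

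\textbf{Step 3: nonvanishing gradient.} Use the $L^{\infty}$ bound $\|\tilde u_{x,s}-l\|_{L^{\infty}(B_{3/4})}\le C\sqrt{\epsilon}$ for all small $s$. Because $u\in C^{1}$, we have
\[
u(x+sA_{x}y)-u(x)=s\,\nabla u(x)\cdot A_{x}y+o(s),
\]
while the normalizing factor $h(s):=\|u_{x,s}\|$ satisfies $h(s)\ge c\,s^{1+3\epsilon}$, by iterating the doubling bound $D\le1+2\epsilon$ down from scale $r$. Suppose $\nabla u(x)=0$. Then $\tilde u_{x,s}=o(s)/h(s)$, and it remains to check that $o(s)/h(s)\to 0$, which would contradict $\tilde u_{x,s}\to$ a neighbourhood of $l$ with $\|l\|=1$.

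This ratio is the main obstacle. The $o(s)$ term is not quantified, so a comparison with $s^{1+3\epsilon}$ alone does not close the argument. I would instead prove directly that $h(s)/s$ stays bounded below, by comparing consecutive dyadic scales. Since $\tilde u_{x,s}\approx l$ and $\tilde u_{x,s/2}\approx l$, and $l$ is exactly homogeneous of degree $1$, we get $h(s/2)/h(s)=\tfrac12(1+O(\sqrt{\epsilon}))$. By itself this still leaves a factor $(1+O(\sqrt\epsilon))^{k}$ after $k$ halvings.

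To remove it, I would use the summable per-scale error built into the proof of Theorem \ref{quantitative uniqueness}, namely $\sum_{k}\Omega(2^{-k}r)<\infty$ by the Dini condition. The plan is to show $\log\big(2h(s/2)/h(s)\big)=O(\Omega(s))$ via the harmonic-approximation projections. Summing over dyadic scales then gives $h(s)\ge c\,s$. Consequently $\tilde u_{x,s}\to l/\!\!$ (a nonzero constant), and by the $C^{1}$ Taylor expansion this limit equals $\nabla u(x)\cdot A_{x}y$ divided by $\lim h(s)/s$. The limit $l\neq0$ therefore forces $\nabla u(x)\neq0$, i.e.\ $x\notin C(u)$. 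The fallback, if this route fails, is the Dini gradient estimate of Theorem \ref{DMO regularity} applied to $\tilde u_{x,s}-l$, in the manner of Proposition \ref{epsilon regularity in section 4.3}, which gives $|\nabla\tilde u_{x,s}(0)-\nabla l|\le C(\sqrt\epsilon+\Omega(s))<|\nabla l|$ directly.
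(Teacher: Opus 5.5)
Steps 1 and 2 are sound and follow the paper's logic. The paper gets $D(x,s)\ge 1-\epsilon$ more quickly, from $\lim_{s\to0}D(x,s)\ge 1$ together with almost monotonicity, but your argument through Corollary \ref{cor3.9} also works.

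The problem is the primary route of Step 3. Your claim that $\sum_k\Omega(2^{-k}r)<\infty$ follows from the Dini condition is false. The sum $\sum_k\int_0^{2^{-k}r}\omega(t)t^{-1}\,dt$ is comparable to $\int_0^r\omega(t)t^{-1}\log(r/t)\,dt$, and this diverges for the Dini modulus $\omega(t)=(\log(1/t))^{-3/2}$. Independently of that, nothing in the paper gives a per-scale bound $|D(x,s/2)-1|=O(\Omega(s))$. Proposition \ref{projection difference} controls only the normalized projection $Q_\rho$, which is a direction in $H_m$. The ratio $h(s/2)/h(s)$ is instead governed by the other spherical-harmonic modes of the harmonic approximant, and the paper controls those only by $\epsilon$. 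Note also that, for a $C^1$ function, $h(s)\ge cs$ for all small $s$ is equivalent to $\nabla u(x)\neq 0$. So this route restates the corollary in a harder, all-scales form rather than proving it.

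Your fallback is the correct argument, and it is exactly the paper's proof. It should replace the dyadic summation entirely, and only one scale is needed. Take $s=r/4$ and view $\tilde u_{x,s}-l$ as a solution of ${\rm div}(B\nabla(\tilde u_{x,s}-l))=-{\rm div}((B-I)\nabla l)$ with $B(0)=I$. The Dini gradient estimate of Theorem \ref{DMO regularity}, applied as in Proposition \ref{epsilon regularity in section 4.3}, upgrades $\|\tilde u_{x,s}-l\|_{L^\infty(B_{3/4})}\le C\sqrt\epsilon$ to $\|\nabla(\tilde u_{x,s}-l)\|_{L^\infty(B_{1/2})}\le C(\sqrt\epsilon+\Omega(\kappa s))$. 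Since $\nabla\tilde u_{x,s}(0)=s\,A_x\nabla u(x)/h(s)$ and $|\nabla l|=c(n)>0$, the choice of $r_0$ gives $|\nabla\tilde u_{x,s}(0)|\ge c(n)-C\sqrt\epsilon>0$. Hence $\nabla u(x)\neq 0$, with no blow-up and no information needed from other scales.
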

\begin{proof}
    Assume that $r_{0}$ satisfies both Theorem \ref{almost monotonicity for doubling index theorem} and Theorem \ref{quantitative uniqueness}. For any $100s\leq r\leq r_{0}$, we have $1-\epsilon \leq D(x,s)\leq D(x,r)+\epsilon\leq 1+2\epsilon$, from the proof of Theorem \ref{quantitative uniqueness}, there exists a linear function $L$ with $||L||=1$ such that $||\tilde{u}_{x,r/4}-L||_{C^{1}(B_{3/4})}\leq C(n,\lambda,\omega)\sqrt{2\epsilon}$, since $|\nabla L|=c(n)$, then $|\nabla \tilde{u}_{x,r/4}(0)|\geq c(n)-C\sqrt{2\epsilon}>0$ if $\epsilon<c(n,\lambda,\omega)$, therefore $x\notin C(u)$.
\end{proof}

To prove Theorem \ref{quantitative uniqueness}, first, we need the following trace theorem.
\begin{lemma}\label{lemma5.2}
    let $v\in C^{1}(B_{3/2})$, then we have
    \begin{equation}
        ||v||^{2}\leq C(n)\int_{B_{3/2}}\left(v^{2}+|\nabla v|^{2}\right).
    \end{equation}
    Recall that  $||v||^2=\fint_{\partial B_1(0)}|v|^2$.
\end{lemma}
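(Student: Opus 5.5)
This is the standard trace inequality. The plan is to bound the boundary integral by a volume integral through the divergence theorem applied to a radial vector field. Fix a smooth cutoff $\phi(r)$ with $\phi\equiv 1$ on $[1/2,1]$ and $\phi\equiv 0$ near $0$. (One could equally take $\phi(r)=r$ on $[0,1]$.) Consider the vector field $X(y)=\phi(|y|)\,v(y)^{2}\,\frac{y}{|y|}$ on $B_{1}$. It is $C^{1}$ on $B_{1}$ because $\phi$ vanishes near the origin. Moreover $X\cdot\nu=v^{2}$ on $\partial B_{1}$.

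The divergence theorem then gives
\begin{equation*}
\int_{\partial B_{1}}v^{2}=\int_{B_{1}}{\rm div}\,X=\int_{B_{1}}\Big(\phi'(|y|)v^{2}+\phi(|y|)\tfrac{n-1}{|y|}v^{2}+2\phi(|y|)v\,\nabla v\cdot\tfrac{y}{|y|}\Big).
\end{equation*}
We bound each term as follows.
\begin{itemize}
\item On the support of $\phi$ we have $|y|\geq c>0$, so the first two terms are at most $C(n)\int_{B_{1}}v^{2}$.
\item By Cauchy--Schwarz, the third term is at most $\int_{B_{1}}(v^{2}+|\nabla v|^{2})$.
\end{itemize}
Dividing by $|\partial B_{1}|=n\alpha(n)$ and enlarging the domain from $B_{1}$ to $B_{3/2}$ gives $\|v\|^{2}\leq C(n)\int_{B_{3/2}}(v^{2}+|\nabla v|^{2})$.

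There is essentially no obstacle here. The only point to check is regularity: $v\in C^{1}(B_{3/2})$ is $C^{1}$ up to $\partial B_{1}$, since $\overline{B_{1}}\subset B_{3/2}$. This makes the divergence theorem directly applicable to $X$, with no approximation argument needed.

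Alternatively, one can argue along rays. For $\theta\in\partial B_{1}$ and $t\in[1/2,1]$, the fundamental theorem of calculus gives $v(\theta)^{2}\leq 2v(t\theta)^{2}+2\big(\int_{1/2}^{1}|\nabla v(s\theta)|ds\big)^{2}$. Averaging this over $t\in[1/2,1]$, integrating over $\theta$, and using $s^{n-1}\geq 2^{1-n}$ on $[1/2,1]$ yields the same inequality.
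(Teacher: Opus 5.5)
Your proof is correct, but your main route differs from the paper's.

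\textbf{The paper's argument.} The paper works along rays. For fixed $\theta$ it sets $g(s)=v(s\theta)$ and picks a radius $s_0\in[t/2,3t/2]$ at which $|g(s_0)|^2$ is at most its average $\frac1t\int_{t/2}^{3t/2}|g|^2$. It then applies the fundamental theorem of calculus and Cauchy--Schwarz between $s_0$ and $t$, and integrates over $\theta$. This gives the scaled annulus bound $\fint_{\partial B_1}|v(t\theta)|^2\le C\big[t^{-n}\int_{B_{3t/2}\setminus B_{t/2}}v^2+t^{2-n}\int_{B_{3t/2}\setminus B_{t/2}}|\nabla v|^2\big]$, and the paper then sets $t=1$. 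Your alternative sketch is essentially this argument, made one-sided by averaging over $t\in[1/2,1]$ instead of selecting a good radius $s_0$.

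\textbf{Your main argument.} Your primary proof packages everything into a single Gauss--Green identity for the radial field $\phi(|y|)v^2y/|y|$. This needs no good-radius selection and is coordinate-free. It also uses only the values of $v$ on $\overline{B_1}$, so it gives the estimate with $B_1$ in place of $B_{3/2}$.

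\textbf{What each buys.} The paper's version is stated directly at every scale $t$. That is the form used later in Proposition~\ref{projection difference} for $t\in[1/32,1]$. You recover it immediately by applying your inequality to $v(t\,\cdot)$.

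\textbf{A small point.} For your parenthetical choice $\phi(r)=r$, two of your justifications do not literally apply: ``$\phi$ vanishes near the origin'' and ``$|y|\ge c$ on the support of $\phi$''. The conclusion is unaffected, because there $X=v^2y$ is smooth and ${\rm div}\,X=nv^2+2v\,\nabla v\cdot y$.
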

\begin{proof}
    The proof is an easy exercise. Fix $\theta\in S^{n-1}$, and let $g(t)=v(t\theta)$, hence there exists $s_{0}\in[t/2,3t/2]$ such that 
    \begin{equation}
        |g(s_{0})|^{2}\leq \frac{1}{t}\int_{t/2}^{3t/2}|g(s)|^{2}ds.
    \end{equation}
Therefore, 
\begin{align}
    \begin{split}
        |g(t)|^{2}&\leq 2|g(s_{0})|^{2}+2[\int_{s_{0}}^{t}|g^{'}(s)|ds]^{2}\\
        &\leq \frac{2}{t}\int_{t/2}^{3t/2}|g(s)|^{2}+2t\int_{s_{0}}^{t}|g^{'}(s)|^{2}\\
        &\leq \frac{2}{t}\int_{t/2}^{3t/2}|g(s)|^{2}+2t\int_{t/2}^{3t/2}|g^{'}(s)|^{2}.
    \end{split}
\end{align}
Next, integrating $\theta$ in $S^{n-1}=\partial B_{1}$, we have
\begin{equation}
    \fint_{\partial B_{1}}|v(t\theta)|^{2}\leq C[t^{-n}\int_{B_{3t/2}\setminus B_{t/2}}|v|^{2}+t^{2-n}\int_{B_{3t/2}\setminus B_{t/2}}|\nabla v|^{2}].
\end{equation}
Let $t=1$, and we obtain the desired estimate.
\end{proof}
Next, we show the $C^{1}$ upper bound for $\tilde{u}_{x,r}$ if $r$ is small enough.
\begin{lemma}\label{lemma5.3}
    Let $w=\tilde{u}_{x,r}$, there exists positive constant $r_{0}$ satisfying $\omega(\kappa r_{0})\leq C(n,\lambda)^{-\Lambda}$ with $\kappa=10(1+\lambda)$ such that if $r\leq r_{0}$, then 
    \begin{equation}
        \fint_{B_{1}(0)}w^{2}\leq C(n)\;\text{and}\;||w||_{C^{1}(B_{1})}\leq C(n,\lambda,\omega)^{\Lambda}.
    \end{equation}
\end{lemma}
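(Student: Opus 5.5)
We need to bound $\fint_{B_1} w^2$ and $\|w\|_{C^1(B_1)}$ for $w=\tilde u_{x,r}$, which is normalized on the sphere: $\|w\|=1$.

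\textbf{Step 1: bound $\fint_{B_1}w^2$ by $C(n)$.} We will use the harmonic approximation of Lemma \ref{harmonic approximation} (in its sphere-normalized version). Choose $r_0$ with $\omega(\kappa r_0)\le C(n,\lambda)^{-\Lambda}$ small, where $\theta\le\omega$. Then for $r\le r_0$ there is a harmonic $h$ with $h(0)=0$ and $\sup_{B_3}|u_{x,r}-h|\le\delta\fint_{B_1}u_{x,r}^2$, for a small absolute $\delta$. For harmonic $h$ with $h(0)=0$, the expansion in the proof of Lemma \ref{monotonicity for harmonic} gives
\[
\fint_{B_1}h^2=\sum_{k\ge1}\frac{n}{2k+n}a_k^2\le \sum_k a_k^2=\|h\|^2 .
\]
With $\delta$ small, $\|u_{x,r}\|\ge\|h\|-\delta(\fint_{B_1}u_{x,r}^2)^{1/2}$ and $(\fint_{B_1}u_{x,r}^2)^{1/2}\le(\fint_{B_1} h^2)^{1/2}+\delta(\fint_{B_1}u_{x,r}^2)^{1/2}$. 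Combining these,
\[
\fint_{B_1}u_{x,r}^2\le(1+C\delta)\|u_{x,r}\|^2 .
\]
Dividing by $\|u_{x,r}\|^2$ gives $\fint_{B_1}w^2\le 2$. This is where the requirement on $r_0$ enters: $\delta$ must be a fixed absolute constant, so the lemma needs $\theta(\kappa r_0)\le C^{-\Lambda}\delta^{n+1}$, which is of the stated form.

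\textbf{Step 2: pass to larger balls.} By the doubling assumption \eqref{doubling assumption} transported to the rescaled function (as in Section 2), we get $\fint_{B_4}w^2\le C(n,\lambda)^{\Lambda}\fint_{B_1}w^2\le C^\Lambda$.

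\textbf{Step 3: the $C^1$ estimate.} The function $w$ solves $\operatorname{div}(\bar a\nabla w)=0$, where $\bar a(0)=Id$, $\bar a$ is uniformly elliptic, and $\bar a$ has modulus of continuity at most $C(\lambda)\omega(\sqrt{1+\lambda}\,r\,\cdot)\le C\omega(\kappa\,\cdot)$. Its Dini integral is therefore bounded by $C\int_0^{\kappa}\omega(t)/t\,dt$, uniformly in $r\le r_0$. We apply Theorem \ref{DMO regularity} with $F=0$: $\|\nabla w\|_{L^\infty(B_1)}\le C(n,\lambda,\omega)\|\nabla w\|_{L^1(B_2)}$, possibly after a covering or rescaling of the ball sizes. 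By Caccioppoli, $\|\nabla w\|_{L^1(B_2)}\le C\|w\|_{L^2(B_3)}\le C^\Lambda$. De Giorgi--Nash--Moser (or interior $L^\infty$ estimates) give $\sup_{B_1}|w|\le C\|w\|_{L^2(B_2)}\le C^\Lambda$. Hence $\|w\|_{C^1(B_1)}\le C(n,\lambda,\omega)^\Lambda$.

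\textbf{Main obstacle.} The delicate point is Step 1. One must check that the approximation error is small relative to $\|u_{x,r}\|$ and not only relative to $\fint_{B_1}u_{x,r}^2$. Using the $L^\infty$ closeness on $B_3\supset\partial B_1$, both normalizations are comparable up to $1+O(\delta)$, and the rest is routine.
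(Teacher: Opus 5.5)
Your proposal is correct and uses the same strategy as the paper. You compare $u_{x,r}$ with a harmonic function, and use $\fint_{B_1}h^2\le\|h\|^2$ for harmonic $h$ to bound $\|u_{x,r}\|$ from below, which is equivalent to bounding $\fint_{B_1}w^2$ from above. The $C^1$ bound then comes from Theorem~\ref{DMO regularity} together with Caccioppoli and doubling.

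The difference is in how the approximation error is controlled on $\partial B_1$. The paper does not invoke Lemma~\ref{harmonic approximation}. It takes the harmonic replacement $h$ of $z=u_{x,r}$ on $B_{3/2}$ and bounds $v=z-h$ in $H^1(B_{3/2})$ using only the energy estimate, Caccioppoli and Poincar\'e. It then transfers this to the sphere with the trace inequality of Lemma~\ref{lemma5.2}, obtaining $\|v\|\le C(n,\lambda)^{\Lambda}\omega(\kappa r)$.

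Your use of the $L^\infty$ approximation is the shortcut mentioned in the remark after the lemma. It is shorter once Lemma~\ref{harmonic approximation} is available, but slightly weaker in the constants. That lemma's proof goes through $W^{1,2n}$ estimates whose constants depend on the modulus of continuity. So your smallness threshold is $\omega(\kappa r_0)\le C(n,\lambda,\omega)^{-\Lambda}$, not the stated $C(n,\lambda)^{-\Lambda}$, and your remark that it is ``of the stated form'' is not quite right. This is harmless downstream, since $r_0$ in Theorem~\ref{quantitative uniqueness} depends on $\omega$ anyway. The paper's energy--trace route is exactly what buys the $\omega$-independent threshold.

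Finally, drop the phrase ``sphere-normalized version''. Taken literally, a sphere-normalized approximation would presuppose the lower bound on $\|u_{x,r}\|$ that Step 1 is meant to prove. What you actually use, and need, is the ball-normalized statement of Lemma~\ref{harmonic approximation}. With $\fint_{B_1}h^2=1$, it gives directly $\|u_{x,r}\|\ge\|h\|-\delta\ge 1-\delta$.
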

\begin{remark}
    Actually, using the same arguments as in Huang-Jiang \cite{HJ1}, we can prove a harmonic approximation for $w$ similar to Lemma \ref{harmonic approximation}, which implies the above lemma directly. Furthermore, one can show the doubling index on sphere is uniformly bounded by $C(n,\lambda)\Lambda$.
\end{remark}
\begin{proof}
    Let $z=u_{x,r}$, it suffices to prove that $||z||\geq c>0$. 
    Let \( {h} \) satisfy the following Dirichlet problem
\begin{equation}
\begin{cases}
\Delta {{h}} = 0 & \text{in } B_{3/2}(0) \\
{h} = {z} & \text{on } \partial B_{3/2}(0).
\end{cases}
\end{equation}
Consider \( v= {z} - {h} \). Since \( \hat{u} \) satisfies the elliptic equation, then
\begin{equation}
-\Delta v = -\Delta {z} = {\rm div}[(B-Id)\nabla z]:={\rm div}f.
\end{equation}
Where $B(y)=a(x)^{-1/2}a(x+rA_{x}(y))a(x)^{-1/2}$ with $|{B}(y)-{B}(0)|\leq C\omega(\sqrt{1+\lambda}r|y|)$. Note that $v=0$ in $\partial B_{3/2}(0)$, then integration by parts and Young inequality imply
\begin{equation}
    \int_{B_{3/2}(0)}|\nabla v|^{2}\leq 2\int_{B_{3/2}(0)}f^{2}.
\end{equation}
Since 
\begin{equation}
    \int_{B_{3/2}(0)}f^{2}\leq [\omega(4\sqrt{(1+\lambda)})]^{2}\int_{B_{3/2}(0)}|\nabla{z}|^{2},
\end{equation}
and by Caccioppoli inequality and doubling assumption, $\int_{B_{3/2}(0)}|\nabla{z}|^{2}\leq C\int_{B_{2}(0)}{z}^{2}\leq C(n,\lambda)^{\Lambda}$, combining Poincare inequality, we can conclude that
\begin{equation}\label{5.13}
|v|_{H^{1}(B_{3/2})} \leq C(n, \lambda)^{\Lambda}\omega(4\sqrt{1+\lambda}r).
\end{equation}
By the last lemma, the trace inequality implies 
\begin{equation}
    ||v||\leq C^{\Lambda}\omega(\kappa_{} r),
\end{equation}
where $\kappa=10(1+\lambda)>1$. Therefore
\begin{align}
    \begin{split}
        ||z||&\geq ||h||-||v||\geq c(n)||h||_{L^{2}(B_{1})}-||v||\\
        &\geq c(n)(||z||_{L^{2}(B_{1})}-||v||_{L^{2}(B_{1})})-||v||\\
        &\geq c(n)-C(n,\lambda)^{\Lambda}\omega(\kappa r)\\
        &\geq c(n)/2>0
    \end{split}
\end{align}
if $r\leq r_{0}$ and $\omega(\kappa r_{0})\leq C^{-\Lambda}$. Then $||w||_{C^{1}(B_{1})}\leq C(n)||z||_{C^{1}(B_{1})}$, by Theorem \ref{DMO regularity} and doubling assumption, we have
\begin{equation}
    ||w||_{C^{1}(B_{1})}\leq C||z||_{L^{2}(B_{1})}\leq C(n,\lambda,\omega)^{\Lambda}.
\end{equation}
This completes the proof.
\end{proof}
Recall that $H_{m}$ denotes the space of homogeneous harmonic polynomial with degree $m$, and classical theory tells us
\begin{equation}
    L^{2}(\partial B_{1})=H_{0}\oplus H_{1}\oplus H_{2}\oplus\cdots,
\end{equation}
here we define 
\begin{equation}
    \Pi_{m}:L^{2}(\partial B_{1})\to H_{m}.
\end{equation}
Fix $x\in B_{1}$, given $\rho>0$ and an integer $m$, we set 
\begin{equation}\label{normal projection}
    Q_{\rho}:=\frac{\Pi_{m}\tilde{u}_{x,\rho}}{||\Pi_{m}\tilde{u}_{x,\rho}||}.
\end{equation}
Next, we prove the following proposition, which plays an important role in showing Theorem \ref{quantitative uniqueness}.
\begin{proposition}\label{projection difference}
     Let $u$ be a solution to ${\rm div}(A\nabla u)=0$, where $A$ is $\omega$-Dini continuous with \eqref{unifomly elliptic}. Assume that $u$ satisfies doubling condition \eqref{doubling assumption}, if $|D^{u}(x,R)-D^{u}(x,R/32)|\leq \epsilon$ with $\epsilon<10^{-3}$ and $C^{\Lambda}\omega(\kappa R)\leq 10^{-3}$, then 
     \begin{equation}
         ||Q_{R/4}-Q_{R/8}||\leq C(n,\lambda)^{\Lambda}\omega(\kappa R),
     \end{equation}
     where the associated integer $m$ is the approximated integer for above doubling index.
    \end{proposition}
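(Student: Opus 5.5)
The plan is to compare $\tilde u_{x,R/4}$ and $\tilde u_{x,R/8}$ with \emph{one and the same} harmonic function $h$ at scale $R$. The point is that the coefficient error then enters only linearly in $\omega(\kappa R)$, with no compactness constants and no powers of $\epsilon$.

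\textbf{Step 1: harmonic replacement at scale $R$.} Set $z=u_{x,R}$ and solve $\Delta h=0$ in $B_{3/2}$ with $h=z$ on $\partial B_{3/2}$, exactly as in the proof of Lemma \ref{lemma5.3}. By \eqref{5.13}, $v=z-h$ satisfies $\|v\|_{H^1(B_{3/2})}\leq C^{\Lambda}\omega(\kappa R)$. Applying the trace estimate of Lemma \ref{lemma5.2} on the spheres $\partial B_{1/4}$ and $\partial B_{1/8}$ (the scaled version, with $t=1/4,1/8$ in its proof) gives
\[
\Big(\fint_{\partial B_{\rho}}|z-h|^{2}\Big)^{1/2}\leq C(n)^{\cdot}C^{\Lambda}\omega(\kappa R),\qquad \rho\in\{1/4,1/8\}.
\]
The constants here blow up only like a fixed power of $\rho^{-1}$, which is harmless. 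Since $z(0)=0$ and $\tilde u_{x,\rho R}(y)$ is $z(\rho y)$ normalized on $\partial B_1$, we need two more ingredients. First, a lower bound $\fint_{\partial B_\rho}z^2\geq C^{-\Lambda}$, which comes from the doubling condition \eqref{doubling assumption} together with Lemma \ref{lemma5.3}. Second, a control of $|h(0)|$: mean value plus the $H^1$ bound gives $|h(0)|\leq C^\Lambda\omega(\kappa R)$. With these, replacing $h$ by $h-h(0)$ we get
\[
\|\tilde u_{x,\rho R}-\tilde h_\rho\|\leq C^{\Lambda}\omega(\kappa R),\qquad \tilde h_\rho(y):=\frac{h(\rho y)-h(0)}{\|h(\rho\cdot)-h(0)\|}.
\]
This is a Lipschitz estimate for the normalization map $f\mapsto f/\|f\|$ away from $0$.

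\textbf{Step 2: the projections of the harmonic model coincide.} Expand $h-h(0)=\sum_{k\geq1}a_kP_k$ with $\|P_k\|=1$. Then $\Pi_m\tilde h_\rho$ is a positive multiple of $P_m$ for every $\rho$, so the normalized projections $\Pi_m\tilde h_{1/4}/\|\Pi_m\tilde h_{1/4}\|$ and $\Pi_m\tilde h_{1/8}/\|\Pi_m\tilde h_{1/8}\|$ are both equal to $P_m$ (the sign of $a_m$ is the same at both scales, since $\rho^m>0$). This is the key structural fact: $\Pi_m$ commutes with dilations on harmonic functions.

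\textbf{Step 3: nondegeneracy of $\Pi_m$ and conclusion.} Since $\Pi_m$ is an orthogonal projection,
\[
\|\Pi_m\tilde u_{x,\rho R}-\Pi_m\tilde h_\rho\|\leq C^\Lambda\omega(\kappa R).
\]
It then suffices to show $\|\Pi_m\tilde u_{x,\rho R}\|\geq c(n)>0$ for $\rho=1/4,1/8$. Given that, the normalization map is Lipschitz with constant $\leq 2/c$ near these vectors, so
\[
\|Q_{R/4}-P_m\|+\|Q_{R/8}-P_m\|\leq C^\Lambda\omega(\kappa R),
\]
and the triangle inequality finishes the proof.

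\textbf{Main obstacle: the lower bound on $\|\Pi_m\tilde u_{x,\rho R}\|$.} This is where the pinching hypothesis $|D(x,R)-D(x,R/32)|\leq\epsilon$ has to be used, and I expect it to be the hardest step. My first attempt is the following.
\begin{itemize}
\item[(a)] Transfer the pinching to $h$ via Step 1. Because of the smallness assumption $C^\Lambda\omega(\kappa R)\leq10^{-3}$, the spherical doubling indices of $h-h(0)$ on the relevant range $[1/32,1]$ (or its trace-safe subrange $[1/16,1/2]$) are pinched within $\epsilon+10^{-2}$ of $m$.
\item[(b)] Write $D^{h}_s(0,t)=\log_4\big(\sum a_k^2(2t)^{2k}\big/\sum a_k^2t^{2k}\big)$. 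Use the log-convexity from Lemma \ref{monotonicity for harmonic}: if this quantity stays close to $m$ across several dyadic scales, then the weight $a_m^2t^{2m}/\sum_k a_k^2t^{2k}$ must be bounded below by some $c(n)$ at $t=1/4,1/8$.
\item[(c)] To see (b), suppose instead the mass concentrates on degrees $<m$ or $>m$. Then the index would be strictly below $m-1/3$ at small scales or above $m+1/3$ at large scales, contradicting the pinching. A Chebyshev/variance estimate for the probability measure $\mu_t\propto a_k^2t^{2k}$, whose "mean degree" is about $m$ and whose variance is small since $D$ barely changes, should make this quantitative.
\end{itemize}
Together with Step 1 this gives $\|\Pi_m\tilde u\|\geq c(n)-C^\Lambda\omega(\kappa R)\geq c/2$.
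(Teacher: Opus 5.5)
Your plan is essentially the paper's proof. Both use a single harmonic replacement $h$ at scale $R$, trace estimates on the intermediate spheres, and the fact that the normalized $\Pi_m$-projection of a harmonic function is the same $\pm P_m$ at every scale. Both then get nondegeneracy of $\Pi_m$ from the pinching and finish with the Lipschitz property of $f\mapsto f/\|f\|$ and the triangle inequality. For the nondegeneracy, the paper simply quotes harmonic quantitative uniqueness, $\|\Pi_m H_t\|^2\geq 1-6\delta_R$ on $[1/8,1/4]$, whereas you only need a lower bound $c$. Three steps in your write-up need repair; none changes the strategy.

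\textbf{The replacement ball is too small.} In the rescaled picture, $D^u(x,R)$ is a ratio of averages over $\partial B_2$ and $\partial B_1$. A replacement on $B_{3/2}$ therefore cannot transfer the top-scale index. Retreating to the subrange $[1/16,1/2]$ does not help either. The hypothesis pins $D^u$ only at the two scales $R$ and $R/32$, and $D^u$ is not monotone: it is only almost monotone, and only under a smallness of $R$ stronger than assumed here. Solve the Dirichlet problem on $B_2$, as the paper does; then $h=w$ on $\partial B_2$ exactly. Compare the indices only at $t=1$ and $t=1/32$, and let the monotonicity of $D^h$ spread the pinching over $[1/32,1]$.

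\textbf{The bound on $|h(0)|$ is not justified.} ``Mean value plus the $H^1$ bound'' does not give $|h(0)|\leq C^{\Lambda}\omega(\kappa R)$. Since $z$ is not harmonic, $\fint_{B_s}z$ is only $O(s)$. Balancing this against $s^{-n/2}\|v\|_{L^2(B_{3/2})}$ yields only a power $\omega(\kappa R)^{2/(n+2)}$. That is not summable under the Dini condition in general, which would defeat the purpose of the proposition. There are two easy fixes:
\begin{enumerate}
\item[(i)] Use $\|v\|_{L^\infty}\leq C\|f\|_{L^p}$ with $p>n$, together with $\|f\|_{L^\infty}\leq C^{\Lambda}\omega(\kappa R)$, which follows from the $C^1$ bound of Theorem \ref{DMO regularity}.
\item[(ii)] Do not subtract $h(0)$ at all. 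Compare $\tilde u_{x,\rho R}$ with $h(\rho\,\cdot)/\|z(\rho\,\cdot)\|$, which has the same normalizing constant, and note that $\Pi_m$ with $m\geq1$ annihilates constants.
\end{enumerate}

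\textbf{The concentration step needs a pointwise bound.} Small oscillation of $D^h$ controls only the integral in $\log t$ of the variance of your measure $\mu_t$, not its value at $t=1/4,1/8$. The pointwise statement follows from the Jensen gap. We have $4^{D^h(t)}=E_{\mu_t}[4^k]$, and $E_{\mu_t}[k]\leq D^h(t)\leq E_{\mu_{2t}}[k]$ with $E_{\mu_s}[k]$ nondecreasing in $s$. Hence, with $\bar k=E_{\mu_t}[k]$,
\[
\log_4E_{\mu_t}[4^{k-\bar k}]=D^h(t)-\bar k\leq D^h(1)-D^h(1/32)=:\delta \quad\text{for } t\in[1/16,1/2].
\]
Since $4^x-1-x\log 4\geq c_0>0$ for $|x|\geq 1/2$, this gives $\mu_t(|k-\bar k|\geq 1/2)\leq C\delta$. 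So mass at least $1-C\delta$ sits on a single integer $m$, which is what you need.
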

\begin{remark}
    Under the Dini condition, the above proposition tells us the error of projection is summable, and the total error will be small if the top scale is small enough.
\end{remark}
\begin{proof}
    Let $w=\tilde{u}_{x,R}$, consider the following Dirichlet problem
\begin{equation}
\begin{cases}
\Delta {{h}} = 0 & \text{in } B_{2}(0) \\
{h} = {w} & \text{on } \partial B_{2}(0).
\end{cases}
\end{equation}
Then we set $v=h-w$, by Lemma \ref{lemma5.2} and Lemma \ref{lemma5.3}, for any $t\in[1/32,1]$, we have
\begin{equation}
    \fint_{\partial B_{1}}|v(t\theta)|^{2}d\theta\leq C\int_{B_{2}}|v|^{2}+|\nabla v|^{2}\leq C(n,\lambda)^{\Lambda}\omega(\kappa R),
\end{equation}
where $\kappa=10(1+\lambda)$. Therefore,
\begin{equation}
    ||w(t\cdot)-h(t\cdot)||\leq C(n,\lambda)^{\Lambda}\omega(\kappa R)
\end{equation}
holds for all $t\in[1/32,1]$. Denote
\begin{equation}
    W_{t}(\theta)=\frac{w(t\theta)}{||w(t\cdot)||},H_{t}(\theta)=\frac{h(t\theta)}{||h(t\cdot)||}.
\end{equation}
 By doubling property, $||w(t\cdot)||\geq C(n,\lambda)^{-\Lambda}>0$, then we have
 \begin{equation}\label{5.24}
     ||W_{t}-H_{t}||\leq C^{\Lambda}\omega(\kappa R).
 \end{equation}
 Since 
 \begin{equation}
     D^{h}(t):={\rm log}_{4}\frac{\fint_{\partial B_{2t}}h^{2}}{\fint_{\partial B_{t}}h^{2}}={\rm log}_{2}\frac{||h(2t\cdot)||}{||h(t\cdot)||},D^{w}(t):={\rm log}_{2}\frac{||w(2t\cdot)||}{||w(t\cdot)||},
 \end{equation}
and $||h(s\cdot)||/||w(s\cdot)||=1+\mu_{s}$ with $|\mu_{s}|\leq C^{\Lambda}\omega(\kappa R)$ for $s\in[1/32,1]$, therefore
 \begin{equation}
     |D^{h}(t)-D^{w}(t)|\leq |{\rm log}_{2}\frac{1+\mu_{2t}}{1+\mu_{t}} |\leq C^{\Lambda}\omega(\kappa R).
 \end{equation}
 Note that $D^{u}(x,tR)=D^{w}(t)$, then we have
 \begin{equation}
     |D^{h}(1)-D^{h}(1/32)|\leq \epsilon+C^{\Lambda}\omega(\kappa R):=\delta_{R}.
 \end{equation}
 Assume that $h=\sum_{k=0}^{\infty}a_{k}P_{k}$, where $P_{k}$ is the normalized hhp of degree $k$. By quantitative uniqueness of tangent map for harmonic function, if $C^{\Lambda}\omega(\kappa R)\leq 10^{-3}$ and $\epsilon<10^{-3}$, then $\delta_{R}<1/100$, hence there exists some integer $m$ such that for any $t\in[1/8,1/4]$
 \begin{equation}\label{5.28}
   ||\Pi_{m}H_{t}||^{2}\geq (1-6\delta_{R})\;\text{and}\;  \fint_{\partial B_{1}}|H_{t}-P_{m}|^{2}\leq 9\delta_{R}.
 \end{equation}
And we can also write $h(r\theta)=\sum_{k=0}^{\infty}a_{k}r^{k}p_{k}(\theta)$, then $\Pi_{m}h=p_{m}$, and $P_{m}=p_{m}$ on $\partial B_{1}$. Hence by the definition of $H_{t}(\theta)$, we can rewrite the above quantitative uniqueness as
\begin{equation}
    ||H_{t}-\frac{\Pi_{m}H_{t}}{||\Pi_{m}H_{t}||}||\leq 3\sqrt{\delta_{R}}
\end{equation}
for any $t\in[1/8,1/4]$. Now fix $m$, note that $||\Pi_{m}(H_{t}-p_{m})||\leq ||H_{t}-p_{m}||\leq 3\delta_{R}$, then since $\delta_{R}<1/100$, we have $||\Pi_{m}H_{t}||\geq 1-3/100>3/4$ for any $t\in[1/8,1/4]$. Since $u_{x,tR}=W_{t}$ on $\partial B_{1}$, recall that 
\begin{equation}
    ||\Pi_{m}W_{t}-\Pi_{m}H_{t}||\leq ||W_{t}-H_{t}||\leq C^{\Lambda}\omega(\kappa R).
\end{equation}
Then
\begin{equation}
    ||\frac{\Pi_{m}W_{t}}{||\Pi_{m}W_{t}||}-\frac{\Pi_{m}H_{t}}{||\Pi_{m}H_{t}||}||\leq C^{\Lambda}\omega(\kappa R).
\end{equation}
By the definition of $Q_{\rho}$, we can conclude that
\begin{equation}
    ||Q_{tR}-p_{m}||\leq C^{\Lambda}\omega(\kappa R)
\end{equation}
for all $t\in[1/8,1/4]$. Therefore,
\begin{equation}
    ||Q_{R/4}-Q_{R/8}||\leq C^{\Lambda}\omega(\kappa R).
\end{equation}
\end{proof}
Now, we are ready to prove the quantitative uniqueness theorem under the Dini setting.
\begin{proof}[Proof of Theorem \ref{quantitative uniqueness}]
    Taking $\rho=r_{1}/4$ and $s_{j}=2^{-j}\rho$ for $j=0,1,2,...$, and we only consider $j$ such that $s_{j}\geq 8r_{2}$. By pinching condition $|D^{u}(x,t)-D^{u}(x,r_{1})|\leq \epsilon$ for any $t\in[t_{2},t_{1}]$, then Proposition \ref{projection difference} implies that for any $s\in[s_{j}/2,s_{j}]$, there exists $P_{j}\in H_{m_{j}}$ with $||P_{j}||=1$ such that 
    \begin{equation}
        ||\tilde{u}_{x,s}-P_{j}||\leq \epsilon+ C^{\Lambda}\omega(\kappa r_{j}).
    \end{equation}
Then we claim that $m_{j}=m_{j+1}$, if not, we may assume $m_{j}\neq m_{j+1}$ for some $j$, then 
\begin{equation}
    2=||P_{j}-P_{j+1}||\leq ||\tilde{u}_{x,j+1}-P_{j}||+||\tilde{u}_{x,j+1}-P_{j+1}||\leq 2\epsilon+C^{\Lambda}[\omega(\kappa r_{j})+\omega(\kappa r_{j+1})],
\end{equation}
    which is a contradiction if $\epsilon$ and $r_{1}\leq r_{0}$ is small enough. Therefore, $m_{j}\equiv m$ for all $j$ such that $s_{j}\geq 8r_{2}$. Hence we can define the normalization projection $Q_{s}$ as \eqref{normal projection} for all $s\in[4r_{2},r_{1}]$ with the same $d$. By Proposition \ref{projection difference}, we have 
    \begin{equation}
        ||Q_{s_{j}}-Q_{s_{j+1}}||\leq C^{\lambda}\omega(\kappa s_{j}).
    \end{equation}
Then
\begin{align}
\begin{split}
    ||Q_{s_{N}}-Q_{s_{0}}||&\leq C^{\Lambda}\sum_{j=0}^{N-1}||Q_{s_{j+1}}-Q_{s_{j}}||\leq C^{\Lambda}\sum_{j=0}
^{N-1}\omega(\kappa s_{j})\\
&\leq C^{\Lambda}\sum_{j=0}^{N-1}\int_{\kappa s_{j}}^{2\kappa s_{j}}\frac{\omega(s)}{s}ds\leq C^{\Lambda}\int_{0}^{2\kappa s_{0}}\frac{\omega(s)}{s}ds
\end{split}
\end{align}
    Denote $P_{m}=Q_{s_{0}}$ and recall that $s_{0}\leq r_{1}$, then for any $s\in[4r_{2},r_{1}/4]$, we have
    \begin{equation}
        \sup_{4r_{2}\leq r\leq r_{1}/4}||Q_{r}-P_{m}||\leq C^{\Lambda}\int_{0}^{\kappa r_{1}}\frac{\omega(s)}{s}ds:=C^{\Lambda}\omega_{1}(\kappa r_{1}).
    \end{equation}
Finally, for any $r\in[4r_{2},r_{1}/4]$, we can write 
\begin{equation}
    \tilde{u}_{x,r}=a_{r}Q_{r}+R_{r},
\end{equation}
where $a_{r}=||\Pi_{m}\tilde{u}_{x,r}||$, by \eqref{5.24}, taking $R=8r$ and $t=1/8$, then $||\tilde{u}_{x,r}-H_{1/8}||\leq C^{\Lambda}\omega(8\kappa r)$, then $||\Pi_{m}\tilde{u}_{x,r}-\Pi_{m}H_{1/8}||\leq C^{\Lambda}\omega(8\kappa r)$, and by \eqref{5.28}, we have
\begin{equation}
    a_{r}^{2}=||\Pi_{m}\tilde{u}_{x,r}||^{2}\geq 1-6\delta_{8r}-C^{\Lambda}\omega(8\kappa r)^{2}\geq 1-\tau_{r},
\end{equation}
where $\tau_{r}=6\epsilon+C^{\Lambda}\omega(8\kappa r )$, then 
\begin{align}
\begin{split}
    ||\tilde{u}_{x,r}-P_{m}||&\leq ||\tilde{u}_{x,r}-Q_{r}||+||Q_{r}-P_{m}||\\
    &\leq \sqrt{(1-a_{r})^{2}+1-a_{r}^{2}}+C\omega_{1}(\kappa r_{1})\\
    &\leq \sqrt{2(1-a_{r}^{2})}+C\omega_{1}(\kappa r_{1})\\
    &\leq \sqrt{2\tau_{r}}+C^{\Lambda}\omega_{1}(\kappa r_{1})\leq 4\sqrt{\epsilon}
    \end{split}
\end{align}
    if $r_{1}\leq r_{0}$ and $10C^{\Lambda}[\Omega(8\kappa r_{0})]\leq \epsilon$, where $\Omega(r)=\omega(r)+\omega_{1}(r)$. Furthermore, for any $r\in [4r_{2},r_{1}/4]$, taking $h_{r}$ as the harmonic approximation of $\tilde{u}_{x,r}$ in $B_{1}$, then we have 
    \begin{align}
    \begin{split}
        ||\tilde{u}_{x,r}-P_{m}||_{L^{2}(B_{1})}&\leq ||\tilde{u}_{x,r}-h_{r}||_{L^{2}(B_{1})}+||h_{r}-P_{m}||_{L^{2}(B_{1})}\\
        &\leq C\omega(\kappa r)+c(n)||h_{r}-P_{m}||\\
        &\leq \sqrt{\epsilon}+c(n)||\tilde{u}_{x,r}-P_{m}||\leq C(n)\sqrt{\epsilon}.
        \end{split}
    \end{align}
    For $C^{1}$-estimate, since 
    \begin{equation}
        -\Delta(\tilde{u}_{x,r}-P_{m})={\rm div}((B-I)\nabla \tilde{u}_{x,r}):={\rm div}f,
    \end{equation}
    then by elliptic estimate we have
    \begin{equation}
        ||\tilde{u}_{x,r}-P_{m}||_{L^{\infty}(B_{3/4})}\leq C(n,\lambda,\omega)(||\tilde{u}_{x,r}-P_{m}||_{L^{1}(B_{7/8})}+||f||_{L^{n+1}(B_{7/8})}).
    \end{equation}
    By Theorem \ref{DMO regularity} and Lemma \ref{lemma5.3}, $||f||_{L^{n+1}(B_{7/8})}\leq \omega(\kappa r)||\nabla \tilde{u}_{x,r}||_{L^{\infty}(B_{7/8})}\leq C\omega(\kappa r)||\tilde{u}_{x,r}||_{L^{2}(B_{1})}\leq C(n,\lambda,\omega)\omega(\kappa r)$. By the choice of $r_{0}$, we conclude that
    \begin{equation}
         ||\tilde{u}_{x,r}-P_{m}||_{L^{\infty}(B_{3/4})}\leq C(n,\lambda,\omega)\sqrt{\epsilon}.
    \end{equation}
    Hence we obtain the desired quantitative uniqueness result.
\end{proof}

\section{Quantitative cone splitting principle}
In this section, we generalize the quantitative cone splitting principle to Dini case. In roughly speaking, the main idea is that two pinched points will give an almost invariant direction. First, we recall two lemmas for hhp, which was proved by Naber-Valtorta.
\begin{lemma}[\cite{NV} Proposition 3.28 ]\label{lemma 6.1}
    Let $P_{m}$ is a hhp of degree $m$ in $\mathbb{R}^{n}$ with $||P_{m}||=1$, there exists $\epsilon_{0}(n)$ such that if 
    \begin{equation}
        ||\partial_{i}P_{m}||^{2}\leq \epsilon||\nabla P_{m}||^{2}
    \end{equation}
    for $i=1,2,...,k$ with $\epsilon\leq \epsilon_{0}/m^{2n}$ and $k\leq n-2$, then there exist normalized $Q,R\in H_{m}$ such that $\partial_{i}Q=0$ for $i=1,2,...,k$ and
    \begin{equation}
        P_{m}=\sqrt{1-\delta^{2}}Q+\delta  R,
    \end{equation}
    where $\delta\leq C(n)m^{n}\sqrt{\epsilon}$.
\end{lemma}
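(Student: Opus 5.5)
We want to show the following: if a normalized $P_m\in H_m$ has all the derivatives $\partial_1,\dots,\partial_k$ small relative to $\nabla P_m$, then $P_m$ is close to a normalized $Q\in H_m$ that is invariant in $e_1,\dots,e_k$. The plan is to work entirely in the finite-dimensional space $H_m$ with the inner product $\langle\cdot,\cdot\rangle$. Let $V_k\subset H_m$ be the subspace of hhp's with $\partial_iQ=0$ for $i\le k$; these are the harmonic polynomials in the variables $x_{k+1},\dots,x_n$. We take $Q$ to be the normalized orthogonal projection of $P_m$ onto $V_k$, and $R$ the normalized orthogonal complement part. The decomposition $P_m=\sqrt{1-\delta^2}Q+\delta R$ then holds automatically, with $\delta=\|P_m-\Pi_{V_k}P_m\|$. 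So the whole task reduces to a \emph{coercivity estimate} on $V_k^\perp\cap H_m$:
\begin{equation}
\sum_{i=1}^k\|\partial_i R\|^2\geq c(n)m^{-2n}\|\nabla R\|^2 \quad\text{for } R\in V_k^{\perp}\cap H_m .
\end{equation}

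\textbf{Step 1: transfer the smallness to the complement.} For $i\le k$ the operators $\partial_i$ kill $V_k$, so $\partial_iP_m=\partial_i(\delta R)$. This gives $\delta^2\sum_{i\le k}\|\partial_iR\|^2\le k\epsilon\|\nabla P_m\|^2$. We also need $\|\nabla P_m\|^2\sim m\|P_m\|^2$ up to dimensional factors; more precisely $\|\nabla P\|^2=m(2m+n-2)\|P\|^2$ via the Rellich/Green identity for hhp's. The same identity applied to $R$ gives $\|\nabla R\|^2=m(2m+n-2)$. The coercivity estimate then yields $\delta^2\le C\epsilon m^{2n}$, i.e. $\delta\le C(n)m^n\sqrt\epsilon$.

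\textbf{Step 2: prove the coercivity.} Decompose $H_m$ by the degree $j$ of a polynomial in the first $k$ variables $y=(x_1,\dots,x_k)$. Any $R\in H_m$ is a sum $\sum_{j\ge1}$ of terms whose $y$-degree is at least $j$, and elements of $V_k^\perp$ carry no $j=0$ piece. On each piece $\sum_{i\le k}\partial_i$ acts like a lowering operator. The estimate $\sum\|\partial_iR\|^2\ge c\|R\|^2$ with a polynomially degenerating constant should follow by comparing $L^2(\partial B_1)$ norms with coefficient (Fischer/Bargmann inner product) norms. The two norms are equivalent on $H_m$ up to factors polynomial in $m$ of order $m^{n}$, and in the Fischer inner product $\partial_i$ is adjoint to multiplication by $x_i$. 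Hence $\sum_i\|\partial_iR\|_F^2\ge\|R\|_F^2$ for $R$ with no $y$-free part. Converting back costs the polynomial factor, which explains the $m^{2n}$ in the hypothesis.

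\textbf{Main obstacle.} The delicate point is that Fischer-orthogonality to $V_k$ and $L^2(\partial B_1)$-orthogonality are different. I would handle this by using that harmonic polynomials are exactly the Fischer-orthogonal complement of $|x|^2\mathcal P_{m-2}$, so the two projections agree up to the polynomial norm-equivalence constant. Alternatively, one can argue by contradiction by compactness for each fixed $m$ and track the $m$-dependence separately. The hypothesis $k\le n-2$ guarantees $V_k\neq\{0\}$, since there are then at least two free variables. This matches the assumption $\epsilon\le\epsilon_0/m^{2n}$, which is used only to ensure $\delta<1$.
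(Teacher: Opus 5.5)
The paper does not prove this lemma; it imports it from \cite{NV}, so your argument has to stand on its own. Your reduction is correct. Write $y=(x_1,\dots,x_k)$, $z=(x_{k+1},\dots,x_n)$ and $V_k=\{Q\in H_m:\partial_iQ=0,\ i\le k\}$, and take $Q$ to be the normalized projection of $P_m$ onto $V_k$. Since $\partial_iP_m=\delta\,\partial_iR$ for $i\le k$, and $\|\nabla P\|^2=m(2m+n-2)\|P\|^2$ on $H_m$, everything reduces to a lower bound for $\sum_{i\le k}\|\partial_iR\|^2/\|\nabla R\|^2$ over $R\in H_m\cap V_k^\perp$.

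\textbf{The gap is in Step 2.} Your claim that elements of $V_k^\perp$ have no $y$-free part is false. Take $n=3$, $k=1$, $m=2$: the polynomial $R=2x_1^2-x_2^2-x_3^2$ is orthogonal to $V_1=\mathrm{span}\{x_2x_3,\,x_2^2-x_3^2\}$, yet $R(0,z)=-|z|^2\neq 0$. Writing $R=\sum_j R_j$ by $y$-degree, the Fischer computation only gives
$\sum_{i\le k}\|\partial_iR\|_F^2=\sum_j j\|R_j\|_F^2\ge\|R\|_F^2-\|R_0\|_F^2$,
and nothing in your proposal controls $\|R_0\|_F$.

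The obstacle you name instead does not exist. For $P,Q\in H_m$ one has $\langle P,Q\rangle_F=\gamma_m\langle P,Q\rangle$ with $\gamma_m=n(n+2)\cdots(n+2m-2)$. So Fischer- and $L^2(\partial B_1)$-orthogonality coincide exactly inside $H_m$. Your ``equivalence up to $m^n$'' is unjustified; it was read off from the hypothesis rather than derived. The compactness fallback gives some $c(n,m)>0$ for each fixed $m$, but no control of its dependence on $m$, which is the entire content of the lemma.

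\textbf{The missing ingredient is the harmonicity of $R$.}
\begin{itemize}
\item Orthogonality to $V_k$ says exactly that $R_0\perp_F H_m(\mathbb{R}^{n-k})$, i.e.\ $R_0\in|z|^2\mathcal{P}_{m-2}(\mathbb{R}^{n-k})$.
\item The $y$-degree-zero part of $\Delta R=0$ reads $\Delta_zR_0=-\Delta_yR_2$.
\item $\Delta_z$ has Fischer adjoint multiplication by $|z|^2$. The operator $|z|^2\Delta_z$ acts on $|z|^{2l}H_{m-2l}(\mathbb{R}^{n-k})$ by the factor $2l(2m-2l+n-k-2)\ge 2m$ for $1\le l\le m/2$.
\item The operator $|y|^2\Delta_y$ has norm $2k$ on polynomials of $y$-degree two.
\end{itemize}
Combining these gives $2m\|R_0\|_F^2\le\|\Delta_zR_0\|_F^2=\|\Delta_yR_2\|_F^2\le 2k\|R_2\|_F^2$. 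Hence $\|R\|_F^2\le(1+\frac{k}{m})\sum_{j\ge1}\|R_j\|_F^2$, that is, $\sum_{i\le k}\|\partial_iR\|_F^2\ge\frac{m}{m+k}\|R\|_F^2$.

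Converting with $\gamma_m/\gamma_{m-1}=n+2m-2$ gives exactly $\sum_{i\le k}\|\partial_iR\|^2\ge\frac{1}{m+k}\|\nabla R\|^2$. Your Step 1 then yields $\delta^2\le k(m+k)\epsilon$, i.e.\ $\delta\le C(n)\sqrt{m\epsilon}$. This is stronger than the stated bound $C(n)m^n\sqrt{\epsilon}$. The example $R=x_1h_{m-1}(z)$, with $h_{m-1}$ harmonic in $z$, shows the order $1/m$ is sharp. With this step inserted, your route is complete.
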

And if a hhp $P$ has $(n-1)$ almost invariant direction, then $P$ is linear.
\begin{lemma}[\cite{NV} Lemma 3.23]\label{lemma6.2}
    Let $P_{m}$ be a nonconstant hhp in $\mathbb{R}^{n}$ of degree $m$ such that
    \begin{equation}
       || \partial_{i}P_{m}||^{2}\leq \epsilon||\nabla P_{m}||^{2}
    \end{equation}
    for $i=1,2,...,n-1$ with $\epsilon\leq 1/2n^{2}$, then $m=1$.
\end{lemma}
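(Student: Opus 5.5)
The plan is a direct norm computation for homogeneous harmonic polynomials on $\partial B_{1}$, arguing by contradiction. Suppose $m\geq 2$, and set $Q_{i}:=\partial_{i}P_{m}$ for $i=1,\dots,n$. Each $Q_{i}$ is a homogeneous harmonic polynomial of degree $m-1\geq 1$. I will use the following identity. For any hhp $P$ of degree $k$, split the gradient into its tangential and radial parts on the sphere. The radial part is $\partial_{r}P=kP$. The tangential part satisfies $\fint_{\partial B_{1}}|\nabla_{T}P|^{2}=k(k+n-2)\|P\|^{2}$, because $P|_{\partial B_1}$ is a spherical harmonic. Adding the two gives
\begin{equation*}
\|\nabla P\|^{2}=c_{k}\|P\|^{2},\qquad c_{k}:=k(2k+n-2),
\end{equation*}
where $\|\nabla P\|^{2}=\sum_{j}\|\partial_{j}P\|^{2}$. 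In particular $c_{m-1}>0$ when $m\geq 2$.

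\textbf{Step 1 (lower bound for $\nabla Q_{n}$).} The hypothesis gives
\begin{equation*}
\|Q_{n}\|^{2}=\|\nabla P_{m}\|^{2}-\sum_{i<n}\|Q_{i}\|^{2}\geq \big(1-(n-1)\epsilon\big)\|\nabla P_{m}\|^{2}.
\end{equation*}
Applying the identity to $Q_{n}$ then yields
\begin{equation*}
\|\nabla Q_{n}\|^{2}=c_{m-1}\|Q_{n}\|^{2}\geq c_{m-1}\big(1-(n-1)\epsilon\big)\|\nabla P_{m}\|^{2}.
\end{equation*}

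\textbf{Step 2 (upper bound for $\nabla Q_{n}$).} Write $\|\nabla Q_{n}\|^{2}=\sum_{j<n}\|\partial_{j}\partial_{n}P_{m}\|^{2}+\|\partial_{n}^{2}P_{m}\|^{2}$ and treat the two pieces separately.

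For $j<n$, commute the derivatives to get $\partial_{j}\partial_{n}P_{m}=\partial_{n}Q_{j}$. Then
\begin{equation*}
\|\partial_{n}Q_{j}\|^{2}\leq\|\nabla Q_{j}\|^{2}=c_{m-1}\|Q_{j}\|^{2}\leq c_{m-1}\epsilon\|\nabla P_{m}\|^{2}.
\end{equation*}

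For the pure second derivative, which is the one term not directly controlled by the hypothesis, use $\Delta P_{m}=0$ to write $\partial_{n}^{2}P_{m}=-\sum_{i<n}\partial_{i}Q_{i}$. By the triangle inequality,
\begin{equation*}
\|\partial_{n}^{2}P_{m}\|\leq\sum_{i<n}\|\nabla Q_{i}\|\leq (n-1)\sqrt{c_{m-1}\epsilon}\,\|\nabla P_{m}\|.
\end{equation*}

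Summing the two pieces gives
\begin{equation*}
\|\nabla Q_{n}\|^{2}\leq c_{m-1}\epsilon\big((n-1)+(n-1)^{2}\big)\|\nabla P_{m}\|^{2}=c_{m-1}\,n(n-1)\,\epsilon\,\|\nabla P_{m}\|^{2}.
\end{equation*}

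\textbf{Step 3 (conclusion).} $P_{m}$ is nonconstant, so $\|\nabla P_{m}\|>0$. Comparing Steps 1 and 2 and dividing by $c_{m-1}\|\nabla P_{m}\|^{2}>0$ gives
\begin{equation*}
1-(n-1)\epsilon\leq n(n-1)\epsilon, \qquad\text{that is,}\qquad 1\leq (n^{2}-1)\epsilon<n^{2}\epsilon\leq \tfrac12 .
\end{equation*}
This is a contradiction, so $m\leq 1$. Since $P_{m}$ is nonconstant, $m\geq 1$, and therefore $m=1$.

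The main obstacle is Step 2. The key point there is that harmonicity converts the uncontrolled term $\partial_{n}^{2}P_{m}$ into derivatives of the small components $Q_{i}$, $i<n$. The rest is bookkeeping with the identity $\|\nabla P\|^{2}=c_{k}\|P\|^{2}$.
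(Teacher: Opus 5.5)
Your proof is correct, and it is self-contained. The paper gives no argument of its own for this lemma: it simply cites \cite{NV}. So your computation replaces a citation rather than reconstructing a proof in the text.

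The mechanism is sound. You apply the identity $\|\nabla Q\|^{2}=k(2k+n-2)\|Q\|^{2}$ for a homogeneous harmonic polynomial $Q$ of degree $k$ to $Q=\partial_{n}P_{m}$. You then use harmonicity, $\partial_{n}^{2}P_{m}=-\sum_{i<n}\partial_{i}(\partial_{i}P_{m})$, to control the one second derivative the hypothesis does not bound directly. The bookkeeping checks out: Steps 1 and 2 together force $1\leq (n^{2}-1)\epsilon$ whenever $m\geq 2$. Nonconstancy gives $\|\nabla P_{m}\|>0$, which makes the division legitimate.

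What your route buys over the bare citation:
\begin{itemize}
\item It makes the constant transparent. The conclusion in fact holds for every $\epsilon<1/(n^{2}-1)$, so the stated threshold $\epsilon\leq 1/2n^{2}$ has room to spare.
\item Both the hypothesis and the norm $\|\cdot\|$ are rotation invariant, so the argument applies verbatim to any orthonormal family $e_{1},\dots,e_{n-1}$ in place of the coordinate directions. That is exactly the form in which the lemma is invoked in Corollary~\ref{cor6.4}, where it is applied to eigenvectors of the quadratic form $B$.
\end{itemize}
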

To state the quantitative cone splitting principle, we need the following notion. Let
\begin{equation}
    \mathcal{P}_{m,\delta,r}^{u}(x)=\{y\in B_{r}(x):|D^{u}(y,s)-m|\leq \delta\;\text{holds for all}\;s\in[10^{-2}r,C_{1}(\lambda)\Lambda r]\},
\end{equation}
where $C_{1}=10^{3}(1+\lambda)$ and we call it the $(m,\delta)$-pinched set in $B_{r}(x)$. Next, we state the quantitative cone splitting principle, which is an improvement of Proposition \ref{cone splitting prop}. 
\begin{theorem}\label{cone splitting theorem}
     Let $u$ be a nonzero solution to ${\rm div}(A\nabla u)=0$ in $B_{2}(0)$, where $A$ is $\omega$-Dini continuous with \eqref{unifomly elliptic}. Assume that $u$ satisfies doubling condition \eqref{doubling assumption}, fix $\tau<c(n,\lambda)$, then there exists $\epsilon_{0}=C(n,\lambda,\omega,\tau)^{-\Lambda}$ such that if $\epsilon\leq \epsilon_{0}$, $r\leq r_{1}$ with $C_{1}\Lambda r_{1}\leq r_{0}$ as in Theorem \ref{quantitative uniqueness}, the following holds. Assume that $\mathcal{P}:=\mathcal{P}_{m,\epsilon,r}^{u}(x)$ is $(k,\tau)$ independent in $B_{r}(x)\subset B_{1}(0)$, then there exists a normalized hhp $Q$ of degree $m$, which is $k$-symmetric such that
     \begin{equation}
         \sup_{r/10\leq s\leq 10\Lambda r}||\tilde{u}_{y,s}-Q||\leq C(n,\lambda,\omega,\tau)^{\Lambda}\sqrt{\epsilon}\leq \epsilon^{1/3}.
     \end{equation}
     for each $y\in\mathcal{P}_{m,\delta,r}^{u}(x)$.
\end{theorem}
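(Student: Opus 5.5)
We follow the Naber--Valtorta strategy, with Theorem \ref{quantitative uniqueness} as the replacement for the Hölder-type uniqueness. The plan has three steps: get a tangent polynomial at each pinched point, show these polynomials are nearly the same, and convert the spread of the pinched set into almost-invariant directions.

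\textbf{Step 1: a tangent polynomial at each pinched point.} Fix $y\in\mathcal{P}$. By definition $|D(y,s)-m|\le\epsilon$ for all $s\in[10^{-2}r,C_1\Lambda r]$. Since $C_1\Lambda r\le r_0$, Theorem \ref{quantitative uniqueness} applies with $r_2=10^{-2}r$ and $r_1=C_1\Lambda r$. It produces a normalized $P^y\in H_m$ with
\[
\sup_{r/25\le s\le C_1\Lambda r/4}\|\tilde u_{y,s}-P^y\|\le 4\sqrt\epsilon,
\]
together with an $L^\infty(B_{3/4})$ bound of order $C\sqrt\epsilon$. The range of radii contains $[r/10,10\Lambda r]$. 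The degree is exactly $m$, since the pinching forces the approximating integer to be $m$.

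\textbf{Step 2: the polynomials at different pinched points agree up to affine change of variables.} Take $y,z\in\mathcal{P}$. Then $|y-z|\le 2r$, which is much smaller than the scale $s_*=C_1\Lambda r/4$. We write $\tilde u_{z,s_*}$ as a rescaled, translated copy of $\tilde u_{y,s_*}$:
\[
\tilde u_{z,s_*}(w)=a\bigl(\tilde u_{y,s_*}(A_y^{-1}A_z w+\xi)-\tilde u_{y,s_*}(\xi)\bigr),\qquad |\xi|\lesssim 1/\Lambda .
\]
The coefficient change $A_y^{-1}A_z$ is $\omega(2r)$-close to the identity. Substituting the $C^0$ approximations (Lemma \ref{lemma5.3} controls the $C^1$ norms), we get
\[
\bigl\|P^z-a\bigl(P^y(\cdot+\xi)-P^y(\xi)\bigr)\bigr\|_{L^\infty(B_{1/2})}\le C^\Lambda\sqrt\epsilon .
\]
Expanding the translate by Taylor's formula gives
\[
P^y(w+\xi)-P^y(\xi)=P^y(w)+\xi\cdot\nabla P^y(w)+O(|\xi|^2).
\]
Both $P^z$ and $P^y$ are homogeneous of degree $m$, while $\xi\cdot\nabla P^y$ is homogeneous of degree $m-1$. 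Orthogonality of the $H_k$ then lets us separate degrees:
\begin{itemize}
\item the degree-$m$ part gives $\|P^z-P^y\|\lesssim C^\Lambda\sqrt\epsilon$ (after fixing $a\approx1$);
\item the degree-$(m-1)$ part gives $\|(z-y)\cdot\nabla P^y\|\lesssim C^\Lambda\sqrt\epsilon\, s_*/r$.
\end{itemize}
More precisely, we use the scale $s\sim r$, which is also covered by Step 1, to get $\|\partial_{v}P^y\|^2\le C(n,\lambda,\tau)^{\Lambda}\epsilon\,\|\nabla P^y\|^2$ for unit $v\parallel z-y$ with $|z-y|\ge\tau r$. We expect this to be the main obstacle. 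The difficulty is to keep all losses of size $C^\Lambda$ (from scale comparison and from the matrices $A_y$) polynomial in $\sqrt\epsilon$, so that the final bound is $C(n,\lambda,\omega,\tau)^\Lambda\sqrt\epsilon$. The first thing to try is the projection argument of Proposition \ref{projection difference} at the single scale $s\sim r$, which isolates $\Pi_{m-1}$ of the translated function.

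\textbf{Step 3: independence gives $k$ almost-invariant directions.} Because $\mathcal{P}$ is $(k,\tau)$-independent, we can pick $y_0,\dots,y_k\in\mathcal{P}$ with $y_{i}-y_0$ uniformly (in terms of $\tau$) linearly independent. Step 2 applied with base point $y_0$ gives $\|\partial_{v_i}P\|^2\le C^\Lambda\epsilon\|\nabla P\|^2$ for $i=1,\dots,k$, where $P=P^{y_0}$. Passing to an orthonormal basis of the span costs only a factor depending on $\tau$.

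\textbf{Step 4: replace $P$ by an exactly symmetric polynomial.} Doubling gives $m\le C\Lambda$, so $m^{2n}$ is absorbed into $C^\Lambda$. If $k\le n-2$, Lemma \ref{lemma 6.1} yields a normalized $k$-symmetric $Q\in H_m$ with $\|P-Q\|\le C m^n\sqrt{C^\Lambda\epsilon}$. If $k=n-1$, Lemma \ref{lemma6.2} forces $m=1$, and then $P$ is linear and already symmetric.

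\textbf{Conclusion.} Combining the steps, for every $y\in\mathcal{P}$ and $s\in[r/10,10\Lambda r]$,
\[
\|\tilde u_{y,s}-Q\|\le\|\tilde u_{y,s}-P^y\|+\|P^y-P^{y_0}\|+\|P^{y_0}-Q\|\le C^\Lambda\sqrt\epsilon .
\]
This is at most $\epsilon^{1/3}$ once $\epsilon\le\epsilon_0=C^{-\Lambda}$.
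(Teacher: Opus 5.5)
Your proposal is correct and takes essentially the paper's route. You compare the blow-ups at two pinched points at a single scale through the affine change $w\mapsto A_y^{-1}A_z w+\xi$, Taylor-expand the translated tangent polynomial, and project onto $H_m$ (giving $\|P_y-P_z\|\le C^\Lambda\sqrt\epsilon$, the paper's Claim 2) and onto $H_{m-1}$ (giving the almost-invariant direction, the paper's Claim 1), then finish with Gram--Schmidt, Lemma \ref{lemma 6.1} and the triangle inequality. The differences are minor: you absorb $A_y^{-1}A_z-I$ into an $O(C^\Lambda\omega(2r))$ error before projecting rather than carrying $\Pi_m(P_y\circ M)$ along, you treat $k=n-1$ separately via Lemma \ref{lemma6.2}, and the direction you extract is really $A_y^{-1}(z-y)$ rather than $z-y$, which is harmless since $A_y^{-1}$ preserves $(k,\tau)$-independence up to a factor $\sqrt{1+\lambda}$.
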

\begin{proof}
    Denote $R=Kr$, where $K=20\sqrt{1+\lambda}$ is to be determined, and both $r$ and $R$ are in the interval of quantitative uniqueness. Let $w=\tilde{u}_{y,R}$, fix $r_{0}$ as in Theorem \ref{quantitative uniqueness} and $C_{1}\Lambda r_{1}\leq r_{0}$, then there exists $P_{y}\in H_{m}$ with $||P_{y}||=1$ such that
\begin{equation}\label{6.5}
    ||w-P_{y}||_{L^{\infty}(B_{3/4})}\leq C(n,\lambda,\omega)\sqrt{\epsilon}.
\end{equation}
Taking another $z\in \mathcal{P}$, and set $b=A_{y}^{-1}\frac{z-y}{R}$ and $M:=M_{yz}:=A_{y}^{-1}A_{z}$, where $A_{y}=\sqrt{A(y)}$, note that $|A_{y}-A_{z}|\leq C(n)|A(y)-A(z)|\leq C(n)\omega(|y-z|)$, then $|M-I|\leq C\omega(2r)<1/10$ if $r$ is small enough. By the choice of $K$, then $|b|\leq 1/10$. Since $z=y+RA_{y}b$, then
\begin{equation}
    z+\frac{R}{2}A_{z}\theta=y+RA_{y}(b+\frac{1}{2}M\theta).
\end{equation}
We define 
\begin{equation}
    E(\theta)=w(b+\frac{1}{2}M\theta)-w(b)\;\text{and}\;G(\theta)=P_{y}(b+\frac{1}{2}M\theta)-P_{y}(b).
\end{equation}
Recall that $w=\tilde{u}_{y,R}$, then $u_{z,R/2}=E/||E||$. Since $\theta\in\partial B_{1}$, then  $b+\frac{1}{2}M\theta\in B_{3/4}$. By \eqref{6.5}, we have
\begin{equation}
    ||E-G||\leq C\sqrt{\epsilon}.
\end{equation}
Next we show that $||G||\geq C^{-\Lambda}>0$, therefore 
\begin{equation}\label{6.9}
    ||u_{z,R/2}-\frac{G}{||G||}||\leq C^{\Lambda}\sqrt{\epsilon}.
\end{equation}
Using Taylor expansion
\begin{equation}\label{Taylor expansion}
    G(\theta)=\sum_{j=0}^{m-1}\frac{1}{j!\cdot2^{m-j}}[(b\cdot \nabla)^{j}P_{y}(M\theta)],
\end{equation}
then $ \Pi_{m}G=\frac{1}{2^{m}}\Pi_{m}(P_{y}\circ M)$. Since 
\begin{equation}\label{6.12}
    ||\Pi_{m}(P_{y}\circ M)-P_{y}||=||\Pi_{m}(P_{y}\circ M-P_{y})||\leq C|M-I|\leq 1/10,
\end{equation}
and $m\leq C(n,\lambda)\Lambda$, then
\begin{equation}
    ||G||\geq ||\Pi_{m}G||\geq C^{-\Lambda}||\Pi_{m}(P_{y}\circ M)||\geq C^{-\Lambda}/2.
\end{equation}
By Theorem \ref{quantitative uniqueness}, there exists $P_{z}\in H_{m}$ such that $||u_{z,R/2}-P_{z}||\leq 4\sqrt{\epsilon}$, combining \eqref{6.9}, we have
\begin{equation}\label{6.13}
    ||P_{z}-\frac{G}{||G||}||\leq C^{\Lambda}\sqrt{\epsilon}.
\end{equation}
\indent \textbf{Claim 1:} $||b\cdot \nabla P_{y}||\leq C^{\Lambda}\sqrt{\epsilon}$. Note that for each homogeneous polynomial $p_{m}$, $p_{m}|_{\partial B_{1}}=\sum_{j=0}^{[m/2]}q_{m-2j}$, where $q_{k}\in H_{k}$, therefore by \eqref{Taylor expansion}
\begin{equation}
    \Pi_{m-1}G=\frac{1}{2^{m-1}}\Pi_{m-1}[(b\cdot \nabla P_{y})\circ M].
\end{equation}
By \eqref{6.13}, $||\Pi_{m-1}G||\leq C^{\Lambda}\sqrt{\epsilon}||G||\leq C^{\Lambda}\sqrt{\epsilon}$. Similar to \eqref{6.12} and note that $b\cdot \nabla P_{y}\in H_{m-1}$, 
\begin{equation}
    ||\Pi_{m-1}[(b\cdot \nabla P_{y})\circ M]||\geq ||b\cdot \nabla P_{y}||-||\Pi_{m-1}[(b\cdot \nabla P_{y})\circ M-b\cdot \nabla P_{y}]||\geq ||b\cdot \nabla P_{y}||/2.
\end{equation}
    Therefore $||b\cdot \nabla P_{y}||\leq C^{\Lambda}\sqrt{\epsilon}$, i.e., Cliam 1 holds. \\
\indent Recall the definition of $b$, we have $||\nabla P_{y}\cdot A_{y}^{-1}(\frac{z-y}{r})||\leq C^{\Lambda}\sqrt{\epsilon}$. Since $\mathcal{P}$ is $(k,\tau)$-independent in $B_{r}(x)$, then we can choose $\{z_{0},z_{1},...,z_{k}\}\subset \mathcal{P}$ such that 
\begin{equation}
{\rm dist}(z_{i},{\rm Aff}(z_{0},z_{1},...,z_{i-1}))\geq \tau r
\end{equation}
for $i=1,2,...,k$. Now let $y=z_{0},P=P_{z_{0}}$ and define $v_{i}=A_{y}^{-1}(\frac{z_{i}-z_{0}}{r})$, then $|v_{i}|\geq \tau/\sqrt{1+\lambda}$ and 
\begin{equation}
    ||\partial_{v_{i}}P_{0}||\leq C^{\Lambda}\sqrt{\epsilon}.
\end{equation}
By Gram-Schmidt orthogonalization, we can obtain an orthogonal basis $\{e_{1},e_{2},...,e_{k}\}$ of dimension $k$ from $\{v_{1},v_{2},...,v_{k}\}$, which satisfies
\begin{equation}
    ||\partial_{e_{i}}P||\leq C(n,\lambda,\tau,\omega)^{\Lambda}\sqrt{\epsilon}||\nabla P||,
\end{equation}
where we use $||\nabla P||=C(n,m)>1$. Denote $\delta=C^{2\Lambda}\epsilon$, therefore by Lemma \ref{lemma 6.1}, if $\delta\leq c(n)(C\Lambda)^{-2n}$, i.e., $\epsilon\leq \epsilon_{0}\leq C(n,\lambda,\tau,\omega)^{-\Lambda}$, then there exist normalized $Q,R\in H_{m}$ such that $\partial_{e_{i}}Q=0$ for $i=1,2,...,k$ with
\begin{equation}
    P=\sqrt{1-\eta^{2}}Q+\eta R.
\end{equation}
Therefore $||P-Q||\leq 2\eta$, where $\eta\leq C(n)\Lambda^{n}\sqrt{\delta}$, then $||P-Q||\leq C^{\Lambda}\epsilon^{1/2}$. Recall that $||u_{z,s}-P_{z}||\leq 4\sqrt{\epsilon}$ for any $s\in [r/10,10\Lambda r]$ and $P_{z_{0}}=P$, to finish the whole proof, it suffices to prove that $||P_{z}-P_{z_{0}}||\leq C^{\Lambda}\sqrt{\epsilon}$ if $r$ is small enough .\\
\indent \textbf{Claim 2:} For any $y,z\in\mathcal{P}$, and denote $P_{y},P_{z}\in H_{m}$ be corresponding tangent maps in Theorem \ref{quantitative uniqueness}, if $r\leq r_{1}$ with $C_{1}\Lambda r_{1}\leq r_{0}$, then $||P_{y}-P_{z}||\leq C\sqrt{\epsilon}$. First, recall \eqref{Taylor expansion} implies
\begin{equation}
    \Pi_{m}G=\frac{1}{2^{m}}\Pi_{m}(P_{y}\circ M).
\end{equation}
By \eqref{6.13} and projection onto $H_{m}$, we have 
\begin{equation}
    ||\alpha T-P_{z}||\leq C^{\Lambda}\sqrt{\epsilon}.
\end{equation}
where $T:=\Pi_{m}G/||\Pi_{m}G||$ and $\alpha=||\Pi_{m}G||/||G||$. Since
\begin{equation}
    |\alpha-1|=|||\alpha T||-||P_{z}|||\leq ||\alpha T-P_{z}||\leq C^{\Lambda}\sqrt{\epsilon}.
\end{equation}
Therefore $||T-P_{z}||\leq C^{\Lambda}\sqrt{\epsilon}$. Similar to \eqref{6.12}, $||\Pi_{m}(P_{y}\circ M)-P_{y}||\leq C^{\Lambda}\omega(|y-z|)$, then $||T-P_{y}||\leq C^{\Lambda}\omega(2r)$. By the choice of $r_{0}$ in Theorem \ref{quantitative uniqueness} and $C_{1}\Lambda r_{1}\leq r_{0}$, we have $\omega(2r_{1})\leq \sqrt{\epsilon}$, then we can conclude that $||P_{y}-P_{z}||\leq C^{\Lambda}\sqrt{\epsilon}$.
\end{proof}
As a corollary, we can obtain a Lipchitz structure for pinched set. For any $y\in \mathcal{P}_{m,\epsilon,r}(x_{0})$, we define the $m$-pinched scale as
\begin{equation}
    r_{y}:=\sup_{s}\{0\leq s\leq r: D(y,s)\leq m-\epsilon\}.
\end{equation}
\begin{corollary}\label{cor6.4}
Under the same setting as in Theorem \ref{cone splitting theorem} with $m\geq 2,\tau\leq c(n,\lambda)$, assume that $E\subset \mathcal{P}_{m,\epsilon,r}(x_{0})$ satisfying for any $y,z\in E$, ${\rm max}\{r_{y},r_{z}\}\leq |y-z|\leq r/100$ and fix $r_{1}$ as in Theorem \ref{cone splitting theorem}. Then there exists $\epsilon_{0}=C(n,\lambda,\omega,\tau)^{-\Lambda}$ such that if $\epsilon\leq \epsilon_{0}$, $r\leq r_{1}$, we have a universal plane $V$ of dimension $\leq n-2$, which is independent of  the pinched points and scales, such that $E\subset {\rm Graph}(f)$, where $f:V\to V^{\perp}$ is a Lipschitz map with ${\rm Lip}(f)\leq 1/10$.
\end{corollary}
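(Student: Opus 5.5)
The plan is to compare any two points of $E$ at the scale $s=|y-z|$ and turn the cone-splitting estimates into a Lipschitz bound relative to a fixed plane. First I construct $V$. Since $E\subset\mathcal{P}_{m,\epsilon,r}(x_0)$, Theorem \ref{quantitative uniqueness} gives each $y\in E$ a normalized tangent polynomial $P_y\in H_m$. Claim 2 in the proof of Theorem \ref{cone splitting theorem} gives $\|P_y-P_z\|\le C^{\Lambda}\sqrt{\epsilon}$ for all $y,z\in E$. So I fix one reference polynomial $P:=P_{y_0}$ for some $y_0\in E$.

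I then define $V$ from the almost-invariant directions of $P$. Let $V$ be the span of the eigenvectors of the quadratic form $v\mapsto\|\partial_v P\|^2/\|\nabla P\|^2$ with eigenvalue at most a threshold $\sigma=\sigma(n,\lambda,\tau)$. Because $m\ge2$, Lemma \ref{lemma6.2} forbids $n-1$ almost invariant directions, so $\dim V\le n-2$. By construction, every unit $v\perp V$ satisfies $\|\partial_vP\|\ge\sqrt\sigma\|\nabla P\|$. The plane $V$ depends only on $P$, and $P$ is within $C^\Lambda\sqrt\epsilon$ of every $P_y$. Hence $V$ is independent of the chosen points and scales, up to errors that vanish as $\epsilon\to0$.

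The key step is to show that for $y,z\in E$ the vector $z-y$ is almost in $V$, that is, $|\pi_{V^\perp}(z-y)|\le\frac1{10}|\pi_V(z-y)|$. I rerun Claim 1 of Theorem \ref{cone splitting theorem} at the scale $s\approx K|y-z|$ in place of $Kr$. This is legitimate because $\max\{r_y,r_z\}\le|y-z|\le r/100$. By the definition of $r_y$ and Theorem \ref{sphere almost monotonicity}, both points remain $(m,\epsilon)$-pinched on $[|y-z|/10,\,10\Lambda r]$, so Theorem \ref{quantitative uniqueness} applies at scale $s$ with the same $P_y$. Claim 1 then gives
\[
\Big\|\nabla P_y\cdot A_y^{-1}\tfrac{z-y}{|z-y|}\Big\|\le C^{\Lambda}\sqrt{\epsilon}.
\]
I replace $P_y$ by $P$ at a cost of $C^\Lambda\sqrt\epsilon$. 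I replace $A_y^{-1}$ by $A_{y_0}^{-1}$ at a cost of $C\omega(2r)\le\sqrt\epsilon$. This second cost is controlled by the choice of $r_1$, after composing everything with the fixed linear map $A_{y_0}^{-1}$; the Lipschitz statement is then read in these coordinates, or equivalently $V$ is pulled back. With $e=A_{y_0}^{-1}(z-y)/|A_{y_0}^{-1}(z-y)|$, this yields $\|\partial_eP\|\le C^\Lambda\sqrt\epsilon\,\|\nabla P\|$. Decompose $e=e_V+e_\perp$. Then
\[
\|\partial_{e_\perp}P\|\le\|\partial_eP\|+\|\partial_{e_V}P\|\le C^\Lambda\sqrt\epsilon+\sqrt\sigma\,|e_V|,
\]
while $\|\partial_{e_\perp}P\|\ge\sqrt\sigma|e_\perp|$. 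This bound alone is too weak: it only gives $|e_\perp|\lesssim|e_V|+\dots$. So the threshold must instead come from a spectral gap.

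I expect this to be the main obstacle. I will try a pigeonhole argument on eigenvalues. The quadratic form has $n$ eigenvalues in $[0,1]$. Choosing among $n+1$ geometrically separated thresholds $\sigma_j=\gamma^{j}$, one gap $[\sigma_{j+1},\sigma_j]$ contains no eigenvalue, with $\gamma$ depending only on $n$. I take $V$ as the span of the eigenvectors with eigenvalue at most $\sigma_{j+1}$. Then
\[
\|\partial_eP\|^2\ge\sigma_j|e_\perp|^2\|\nabla P\|^2,
\]
since the form is diagonal in the eigenbasis and no cross terms appear. Combined with the bound above this gives $|e_\perp|^2\le C^\Lambda\epsilon/\sigma_j\le\gamma^{-n}C^\Lambda\epsilon$. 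For $\epsilon\le\epsilon_0=C(n,\lambda,\omega,\tau)^{-\Lambda}$ this forces $|e_\perp|\le1/20$, hence $|\pi_{V^\perp}(z-y)|\le\frac1{10}|\pi_V(z-y)|$. The condition $\dim V\le n-2$ survives by Lemma \ref{lemma6.2}, provided $\sigma_{j+1}\le1/2n^2$, which I arrange by starting the thresholds below $1/2n^2$.

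Finally, the cone condition shows that $\pi_V$ is injective on $E$, with $|z-y|\le 2|\pi_V(z-y)|$. So $f(\pi_V y):=\pi_{V^\perp}y$ is well defined on $\pi_V(E)$ with ${\rm Lip}(f)\le1/10$. It extends to $V$ by Kirszbraun's theorem, giving $E\subset{\rm Graph}(f)$.
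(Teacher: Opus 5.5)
Your proposal is correct and is essentially the paper's argument. You rerun Claim 1 at scale $\sim|y-z|$, which is allowed because $\max\{r_y,r_z\}\le|y-z|$, and use Claim 2 to pass to a single reference polynomial. You then work with the eigen-decomposition of $v\mapsto\|\partial_v P\|^2/\|\nabla P\|^2$, where Lemma \ref{lemma6.2} together with $m\ge 2$ rules out $n-1$ small eigenvalues.

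The pigeonhole spectral gap is unnecessary. Because the form has no cross terms in its eigenbasis, you already get $\|\partial_e P\|^2\ge\sigma|e_\perp|^2\|\nabla P\|^2$ for any threshold $\sigma\le 1/2n^2$; the paper simply takes $V$ spanned by the $n-2$ lowest eigenvectors and bounds $\lambda_{n-1}$ from below. Passing from $A_{y_0}^{-1}$-coordinates back to $E$ only costs a factor $1+\lambda$, which is absorbed into $\epsilon_0$.
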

\begin{proof}
    For any $y,z\in\mathcal{P}$, taking $r=|y-z|$ in the proof of Theorem \ref{cone splitting theorem}, then $b=A_{y}^{-1}\frac{z-y}{20\sqrt{1+\lambda}|z-y|}\in B_{1/20}$. And by Claim 1, we have $||b\cdot \nabla P_{y}||\leq C^{\Lambda}\sqrt{\epsilon}$. Let $\hat{P}_{y}(\xi)=P_{y}(A_{y}^{-1}\xi)$, then for vector $h=\frac{z-y}{20\sqrt{1+\lambda}|z-y|}$, $\partial_{h}\hat{P}_{y}(\xi)=b\cdot \nabla P_{y}(A_{y}^{-1}\xi)$, hence by doubling property and coordinate transformation,
    \begin{equation}\label{6.24}
        ||\partial_{z-y}\hat{P}_{y}||\leq C^{\Lambda}\sqrt{\epsilon}|z-y|.
    \end{equation}
\indent We may assume that $E$ is $(k,\tau)$-independent in $B_{r}(x_{0})$ for some $k$, and we fix $z_{0}\in E$, by Claim 2, $||P-P_{y}||\leq C^{\Lambda}\sqrt{\epsilon}$, where $P=P_{z_{0}}$. Since $z_{0}\in E$, by  the proof of Theorem \ref{cone splitting theorem}, we can find a $k$-symmetric $Q\in H_{m}$ such that $||P-Q||\leq C(n)\Lambda^{n}\sqrt{\epsilon}$, therefore $||P_{y}-Q||\leq C^{\Lambda}\sqrt{\epsilon}$ for any $y\in\mathcal{P}$. Since $P_{y},Q\in H_{m}$, then doubling property and the interior estimate imply
\begin{equation}
    ||P_{y}-Q||_{C^{1}(B_{\sqrt{1+\lambda}})}\leq C^{\Lambda}\sqrt{\epsilon}.
\end{equation}
For any $z\in\mathcal{P}$, set $\hat{Q}_{z}(\xi)=Q(A_{z}^{-1}\xi)$, then we have
\begin{equation}
    ||\hat{P}_{y}-\hat{Q}_{z}||_{C^{1}(B_{1})}\leq C^{\Lambda}(\sqrt{\epsilon}+\omega(r)).
\end{equation}
Therefore, we can choose a common $\hat{Q}(\xi)=Q(A^{-1}\xi)$ with $(1+\lambda)^{-1/2}I\leq A\leq (1+\lambda)^{1/2}I$  such that for any vector $v$,
\begin{equation}
    ||\partial_{v}\hat{P}_{y}-\partial_{v}\hat{Q}||\leq C^{\Lambda}(\sqrt{\epsilon}+\omega(r))|v|.
\end{equation}
Taking $v=|z-y|$, then combining \eqref{6.24}, we have $||\partial_{z-y}\hat{Q}||\leq C^{\Lambda}\sqrt{\epsilon}|z-y|$ since $\omega(r)\leq \omega(r_1)\leq \sqrt{\epsilon}$.\\
\indent Now we define the following quadratic form with respect to $\hat{Q}$,
\begin{equation}
    \hat{B}(v,w):=\frac{\langle \partial_{v}\hat{Q},\partial_{w}\hat{Q}\rangle }{||\nabla \hat{Q}||^{2}},
\end{equation}
where the inner product $\langle f,g\rangle:=\fint_{\partial B_{1}}fg$. Note that $\hat{B}$ is semi-positive definite, we assume its eigenvalues $0\leq \lambda_{1}\leq...\leq \lambda_{n-1}\leq \lambda_{n}$, and the corresponding orthogonal eigenvectors $f_{1},...,f_{n}$. Now we define $V:={\rm span}\{f_{1},f_{2},...,f_{n-2}\}$, especially, if $Q$ is $(n-2)$-symmetric with respect to $W$, then $V=AW$.\\
\indent Set $v=z-y=\sum_{i=1}^{n}\alpha_{i}f_{i}$, then ${\rm dist}(v,V)=\sqrt{\alpha_{n-1}^{2}+\alpha_{n}^{2}}$. Note that 
$\hat{B}(v,v)=\sum_{i=1}^{n}\lambda_{i}\alpha_{i}^{2}\geq \lambda_{n-1}{\rm dist}(v,V)^{2}$, since $||Q||=1$ and $||\nabla \hat{Q}||^{2}\geq c(n,\lambda)>0$, recall $||\partial_{v}\hat{Q}||\leq C^{\Lambda}\sqrt{\epsilon}|v|$, then
\begin{equation}\label{6.29}
    \lambda_{n-1}{\rm dist}(z-y,V)^{2}\leq \hat{B}(v,v)\leq C^{2\Lambda}\epsilon|z-y|^{2}.
\end{equation}
\indent Finally, we give a uniform lower bound of $\lambda_{n-1}$. Similarly, we denote define
\begin{equation}
    B(v,w):=\frac{\langle \partial_{v}Q,\partial_{w}Q\rangle }{||\nabla {Q}||^{2}}.
\end{equation}
    And we denote its eigenvalues as $0\leq \mu_{1}\leq...\leq \mu_{n}$ and its corresponding orthogonal eigenvectors as $e_{1},...,e_{n}$, one can verify that $\lambda_{n-1}\geq C(n,\lambda)^{-\Lambda}\mu_{n-1}$. Hence it suffices to give a uniform lower bound of $\mu_{n-1}$. Actually, we claim that $\mu_{n-1}>1/2n^{2}$. If not, $\mu_{i}\leq\mu_{n-1}\leq 1/2n^{2}$ for $i\leq n-1$, then Lemma \ref{lemma6.2} implies $m=1$, which contradicts the assumption $m\geq 2$, hence the claim is true. So we conclude that $\lambda_{n-1}\geq C(n,\lambda)^{-\Lambda}$. Combining \eqref{6.29}, we have
    \begin{equation}
        {\rm dist}(z-y,V)\leq C^{\Lambda}\sqrt{\epsilon}|z-y|\leq |z-y|/100
    \end{equation}
    if $\epsilon\leq C^{-\Lambda}$. The above inequality gives a Lipschitz map $f:V\to V^{\perp}$ such that $E\subset {\rm Graph}(f)$ with ${\rm Lip}(f)\leq 1/10$.
\end{proof}
In the last part of this section, we point out the location of critical set near an almost $(n-2)$-symmetric point.
\begin{lemma}\label{lemma6.5}
       Let $u$ be a solution to ${\rm div}(A\nabla u)=0$ in $B_{2}(0)$, where $A$ is $\omega$-Dini continuous with \eqref{unifomly elliptic}. Assume that $u$ satisfies doubling condition \eqref{doubling assumption}, fix $\tau=c(n,\lambda),r_{1}$ as in Theorem \ref{cone splitting theorem}, then there exists $\epsilon_{0}=C(n,\lambda,\omega,\tau)^{-\Lambda}$ such that if $\epsilon\leq \epsilon_{0}$, $r\leq r_{1}$, the following holds. For any $x\in B_{1}$, if there exists a $(n-2)$-symmetric hhp $P$ with respect to $V$ such that $||u_{x,2\sqrt{1+\lambda}r}-P||\leq \epsilon$, then 
       \begin{equation}
           C(u)\cap B_{r}(x)\subset B_{\tau r}(x+V_{x}),
       \end{equation}
       where $V_{x}=A_{x}V$.
\end{lemma}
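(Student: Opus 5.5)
Work in the rescaled picture: set $w=\tilde u_{x,R}$ (or $u_{x,R}$) with $R=2\sqrt{1+\lambda}r$. Then a point $y\in B_r(x)$ corresponds to $\xi=A_x^{-1}(y-x)/R$, and since $|A_x^{-1}|\leq\sqrt{1+\lambda}$ we get $|\xi|\leq 1/2$. Critical points of $u$ are exactly critical points of $w$, because $A_x$ is invertible. Also $\mathrm{dist}(\xi,V)\geq c\tau$ is equivalent, up to a factor $C(\lambda)$, to $\mathrm{dist}(y,x+A_xV)\geq \tau r$. So it suffices to show that $w$ has no critical point in $B_{1/2}\setminus B_{c\tau}(V)$.

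\textbf{Step 1: upgrade the $L^2$ closeness to $C^1$.} Argue as at the end of the proof of Theorem \ref{quantitative uniqueness}. Write $-\Delta(w-P)=\mathrm{div}((B-I)\nabla w)$, and use the harmonic approximation together with Lemma \ref{lemma5.3}, which gives $\|w\|_{C^1(B_1)}\leq C^\Lambda$. By Theorem \ref{DMO regularity} applied to $w-P$, with right-hand side $f=(B-I)\nabla w$ whose Dini mean oscillation is controlled by $\omega(\kappa R)$, we obtain
\[
\|\nabla(w-P)\|_{L^\infty(B_{3/4})}\leq C(n,\lambda,\omega)^{\Lambda}\big(\epsilon+\Omega(\kappa R)\big).
\]
The $L^1$ norm of $\nabla(w-P)$ is handled by Caccioppoli plus the trace/$L^2$ bounds. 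By the choice of $r_1$, we have $\Omega(\kappa R)\leq\epsilon$, so the right-hand side is $\leq C^\Lambda\epsilon$.

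\textbf{Step 2: a quantitative lower bound for $|\nabla P|$ away from $V$.} $P$ is a normalized $(n-2)$-symmetric hhp of degree $m\leq C\Lambda$, so $P$ is a harmonic homogeneous polynomial in the two variables of $V^\perp$, i.e.\ $P=c\,\mathrm{Re}(z^m)$ up to rotation, with $c$ comparable to $1$ by normalization. Hence $|\nabla P(\xi)|=c' m\,\mathrm{dist}(\xi,V)^{m-1}$. The case $m=0$ is excluded by $P$ normalized hhp, and $m=1$ gives a nonvanishing gradient everywhere. Thus on $B_{1/2}\setminus B_{c\tau}(V)$,
\[
|\nabla P|\geq (c\tau)^{m-1}\geq (c\tau)^{C\Lambda}.
\]
I expect this to be the main obstacle: the bound degenerates exponentially in $\Lambda$. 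The degenerate direction lies only near $V$, so Step 2 is clean, but we need $\epsilon_0$ to beat $(c\tau)^{C\Lambda}$. This is exactly why $\epsilon_0=C(n,\lambda,\omega,\tau)^{-\Lambda}$ is allowed.

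\textbf{Step 3: conclude.} Choose $\epsilon_0$ so that $C^\Lambda\epsilon_0<\tfrac12(c\tau)^{C\Lambda}$. Then $|\nabla w|\geq|\nabla P|-|\nabla(w-P)|>0$ on $B_{1/2}\setminus B_{c\tau}(V)$. Scaling back, every critical point of $u$ in $B_r(x)$ lies in $B_{\tau r}(x+A_xV)$, after adjusting $c$ to absorb the ellipticity factor $\sqrt{1+\lambda}$ from $A_x$.

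A minor point to check: the hypothesis uses $u_{x,\cdot}$ normalized on $B_1$ rather than on $\partial B_1$. The two normalizations differ by a factor between $C^{-\Lambda}$ and $C^{\Lambda}$ (Lemma \ref{lemma5.3}), which only affects constants absorbed into $\epsilon_0$.
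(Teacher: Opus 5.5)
Your proposal follows the paper's proof essentially step for step. You upgrade the closeness of $u_{x,2\sqrt{1+\lambda}r}$ to $P$ to $C^1$ closeness on $B_{3/4}$. You use that an $(n-2)$-symmetric hhp of degree $m\le C\Lambda$ satisfies $|\nabla P|\ge c\,m\,\mathrm{dist}(\cdot,V)^{m-1}\ge (c\tau)^{C\Lambda}$ on $B_{1/2}\setminus B_{c\tau}(V)$, and you absorb the error by taking $\epsilon_0=C^{-\Lambda}$.

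One repair is needed in Step 1. The field $(B-I)\nabla w$ need not have Dini mean oscillation, because for Dini coefficients $\nabla w$ is in general only continuous with modulus $\int_0^r\omega(t)/t\,dt$. So rewrite the equation as $\mathrm{div}(B\nabla(w-P))=-\mathrm{div}((B-I)\nabla P)$. Then apply Theorem \ref{DMO regularity} with the smooth field $\nabla P$ on the right-hand side, exactly as in Proposition \ref{epsilon regularity in section 4.3}.
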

\begin{remark}
    For $\mathcal{P}=\mathcal{P}_{m,\epsilon,r}^{u}(x_{0})$ the same as in Theorem \ref{cone splitting theorem}, if $\mathcal{P}$ is $(n-2,\tau_{1})$-independent in $B_{r}(x_{0})$ and $r\leq r_{0}$, note that $P_{x}\approx P_{y}$ and $A_{x}\approx A_{y}$ for any $x,y\in \mathcal{P}$, hence one can choose a common $V$ as in Corollary \ref{cor6.4} for Lemma \ref{lemma6.5}, which is independent of pinched points and scales.
\end{remark}
\begin{proof}
    Since $||u_{x,2\sqrt{1+\lambda}r}-P||\leq \epsilon$, by the proof in Theorem \ref{quantitative uniqueness}, we have $||u_{x,2\sqrt{1+\lambda}r}-P||_{C^{1}(B_{3/4})}\leq C(n,\lambda,\omega)\epsilon$. We may assume that $V={\rm span}\{e_{1},e_{2},...,e_{n-2}\}$, and $P(x)=P(x_{n-1},x_{n-2})$ with degree $m\leq C\Lambda$, then 
    \begin{equation}\label{6.34}
        |\nabla P(x)|=\sqrt{2}m\cdot {\rm dist}(x,V)^{m-1}.
    \end{equation}
    If there exists $z\in C(u)\cap B_{r}(x)$ with ${\rm dist}(z-x,V_{x})>\tau r$, then let $y=A_{x}^{-1}(\frac{z-x}{2\sqrt{1+\lambda}r})\in B_{1/2}$, we have ${\rm dist}(y,V)\geq c(\lambda){\rm dist}(z-x,A_{x}V)/r\geq c(\lambda)\tau$. Hence by \eqref{6.34}, $|\nabla P(y)|\geq [c(\lambda)\tau]^{C\Lambda}$. However, $||\nabla u_{x,2\sqrt{1+\lambda}r}-\nabla P||_{L^{\infty}(B_{3/4})}\leq C\epsilon$, especially, $|\nabla u_{x,2\sqrt{1+\lambda}r}(y)|\geq [c\tau]^{C\Lambda}-C\epsilon>0$ if $\epsilon<C(n,\lambda,\omega)^{-\Lambda}$. Hence $\nabla u(z)\neq 0$, which contradicts $z\in C(u)$. 
     \end{proof}

\section{Volume estimates for critical sets}
In this section, we apply the quantitative uniqueness and cone splitting techniques to prove the Minkowski estimates for critical sets.
\subsection{Interior case}
\indent First, we fix the constants. Taking $\tau=c(n,\lambda),\kappa=C_{1}=100(1+\lambda)$, and choosing common $\epsilon=C(n,\lambda,\omega)^{-\Lambda}$ and $C_{1}\Lambda r_{1}\leq r_{0}$ with $\Omega(\kappa r_{0})\leq [c(n,\lambda,\omega)\epsilon/2]^{C(n,\lambda)\Lambda}\leq C(n,\lambda,\omega)^{-\Lambda^{2}}$ as in Theorem \ref{sphere almost monotonicity}, Corollary \ref{cor3.9}, Corollary \ref{cor6.4} and Lemma \ref{lemma6.5}. In the following, we always work on $B_{r}(x_{0})$ with $r\leq r_{1}$, and we will rescale it to $B_{1}(0)$ for the convenience. For later use of inductive covering, we introduce the concept of $(m,\delta)$-good ball, i.e., we say $B_{r}(x)$ is an $(m,K)$-good ball if $D^{u}(y,s)\leq m+\epsilon$ for any $y\in B_{r}(x)$ and $s\leq K r$.\\
\indent To prove the volume estimates, we need a key covering lemma as in \cite{NV} and \cite{HJ1}, and the proof also follow their methods. The main idea for proving it is a successful inductive covering argument introduced in \cite{NV}, and the main ingredients are Corollary \ref{cor6.4} and Lemma \ref{lemma6.5}, which are due to quantitative uniqueness and cone splitting principle. 
\begin{lemma}\label{key covering lemma}
   Under the same setting as in Theorem \ref{interior critical theorem}, fix $r>0$ and assume that $B_{1}(0)$ is a $(m,C_{1}(\lambda)\Lambda)$-good ball with $m\geq 2$, then we have the following covering
    \begin{equation}
        C(u)\cap B_{1}(0)\subset \bigcup_{j}B_{t_{j}}(x_{j})\;\text{with}\;\sum_{j}t_{j}^{n-2}\leq C(n,\lambda)\cdot (\Lambda/\epsilon)^{n},
    \end{equation}
    where $x_{j}\in C(u )$ and each $B_{t_{j}}(x_{j})$ is either an $(m-1,C_{1}(\lambda)\Lambda)$-good ball or the radius satisfies $t_{j}=r$.
\end{lemma}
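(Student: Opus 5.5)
We follow the inductive covering scheme of \cite{NV} and \cite{HJ1}, with Corollary \ref{cor6.4} and Lemma \ref{lemma6.5} replacing the H\"older-based ingredients. Since $B_{1}(0)$ is an $(m,C_{1}\Lambda)$-good ball, every $y\in B_{1}$ has $D(y,s)\le m+\epsilon$ for all $s\le C_{1}\Lambda$. By Theorem \ref{sphere almost monotonicity} and Corollary \ref{cor3.9}, once $D(y,s_{0})\le m-\epsilon$ at some scale $s_{0}$, the doubling index stays below $m-1+\epsilon$ at all scales $\le c\epsilon s_{0}$. Hence a ball around $y$ of radius comparable to $\epsilon s_{0}/(C_{1}\Lambda)$ becomes $(m-1,C_{1}\Lambda)$-good, after using almost monotonicity at neighbouring points to pass from the single point $y$ to the whole small ball.

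\textbf{Step 1: define the pinched scale.} For each $x\in C(u)\cap B_{1}$ set
\[
\bar r_{x}:=\max\Big\{r,\ \sup\{s: D(x,s)\le m-\epsilon\}\Big\}.
\]
Above scale $\bar r_{x}$, every $x$ lies in the $(m,\epsilon)$-pinched set: $D(x,s)\in[m-\epsilon,m+\epsilon]$. Cover $C(u)\cap B_{1}$ by balls $B_{\bar r_{x}}(x)$ and extract a Vitali subfamily $\{B_{t_{j}}(x_{j})\}$ with the balls $B_{t_{j}/5}(x_{j})$ disjoint. After slightly enlarging the radii (by a factor $\Lambda/\epsilon$), each ball with $t_{j}>r$ is $(m-1,C_{1}\Lambda)$-good by the argument above, and the remaining balls have radius $r$. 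So the good/radius-$r$ dichotomy holds, and only the packing bound $\sum t_{j}^{n-2}\le C(\Lambda/\epsilon)^{n}$ remains.

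\textbf{Step 2: packing via Lipschitz structure.} This is the main obstacle. First case: at some scale $s$ the centres $\{x_{j}\}$ in a ball $B_{s}(y)$ are $(n-1,\tau)$-independent. Then Theorem \ref{cone splitting theorem} gives a normalized hhp that is $(n-1)$-symmetric and of degree $m\ge2$, contradicting Lemma \ref{lemma6.2}. So at every scale the centres fall into the case handled by Lemma \ref{lemma6.5} together with the following remark: the critical points, hence the centres, lie in $B_{\tau s}$ of a common $(n-2)$-plane. Second step: apply Corollary \ref{cor6.4} to $E=\{x_{j}\}$; its separation hypothesis $\max\{r_{y},r_{z}\}\le|y-z|$ follows from the disjointness of the Vitali balls. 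This yields a single plane $V$ of dimension $\le n-2$ such that $E\subset\mathrm{Graph}(f)$ with $\mathrm{Lip}(f)\le1/10$. Projecting onto $V$ gives disjoint balls $B_{ct_{j}}(\pi(x_{j}))$ inside a ball of radius $2$ in $V$, so $\sum t_{j}^{n-2}\le C(n)$.

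The difficulty is that Corollary \ref{cor6.4} needs $|y-z|\le r/100$ and pinching only on a bounded window of scales. I would therefore first cover $B_{1}$ by $C(n)(C_{1}\Lambda/\epsilon)^{n}$ balls of radius $c\epsilon/(C_{1}\Lambda)$, apply the Lipschitz packing inside each of them, and rescale. This finite subdivision is exactly what produces the factor $(\Lambda/\epsilon)^{n}$ in the bound. I also need to check that points with pinched scale above the subdivision scale are already good balls. I expect this bookkeeping at the top scale, and the transfer of goodness from points to balls using the doubling index at nearby centres, to be the delicate parts.
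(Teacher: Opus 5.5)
There is a genuine gap in Step 1, and it is exactly the ``transfer of goodness from points to balls'' that you defer to the end. The pinched scale $\bar r_{x_j}$ controls the doubling index only at the centre $x_j$. A Vitali selection by decreasing radii lets $B_{t_j}(x_j)$ absorb precisely the critical points $y$ with $\bar r_y\lesssim t_j$. Such a $y$ may have $|D(y,s)-m|\le\epsilon$ down to scales $\bar r_y\ll \epsilon t_j/\Lambda$, possibly down to $r$. Then no ball of radius $\sim\epsilon t_j/(C_1\Lambda)$ containing $y$ is $(m-1,C_1\Lambda)$-good, so the alternative ``good ball or radius $r$'' fails for your family.

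Refining $B_{t_j}(x_j)$ therefore does not help for the sub-balls containing such $y$. Enlarging the radii, as written, goes the wrong way: already at $x_j$ one only has $D\approx m-\epsilon$ at scale $t_j$, so one must refine. Theorem \ref{sphere almost monotonicity} and Corollary \ref{cor3.9} are statements at a fixed centre and cannot move the drop from $x_j$ to other centres. Continuity of $D(\cdot,s)$ in the centre does so at best on a neighbourhood of $x_j$ whose radius is a small $(\epsilon,\Lambda)$-dependent fraction of $t_j$. Your preliminary subdivision of $B_1(0)$ does not address this, since the same configuration reappears inside each rescaled piece. It is also not where the factor $(\Lambda/\epsilon)^n$ comes from.

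The missing idea is the good/bad splitting of \cite{NV} that the paper uses. With $r_x=\max\{\tilde r_x,r\}$, set $C_g=\{x\in C(u)\cap B_1:\ r_y\ge r_x/7 \text{ for all } y\in B_{5r_x}(x)\}$ and $C_b=(C(u)\cap B_1)\setminus C_g$. Take a Vitali family $\{B_{5r_i}(x_i)\}$ covering $C_g$ with $B_{r_i}(x_i)$ pairwise disjoint. Then \emph{every} $y\in B_{5r_i}(x_i)$ has its own drop $D(y,r_i/7)\le m-\epsilon/2$. Applying Corollary \ref{cor3.9} at each such $y$ shows that refining $B_{5r_i}(x_i)$ into $C(n,\lambda)(\Lambda/\epsilon)^n$ balls of radius $\epsilon r_i/(56C_1\Lambda)$ yields $(m-1,C_1\Lambda)$-good balls. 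This refinement is the source of $(\Lambda/\epsilon)^n$, and it leaves only $\sum_i r_i^{n-2}\le C(n)$ to prove.

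For that packing, the disjoint balls must be $B_{r_i}(x_i)$. Disjointness of $B_{r_i/5}(x_i)$ only gives $|x_i-x_j|\ge(r_i+r_j)/5$, not the separation $\max\{r_{x_i},r_{x_j}\}\le|x_i-x_j|$ required in Corollary \ref{cor6.4}. The boundedly many centres with $r_i>1/100$ are counted directly. The rest are grouped into $C(n)$ clusters of diameter $\le 1/100$, inside which the centres lie in $\mathcal{P}_{m,\epsilon,1}(0)$ and Corollary \ref{cor6.4} applies at unit scale, so no top-scale subdivision is needed.

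Finally, each bad point is linked by a chain to a good point at distance $\le 6r_y$ with smaller pinched scale. The bad points are covered by a second Vitali family with radii $s_y=\min_i|y-x_i|/55\le r_y/5$, adapted to the distance from the good centres. For this family the drop condition and $\sum_j s_j^{n-2}\le C(n)$ are verified as in \cite{HJ1}. Your Step 2 packing via Corollary \ref{cor6.4} is in the right spirit, but without this second covering the balls you pack do not satisfy the good-ball/radius-$r$ alternative.
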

\begin{proof}
    First, we may assume that $r\leq (C_{1}\Lambda)^{-1}$, if not, we can choose a direct Vitali covering as the desired covering. Recall the definition of $m$-pinched scale, i.e, 
    \begin{equation}
        \tilde{r}_{x}=\sup\{0\leq s\leq 1:D(x,s)\leq m-\epsilon\}\;\text{and}\;r_{x}:=\max\{\tilde{r}_{x},r\}\geq r.
    \end{equation}
    Then we can decompose $C(u)$ in $B_{1}(0)$ into the good part and bad part
    \begin{equation}
        C(u)\cap B_{1}(0)=C_{g}\cup C_{b},
    \end{equation}
    where 
    \begin{equation}
        C_{g}:=\{x\in C(u)\cap B_{1}:r_{y}\geq r_{x}/7 \;\text{for any}\;y\in B_{5r_{x}}(x)\} \;\text{and}\; C_{b}=[C(u)\cap B_{1}]\setminus C_{g}.
    \end{equation}
    Here $r_{y}\geq r_{x}/7$ means that $D(x,7r_{y})\geq m-\epsilon$ and recall that now $1$ is the scale for almost monotonicity with error $\epsilon/2$, then $D(y,r_{x}/7)\leq D(y,r_{y})+\epsilon/2\leq  m-\epsilon/2$. Next, we can follow \cite{HJ1} to cover $C_{g}$ and $C_{b}$, now we give a proof sketch here.\\
\indent  \textbf{Step 1: The covering of  $C_{g}$.} First, we choose a Vitali covering of $C_{g}$, i.e., 
    \begin{equation}
        C_{g}\subset \bigcup_{i}B_{5r_{i}}(x_{i})\; \text{with}\;B_{r_{i}}(x_{i})\cap B_{r_{j}}(x_{j})=\emptyset \;\text{for any}\;i\neq j,
  \end{equation}
  where $x_{i}\in C_{g}$ and $r_{i}:=r_{x_{i}}$. If $r_{i}>r$, then $D(y,r_{i}/7 )\leq m-\epsilon/2$ for any $y\in B_{5r_{i}}(x_{i})$. By Corollary \ref{cor3.9} with error $\epsilon/2$, we have $D(y,\epsilon r_{i}/56)\leq m-1+\epsilon/2$. Then we can cover $B_{r_{i}}(x_{i})$ with the ball with radius $\epsilon r_{i}/56C_{1}\Lambda$, and the number of such ball for $B_{r_{i}}(x_{i})$ is bounded by $C(n,\lambda)\cdot(\Lambda/\epsilon)^{n}$. Besides, by almost monotonicity, each new ball is an $(m-1,C_{1}\Lambda)$-good ball. Therefore for the desired covering of $C_{g}$, it suffices to show that $\sum_{i}r_{i}^{n-2}\leq C(n)$.\\
  \indent Denote $G:=\{x_{i}\in C_{a}:r_{i}>1/100\}$, since the cover is disjoint, then $\sum_{x_{i}\in G}r_{i}^{n-2}\leq C(n)\cdot 1^{n-2}=C(n)$. Hence it remains to estimate the sum of points with radius $\leq 1/100$. We may set $E:=C_{g}\setminus G$, and $E=\bigcup_{a\in \mathcal{A}}E_{a}$, where for any $x_{i},x_{j}\in E_{a}$, $|x_{i}-x_{j}|\leq 1/100$. By disjointeness, $|\mathcal{A}|\leq C(n)$, therefore it suffices to prove that $\sum_{x_{i}\in E_{a}}r_{i}^{n-2}\leq C(n)$. Since $r_{i}+r_{j}\leq |x_{i}-x_{j}|\leq 1/100$, then $x_{i},x_{j}\in \mathcal{P}_{m,\epsilon,1}(0)$. By Corollary \ref{cor6.4}, there exists a Lipschitz map $f:V\to V^{\perp}$ with ${\rm dim}V\leq n-2$ and ${\rm Lip}(f)\leq 1/10$ such that $E_{a}\in {\rm Graph}(f)$. Therefore, by disjointness and Lipschitz structure, we have $\sum_{x_{i}\in E_{a}}r_{i}^{n-2}\leq C(n)$.\\
  \indent \textbf{Step 2: The covering of $C_{b}$.} For any $y\in C_{b}$, by definition and iteration, there exists some $x\in C_{g}$ such that
  \begin{equation}
      |x-y|\leq 5\sum_{j=0}7^{-j}r_{y}\leq 6r_{y}\; \text{and}\;r_{x}< r_{y}/7.
  \end{equation}
    We may assume that $x\in B_{5r_{i}}(x_{i})$, then $r_{x}\geq r_{i}/7$, which means $r_{i}\leq r_{y}$ and $|y-x_{i}|\leq 11r_{y}$. Denote $s_{y}:=\min_{i}|y-x_{i}|/55$, then $s_{y}\leq r_{y}/5$, and we also choose a Vitali covering of the remaining part, i.e.,
    \begin{equation}
        C_{b}\setminus \bigcup_{i}B_{5r_{i}}(x_{i})\subset \bigcup_{j}B_{s_{j}}(y_{j})\; \text{with}\;B_{s_{j}}(y_{j})\cap B_{s_{k}}(y_{k})=\emptyset \;\text{for any}\;j\neq k,
    \end{equation}
    where $y_{j}\in C_{b}$ and $s_{j}:=s_{y_{j}}$. Following the same further discussions in \cite{HJ1}, one can also verify the doubling index drop condition and $\sum_{j}s_{j}^{n-2}\leq C(n)$, here we omit them.
\end{proof}
\indent Based on the above covering lemma, now we are ready to prove Theorem \ref{interior critical theorem}.
\begin{proof}[Proof of Theorem \ref{interior critical theorem}]
    Fix the constants $\tau,C_{1},\epsilon,\kappa, r_{0}$ as before, and especially, we set $r_{0}$ such that $\Omega(\kappa r_{0})=C(n,\lambda,\omega)^{-\Lambda^{2}}$ and $r_{1}=r_{0}/C_{1}\Lambda$. And we can cover $B_{1}(0)$ by several $B_{r_{1}}(x_{i})$ with the number of such balls $\leq C(n)r_{1}^{-n}$. Note that if we set $m=[C(n,\lambda)\Lambda]+1$, then each ball is an $(m,C_{1}\Lambda)$-good ball. For $r>r_{1}$, applying Lemma \ref{key covering lemma} to each $B_{r_{1}}(x_{i})$, then we have
    \begin{equation}
        C(u)\cap B_{1}\subset \bigcup_{j}B_{t_{1,j}}(x_{1,j})\;\text{with}\;\sum_{j}t_{1,j}^{n-2}\leq C(n)r_{1}^{-n}\cdot C(n,\lambda,\omega)^{\Lambda},
    \end{equation}
    where $x_{1,j}\in C(u)$ and each $B_{t_{1,j}}(x_{1,j})$ is an $(m-1,C_{1}\Lambda)$-good ball. Iterating $m-1$ times, we obtain
    \begin{equation}
        C(u)\cap B_{1}\subset \bigcup_{j}B_{t_{j}}(x_{j})\;\text{with}\;\sum_{j}t_{j}^{n-2}\leq C(n)r_{1}^{-n}\cdot C(n,\lambda,\omega)^{(m-1)\Lambda}\leq  C(n,\lambda,\omega)^{\Lambda^{2}}r_{1}^{-n}.
    \end{equation}
    Finally, each $B_{t_{j}}(x_{j})$ is a $(1,C_{1}\Lambda)$-good ball or $t_{j}=r$. By Corollary \ref{cor5.2}, if $D(x_{j},t_{j})\leq 1+\epsilon$, then $x_{j}\notin C(u)$, which is a contradiction. Hence $t_{j}=r$ for all $j$, and the number of these balls is bounded by $C(n,\lambda,\omega)^{\Lambda^{2}}\cdot r_{1}^{-n}r^{2-n}$, then we have
    \begin{equation}
        {\rm Vol}(B_{r}(C(u)\cap B_{1}))\leq C(n,\lambda,\omega)^{\Lambda^{2}}\cdot  r_{1}^{-n}r^{2-n}\cdot \alpha_{n}(2r)^{n}\leq C^{\Lambda^{2}}r_{0}^{-n}\cdot r^{2}.
    \end{equation}
\end{proof}
\subsection{Boundary case} In this subsection, we recall the boundary results of \cite{KZ4}, and combining our interior volume estimate, we can obtain Theorem \ref{boundary critiical theorem}. First, we need to introduce the concept of $C^{1}$-Dini domain, roughly speaking, $\partial D$ is locally a $C^{1}$-Dini graph.
\begin{definition}\label{def7.2}
    We say a connected domain $D\subset \mathbb{R}^{n}$ is a $C^{1}$-Dini domain with Dini parameter $\omega$ if for any point $x_{0}\in \partial D$, there exists a coordinate system $x=(x^{\prime},x_{n})$ with $x^{\prime}\in\mathbb{R}^{n-1}$ and $x_{n}\in \mathbb{R}$ such that $x_{0}=(0^{n-1},0)$ in this coordinate and there are a ball $B$ centered at $x_{0}$ and a Lipschitz function $g$ in $\mathbb{R}^{n-1}$ satisfying \\
    \indent \quad $(1)$ $||\nabla g||_{L^{\infty}(\mathbb{R}^{n-1})}\leq C$ for some constant $C>0$;\\
    \indent \quad $(2)$ $|\nabla g(x^{\prime})-\nabla g(y^{\prime})|\leq \omega(|x^{\prime}-y^{\prime}|)$ for any $x^{\prime},y^{\prime}\in\mathbb{R}^{n-1}$;\\
    \indent \quad $(3)$ $D\cap B=\{(x^{\prime},x_{n})\in B: x_{n}>g(x^{\prime})\}$.\\
    Especially, if $\omega(r)\leq C_{\alpha}r^{\alpha}$ for some $\alpha\in(0,1]$, we say $D$ is a $C^{1,\alpha}$ domain.
\end{definition}
Based on the main result in \cite{HJ1}, i.e, under the doubling assumption, for elliptic equation ${\rm div}(A\nabla u)=0$ with H\"{o}lder coefficients, the critical set satisfies $\mathcal{H}^{n-2}(C(u)\cap B_{1}(0))\leq C<\infty$, where $C$ is a universal constant, Kenig-Zhao \cite{KZ4} proved the following theorem.
\begin{theorem}[\cite{KZ4} Theorem 1.1]\label{thm7.3}
Let $D\subset \mathbb{R}^{n}$ be a $C^{1,\alpha}$ domain with constant $C_{\alpha}$ such that $0\in \partial D$. Assume that $u$ is a non-zero harmonic function in $D\cap B_{5R}(0)$, satisfying the boundary condition $u=0$ in $\partial D\cap B_{5R}(0)$. Then we have the Hausdorff estimate for critical set
    \begin{equation}
        \mathcal{H}^{n-2}(C(u)\cap \overline{D}\cap B_{R}(0))\leq C<\infty,
    \end{equation}
   where $C$ depends on $n,\alpha,C_{\alpha},R$ and the upper bound of frequency function in $B_{5R}(0)$.
\end{theorem}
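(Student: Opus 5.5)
The statement is Kenig--Zhao's Theorem~\ref{thm7.3}, and I would reproduce their strategy. The aim is to reduce the boundary critical set estimate to two ingredients. The first is an interior Minkowski/Hausdorff bound for critical sets of divergence-form equations with regular coefficients; this is the result of \cite{HJ1} in the H\"older case, and Theorem~\ref{interior critical theorem} in the Dini case. The second is a boundary analysis near $\partial D$.

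\textbf{Step 1: flatten the boundary.} I would flatten $\partial D$ near each boundary point by the map $\Phi(x',x_n)=(x',x_n-g(x'))$, or better by a regularized distance-type change of variables, so that $\nabla\Phi$ inherits the $C^{\alpha}$ modulus of $\nabla g$. After this map, $u$ becomes a solution of ${\rm div}(A\nabla v)=0$ in a half ball with $v=0$ on the flat boundary, where $A$ is $C^{\alpha}$.

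\textbf{Step 2: odd reflection.} I would extend $v$ oddly across $\{x_n=0\}$ and reflect $A$ evenly or oddly componentwise, so that the extension solves a divergence-form equation in a full ball. The reflected coefficients are only piecewise $C^{\alpha}$: they are discontinuous across the hyperplane in the off-diagonal $a^{in}$ entries. So the interior result cannot be applied directly at points on the boundary, and this is where I expect the real work to lie.

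\textbf{Step 3: split the critical set.} I would split $C(u)\cap\overline D\cap B_R$ into interior points and boundary points.

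For points at distance $d$ from $\partial D$, I would apply the interior estimate of \cite{HJ1} on Whitney balls of radius comparable to $d$. The doubling index on such balls is controlled by the boundary frequency bound, using the almost monotonicity of the boundary frequency for $C^{1,\alpha}$ (or $C^1$-Dini) domains due to Adolfsson--Escauriaza and Kenig--Zhao. This gives $\mathcal H^{n-2}$ bounds of order $d^{n-2}$ per Whitney ball, uniformly in the ball.

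These bounds alone do not sum well. The key boundary input is that near $\partial D$, away from the boundary singular set $S(u)\cap\partial D$, we have $|\partial_\nu u|>0$, so $\nabla u\ne0$ in a neighborhood. The singular set itself is $(n-2)$-dimensional with uniform Minkowski bounds, following \cite{KZ1}-type quantitative estimates. Near a boundary point where the boundary tangent map is a degree-$m$ homogeneous harmonic polynomial vanishing on the plane, the critical set is contained in a cone around an $(n-2)$-plane.

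\textbf{Step 4: covering and summation.} I would run a covering argument. At a point where the frequency is pinched, quantitative uniqueness at the boundary gives that the interior critical points lie in a neighborhood of a codimension-$2$ set. Using a Naber--Valtorta-type inductive covering on the frequency drop, summing over Whitney balls then yields a total bound of $\sum r_i^{n-2}\le C$.

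Finally, the boundary critical set $C(u)\cap\partial D$ coincides with $S(u)\cap\partial D$, since $u=0$ there. Its $\mathcal H^{n-2}$ measure is bounded by the boundary singular set estimate.

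\textbf{Main obstacle.} The hardest step is the uniform control of interior critical points accumulating at the boundary. This needs a boundary quantitative uniqueness and cone splitting so that the Whitney-ball estimates sum, and the dependence of the interior constant on the doubling index has to be explicit. For that reason I would first try to use the explicit form $C(n,\lambda,\omega)^{\Lambda^2}r_0^{-n}$ from Theorem~\ref{interior critical theorem}, after rescaling each Whitney ball to unit size, so that the constants are uniform in the distance to the boundary.
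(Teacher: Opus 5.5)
\textbf{There is a genuine gap, and it sits exactly where you place the ``real work'': your Step~4 is asserted, not argued.}

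Summing the interior estimate over Whitney balls diverges. There are roughly $d^{1-n}$ balls at height $d$, each contributing about $d^{n-2}$. Your proposed fix is a boundary quantitative uniqueness and cone-splitting theory for critical points, followed by a Naber--Valtorta covering. That theory would have to control $u-u(x)$ at interior points near $\partial D$ in terms of boundary tangent maps, and nothing in your proposal supplies it. Even a sharp Minkowski bound $d^{2-n}$ on the number of bad boundary balls of radius $d$ would only give $O(1)$ per dyadic height, i.e.\ a logarithmic divergence. So the full inductive covering would have to be rebuilt at the boundary. This is precisely the difficulty Kenig--Zhao say blocks extending \cite{KZ1} to critical sets. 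The qualitative fact that $\partial_\nu u\neq 0$ off $S(u)$ gives no uniform neighborhood, so it does not help the summation.

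\textbf{The missing idea makes all of this unnecessary.} Your Step~2 claim, that the reflected coefficients must jump in the $a^{in}$ entries, is true for the graph flattening $(x',x_n)\mapsto(x',x_n-g(x'))$. There $a^{in}(x',0)=-\partial_i g(x')$. It is false for the flattening the paper uses, $G(x',s)=(x',g(x'))+s\,(\rho_s*\vec n)(x')$. At $s=0$ the columns of $DG$ are the tangent vectors $(e_i,\partial_i g)$ and the unit normal $\vec n$, which are mutually orthogonal. Hence $A=|\det DG|\,DG^{-1}(DG^{-1})^{t}$ is block diagonal on $\{x_n=0\}$, with $|a^{in}(x',s)|$ controlled by $\omega(s)$ (by $s^{\alpha}$ in the $C^{1,\alpha}$ case).

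Now reflect the coefficients by $\tilde A(x',-x_n)=PA(x',x_n)P$ with $P={\rm diag}(1,\dots,1,-1)$, and reflect $\tilde u=u\circ G$ oddly. This gives a weak solution in a full ball with H\"older (resp.\ Dini) coefficients. The interior theorem (\cite{HJ1}, or Theorem~\ref{interior critical theorem}) then applies once, on balls straddling $\partial D$. It handles boundary critical points and interior critical points accumulating at $\partial D$ simultaneously. No boundary quantitative uniqueness, Whitney summation, or separate singular-set estimate is needed.

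\textbf{What this route actually requires instead} is verifying the doubling condition \eqref{doubling for critical set} for the reflected function, uniformly over all centers and scales. Below the critical scale $r_{cs}(x)\geq 72\,{\rm dist}(x,\partial D)$, this comes from the bound on the normalized frequency $N_C$. At scales $r\geq{\rm dist}(x,\partial D)$, it comes from the comparison $r^{-n}\int_{B_r(x)\cap D}|u-u(x)|^2\approx\fint_{B_r(\tilde x)}u^2$ combined with the boundary frequency bound. This is the step you would need to supply in place of your Steps~3--4.
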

\indent Here, we give a short outline of the proof of this theorem, and we refer interested readers to \cite{KZ4} for more details.
\begin{proof}[Sketch of proof]
    \textbf{Step 1: Reduction to an interior elliptic equation}. Assume that $B_{0}$ be the ball centered at the origin, for $D$ is a $C^{1}$-Dini domain, by definition
    \begin{equation}
        D\cap B_{0}=\{(x^{\prime},x_{n})\in\mathbb{R}^{n-1}\times \mathbb{R}:x_{n}>g(x^{\prime})\}.
    \end{equation}
Define the map
\begin{equation}
    G:(x^{\prime},s)\in \mathbb{R}^{n-1}\times \mathbb{R}_{+}\to (x^{\prime},g(x^{\prime}))+s\cdot \rho_{s}*\vec{n}(x^{\prime})\in\mathbb{R}^{n},
\end{equation}
where $0\leq \rho\in C_{c}(\mathbb{R}^{n-1})$ with ${\rm supp}(\rho)\subset B_{1}(0^{n-1})$ and $\int_{\mathbb{R}^{n-1}}\rho(y)dy=1$, and $\vec{n}$ is the unit normal vector to $\partial D$ at $(x^{\prime},g(x^{\prime}))$, i.e.,
\begin{equation}
    \vec{n}(x^{\prime})=(\frac{-\nabla g(x^{\prime})}{\sqrt{1+|\nabla g(x^{\prime})|^{2}}},\frac{1}{\sqrt{1+|\nabla g(x^{\prime})|^{2}}}).
\end{equation}
One can check that if $u$ satisfies $\Delta u=0$ in $D\cap B_{0}$, then $\tilde{u}(x^{\prime},x_{n}):=u\circ G(x^{'},x_{n})$ satisfies ${\rm div}(A(x)\nabla \tilde{u})=0$ in $\mathbb{R}_{+}^{n}\cap B$, where $x=(x^{'},x_{n})$, $A(x)=|{\rm det}(DG)|DG(x)^{-1}(DG(x)^{-1})^{t}$, $B$ is a slightly small ball than $B_{0}$ with ${\rm det}(DG(x))\approx 1$ in $\mathbb{R}_{+}^{n}\cap B$. Since $u=0$ on $\partial D\cap B_{0}$, then $\tilde{u}=0$ in $\partial \mathbb{R}_{+}^{n}\cap B$. Consider its odd reflection across $\mathbb{R}_{+}^{n}$, and we still denote it as $\tilde{u}$, then we have
\begin{equation}
    {\rm div}(\tilde{A}(x)\nabla \tilde{u})=0\;\text{in both}\;\mathbb{R}_{-}^{n}\;\text{and}\;\mathbb{R}_{+}^{n}.
\end{equation}
Since 
\begin{equation}
    \lim_{x_{n}\to 0+}\tilde{A}(x^{\prime},x_{n})\nabla \tilde{u}(x^{\prime},x_{n})\cdot (-e_{n})+\lim_{x_{n}\to 0-}\tilde{A}(x^{\prime},x_{n})\nabla \tilde{u}(x^{\prime},x_{n})\cdot (e_{n})=0,
\end{equation}
then one can show that 
\begin{equation}
    {\rm div}(\tilde{A}(x)\nabla \tilde{u})=0\;\text{in entire ball}\; B\subset\mathbb{R}^{n}.
\end{equation}
    Note that the modulus of continuity of $\tilde{A}$ is controlled by $\omega$, i.e, $\tilde{A}(x)$ is $\omega$-Dini continuous. Especially, if $D$ is a $C^{1,\alpha}$ domain, then $\tilde{A}(x)$ is $\alpha$-H\"{o}lder continuous.\\
 \indent   \textbf{Step 2: Verifying the doubling assumption.} To apply the result of \cite{HJ1}, one need to check that $\tilde{u}$ satisfies the doubling assumption in $B$. By the construction of $\tilde{u}$, it suffices to verify that $\tilde{u}$ satisfies the doubling in $B\cap \mathbb{R}_{+}^{n}$. Next, for any $x\in\mathbb{R}^{n}$ and $r>r$, we define
 \begin{equation}
   N(x,r):=\frac{rD(x,r)}{H(x,r)}:=\frac{r\int_{B_{r}(x)\cap D}|\nabla u|^{2}}{\int_{\partial B_{r}(x)\cap D}|u|^{2}}\;\text{and}\;  N_{C}(x,r):=\frac{rD(x,r)}{H_{C}(x,r)}:=\frac{r\int_{B_{r}(x)\cap D}|\nabla u|^{2}}{\int_{\partial B_{r}(x)\cap D}|u-u(x)|^{2}},
 \end{equation}
 where $N(x,r)$ is called the \textit{standard frequency function} and $N_{C}(x,r)$ is called the \textit{normalized frequency function}. Denote $N_{x}(r)$ be the boundary modified frequency as in \cite{KZ1}, the discussions in Section 2 of \cite{KZ4} imply that if $N(0,5R)\leq N$, then $N_{0}(4R)\leq CN$. Then, by Lemma 5.26 in \cite{KZ4}, they proved for $C^{1}$-Dini domain $D$ and harmonic function $u$ that $N_{C}(x,r)\leq C(n,R,N)$ holds for any $x\in D\cap B_{R/40}(0)$ and $r\leq R/8$. Once we have the upper bound of normalized frequency function, if the radius is smaller than the critical scale, i.e., $2s\leq 2r\leq r_{cs}(x)$, since
 \begin{equation}
     \frac{d}{ds}({\rm log}\frac{H_{C}(x,s)}{s^{n-1}})=\frac{2N_{C}(x,s)}{s}+{\rm Err}_{s},
 \end{equation}
 where $|{\rm Err}_{s}|\leq C/s$, then integrating the above inequality, we obtain that
 \begin{equation}
     \frac{H_{C}(x,2s)}{H_{C}(x,s)}\leq 2^{n-1+C(N,R)}.
 \end{equation}
 Furthermore, for $2r\leq r_{cs}(x)$, we have
 \begin{equation}
     \frac{\int_{B_{2r}(x)\cap D}|u-u(x)|^{2}}{\int_{B_{r}(x)\cap D}|u-u(x)|^{2}}=\frac{2\int_{0}^{r}H_{C}(x,2s)ds}{\int_{0}^{r}H_{C}(x,s)ds}\leq 2^{n+C(N,R)}.
 \end{equation}
 \indent On the other hand, when ${\rm dist}(x,\partial D)\leq r\leq 2r\leq R/8$, since Lemma 5.7 in \cite{KZ4} tells us for any $x\in D\cap B_{3R/40}(0)$ and ${\rm dist}(x,\partial D)\leq r<R/8$, 
\begin{equation}
    c_{1}\fint_{B_{r}(\tilde{x})}u^{2}\leq \frac{1}{r^{n}}\int_{B_{r}(x)\cap D}|u-u(x)|^{2}\leq c_{2}\fint_{B_{r}(\tilde{x})}u^{2},
\end{equation}
where $c_{1},c_{2}$ are universal constants and $\tilde{x}\in\partial D$ with $|x-\tilde{x}|={\rm dist}(x,\partial D)$. Therefore
\begin{equation}
    \frac{\int_{B_{2r}(x)\cap D}|u-u(x)|^{2}}{\int_{B_{r}(x)\cap D}|u-u(x)|^{2}}\leq C\frac{\int_{B_{2r}(\tilde{x})}u^{2}}{\int_{B_{r}(\tilde{x})}u^{2}}\leq 2^{n+C(N,R)},
\end{equation}
where in the last inequality, we use the uniform upper bound of frequency at boundary point in \cite{KZ1}. Since $r_{cs}(x)\geq 72{\rm dist}(x,\partial D)$, then we can conclude that for any $x\in D\cap B_{R/40}(0)$ and $2r\leq R/8$,
\begin{equation}\label{7.24}
         \frac{\int_{B_{2r}(x)\cap D}|u-u(x)|^{2}}{\int_{B_{r}(x)\cap D}|u-u(x)|^{2}}\leq 2^{n+C(N,R)}.
\end{equation}
    Recall that $\tilde{u}=u\circ G$ with ${\rm det}( DG(x))\approx 1$ in $B\cap \mathbb{R}_{+}^{n}$, if we take $B_{0}=B_{2r}(x)$, since $B$ is slightly small than $B_{0}$ with the radius of $B$ only depends on $R,\omega$. By above estimate \eqref{7.24}, for any ball $b\subset B$,
    \begin{equation}
          \frac{\int_{b}|\tilde{u}-\tilde{u}(x)|^{2}}{\int_{b/2}|\tilde{u}-\tilde{u}(x)|^{2}}\leq C(n,N,R,\omega).
    \end{equation}
   We remark that all the arguments in Step 1 and Step 2 are carried out under the case of $C^{1}$-Dini domain.\\
    \indent \textbf{Step 3: Applying the interior volume estimate.} Since the doubling assumption holds for any $b\in B$. By the main volume estimate in \cite{HJ1}, we have
    \begin{equation}
        \mathcal{H}^{n-2}(C(\tilde{u})\cap B/2)\leq C(n,R,N,\alpha,C_{\alpha}).
    \end{equation}
Since ${\rm det}( DG(x))\approx 1$ in $B\cap \mathbb{R}_{+}^{n}$, then 
\begin{equation}
    \mathcal{H}^{n-2}(C(u)\cap G(\overline{\mathbb{R}_{+}^{n}})\cap B/2)\approx \mathcal{H}^{n-2}(C(\tilde{u})\cap \overline{\mathbb{R}_{+}^{n}}\cap B/2)\leq C(n,R,N,\alpha,C_{\alpha}).
\end{equation}
Since $\overline{D}$ is compact, then there are finitely many maps $G_{i}$ and $B_{i}$ such that
\begin{equation}
    \overline{D}\cap B_{R/100}(0)\subset \bigcup_{i=1}^{M}G_{i}(\overline{\mathbb{R}_{+}^{n}}\cap B_{i}/2),
\end{equation}
where $M$ depends only on $R,\alpha,C_{\alpha}$. Then we can conclude that
\begin{equation}
    \mathcal{H}^{n-2}(C(u)\cap B_{R/100}(0))\leq \sum_{i=1}^{M}\mathcal{H}^{n-2}(C(u)\cap G_{i}(\overline{\mathbb{R}_{+}^{n}}\cap B_{i}/2))\leq C(n,R,N,\alpha,C_{\alpha}).
\end{equation}
Finally, Kenig-Zhao pointed out in Remark 1.2 of \cite{KZ4} that one can improve this estimate to $C(u)\cap B_{R}(0)$.
\end{proof}
\indent Throughout out the prof of Theorem \ref{thm7.3}, the only place where the $C^{1\alpha}$-domain condition is used is the interior volume estimate for critical set in \cite{HJ1} in Step 3. Therefore, as mentioned in \cite{KZ4}, if the volume estimate holds for elliptic equation ${\rm div}(A\nabla u)=0$ with Dini coefficients, we can obtain similar volume estimate for boundary critical set. In other words, Theorem \ref{interior critical theorem} implies Theorem \ref{boundary critiical theorem}.

\appendix
\section{}

\setcounter{equation}{0}
\renewcommand{\theequation}{A\arabic{equation}}
\renewcommand{\theproposition}{A\arabic{proposition}}
Here, we introduce a classical example in quasiconformal geometry, which can serve as a counterexample to Theorem \ref{nodal set main theorem} in the case of continuous coefficients.
\begin{proposition}\label{Prop A.1}
    Let $B_{2}(0)\subset\mathbb{R}^{2}$, there exist a continuous coefficient matrix $A$, $u\in W^{1,2}(B_{2}(0))$, $u\not\equiv 0$ and positive constants $\lambda$ and $\Lambda$ such that:\\
\indent \quad $(1)$ $A(x)$ is uniformly elliptic, i.e., $(1+\lambda)^{-1}I\leq A(x)\leq (1+\lambda)I$;\\
\indent \quad$(2)$ $u$ is a weak solution to ${\rm div}(A\nabla u)=0$ in $B_{2}(0)$;\\
\quad \indent \quad$(3)$ doubling property holds for any $B_{2r}(x)\subset B_{2}(0)$, i.e., $\fint_{B_{2r}(x)}u^{2}\leq 4^{\Lambda}\fint_{B_{r}(x)}u^{2}$;\\ 
\indent \quad $(4)$ the nodal set has infinite length, i.e. $\mathcal{H}^{n-1}(Z(u)\cap B_{1}(0))=\infty$.
\end{proposition}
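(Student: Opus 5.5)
Our plan is to realize the counterexample as a harmonic function composed with a quasiconformal homeomorphism. In dimension two, if $\Phi:B_2\to\Phi(B_2)$ is a $K$-quasiconformal homeomorphism and $h$ is harmonic, then $u=h\circ\Phi$ is a weak solution of ${\rm div}(A\nabla u)=0$. Here $A=|\det D\Phi|\,(D\Phi)^{-1}(D\Phi)^{-t}$, which is uniformly elliptic with $\lambda$ depending only on $K$. The simplest choice is $h(y)=y_2$, whose nodal set is the line $\{y_2=0\}$. Then $Z(u)=\Phi^{-1}(\{y_2=0\})$ is a quasiline, and we want it to have infinite length. The construction therefore reduces to three tasks: find a quasiconformal map whose Beltrami coefficient is continuous (so that $A$ is continuous), make the preimage of the line a curve of infinite length near the origin, and check the doubling property.

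\textbf{Step 1 (the map).} We take $\Phi^{-1}$ to be the standard quasiconformal map sending the real line onto a non-rectifiable curve. To make $A$ continuous, we cannot use self-similar snowflakes, whose dilatation is discontinuous. Instead we use an \emph{asymptotically conformal} construction. Fix scales $r_k=2^{-k}$ and small amplitudes $\epsilon_k\to0$ with $\sum_k\epsilon_k=\infty$; for instance $\epsilon_k=1/k$. On the annulus $r_{k+1}\le|x|\le r_k$, we perturb the graph of the line by a smooth oscillation of height $\epsilon_k r_k$ and wavelength comparable to $\epsilon_k r_k$ (roughly $1/\epsilon_k$ bumps), glued smoothly across the annuli. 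The map $\Phi_k$ that straightens these bumps can be chosen with $\|\mu_{\Phi}\|_{L^\infty}\lesssim \epsilon_k$ on that annulus. Hence $\mu$ is continuous away from $0$ and tends to $0$ at $0$; with $\mu(0)=0$ it is continuous on $B_2$, and $A$ is continuous. The length of the curve inside the $k$-th annulus is comparable to $r_k$ times a factor slightly larger than one, so this design does not yet give infinite length. We correct it by letting the oscillation live at many smaller scales $2^{-j}\epsilon_k r_k$ within each annulus. Each sub-scale contributes an added length of order $\epsilon_k^2 r_k$, so we need about $\epsilon_k^{-2}$ nested sub-scales per annulus. Since the dilatations of the sub-scale maps do not accumulate, $\|\mu\|_\infty\lesssim\epsilon_k$ is preserved, while the length in the annulus is at least $r_k\prod_j(1+c\epsilon_k^2)$, which can be made $\ge 1$. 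Summing over $k$ then gives infinite total length.

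\textbf{Step 2 (equation and doubling).} We set $u=y_2\circ\Phi$ and check weak solvability by the change of variables formula, since $\Phi\in W^{1,2}_{loc}$. For doubling, a $K$-quasiconformal $\Phi$ is $\eta$-quasisymmetric. Images of balls are therefore comparable to balls, $\Phi(B_r(x))$ is sandwiched between $B_{\rho}(\Phi x)$ and $B_{C\rho}(\Phi x)$, and the Jacobian is an $A_\infty$ weight. The linear function $y_2$ satisfies uniform doubling on every ball. Transporting the averages through $\Phi$ with the $A_\infty$ property of $J_\Phi$ gives $\fint_{B_{2r}}u^2\le 4^{\Lambda}\fint_{B_r}u^2$ with $\Lambda=\Lambda(K)$.

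The main obstacle is Step 1: we must simultaneously keep $\mu$ continuous, i.e.\ small at small scales, and still generate infinite length. This is delicate because a quasiconformal map with small dilatation at all scales tends to produce nearly rectifiable curves. We will resolve it by the multiscale bookkeeping above, using only $\epsilon_k\to0$ and no summability such as $\sum\epsilon_k^2<\infty$. This is consistent with the paper's remark that the Dini condition is exactly what prevents such behavior.
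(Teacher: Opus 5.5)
Your architecture matches the paper's: a harmonic function composed with a quasiconformal map, nodal set equal to the preimage of a line, continuity of $A$ from $\mu\to0$, and doubling from quasisymmetry. Your Step 2 is fine. The gap is in Step 1, at the assertion that the dilatations of the sub-scale maps ``do not accumulate''. That assertion is the whole difficulty, and it is left unproved. Length at least $1$ in the $k$-th annulus requires $(1+c\epsilon_k^2)^{N_k}\ge 2^k$, i.e.\ $N_k\gtrsim k\epsilon_k^{-2}$ nested straightenings, so $N_k\epsilon_k\to\infty$. Each straightening map is non-conformal in a neighbourhood of the curve, so points near the curve are affected by all of them. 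By the composition formula their Beltrami coefficients add to first order (up to unimodular factors), and the only general bound, $K\le\prod_j K_j\approx(1+C\epsilon_k)^{N_k}$, is useless. If the oscillations are graphs over the fixed line, as you describe, and you straighten them by the natural vertical shears, the claim is actually false. The slopes add, so length at least $1$ over a base of length $r_k$ forces $\|f'\|_\infty\gtrsim 2^k$. The shear $(x,y)\mapsto(x,y-f(x))$ then has $|\mu|=|f'|/\sqrt{4+|f'|^2}$, which tends to $1$. There is also a slip: height $\epsilon_k r_k$ with wavelength $\epsilon_k r_k$ gives slopes of order one, which contradicts $|\mu|\lesssim\epsilon_k$; presumably you meant height $\epsilon_k^2 r_k$.

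What is missing is a construction in which the dilatation at a point at distance $d$ from the curve depends only on the flatness of the curve at scale about $d$. This is exactly the Becker--Pommerenke theory of asymptotically conformal curves, which the paper invokes instead of building the map by hand. The paper takes from the literature an asymptotically conformal quasicircle $\Gamma$ with $\mathcal{H}^1(\Gamma)=\infty$, and uses $(1-|z|)|g''/g'|\to0$ for the Riemann map $g$ of its interior. It extends $g$ by $G(z)=g(z^*)+g'(z^*)(z-z^*)$, whose Beltrami coefficient $-(z^*)^2(z-z^*)g''(z^*)/g'(z^*)$ is an explicit local quantity tending to $0$ at $\partial\mathbb{D}$. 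It then sets $u={\rm Im}(M\circ G^{-1})$ with $M$ a M\"obius map of the disk onto a half-plane. You can repair Step 1 the same way. Build only the curve, inserting bumps of relative size $\theta_j\to0$ with $\sum_j\theta_j^2=\infty$ normal to the current tangent direction rather than over a fixed line. Then verify that its flatness tends to zero at small scales and that its length is infinite, and let that theorem supply the map with continuous $\mu$.
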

First, we recall some basic notions in quasiconformal geometry. 
\begin{definition}
$(1)$ Let $\Omega,\Omega'\subset\mathbb C$ be domains. We say an orientation-preserving homeomorphism $f:\Omega\longrightarrow\Omega'$ is \emph{$K$-quasiconformal}, where $1\leq K<\infty$, if $f\in W_{\mathrm{loc}}^{1,2}(\Omega)$ and $  \|\mu_f\|_{L^\infty(\Omega)}
    \leq\frac{K-1}{K+1}$, where $\mu_{f}:=f_{\bar{z}}/f_{z}$ is the Beltrami coefficient.\\
\indent $(2)$ Let $\mathbb{D}\subset \mathbb{C}$ be the unit disk. We say $\Gamma$ is a asymptotically conformal quasicircle if there exists a global quasiconformal homeomorphism $f:\mathbb{C}\to\mathbb{C}$ such that $\Gamma=f(\partial \mathbb{D})$ and ${\rm ess\,sup}_{1-t<|z|<1+t}|\mu_{f}(z)|\to 0$ as $t\to 0$.
\end{definition}
Next, we recall a standard existence result in quasiconformal geometry, which is important for the counterexample. We refer the readers to Section 1 of \cite{Badger} for this and more related result.
\begin{lemma}\label{lemma A.3}
    There is a Jordan curve $\Gamma\subset\mathbb{C}$ such that $\Gamma$ is an asymptotically conformal quasicircle with ${\rm dim}_{\mathcal{H}}(\Gamma)=1$ and $\mathcal{H}^{1}(\Gamma)=\infty$.
\end{lemma}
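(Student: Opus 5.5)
The plan is to build $\Gamma$ explicitly as a von Koch-type snowflake whose bump angles tend to zero, but slowly enough that the length diverges. Concretely, fix angles $\theta_k=c_0k^{-1/2}$ with $c_0$ small. Start from the boundary of an equilateral triangle. At generation $k$, replace each segment of the current polygon $\Gamma_{k-1}$ by four equal segments forming an outward tent of base angle $\theta_k$. Each step then multiplies the segment length by a factor $s_k/s_{k-1}=\frac{1}{2(1+\cos\theta_k)}$. Let $\Gamma=\lim\Gamma_k$ in the Hausdorff sense; $\Gamma$ is parametrized as a uniform limit of the natural parametrizations. The three properties are then checked separately, in the order below.

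\textbf{Length and dimension.} The vertices of $\Gamma_k$ lie on $\Gamma$ and appear in the same cyclic order. Hence $\mathcal{H}^1(\Gamma)\ge \mathrm{length}(\Gamma_k)=L_0\prod_{j\le k}\frac{2}{1+\cos\theta_j}\ge L_0\prod_{j\le k}(1+c\theta_j^2)$. Since $\sum_j\theta_j^2=\infty$, this tends to infinity, so $\mathcal{H}^1(\Gamma)=\infty$. For the dimension, note that each generation-$k$ subarc has diameter $\le Cs_k$, because the later tents have small angles and their heights sum geometrically. The number of such subarcs is $3\cdot4^k$, and
\begin{equation*}
s_k\le 4^{-k}\exp\Big(C\sum_{j\le k}\theta_j^2\Big)\le 4^{-k}k^{C}.
\end{equation*}
Therefore $\sum_{\text{arcs}}(\mathrm{diam})^{1+\delta}\le C4^{-k\delta}k^{C(1+\delta)}\to0$ for every $\delta>0$, giving $\dim_{\mathcal H}\Gamma\le1$. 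Equality holds since $\Gamma$ is a nondegenerate curve.

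\textbf{Jordan quasicircle, asymptotically conformal.} Take a generation-$k$ subarc with chord $[a,b]$. Its maximal deviation from the chord, relative to $|a-b|$, is bounded by $\sum_{j\ge k}\theta_j\,4^{-(j-k)}\le C\theta_k$. From this, injectivity follows, since small angles prevent self-intersections near the triangle's corners. The same bound gives the Ahlfors three-point condition $|a-z|+|z-b|\le C|a-b|$ for $z$ between $a$ and $b$, so $\Gamma$ is a quasicircle. It also gives the sharper statement
\begin{equation*}
\sup\Big\{\tfrac{|a-z|+|z-b|}{|a-b|}:\ z\in\Gamma(a,b),\ |a-b|\le t\Big\}\longrightarrow1\quad\text{as }t\to0 .
\end{equation*}
For general $a,b$, one compares with the generation-$k$ subarcs at scale $|a-b|\sim s_k$; this costs at most two or three such arcs. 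The corners of the initial triangle must be treated separately: either round them first, or observe that only finitely many top-level corners occur, and handle them by starting from a smooth convex Jordan curve in place of the triangle. Finally, invoke Pommerenke's characterization of asymptotically conformal curves, as in \cite{Badger}. This characterization says the displayed condition is equivalent to the existence of a global quasiconformal $f$ with $f(\partial\mathbb D)=\Gamma$ and $\operatorname{ess\,sup}_{1-t<|z|<1+t}|\mu_f|\to0$.

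The main obstacle is this last step: turning the geometric three-point condition into the quantitative Beltrami bound near $\partial\mathbb{D}$. The plan is not to prove this directly but to cite the Pommerenke (Becker–Pommerenke) equivalence. The real work is then the uniform verification of the $\to1$ condition at all pairs $a,b$, including pairs straddling generation boundaries. This is where the estimate $\theta_k\to0$ combined with the geometric decay $4^{-(j-k)}$ is used.
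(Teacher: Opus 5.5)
Your length and dimension computations are fine, and the limit curve is a Jordan quasicircle. It is \emph{not} asymptotically conformal, however, so the step you single out as the real work actually fails.

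In your replacement rule, the first and last of the four new segments are collinear with the segment they replace. Consequently a vertex $v$ created at generation $j$ keeps its two one-sided directions forever. Suppose $v$ is a base point of a generation-$j$ tent (turning angle $\theta_j$), and let $a_k,b_k$ be its two neighbours in $\Gamma_k$ for $k>j$. Then $a_k,b_k\in\Gamma$, $|a_k-v|=|v-b_k|=s_k$, and the angle at $v$ of the triangle $a_kvb_k$ is $\pi-\theta_j$. Hence
\[
\frac{|a_k-v|+|v-b_k|}{|a_k-b_k|}=\frac{1}{\cos(\theta_j/2)}\qquad\text{for every }k>j,
\]
even though $|a_k-b_k|\to0$ and $v$ lies on the small arc $\Gamma(a_k,b_k)$. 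Taking $j=1$, the supremum in your displayed condition is at least $1/\cos(c_0/2)>1$ for every $t$.

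Equivalently, the blow-up of $\Gamma$ at $v$ is a pair of rays. In contrast, if $f$ is quasiconformal with $\operatorname{ess\,sup}_{1-t<|z|<1+t}|\mu_f|\to0$, its blow-ups at points of $\partial\mathbb D$ are injective entire conformal maps, hence affine, so every blow-up of $\Gamma$ would have to be a line.

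This is exactly what the ``two or three arcs'' comparison hides. Two adjacent generation-$k$ subarcs meet at the turning angle inherited from the generation in which their common endpoint was born. That angle can be $\theta_1$ or $2\theta_1$, not $O(\theta_k)$. Your bound $\sum_{j\ge k}\theta_j4^{-(j-k)}$ only controls the deviation of a single generation-$k$ subarc from its own chord. The corners of the initial triangle are merely the worst instance of the same phenomenon. The tent vertices form a dense set of such corners, so rounding the triangle (or starting from a smooth convex curve) does not help.

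Any repair must also flatten old corners at later stages. The paper does not construct $\Gamma$ at all; it takes the existence from Section~1 of \cite{Badger}, which is the economical route here.

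If you want an explicit example, a standard choice is a radial graph $\{(1+\epsilon f(\vartheta))e^{i\vartheta}\}$ of a lacunary function in the little Zygmund class that is not of bounded variation, e.g.\ $f(\vartheta)=\sum_{n\ge1}n^{-1/2}2^{-n}\cos(2^n\vartheta)$.
\begin{itemize}
\item The uniform bound $|f(x+h)+f(x-h)-2f(x)|=o(h)$ gives, by dyadic subdivision, deviation $o(|x-y|)$ of the graph from \emph{every} chord, not only dyadic ones. So Pommerenke's condition holds uniformly, and it survives the smooth change of variables $(\vartheta,r)\mapsto re^{i\vartheta}$.
\item Lacunarity together with $\sum_n n^{-1}=\infty$ prevents $f$ from having bounded variation, so the length is infinite.
\item The modulus $|f(x+h)-f(x)|\lesssim h\log(1/h)$ gives $\dim_{\mathcal H}\Gamma=1$ by a covering argument like yours.
\end{itemize}
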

\indent As a consequence, by a compactness argument, there exists $p\in\Gamma$ such that for any $r>0$,
\begin{equation}\label{A.1}
    \mathcal{H}^{1}(\Gamma\cap B_{r}(p))=\infty.
\end{equation}
\begin{proof}[Proof of Proposition \ref{Prop A.1}]
    \indent Taking $\Gamma$ as in Lemma \ref{lemma A.3} and $p\in\Gamma$ as above, by definition of asymptotically conformal quasicircle, assume that $\Gamma=f(\partial \mathbb{D})$, but $f$ may not be conformal. Let $\Omega$ be the bounded component of $\mathbb{C}\setminus \Gamma$, and we choose $g:\mathbb{D}\to\Omega$ be a Riemann map. Then by the equivalence in \cite{Dy}, the classical characterization of asymptotically conformal quasicircle gives
    \begin{equation}\label{asymptotic conformal}
        \lim_{t\to 0}\sup_{1-t<|z|<1}(1-|z|)|\frac{g^{\prime\prime}(z)}{g^{\prime}(z)}|=0.
    \end{equation}
\indent Since $\Gamma$ is a Jordan curve, by Caratheodory extension theorem, $g:\overline{\mathbb{D}}\to \overline{\Omega}$ is homeomorphism, and we may assume that $z_{0}\in\partial \mathbb{D}$ such that $g(z_{0})=p$. Next, denote $z^{*}=1/\bar{z}$, then we define
\[G(z)=
\left\{
\begin{aligned}
    &g(z),\quad \quad\quad\quad\quad\quad\quad\;\;\quad|z|\leq 1,\\
    &g(z^{*})+g^{\prime}(z^{*})(z-z^{*}),\quad 1<|z|<1+\delta.
\end{aligned}
\right.
\]
    By a direct calculation, for $1<|z|<1+\delta$,
    \begin{equation}\label{A.3}
        \mu_{G}(z):=\frac{G_{\bar{z}}}{G_{z}}=-(z^{*})^{2}(z-z^{*})\frac{g^{\prime\prime}(z^{*})}{g^{\prime}(z^{*})}.
    \end{equation}
Therefore by \eqref{asymptotic conformal} and  \eqref{A.3},
\begin{equation}
    \lim_{\delta\to 0}\sup_{1<|z|<1+\delta}|\mu_{G}(z)|=2   \lim_{\delta\to 0}\sup_{(1+\delta)^{-1}<|z^{*}|<1}(1-|z^{*}|)|\frac{g^{\prime\prime}(z^{*})}{g^{\prime}(z^{*})}|=0.
\end{equation}
\indent Note that $\mu_{G}=0$ in $\mathbb{D}$, set $\mu_{G}=0$ on $\partial \mathbb{D}$, we have $\mu_{G}$ is continuous across the circle. Choosing $\delta$ small enough, we may assume that $||\mu_{G}||_{L^{\infty}}\leq k<1$ in $\{z:|z|< 1+\delta\}$. Denote $K=(1+k)/(1-k)$, then both $G$ and $G^{-1}$ are $K$-quasiconformal.
Now we define 
\begin{equation}
    M(z):=i\cdot \frac{1-\bar{z}_{0}z}{1+\bar{z}_{0}z},\;H:=M\circ G^{-1},\;u={\rm Im}\,H.
\end{equation}
 Since $M$ is conformal, then $|\mu_{H}|=|\mu_{G^{-1}}|=|\mu_{G}|$. Now identifying the complex plane $\mathbb{C}$ as $\mathbb{R}^{2}$, then $H$ can define on $B_{2r}(p)\subset \mathbb{R}^{2}$ for some small $r$. After translation and rescaling, we may assume that $u,H$ are defined on $B_{2}(0)\subset \mathbb{C}=\mathbb{R}^{2}$. One can check that
\begin{equation}
    Z(u)\cap B_{1}(0)=\Gamma_{\rm loc}\cap B_{1}(0),
\end{equation}
where $\Gamma_{\rm loc}$ is the translated and rescaled local arc of $\Gamma$ near $p$. Therefore by the choice of $p$, we have
\begin{equation}
    \mathcal{H}^{1}(Z(u)\cap B_{1}(0))=\infty,
\end{equation}
i.e., $(4)$ holds. Recall that $u={\rm Im}\,H,u\not \equiv 0$, hence the classical quasiconformal theory implies that $u\in W^{1,2}(B_{2}(0))$ and is a solution to ${\rm div}(A\nabla u)=0$ in the weak sense, where
\[
A(z)=\frac{1}{1-|\mu_{H}(z)|^{2}}
\begin{pmatrix}
    |1-\mu_{H}(z)|^{2} & -2{\rm Im}\,\mu_{H}(z)\\
    -2{\rm Im}\,\mu_{H}(z) & |1+\mu_{H}(z)|^{2}
\end{pmatrix}
    .
\]
Note that the eigenvalues of $A(z)$ are 
\begin{equation}
    \frac{1-|\mu_{H}(z)|}{1+|\mu_{H}(z)|}\;\text{and}\; \frac{1+|\mu_{H}(z)|}{1-|\mu_{H}(z)|},
\end{equation}
since $||\mu_{H}||_{L^{\infty}(B_{2}(0))}\leq k<1$, then $A$ is uniformly elliptic with $\lambda=2k/(1-k)$, hence $(1)$ and $(2)$ also hold. Finally, we check the doubling property $(3)$.\\
\indent Since $H$ is a $K$-quasiconformal homeomorphism, the classical local quasisymmetry implies that for any $y\in B_{2r}(x)\in B_{2}(0),z\in \partial B_{r/2}(x)$, we have 
\begin{equation}
    |H(y)-H(x)|\leq \eta_{K}(4)|H(z)-H(x)|.
\end{equation}
Denote $  M(x,r):=\sup_{y\in {B_{r}(x)}}|H(y)-H(x)|$ and $m(x,r):=\inf_{z\in\partial B_{r}(x)}|H(z)-H(x)|$. Then 
\begin{equation}
    M(x,2r)\leq C(K)m(x,r/2)\;\text{for any}\,B_{2r}(x)\subset B_{2}(0).
\end{equation}
Note that $B_{m(x,r/2)}(H(x))\subset H(B_{r/2}(x))$, let $a={\rm Im}\,H(x)\in \mathbb{R}$, recall that $u={\rm Im}\,H$, then
\begin{equation}
    \sup_{B_{2r}(x)}|u|\leq |a|+M(x,2r)\leq C(K)[|a|+m(x,r/2)]\leq C(K)\sup_{B_{r/2}(x)}|u|.
\end{equation}
Combining elliptic estimate, we have
\begin{equation}
    \fint_{B_{2r}(x)}u^{2}\leq C(K,\lambda)\fint_{B_{r}(x)}u^{2}:=4^{\Lambda}\fint_{B_{r}(x)}u^{2},
\end{equation}
i.e., $(3)$ holds. Therefore, we have finish the proof.
\end{proof}

\end{document}